\newcommand{\td}{\,\mathrm{d}}
\newcommand{\const}{\operatorname{const}}
\newcommand{\Ad}{\operatorname{Ad}}
\newcommand{\ad}{\operatorname{ad}}
\renewcommand\Re{\operatorname{Re}}
\renewcommand\Im{\operatorname{Im}}
\newcommand{\id}{\operatorname{id}}
\DeclareMathOperator{\tr}{tr}
\DeclareMathOperator{\rk}{rk}
\DeclareMathOperator{\Tr}{Tr}
\newcommand{\Sym}{\operatorname{Sym}}
\newcommand{\Herm}{\operatorname{Herm}}
\newcommand{\Str}{\operatorname{Str}}
\newcommand{\str}{\mathfrak{str}}
\newcommand{\Aut}{\operatorname{Aut}}
\newcommand{\der}{\mathfrak{der}}
\newcommand{\Co}{\operatorname{Co}}
\newcommand{\co}{\mathfrak{co}}
\newcommand{\GL}{\operatorname{GL}}
\newcommand{\gl}{\mathfrak{gl}}
\renewcommand{\sl}{\mathfrak{sl}}
\renewcommand{\sp}{\mathfrak{sp}}
\newcommand{\so}{\mathfrak{so}}
\newcommand{\su}{\mathfrak{su}}
\newcommand{\End}{\operatorname{End}}
\newcommand{\RR}{\mathbb{R}}
\newcommand{\KK}{\mathbb{K}}
\newcommand{\CC}{\mathbb{C}}
\newcommand{\ZZ}{\mathbb{Z}}
\newcommand{\NN}{\mathbb{N}}
\newcommand{\QQ}{\mathbb{Q}}
\newcommand{\HH}{\mathbb{H}}
\newcommand{\OO}{\mathbb{O}}
\newcommand{\DD}{\mathbb{D}}
\newcommand{\XX}{\mathbb{X}}
\newcommand{\BB}{\mathbb{B}}
\newcommand{\TT}{\mathbb{T}}
\renewcommand{\1}{\mathbf{1}}
\newcommand{\calA}{\mathcal{A}}
\newcommand{\calF}{\mathcal{F}}
\newcommand{\calO}{\mathcal{O}}
\newcommand{\calS}{\mathcal{S}}
\newcommand{\calW}{\mathcal{W}}
\newcommand{\calB}{\mathcal{B}}
\newcommand{\calH}{\mathcal{H}}
\newcommand{\calP}{\mathcal{P}}
\newcommand{\calU}{\mathcal{U}}
\newcommand{\calL}{\mathcal{L}}
\newcommand{\calX}{\mathcal{X}}
\newcommand{\calD}{\mathcal{D}}
\newcommand{\calT}{\mathcal{T}}
\newcommand{\calV}{\mathcal{V}}
\newcommand{\calR}{\mathcal{R}}
\newcommand{\calI}{\mathcal{I}}
\newcommand{\calJ}{\mathcal{J}}
\newcommand{\calK}{\mathcal{K}}
\newcommand{\frakg}{\mathfrak{g}}
\newcommand{\fraku}{\mathfrak{u}}
\newcommand{\frakk}{\mathfrak{k}}
\newcommand{\frakp}{\mathfrak{p}}
\newcommand{\frakn}{\mathfrak{n}}
\newcommand{\frakl}{\mathfrak{l}}
\newcommand{\frakh}{\mathfrak{h}}
\DeclareMathOperator{\Det}{Det}
\newcommand{\Int}{\operatorname{Int}}
\newcommand{\Ann}{\operatorname{Ann}}
\newcommand{\Ass}{\operatorname{Ass}}
\newcommand{\gr}{\operatorname{gr}}
\DeclareMathOperator*{\bigoplushat}{\widehat{\bigoplus}}
\newcommand{\disc}{\textup{disc}}
\newcommand{\cont}{\textup{cont}}
\theoremstyle{plain}
\newtheorem{theorem}{Theorem}[section]
\newtheorem{proposition}[theorem]{Proposition}
\newtheorem{lemma}[theorem]{Lemma}
\newtheorem{corollary}[theorem]{Corollary}
\newtheorem{conjecture}[theorem]{Conjecture}
\newtheorem{thmalph}{Theorem}
\theoremstyle{definition}
\newtheorem{definition}[theorem]{Definition}
\newtheorem{example}[theorem]{Example}
\newtheorem{remark}[theorem]{Remark}
\theoremstyle{remark}
\numberwithin{equation}{section}
\begin{document}

\title{A geometric quantization of the Kostant--Sekiguchi correspondence for scalar type unitary highest weight representations}
\author{Jan M\"ollers}
\date{June 18, 2013}
\maketitle
\begin{abstract}

For any Hermitian Lie group $G$ of tube type we give a geometric quantization procedure of certain $K_\CC$-orbits in $\frakp_\CC^*$ to obtain all scalar type highest weight representations. Here $K_\CC$ is the complexification of a maximal compact subgroup $K\subseteq G$ with corresponding Cartan decomposition $\frakg=\frakk+\frakp$ of the Lie algebra of $G$. We explicitly realize every such representation $\pi$ on a Fock space consisting of square integrable holomorphic functions on its associated variety $\Ass(\pi)\subseteq\frakp_\CC^*$.

The associated variety $\Ass(\pi)$ is the closure of a single nilpotent $K_\CC$-orbit $\calO^{K_\CC}\subseteq\frakp_\CC^*$ which corresponds by the Kostant--Sekiguchi correspondence to a nilpotent coadjoint $G$-orbit $\calO^G\subseteq\frakg^*$. The known Schr\"odinger model of $\pi$ is a realization on $L^2(\calO)$, where $\calO\subseteq\calO^G$ is a Lagrangian submanifold. We construct an intertwining operator from the Schr\"odinger model to the new Fock model, the generalized Segal--Bargmann transform, which gives a geometric quantization of the Kostant--Sekiguchi correspondence (a notion invented by Hilgert, Kobayashi, \O rsted and the author).

The main tool in our construction are multivariable $I$- and $K$-Bessel functions on Jordan algebras which appear in the measure of $\calO^{K_\CC}$, as reproducing kernel of the Fock space and as integral kernel of the Segal--Bargmann transform. As a corollary to our construction we also obtain the integral kernel of the unitary inversion operator in the Schr\"odinger model in terms of a multivariable $J$-Bessel function as well as explicit Whittaker vectors.\\

\textit{2010 Mathematics Subject Classification:} Primary 22E45; Secondary 17C30, 30H20, 33C70, 44A15, 46E22.
\\

\textit{Key words and phrases:} unitary highest weight representation, orbit method, Schr\"odinger model, Fock model, Jordan algebra, Segal–-Bargmann transform, Bessel function, Bessel operator, unitary inversion operator, Whittaker vectors, branching law.

\end{abstract}

\newpage

\tableofcontents

\newpage

\addcontentsline{toc}{section}{Introduction}
\section*{Introduction}

The unitary highest weight representations of a semisimple Lie group $G$ were classified by Enright--Howe--Wallach \cite{EHW83} and Jakobsen \cite{Jak83} independently (see also \cite{EJ90}). Among them are the scalar type representations which were first completely described by Berezin \cite{Ber75} for the classical groups and by Wallach \cite{Wal79} in the general case and are parameterized by the so-called Wallach set (sometimes also referred to as the Berezin--Wallach set). There are various different realizations of these representations, e.g. on spaces of holomorphic functions on a bounded symmetric domain. Rossi--Vergne \cite{RV76} gave a realization which can by the work of Hilgert--Neeb--\O rsted \cite{HNO94,HNO96a,HNO96b} be understood as the geometric quantization of a nilpotent coadjoint $G$-orbit. More precisely, their model lives on the space $L^2(\calO)$ of square integrable functions on a Lagrangian submanifold $\calO\subseteq\calO^G$ of a nilpotent coadjoint $G$-orbit $\calO^G\subseteq\frakg^*$ in the dual of the Lie algebra $\frakg$ of $G$. We refer to this realization as the Schr\"odinger model.

The Kostant--Sekiguchi correspondence asserts a bijection between the set of nilpotent coadjoint $G$-orbits in $\frakg^*$ and the set of nilpotent $K_\CC$-orbits in $\frakp_\CC^*$, where $K_\CC$ is the complexification of a maximal compact subgroup $K\subseteq G$ of $G$ with corresponding Cartan decomposition $\frakg=\frakk+\frakp$. Under this bijection $\calO^G$ corresponds to a nilpotent $K_\CC$-orbit $\calO^{K_\CC}$ in $\frakp_\CC^*$. Harris--Jakobsen \cite{HJ82} obtained a realization of each scalar type unitary highest weight representation of $\frakg$ on regular functions on the corresponding associated variety in $\frakp_\CC^*$ (see also \cite{Jos92}). However, their construction is not explicit and of purely algebraic nature. They neither give a geometric construction of the invariant inner product nor do they analytically describe the representation space for the group action.

However, for the special case of the even part of the metaplectic representation, which is a highest weight representation of scalar type of $\frakg=\sp(k,\RR)$, a geometric quantization of the corresponding $K_\CC$-orbit is well-known. It gives the even part $\calF_{\textup{even}}(\CC^k)$ of the classical Fock space, consisting of even holomorphic functions on $\CC^k$ which are square integrable with respect to the Gaussian measure $e^{-|z|^2}\td z$. In this case also the intertwining operator between the Schr\"odinger model on $L^2_{\textup{even}}(\RR^k)$, the space of even $L^2$-functions on $\RR^k$, and the Fock model $\calF_{\textup{even}}(\CC^k)$ is explicit. It is given by the classical Segal--Bargmann transform which is the unitary isomorphism $\BB:L^2_{\textup{even}}(\RR^k) \stackrel{\sim}{\to}\calF_{\textup{even}}(\CC^k)$ given by
\begin{align}
 \BB\psi(z) &= e^{-\frac{1}{2}z^2}\int_{\RR^k}{e^{2z\cdot x}e^{-x^2}\psi(x)\td x}.\label{eq:IntroClassicalBargmann}
\end{align}

For Hermitian groups of tube type, we construct in a completely analytic and geometric way analogues of the Fock space and the Segal--Bargmann transform for any unitary highest weight representation of scalar type. The generalized Fock space consists of holomorphic functions on $\calO^{K_\CC}$ which are square integrable with respect to a certain measure. This establishes a geometric quantization of the nilpotent $K_\CC$-orbit $\calO^{K_\CC}$. We further find explicitly the intertwining operator between this new Fock model and the Schr\"odinger model, the generalized Segal--Bargmann transform. This integral transform can be understood as a \textit{geometric quantization of the Kostant--Sekiguchi correspondence}, a notion invented in \cite{HKMO12}. We remark that the special case of the minimal nilpotent $K_\CC$-orbit was treated in \cite{HKMO12} which was in fact the starting point of our investigations.

Although some of our proofs are obvious generalizations of the proofs in \cite{HKMO12} for the rank $1$ case (the minimal scalar type highest weight representation), the general case requires new techniques. Whereas in \cite{HKMO12} it was possible to work with classical one-variable Bessel functions, we have to use multivariable $J$-, $I$- and $K$-Bessel functions on Jordan algebras in the general case. These were studied before in \cite{Cle88,Dib90,DG93,DGKR00,FT87}, partly in a different context. We systematically investigate them further, proving asymptotic expansions, growth estimates, invariance properties, differential equations, integral formulas and their restrictions to the $K_\CC$-orbits $\calO^{K_\CC}$. We then use the $K$-Bessel functions in the construction of the measures on the orbits $\calO^{K_\CC}$ and the $I$-Bessel functions as reproducing kernels of the Fock spaces and as integral kernels of the Segal--Bargmann transforms. Additional results not studied in \cite{HKMO12} involve a detailed analysis of corresponding $K_\CC$-orbits and rings of differential operators on them (Section \ref{sec:RingsOfDiffOps}), the intertwining operator to the bounded symmetric domain model (Section \ref{sec:IntertwinerBdSymDomainModel}), Whittaker vectors in the Schr\"odinger model (Section \ref{sec:WhittakerVectors}) and applications to branching laws (Section \ref{sec:ApplicationBranching}). We further believe that our more general construction extends the known realizations of unitary highest weight representations in a natural and organic way.\\

We now explain our results in more detail. Let $G$ be a simple connected Hermitian Lie group of tube type with trivial center. Then $G$ occurs as the identity component of the conformal group of a simple Euclidean Jordan algebra $V$. Its Lie algebra $\frakg$ acts on $V$ by vector fields up to order $2$. More precisely, we have a decomposition
\begin{align}
 \frakg &= \frakn+\frakl+\overline{\frakn},\label{eq:IntroGelfandNaimark}
\end{align}
where $\frakn\cong V$ acts on $V$ by translations, $\frakl=\str(V)$ is the structure algebra of $V$ acting by linear transformations and $\overline{\frakn}$ acts by quadratic vector fields. There is a natural Cartan involution $\vartheta$ on $G$ given in terms of the inversion in the Jordan algebra. Denote by $K=G^\vartheta$ the corresponding maximal compact subgroup of $G$ and by $\frakg=\frakk+\frakp$ the associated Cartan decomposition. Then $\frakk$ has a one-dimensional center $Z(\frakk)=\RR Z_0$. Let $K_\CC$ be a complexification of $K$, acting on $\frakp_\CC$. Then $\frakp_\CC$ decomposes into two irreducible $K_\CC$-modules $\frakp^+$ and $\frakp^-$ and $Z_0\in Z(\frakk)$ can be chosen such that $\frakp^\pm$ is the $\pm i$ eigenspace of $\ad(Z_0)$ on $\frakp_\CC$. We have the well-known decomposition
\begin{align}
 \frakg_\CC &= \frakp^-+\frakk_\CC+\frakp^+.\label{eq:IntroCartanDecomp}
\end{align}
There is a Cayley type transform $C\in\Int(\frakg_\CC)$ (see \eqref{eq:DefCayleyTypeTransform} for the precise definition) which interchanges the decompositions \eqref{eq:IntroGelfandNaimark} and \eqref{eq:IntroCartanDecomp} in the sense that
\begin{align*}
 C(\frakp^-) &= \frakn_\CC, & C(\frakk_\CC) &= \frakl_\CC, & C(\frakp^+) &= \overline{\frakn}_\CC.
\end{align*}

\subsection*{Unitary highest weight representations}

Let $(\pi,\calH)$ be an irreducible unitary representation of the universal cover $\widetilde{G}$ of $G$ and denote by $(\td\pi,\calH_{\widetilde{K}})$ its underlying $(\frakg,\widetilde{K})$-module, $\widetilde{K}\subseteq\widetilde{G}$ being the universal cover of $K$. We say that $(\pi,\calH)$ or $(\td\pi,\calH_{\widetilde{K}})$ is a \textit{highest weight representation} if
\begin{align*}
 \calH_{\widetilde{K}}^{\frakp^+} &:= \{v\in\calH_{\overline{K}}:\td\pi(\frakp^+)v=0\} \neq 0.
\end{align*}
The space $\calH_{\widetilde{K}}^{\frakp^+}$ is an irreducible $\frakk$-representation. If further $\dim\calH_{\widetilde{K}}^{\frakp^+}=1$ we say that $(\pi,\calH)$ is \textit{of scalar type}. In this case only the center $Z(\frakk)=\RR Z_0$ of $\frakk$ can act nontrivially on $\calH_{\widetilde{K}}^{\frakp^+}$ and the scalar type highest weight representations of $\widetilde{G}$ are parameterized by this scalar. More precisely, we define the Wallach set
\begin{align*}
 \calW &= \left\{0,\frac{d}{2},\ldots,(r-1)\frac{d}{2}\right\}\cup\left((r-1)\frac{d}{2},\infty\right),
\end{align*}
where $d$ is a certain root multiplicity of $\frakg$ and $r$ denotes the rank of the Hermitian symmetric space $G/K$. Then for each $\lambda\in\calW$ there is exactly one scalar type highest weight representation $\calH$ of $\widetilde{G}$ in which $Z_0$ acts on $\calH_{\widetilde{K}}^{\frakp^+}$ by the scalar $-i\frac{r\lambda}{2}$.

\subsection*{Nilpotent orbits}

We identify $\frakg^*$ with $\frakg$ and $\frakp_\CC^*$ with $\frakp_\CC$ via the Killing form of $\frakg_\CC$. By the Kostant--Sekiguchi correspondence the nilpotent adjoint $G$-orbits in $\frakg$ are in bijection with the nilpotent $K_\CC$-orbits in $\frakp_\CC$. For a certain class of orbits we make this correspondence more precise. The analytic subgroup $L$ of $G$ with Lie algebra $\frakl$ acts on $V\cong\frakn$ via the adjoint representation. It has finitely many orbits, among them an open orbit $\Omega\subseteq V$ (the orbit of the identity element in the Jordan algebra) which is a symmetric cone. Its closure has a stratification into lower dimensional orbits $\calO_0,\ldots,\calO_{r-1}$ with $\calO_k\subseteq\overline{\calO_{k+1}}$. Put $\calO_r:=\Omega$. These orbits in $\frakn$ generate nilpotent adjoint $G$-orbits
\begin{align*}
 \calO_k^G &:= G\cdot\calO_k\subseteq\frakg, & k=0,\ldots,r.
\end{align*}
Via the Kostant--Sekiguchi correspondence these $G$-orbits correspond to nilpotent $K_\CC$-orbits $\calO_0^{K_\CC},\ldots,\calO_r^{K_\CC}$ in $\frakp^+\subseteq\frakp_\CC$. Under the Cayley type transform $C\in\Int(\frakg_\CC)$ these $K_\CC$-orbits map to
\begin{align*}
 \calX_k &:= C(\calO_k^{K_\CC})\subseteq\overline{\frakn}_\CC\cong V_\CC, & k=0,\ldots,r.
\end{align*}
Since $C(\frakk_\CC)=\frakl_\CC$ each $\calX_k$ is an $L_\CC$ orbit in $V_\CC$, where $L_\CC$ is a complexification of $L$, acting linearly on $V_\CC$. Under the identifications described above $\calO_k\subseteq\calX_k$ is a totally real submanifold.

\subsection*{The Schr\"odinger model}

Rossi--Vergne \cite{RV76} showed that the scalar type unitary highest weight representation corresponding to the Wallach point $\lambda\in\calW$ can be realized on a space of $L^2$-functions on the orbit $\calO_k$ for $\lambda=k\frac{d}{2}$, $k=0,\ldots,r-1$, and on the orbit $\calO_r=\Omega$ for $\lambda>(r-1)\frac{d}{2}$. For convenience we put $\calO_\lambda:=\calO_k$ where either $\lambda=k\frac{d}{2}$, $0\leq k\leq r-1$, or $k=r$ for $\lambda>(r-1)\frac{d}{2}$. For each $\lambda\in\calW$ there is a certain $L$-equivariant measure $\td\mu_\lambda$ on $\calO_\lambda$ (see Section \ref{sec:StructureGroup} for details). Then the natural irreducible unitary representation of $L\ltimes\exp(\frakn)$ on $L^2(\calO_\lambda,\td\mu_\lambda)$ extends to an irreducible unitary representation $\pi_\lambda$ of $\widetilde{G}$ which is the scalar type highest weight representation belonging to the Wallach point $\lambda\in\calW$. The differential representation $\td\pi_\lambda$ of $\frakg$ in this realization was computed in \cite{HKM12} and is given by differential operators on $\calO_\lambda$ up to order $2$ with polynomial coefficients. A special role is played by the second order vector-valued Bessel operator $\calB_\lambda$ which gives the differential action of $\overline{\frakn}$ (see Section \ref{sec:DiffOperators} for its definition). This operator was first introduced by Dib \cite{Dib90} and studied further in \cite{HKM12}.

\subsection*{Bessel functions on Jordan algebras}

For each $\lambda\in\calW$ we construct Bessel functions $\calJ_\lambda(z,w)$ and $\calI_\lambda(z,w)$ for $z,w\in\calX_\lambda:=L_\CC\cdot\calO_\lambda$ (see Sections \ref{sec:JBessel} and \ref{sec:IBessel}). Both $\calJ_\lambda(z,w)$ and $\calI_\lambda(z,w)$ can be extended to functions on $V_\CC\times\overline{\calX_\lambda}$ which are holomorphic in $z$ and antiholomorphic in $w$. They solve the differential equations
\begin{align*}
 (\calB_\lambda)_z\calJ_\lambda(z,w) &= -\overline{w}\calJ_\lambda(z,w), & (\calB_\lambda)_z\calI_\lambda(z,w) &= \overline{w}\calI_\lambda(z,w).
\end{align*}
For $w=e$ the identity element and $\lambda>(r-1)\frac{d}{2}$ the corresponding one-variable functions $\calJ_\lambda(x)=\calJ_\lambda(x,e)$ and $\calI_\lambda(x)=\calI_\lambda(x,e)$ on $\Omega$ were first studied by Herz~\cite{Her55} for real symmetric matrices and later for the more general case in \cite{Dib90,DG93,DGKR00,FK94,FT87}. We further define a $K$-Bessel function $\calK_\lambda(x)$ on $\calO_\lambda$ for every $\lambda\in\calW$ which solves the same differential equation as $\calI_\lambda(x)$, but has a different asymptotic behaviour (see Section \ref{sec:KBessel}). These functions were introduced by Dib \cite{Dib90} and studied further in \cite{Cle88,FK94}.

\subsection*{The Fock space}

The closure of $\calX_k\cong\calO_k^{K_\CC}$ is an affine algebraic variety and hence the space $\CC[\overline{\calX_k}]$ of regular functions on $\overline{\calX_k}$ equals the space $\calP(\calX_k)$ of restrictions of holomorphic polynomials on $V_\CC$ to $\calX_k$. We construct measures on $\calX_k$ in two steps. Fix $\lambda\in\calW$. First the $L$-equivariant measure $\td\mu_\lambda$ on $\calO_\lambda\subseteq\calX_\lambda$ gives rise to an $L_\CC$-equivariant measure $\td\nu_\lambda$ on $\calX_\lambda$ for every $\lambda\in\calW$. In the second step we extend the function
\begin{align*}
 \omega_\lambda(x) &:= \calK_\lambda\left(\left(\frac{x}{2}\right)^2\right), & x\in\calO_\lambda,
\end{align*}
uniquely to a positive density on $\calX_\lambda=L_\CC\cdot\calO_\lambda$ which, under the identification of $\calX_\lambda$ with a $K_\CC$-orbit, is invariant under the action of $K$. Then every polynomial $F\in\calP(\calX_\lambda)$ is square integrable on $\calX_\lambda$ with respect to the measure $\omega_\lambda\td\nu_\lambda$ and we let $\calF_\lambda$ be the closure of $\calP(\calX_\lambda)$ with respect to the inner product
\begin{align*}
 \langle F,G\rangle_{\calF_\lambda} &:= \const\cdot\int_{\calX_\lambda}{F(z)\overline{G(z)}\omega_\lambda(z)\td\nu_\lambda(z)}, & F,G\in\calP(\calX_\lambda).
\end{align*}

\begin{thmalph}[see Proposition \ref{prop:FockContPointEvaluations} \&\ Theorems \ref{thm:FockRepKernel} \&\ \ref{thm:FockAsRestriction}]\label{thm:IntroFockSpace}
For each $\lambda\in\calW$ the space $\calF_\lambda$ is a Hilbert space of holomorphic functions on $\calX_\lambda$ with reproducing kernel
\begin{align*}
 \KK_\lambda(z,w) &= \calI_\lambda\left(\frac{z}{2},\frac{w}{2}\right), & z,w\in\calX_\lambda.
\end{align*}
We further have the following intrinsic description:
\begin{multline*}
 \calF_\lambda = \left\{F|_{\calX_\lambda}:F:V_\CC\to\CC\mbox{ holomorphic and}\right.\\
 \left.\int_{\calX_\lambda}{|F(z)|^2\omega_\lambda(z)\td\nu_\lambda(z)}<\infty\right\}.
\end{multline*}
\end{thmalph}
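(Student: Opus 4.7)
The strategy is to reduce both the reproducing kernel identity and the intrinsic characterization to harmonic analysis on $\calX_\lambda$ under the compact group $K$, combined with the integral formulas for the multivariable Bessel functions developed in Sections~\ref{sec:KBessel} and \ref{sec:IBessel}. First I would check that $\calP(\calX_\lambda)\subseteq L^2(\calX_\lambda,\omega_\lambda\td\nu_\lambda)$ using the asymptotic decay of $\calK_\lambda$ and the polynomial growth of $\td\nu_\lambda$ along the cone. By construction the measure is $K$-invariant, so $\calP(\calX_\lambda)$ becomes a unitary $K$-module and Schur orthogonality makes its $K$-isotypic components mutually orthogonal in $\calF_\lambda$. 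By the Hua--Kostant--Schmid decomposition, $\calP(V_\CC)=\bigoplus_{\mathbf{m}}V_{\mathbf{m}}$ multiplicity-freely, indexed by partitions $\mathbf{m}=(m_1\geq\cdots\geq m_r\geq 0)$, and restriction to $\calX_\lambda$ kills precisely those $V_{\mathbf{m}}$ with $m_{k+1}\neq 0$ when $\lambda=kd/2$ is a singular Wallach point, matching the Harris--Jakobsen module.

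It then suffices to compute $\|\cdot\|_{\calF_\lambda}$ on each $V_{\mathbf{m}}$, which is determined by its value on the spherical vector: a power function $\Phi_{\mathbf{m}}$ built out of Jordan minors. I would apply the $K$-equivariant polar decomposition of $\calX_\lambda$ along a Cartan subspace of $\calO_\lambda$ to reduce the integral to an iterated one over a torus times an orthant, on which $\omega_\lambda$ factors essentially as a product of one-variable $K$-Bessel values. The $K$-Bessel moment formula established in Section~\ref{sec:KBessel} then yields
\begin{align*}
  \|\Phi_{\mathbf{m}}\|_{\calF_\lambda}^2\;\propto\;\Gamma_\Omega(\mathbf{m}+\lambda),
\end{align*}
so summing an orthonormal basis of each $V_{\mathbf{m}}$ over $\mathbf{m}$ produces a Faraut--Koranyi series whose coefficients are reciprocals of $\Gamma_\Omega(\mathbf{m}+\lambda)$; this is exactly the series expansion of $\calI_\lambda(z/2,w/2)$. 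Fixing $\const$ so that the constant term matches gives the stated reproducing kernel formula, and hence also continuity of point evaluations via $|F(z)|\leq\KK_\lambda(z,z)^{1/2}\|F\|_{\calF_\lambda}$ on the dense subspace $\calP(\calX_\lambda)$.

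For the intrinsic description, given a holomorphic $F:V_\CC\to\CC$ with $\int_{\calX_\lambda}|F|^2\omega_\lambda\td\nu_\lambda<\infty$, I expand $F=\sum_n F_n$ in homogeneous Taylor pieces at the origin. Since $K$-invariance of $\omega_\lambda\td\nu_\lambda$ includes invariance under the circle action generated by the central element $Z_0$, distinct degree components are orthogonal in $\calF_\lambda$, and angular Parseval yields $\sum_n\|F_n|_{\calX_\lambda}\|_{\calF_\lambda}^2\leq\|F\|_{L^2}^2$. Thus the partial sums converge in $\calF_\lambda$, and their pointwise limit on $\calX_\lambda$, which equals $F|_{\calX_\lambda}$, agrees with the $L^2$ limit. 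The main obstacle is the explicit norm computation in the middle step, which combines a stratified polar decomposition of $\calX_\lambda$ (nonsmooth at singular Wallach points) with the full integral identities for multivariable $K$- and $I$-Bessel functions developed earlier; once that computation is in hand, everything else follows formally from reproducing-kernel Hilbert space theory.
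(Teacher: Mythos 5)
Your outline for the reproducing kernel follows the same skeleton as the paper's: express the $\calF_\lambda$-inner product via the $U$-invariant measure $\omega_\lambda\td\nu_\lambda$, use Schur orthogonality to make the pieces $\calP_{\bf m}(\calX_\lambda)$ of the Hua--Kostant--Schmid decomposition mutually orthogonal, compute the norm on the $K^L$-spherical vector $\Phi_{\bf m}$, and sum the resulting series to recognize $\calI_\lambda(z/2,w/2)$. That much is right. However, the proposed mechanism for evaluating $\|\Phi_{\bf m}\|_{\calF_\lambda}^2$ --- polar coordinates on $\calX_\lambda$ followed by the claim that $\omega_\lambda$ "factors essentially as a product of one-variable $K$-Bessel values" on a torus times an orthant --- is incorrect whenever $\rk V\geq 2$. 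The radial density $J_\lambda({\bf t})$ in \eqref{eq:Jlambda} contains the coupling factor $\prod_{i<j}\sinh^d\bigl(\tfrac{t_i-t_j}{2}\bigr)$, and more fundamentally the multivariable $K$-Bessel function on a Jordan algebra does not factor into one-variable pieces on the diagonal (the radial parts $(\calB_\lambda^V)^i$ in Proposition \ref{prop:BesselOnInvariants} are coupled through the $\sum_{j\neq i}$ terms). The moment formula $\int_{\calO_\lambda}\Phi_{\bf m}(x)\calK_\lambda(x)\td\mu_\lambda(x)=2^{r\lambda}\Gamma_\Omega(\tfrac{n}{r})(\tfrac{n}{r})_{\bf m}(\lambda)_{\bf m}$ (Lemma \ref{lem:IntPhimKBessel}) is obtained in the paper by a quite different route: substituting the integral representation \eqref{eq:DefKBessel1} of $\calK_\lambda$, applying Fubini together with the Laplace-transform identity of Lemma \ref{lem:LaplaceTransformPolynomials}, and changing variables $v=u^{-1}$. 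Without a step of that kind, your chain does not close. (You also record the answer as "$\propto\Gamma_\Omega({\bf m}+\lambda)$", which drops the ${\bf m}$-dependent factors $4^{|{\bf m}|}(\tfrac{n}{r})_{\bf m}/d_{\bf m}$; you must keep these to land on the $\calI_\lambda$ series rather than some other hypergeometric object.)

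On the intrinsic description, your argument for $\supseteq$ (expand a globally holomorphic $F$ into homogeneous Taylor pieces, observe that the circle $e^{i\theta}\cdot$ lies in $U$ so the pieces are mutually orthogonal in $L^2(\calX_\lambda,\omega_\lambda\td\nu_\lambda)$, and invoke Parseval to get $L^2$-convergence of partial sums) is a coarser but legitimate variant of the paper's argument, which instead uses the full decomposition $F=\sum_{\bf m}p_{\bf m}$ together with Lemma \ref{lem:SchurOrthogonalityLemma}; both are fine, and yours is arguably lighter. But you never address the other inclusion $\subseteq$, namely that every element of $\calF_\lambda$ --- a priori just an $L^2$-limit of polynomials on $\calX_\lambda$ --- extends to a holomorphic function on all of $V_\CC$. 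This does \emph{not} follow "formally from reproducing-kernel Hilbert space theory": the reproducing kernel $\KK_\lambda(\cdot,w)$ is an element of $\calF_\lambda$ only for $w\in\calX_\lambda$, so the formula $F(w)=\langle F,\KK_\lambda(\cdot,w)\rangle$ gives no a priori extension across the singular variety $\overline{\calX_\lambda}\setminus\calX_\lambda$ or into the complementary strata. The paper obtains $\subseteq$ by a genuinely different input: the Segal--Bargmann transform $\BB_\lambda$ of Section \ref{sec:BargmannTransform} is shown to be a unitary isomorphism $L^2(\calO_\lambda,\td\mu_\lambda)\to\calF_\lambda$ whose image lies in $\calO(V_\CC)$ (Proposition \ref{prop:BargmannMappingProperty}), and $\subseteq$ is then immediate. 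You will need either to import this machinery or to replace it by a direct argument (for instance, proving that $\calI_\lambda(\cdot/2,w/2)|_{\calX_\lambda}\in\calF_\lambda$ for \emph{every} $w\in V_\CC$ using the growth estimate of Lemma \ref{lem:JBesselConvergence}, and then extending via $\tilde F(w):=\langle F,\calI_\lambda(\cdot/2,w/2)\rangle_{\calF_\lambda}$). As it stands the inclusion $\subseteq$ is an unproved gap.
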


In Section \ref{sec:BesselFischer} we prove that the inner product on $\calF_\lambda$ can also be expressed as a Fischer type inner product involving the Bessel operator $\calB_\lambda$.

\subsection*{Unitary action on the Fock space}

Complexifying the differential operators $\td\pi_\lambda(X)$, $X\in\frakg$, in the differential representation of the Schr\"odinger model one obtains a Lie algebra representation $\td\pi_\lambda^\CC$ of $\frakg$ on holomorphic functions on $\calX_\lambda$ by polynomial differential operators (see Section \ref{sec:ComplexificationSchrödinger} for details). We define a $\frakg$-module structure on $\calP(\calX_\lambda)$ by composing this action with the Cayley type transform $C\in\Int(\frakg_\CC)$:
\begin{align*}
 \td\rho_\lambda(X) &:= \td\pi_\lambda^\CC(C(X)), & X\in\frakg.
\end{align*}

\begin{thmalph}[see Propositions \ref{prop:FockIrreducible}, \ref{prop:FockInfUnitary} \&\ \ref{prop:AssVar} \&\ Theorem \ref{thm:UnitaryRepOnFock}]
For each $\lambda\in\calW$ the $\frakg$-module $(\td\rho_\lambda,\calP(\calX_\lambda))$ is an irreducible $(\frakg,\widetilde{K})$-module which is infinitesimally unitary with respect to the inner product on $\calF_\lambda$. It integrates to an irreducible unitary representation $\rho_\lambda$ of $\widetilde{G}$ on $\calF_\lambda$ with associated variety
\begin{align*}
 \Ass(\rho_\lambda) &= \overline{\calO_k^{K_\CC}},
\end{align*}
where $k\in\{0,\ldots,r\}$ is such that $\calX_\lambda=\calX_k$.
\end{thmalph}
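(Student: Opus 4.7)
The plan is to reduce everything to the Schr\"odinger model via the Cayley transform $C$ and invoke uniqueness of the scalar type unitary highest weight representation at each Wallach point. Since $\td\rho_\lambda(X)=\td\pi_\lambda^\CC(C(X))$ and $C$ is an inner automorphism of $\frakg_\CC$, the abstract $\frakg$-module $(\td\rho_\lambda,\calP(\calX_\lambda))$ is, up to the twist by $C$, the polynomial sector of the complexified Schr\"odinger module. The work splits into four steps: verifying the $(\frakg,\widetilde{K})$-module structure, identifying $1\in\calP(\calX_\lambda)$ as the correct highest weight vector, checking infinitesimal unitarity of $\langle\cdot,\cdot\rangle_{\calF_\lambda}$, and reading off the associated variety.

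For the three pieces of the Cartan decomposition $\frakg_\CC=\frakp^-+\frakk_\CC+\frakp^+$ I would exploit the Cayley identifications $C(\frakp^-)=\frakn_\CC$, $C(\frakk_\CC)=\frakl_\CC$, $C(\frakp^+)=\overline{\frakn}_\CC$ together with the explicit differential-operator form of $\td\pi_\lambda$. Because $\frakl$ acts in the Schr\"odinger model by linear vector fields coming from the $L_\CC$-action on $V_\CC$ (plus a scalar character), $\td\rho_\lambda|_{\frakk_\CC}$ preserves polynomial degree on $\calP(\calX_\lambda)$ and integrates to a locally finite $\widetilde{K}$-action. Because $\overline{\frakn}$ acts by the second order Bessel operator $\calB_\lambda$, which has no zeroth order term, the constant function $1\in\calP(\calX_\lambda)$ satisfies $\td\rho_\lambda(\frakp^+)\cdot 1=0$, and one computes $\td\rho_\lambda(Z_0)\cdot 1=-\tfrac{ir\lambda}{2}\cdot 1$, matching the scalar type highest weight at $\lambda\in\calW$. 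Because $\frakn$ acts by multiplication by linear functions on $V$, the complexified $\frakp^-$-action realizes multiplication by all complex linear functions on $V_\CC$ restricted to $\calX_\lambda$, so $U(\frakp^-)\cdot 1=\calP(\calX_\lambda)$ and $1$ is cyclic. Combined with the classification of \cite{EHW83,Jak83}, these three facts identify $(\td\rho_\lambda,\calP(\calX_\lambda))$ with the Harish-Chandra module of the scalar type unitary highest weight representation at $\lambda$, in particular as irreducible.

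The hard part is infinitesimal unitarity of $\langle\cdot,\cdot\rangle_{\calF_\lambda}$. For $X\in\frakk$ this reduces to the $K$-invariance of the measure $\omega_\lambda\td\nu_\lambda$, built into the definition of $\omega_\lambda$. The nontrivial cross-relation is $\td\rho_\lambda(X)^*=-\td\rho_\lambda(\overline{X})$ for $X\in\frakp^+$, which asserts that the complexified Bessel operator $\calB_\lambda$ is formally adjoint on $\calF_\lambda$ to the corresponding multiplication operator coming from $\frakp^-$. I would derive this from Theorem \ref{thm:IntroFockSpace} and the Bessel differential equation $(\calB_\lambda)_z\calI_\lambda(z,w)=\overline{w}\,\calI_\lambda(z,w)$: writing $F(w)=\langle F,\KK_\lambda(\cdot,w)\rangle_{\calF_\lambda}$ converts the action of $\calB_\lambda$ in the first kernel argument into multiplication by $\overline{w}$ in the second, and the reproducing pairing then dualises this into the required adjoint identity; the analogous relation for $\frakp^-$ is its Hermitian conjugate. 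Once infinitesimal unitarity is in place, integration to an irreducible unitary representation $\rho_\lambda$ of $\widetilde{G}$ on the Hilbert completion $\calF_\lambda$ follows from the standard unitarizability criterion for Harish-Chandra modules.

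Finally, the associated variety equals $\overline{\calO_k^{K_\CC}}$. By the preceding identification, $(\td\rho_\lambda,\calP(\calX_\lambda))$ is isomorphic as a $(\frakg,\widetilde{K})$-module to the Harish-Chandra module of the Schr\"odinger representation, whose associated variety is known from the Kostant--Sekiguchi correspondence applied to $\calO_k^G$ to be $\overline{\calO_k^{K_\CC}}$. Alternatively one reads this off intrinsically: the filtration of $\calP(\calX_\lambda)$ by polynomial degree is a good filtration, the principal symbol of the $\frakp^-$-action is multiplication by linear functions on $V_\CC$, so $\gr\calP(\calX_\lambda)=\CC[\overline{\calX_\lambda}]$ as $S(\frakp^-)$-module and its support pulls back under the inner automorphism $C$ to $\overline{\calO_k^{K_\CC}}\subseteq\frakp^+$.
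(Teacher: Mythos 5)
Your overall plan — transfer the structure from the Schr\"odinger model via the Cayley transform, identify the lowest $\frakk$-type generator, and invoke uniqueness of the scalar type highest weight module — is a genuinely different route from the paper, which does everything \emph{intrinsically} in the Fock model and never appeals to the classification of unitary highest weight modules. But there is a real gap in the irreducibility step.

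You claim that once $1\in\calP(\calX_\lambda)$ is seen to be a $\frakp^+$-annihilated, $\frakp^-$-cyclic vector on which $Z_0$ acts by the correct scalar, the classification of Enright--Howe--Wallach/Jakobsen identifies $(\td\rho_\lambda,\calP(\calX_\lambda))$ with the irreducible module $L(\lambda)$. This does not follow: \emph{any} highest weight module with that highest weight — in particular the generalized Verma module $N(\lambda)=\calU(\frakg)\otimes_{\calU(\frakk_\CC+\frakp^+)}\CC_\lambda$ and every intermediate quotient — satisfies the same three conditions, and for $\lambda\in\calW_{\disc}$ the quotient $N(\lambda)\twoheadrightarrow\calP(\calX_\lambda)$ is proper and the module $N(\lambda)$ is genuinely reducible. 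To close this you would need either (i) to show directly that the only vectors in $\calP(\calX_\lambda)$ killed by $\frakp^+$ are the constants, or (ii) to first establish infinitesimal unitarity (your step 3), note that the lowest $\frakk$-type $\calP_{\bf 0}(\calX_\lambda)=\CC\1$ has multiplicity one (Hua--Kostant--Schmid), and then conclude: a unitary Harish-Chandra module generated by a multiplicity-one $\widetilde{K}$-type is irreducible, because unitarity forces complete reducibility and the generator sits in a single summand. Route (ii) salvages your program, but only after rearranging the logical order so that unitarity precedes irreducibility — the step is not free and must be spelled out. The paper avoids this entirely: Proposition \ref{prop:FockIrreducible} shows irreducibility \emph{before and independently of} unitarity, by using the Bessel--Fischer identity $\|\Phi_{\bf m}\|_{\calF_\lambda}^2=\left.\Phi_{\bf m}(\calB_\lambda)\Phi_{\bf m}(4z)\right|_{z=0}\neq 0$ (Theorem \ref{thm:FischerEqualsL2}) to exhibit, for each pair ${\bf m},{\bf n}$, an element $\Phi_{\bf n}(z)\Phi_{\bf m}(\calB_\lambda)\in\td\rho_\lambda(\calU(\frakg))$ carrying $\Phi_{\bf m}$ to a nonzero multiple of $\Phi_{\bf n}$.

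On infinitesimal unitarity your reproducing-kernel argument is essentially sound on its face (the identity $(\calB_\lambda)_z\KK_\lambda(z,w)=\tfrac{\overline w}{4}\KK_\lambda(z,w)$ does pair correctly when $F,G$ are kernel vectors), but it carries two hidden burdens: (a) you must establish $\KK_\lambda(z,w)=\calI_\lambda(\tfrac z2,\tfrac w2)$ and its $L_\CC$-equivariance \emph{without} using the group representation — this is possible from the $U$-invariance of $\omega_\lambda\td\nu_\lambda$ and the norm computation of Proposition \ref{prop:FockNormCalculations}, but in the paper these facts are derived after Theorem \ref{thm:UnitaryRepOnFock}, so you cannot cite them as stated; and (b) you must pass from adjointness on kernel vectors to adjointness on the dense subspace of polynomials, which requires some care since these are different dense subsets. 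The paper's argument is shorter and sidesteps both issues: it proves $\calB_\lambda^{*}=\tfrac z4$ (Proposition \ref{prop:BesselAdjoint}) directly from the differential equation $\calB_\lambda\omega_\lambda(z)=\tfrac{\overline z}{4}\omega_\lambda(z)$ satisfied by the weight function (Proposition \ref{prop:OmegaDiffEq}) combined with the integration-by-parts identity from Proposition \ref{prop:PiCC}. This is where the particular choice of $\omega_\lambda$ as a $K$-Bessel function earns its keep, and it is the structural heart of the argument. Your associated-variety computation via $\gr\calP(\calX_\lambda)\cong\CC[\overline{\calX_\lambda}]$ is fine and slightly more self-contained than the paper's citation of Joseph, provided you note that $\td\rho_\lambda(\frakk_\CC)$ preserves degree and $\td\rho_\lambda(\frakp^+)$ lowers it, so the leading symbol lives entirely in $S(\frakp^-)$.
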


The group action of $\widetilde{K}$ in the Fock model is induced by the geometric action of $K_\CC$ on the orbits $\calX_k\cong\calO_k^{K_\CC}$ up to multiplication with a character (see Proposition \ref{prop:FockModelKAction}). Therefore the $\widetilde{K}$-type decomposition of $\rho_\lambda$ equals the decomposition of the space of polynomials on $\calO_k^{K_\CC}\subseteq\frakp^+$ with respect to the natural $K_\CC$-action which is essentially the Hua--Kostant--Schmid decomposition (see Proposition \ref{prop:FockKAction}).

In Section \ref{sec:RingsOfDiffOps} we use the Fock model to give explicit generators for the rings $\DD(\overline{\calX_k})$ of differential operators on the singular affine algebraic varieties $\overline{\calX_k}\subseteq V_\CC$, $k=0,\ldots,r-1$, in terms of the Bessel operator $\calB_\lambda$.

Although there are already known explicit realizations for highest weight representations on spaces of holomorphic functions our construction has a certain advantage. In the realization on holomorphic functions on the corresponding bounded symmetric domain $G/K$ the group action is quite explicit, but the representation space as well as the inner product are defined in a rather abstract way, at least for the singular highest weight representations. In our Fock model the representation space is by Theorem \ref{thm:IntroFockSpace} explicitly given by holomorphic functions on $V_\CC$ which are square integrable with respect to a certain measure on the orbit $\calX_\lambda$. Although our group action is less explicit we still have a useful expression of the Lie algebra action in terms of the Bessel operators which allows e.g. the study of discrete branching laws as done in Theorem \ref{thm:IntroBranching}.

\subsection*{The Segal--Bargmann transform}

The unitary representations $(\pi_\lambda,L^2(\calO_\lambda,\td\mu_\lambda))$ and $(\rho_\lambda,\calF_\lambda)$ are isomorphic and we explicitly find the integral kernel of the intertwining operator which resembles the classical Segal--Bargmann transform \eqref{eq:IntroClassicalBargmann}. For this let $\tr\in V_\CC^*$ denote the Jordan trace of the complex Jordan algebra $V_\CC$ and recall the $I$-Bessel function $\calI_\lambda(z,w)$.

\begin{thmalph}[see Proposition \ref{prop:BargmannMappingProperty} \&\ Theorem \ref{thm:BargmannIsomorphism}]
Let $\lambda\in\calW$, then for $\psi\in L^2(\calO_\lambda,\td\mu_\lambda)$ the formula
\begin{align*}
 \BB_\lambda\psi(z) &= e^{-\frac{1}{2}\tr(z)}\int_{\calO_\lambda}{\calI_\lambda(z,x)e^{-\tr(x)}\psi(x)\td\mu_\lambda(x)}, & z\in\calX_\lambda,
\end{align*}
defines a function $\BB_\lambda\psi\in\calF_\lambda$. This gives a unitary isomorphism $\BB_\lambda:L^2(\calO_\lambda,\td\mu_\lambda)\to\calF_\lambda$ which intertwines the representations $\pi_\lambda$ and $\rho_\lambda$.
\end{thmalph}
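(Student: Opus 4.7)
The plan is to split the argument into three tasks: (a) establish that $\BB_\lambda$ is a well-defined bounded operator $L^2(\calO_\lambda,\td\mu_\lambda)\to\calF_\lambda$; (b) verify that it intertwines $\td\pi_\lambda$ and $\td\rho_\lambda$ on a dense $(\frakg,\widetilde{K})$-stable core; and (c) pin down the resulting scalar. Once (a) and (b) are in hand, Schur's lemma applied to the irreducibility of both $(\pi_\lambda,L^2(\calO_\lambda,\td\mu_\lambda))$ and $(\rho_\lambda,\calF_\lambda)$ forces $\BB_\lambda$ to be a scalar multiple of a unitary isomorphism, and step (c) normalizes that scalar to $1$.

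For (a) I would fix $z\in\calX_\lambda$ and verify that $K_z(x):=e^{-\overline{\tr(z)}/2}\overline{\calI_\lambda(z,x)}e^{-\tr(x)}$ lies in $L^2(\calO_\lambda,\td\mu_\lambda)$, using the growth estimates for $\calI_\lambda$ on $\calO_\lambda$ (the Jordan-algebra analogue of the bound $\calI_\lambda(x)\lesssim e^{2\sqrt{x}}$) balanced against the Gaussian decay supplied by $e^{-\tr(x)}$. Then $\BB_\lambda\psi(z)=\langle\psi,K_z\rangle_{L^2}$ is defined pointwise, and holomorphy in $z$ follows by dominated convergence from the holomorphy of $\calI_\lambda(\cdot,x)$. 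The crucial analytic input is the Hankel-type integral identity
\begin{equation*}
 \int_{\calO_\lambda}\calI_\lambda(z,x)\overline{\calI_\lambda(w,x)}\,e^{-2\Re\tr(x)}\,\td\mu_\lambda(x)=\const\cdot\calI_\lambda\!\left(\tfrac{z}{2},\tfrac{w}{2}\right)e^{\tr(z)/2+\overline{\tr(w)}/2},
\end{equation*}
which, combined with the reproducing kernel formula $\KK_\lambda(z,w)=\calI_\lambda(z/2,w/2)$ from Theorem~A, identifies $\|K_z\|_{L^2}^2$ as a constant multiple of $\KK_\lambda(z,z)$. The resulting pointwise bound $|\BB_\lambda\psi(z)|^2\leq\|K_z\|_{L^2}^2\,\|\psi\|_{L^2}^2$, together with the intrinsic description of $\calF_\lambda$ in Theorem~A, then places $\BB_\lambda\psi$ in $\calF_\lambda$ and bounds it in norm by a multiple of $\|\psi\|_{L^2}$.

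For (b) I would use the Gelfand--Naimark decomposition $\frakg=\frakn+\frakl+\overline{\frakn}$ and check the relation $\BB_\lambda\td\pi_\lambda(X)=\td\pi_\lambda^\CC(C(X))\BB_\lambda$ separately on each summand, restricted to the polynomial $\widetilde{K}$-finite core on which all operators are manifestly defined. For $X\in\frakn$ the Schr\"odinger action is multiplication by a linear functional $x\mapsto\tr(vx)$, which integration against the kernel converts into a first-order derivative in $z$ applied to $\calI_\lambda(z,x)$; this matches the $\frakp^-$-action on the Fock side once the product rule on the prefactor $e^{-\tr(z)/2}$ is taken into account. For $X\in\overline{\frakn}$ the Schr\"odinger action is the Bessel operator $\calB_\lambda$, and the eigenfunction equation $(\calB_\lambda)_x\calI_\lambda(z,x)=\overline{z}\calI_\lambda(z,x)$ turns $\calB_\lambda$ into a multiplication operator in $z$ after integration by parts on $\calO_\lambda$, matching the $\frakp^+$-action on Fock. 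The remaining $\frakl$-intertwining is a direct consequence of the $L_\CC$-equivariance of $\calI_\lambda$ and of both measures.

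For (c), $\BB_\lambda$ is by now a nonzero bounded $(\frakg,\widetilde{K})$-intertwiner between two irreducible unitary $(\frakg,\widetilde{K})$-modules, so Schur's lemma forces $\BB_\lambda$ to be a scalar multiple of a unitary isomorphism. To fix the scalar I would apply $\BB_\lambda$ to one distinguished vector whose norm on both sides is explicitly computable, most naturally the lowest $\widetilde{K}$-type generator: its $L^2(\calO_\lambda,\td\mu_\lambda)$-norm reduces to a standard Gamma-type integral over $\calO_\lambda$, and its image in $\calF_\lambda$ is, by the defining series of $\calI_\lambda$, essentially a constant function whose $\calF_\lambda$-norm is again tractable. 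The main obstacle in the whole scheme is the Hankel-type integral identity invoked in (a): on the \emph{singular} orbits $\calX_k$ with $k<r$ it is not a classical symmetric-cone identity, and I expect to prove it by showing that both sides satisfy the same Bessel differential system (via the eigenfunction equation for $\calI_\lambda$ in each variable separately), share the same $L_\CC$-equivariance and matching asymptotic behavior along the $L$-orbit stratification, and then invoking a uniqueness-of-solutions argument in an appropriate class of holomorphic functions on $\calX_\lambda\times\overline{\calX_\lambda}$.
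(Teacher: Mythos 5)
Your overall scheme (convergence/holomorphy, intertwining, Schur's lemma plus a normalization) matches the paper's strategy, but two of your intermediate steps contain genuine problems that the paper's own argument is specifically designed to avoid.

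The first problem is in step (a). The Cauchy--Schwarz bound $|\BB_\lambda\psi(z)|^2\leq\|K_z\|_{L^2}^2\,\|\psi\|_{L^2}^2$ is automatic for any reproducing-kernel--type integral and gives a \emph{pointwise} estimate, not an $L^2(\calX_\lambda,\omega_\lambda\td\nu_\lambda)$ estimate. Even granting the Hankel identity so that $\|K_z\|_{L^2}^2=\const\cdot\KK_\lambda(z,z)$, integrating the pointwise bound against $\omega_\lambda\td\nu_\lambda$ would require $\int_{\calX_\lambda}\KK_\lambda(z,z)\,\omega_\lambda(z)\td\nu_\lambda(z)<\infty$, and that integral diverges: it is the trace of the identity on the infinite-dimensional space $\calF_\lambda$. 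So the pointwise bound does not place $\BB_\lambda\psi$ in $\calF_\lambda$. Moreover, invoking the intrinsic description $\calF_\lambda=\{F|_{\calX_\lambda}:F\in\calO(V_\CC),\ \|F\|_{L^2(\calX_\lambda,\omega_\lambda\td\nu_\lambda)}<\infty\}$ here is circular: in the paper, that characterization (Theorem \ref{thm:FockAsRestriction}) is a \emph{consequence} of the Segal--Bargmann isomorphism, not an input to it. The paper sidesteps the boundedness question entirely: Proposition \ref{prop:BargmannMappingProperty} only establishes that $\BB_\lambda\psi$ is holomorphic on $V_\CC$; membership of $\BB_\lambda\psi$ in $\calF_\lambda$ for general $\psi\in L^2(\calO_\lambda,\td\mu_\lambda)$ is obtained later by density, once unitarity on the $\widetilde{K}$-finite core is known.

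The second problem is the Hankel-type identity itself. You flag it as the main obstacle and propose to prove it by matching differential systems, equivariance, and asymptotics. This would indeed be a substantial lemma in its own right, of comparable difficulty to the theorem, and the needed uniqueness result for the (singular) orbits $\calX_k$, $k<r$, is nontrivial. The paper's route never requires it: the key steps are (i) the intertwining identity $\BB_\lambda\circ\td\pi_\lambda(X)=\td\pi_\lambda^\CC(C(X))\circ\BB_\lambda$, proved directly from the Bessel eigenfunction equation and the product rule \eqref{eq:BesselProdRule}; (ii) the observation that $\Psi_0:=\BB_\lambda\psi_0$ is $\widetilde{K}$-equivariant of the same type as $\1$, hence $U$-invariant, and by holomorphy then $L_\CC$-invariant, hence constant on $V_\CC$, hence equal to $\1$; and (iii) since $W^\lambda$ and $\calP(\calX_\lambda)$ are irreducible infinitesimally unitary $(\frakg,\widetilde{K})$-modules generated by $\psi_0$ and $\1$ respectively, Schur's lemma forces $\BB_\lambda$ to be a scalar times a unitary at the $K$-finite level, and $\|\psi_0\|=\|\1\|=1$ fixes the scalar. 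Your normalizing step (c) is compatible with this, but you should replace your step (a) with the $K$-finite-core argument (ii) and (iii), since the bounded-operator route you describe is not available without first knowing unitarity.

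One minor remark: your decomposition of the intertwining check along $\frakn+\frakl+\overline{\frakn}$ is workable, but note that the paper reduces instead to the real form $\{(a,L(b),a):a,b\in V\}$ (equivalently, to $X=(a,\pm 2iL(a),a)$) which generates $\frakg$ and for which $C(X)$ lands cleanly in $\frakn_\CC$ or $\overline\frakn_\CC$; this shortens the computation and avoids checking the $\frakl$-piece separately.
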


The $\frakk$-finite vectors in the Fock model and in the bounded symmetric domain model have the same realization as polynomials on $V_\CC$ and therefore also the intertwiner between these two models is worth studying. In Theorem \ref{thm:IntertwinerFockBdSymDomain} we find an explicit formula for it in terms of its integral kernel.

We remark that Brylinski--Kostant \cite{BK94} construct a Fock space realization for minimal representations of non-Hermitian Lie groups as geometric quantization of the minimal nilpotent $K_\CC$-orbit in $\frakp_\CC^*$ (see also \cite{AF11}). Apart from the fact that their cases are disjoint to ours they do not construct an intertwining operator between the known Schr\"odinger model and their Fock model such as our generalized Segal--Bargmann transform. Further their measure on the $K_\CC$-orbit is not positive whereas our measure is explicitly given in terms of the $K$-Bessel function and hence positive. Moreover, the Lie algebra action in their picture is given by pseudodifferential operators while in our model the Lie algebra acts by differential operators up to second order.

Recently Achab~\cite{Ach12} also constructed a Fock space realization for the minimal representation of Hermitian groups. Her construction looks different from ours and it would be interesting to find a connection between her model and our model.

\subsection*{The unitary inversion operator}

The explicit integral formula for the Segal--Bargmann transform can be used to find the integral kernel of the unitary inversion operator $\calU_\lambda$ in the Schr\"odinger model. This operator $\calU_\lambda$ is a unitary automorphism on $L^2(\calO_\lambda,\td\mu_\lambda)$ of order $2$ (see Section \ref{sec:UnitaryInversionOperator} for its precise definition). The action of $\calU_\lambda$ together with the relatively simple action of a maximal parabolic subgroup generates the whole group action in the Schr\"odinger model. To describe its integral kernel recall the $J$-Bessel function $\calJ_\lambda(z,w)$.

\begin{thmalph}[see Theorem \ref{thm:IntegralKernelInversionOperator}]\label{thm:IntroUnitaryInversion}
For each $\lambda\in\calW$ the unitary inversion operator $\calU_\lambda$ is given by
\begin{align*}
 \calU_\lambda\psi(x) &= 2^{-r\lambda}\int_{\calO_\lambda}{\calJ_\lambda(x,y)\psi(y)\td\mu_\lambda(y)}, & \psi\in C_c^\infty(\calO_\lambda).
\end{align*}
\end{thmalph}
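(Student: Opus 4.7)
The strategy is to transport the identity to the Fock model via the Segal--Bargmann transform $\BB_\lambda$ of the preceding theorem. By its definition in Section \ref{sec:UnitaryInversionOperator}, $\calU_\lambda=\pi_\lambda(w_0)$ for a distinguished element $w_0\in\widetilde{G}$, and the Cayley-type transform $C$ conjugates $w_0$ into an element $\tilde w_0\in\widetilde{K}$. Since $\BB_\lambda$ intertwines $\pi_\lambda$ with $\rho_\lambda$,
\[
 \calU_\lambda \;=\; \BB_\lambda^{-1}\,\rho_\lambda(\tilde w_0)\,\BB_\lambda,
\]
and the $\widetilde{K}$-action $\rho_\lambda(\tilde w_0)$ on $\calF_\lambda$ is explicit: by Proposition \ref{prop:FockModelKAction} it is the geometric action of $K_\CC$ on $\calX_\lambda$ twisted by a character. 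A direct Jordan-algebraic computation shows $\Ad(\tilde w_0)|_{\frakp^+}=-\id$, and the scalar $-ir\lambda/2$ by which $Z_0$ acts on the highest weight line gives the character value, so that
\[
 \rho_\lambda(\tilde w_0)F(z) \;=\; c_\lambda\,F(-z), \qquad z\in\calX_\lambda,
\]
for an explicit constant $c_\lambda\in\CC^\times$ of modulus one.

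The next step is to insert this formula for $\rho_\lambda(\tilde w_0)$ and the explicit integral kernels for $\BB_\lambda$ and its adjoint into the identity above. Testing both sides against $\psi\in C_c^\infty(\calO_\lambda)$ and using the reproducing kernel $\KK_\lambda(z,w)=\calI_\lambda(z/2,w/2)$ of $\calF_\lambda$, the assertion reduces to the single integral identity
\[
 \int_{\calO_\lambda}\calI_\lambda(z,x)\,\calJ_\lambda(x,y)\,e^{-\tr(x)}\,\td\mu_\lambda(x) \;=\; 2^{r\lambda}\,c_\lambda\,e^{\tr(z)-\tr(y)}\,\calI_\lambda(-z,y)
\]
for $z\in\calX_\lambda$ and $y\in\calO_\lambda$, where the factor $2^{r\lambda}$ comes from the normalization of the reproducing kernel. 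Once this identity is established, Fubini (justified by the growth estimates on $\calI_\lambda$ and $\calJ_\lambda$ from the earlier Bessel-function sections) and density of $C_c^\infty(\calO_\lambda)$ in $L^2(\calO_\lambda,\td\mu_\lambda)$ complete the proof.

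The main obstacle is this Bessel-function identity, a multivariable generating function on the Jordan algebra. In the rank-one case $V=\RR$ it reduces to the classical Hankel-type identity relating $J_\nu$ and $I_\nu$. In general, the efficient route is to expand both sides in the basis of spherical (Jack-type) polynomials on $V_\CC$ using the series representations of $\calI_\lambda$ and $\calJ_\lambda$ proved earlier, and to compute the resulting coefficients via the known moments of $\td\mu_\lambda$ against these polynomials. The identity is first checked for $\lambda$ in the continuous part of $\calW$, where all integrals converge absolutely, and then extended to the discrete Wallach points by analytic continuation, both sides depending holomorphically on $\lambda$ together with the Bessel functions and measures. As a consistency check, one may verify the identity independently on the $\widetilde{K}$-spherical vector, where it collapses via the integral formula for $\BB_\lambda$ on the vacuum to the asymptotic/symmetry properties $\calJ_\lambda(x,y)=\calJ_\lambda(y,x)$ and the known Laplace transform of $\calJ_\lambda$.
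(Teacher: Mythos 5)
Your outline is correct in broad strokes but takes a genuinely different—and, as sketched, incomplete—route from the paper's proof. What you get right: transporting to the Fock model via $\BB_\lambda$ is legitimate, and the key observation that $\rho_\lambda(\widetilde{j})$ acts on $\calF_\lambda$ by $(-1)^*$ up to a scalar is correct (the paper records this as Proposition~\ref{prop:InversionOnFockSpace}, proved \emph{after} Theorem~\ref{thm:IntegralKernelInversionOperator}, but it is easy to prove independently by exponentiating $\td\rho_\lambda(e,0,-e)=2i(\partial_z+\tfrac{r\lambda}{2})$). One small inaccuracy: with the paper's normalization $\calU_\lambda=e^{-i\pi r\lambda/2}\pi_\lambda(\widetilde{j})$, the character factor you call $c_\lambda$ is exactly $1$, not merely of modulus one.

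The genuine gap is the integral identity you reduce to,
\begin{align*}
 \int_{\calO_\lambda}{\calI_\lambda(z,x)\,\calJ_\lambda(x,y)\,e^{-\tr(x)}\td\mu_\lambda(x)} &= 2^{r\lambda}\,e^{\tr(z)-\tr(y)}\,\calI_\lambda(-z,y).
\end{align*}
This identity is true, but it is \emph{equivalent} to the theorem once one knows the Fock-side action of $\widetilde{j}$ — you have restated the assertion in kernel form, not reduced it to something simpler. Your proposed proof via spherical expansion would require evaluating $\int_{\calO_\lambda}\Phi_{\bf m}(z,x)\Phi_{\bf n}(x,y)e^{-\tr(x)}\td\mu_\lambda(x)$ for all pairs $({\bf m},{\bf n})$, and these do not decouple: the product of two $\Phi$'s is not a single $\Phi$, so Lemma~\ref{lem:LaplaceTransformPolynomials} does not apply directly. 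You would in effect need a multivariable Weber--Schafheitlin-type addition theorem that is not in the paper's toolbox, and your sketch does not supply it. The paper sidesteps this entirely: it shows that the integral operator $\calT_\lambda$ with kernel $2^{-r\lambda}\calJ_\lambda(x,y)$ intertwines $\td\pi_\lambda$ with $\td\pi_\lambda\circ\Ad(j)$ directly from the eigenfunction equation $(\calB_\lambda)_x\calJ_\lambda(x,y)=-\overline{y}\,\calJ_\lambda(x,y)$ of Proposition~\ref{prop:DiffEqJBessel} and the symmetry of $\calB_\lambda$ on $L^2(\calO_\lambda,\td\mu_\lambda)$, checks $\calT_\lambda\psi_0=\psi_0$ by a one-line calculation using $\BB_\lambda\psi_0=\1$, and then invokes Schur's Lemma: $\calU_\lambda\circ\calT_\lambda^{-1}$ commutes with the irreducible $\td\pi_\lambda$, hence is the scalar fixing $\psi_0$, i.e.\ the identity. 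No new Bessel-function identity is needed — the intertwining property plus normalization at the vacuum determines $\calT_\lambda$ uniquely, which is exactly the point your proposal misses.
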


In various special cases Theorem \ref{thm:IntroUnitaryInversion} was proved earlier. Ding--Gross \cite[Theorem 4.10 (iii)]{DG93} and Faraut--Koranyi \cite[Theorem XV.4.1]{FK94} established Theorem \ref{thm:IntroUnitaryInversion} for the relative holomorphic discrete series, i.e. $\lambda$ is contained in the continuous part of the Wallach set and is \lq large enough\rq. (In fact Ding--Gross proved an analogous version of Theorem \ref{thm:IntroUnitaryInversion} for all holomorphic discrete series representations.) For $\frakg=\su(k,k)$ and $\lambda$ in the discrete part of the Wallach set Theorem \ref{thm:IntroUnitaryInversion} was proved by Ding--Gross--Kunze--Richards \cite[Theorem 5.7]{DGKR00}. For $\frakg=\so(2,n)$ and $\lambda$ the minimal non-zero discrete Wallach point Kobayashi--Mano gave two different proofs for this result (see \cite[Theorem 6.1.1]{KM07a} and \cite[Theorem 5.1.1]{KMa11}). The case of the minimal non-zero discrete Wallach point was systematically treated in \cite{HKMO12}. The proof we present for the general case is quite simple, once the Bargmann transform is established, and works in the same way for all scalar type unitary highest weight representations.

In Section \ref{sec:WhittakerVectors} we use the integral kernel of $\calU_\lambda$ to construct explicit algebraic and smooth Whittaker vectors in the Schr\"odinger model.

\subsection*{Application to branching laws}

We demonstrate the use of our Fock model in one specific example, the branching $\so(2,n)\searrow\so(2,m)\oplus\so(n-m)$ of the minimal unitary highest weight representation of $\so(2,n)$. Note that the subalgebra $\frakh:=\so(2,m)\oplus\so(n-m)$ of $\frakg=\so(2,n)$ is symmetric and of holomorphic type, i.e. the corresponding involution of $\frakg$ acts as the identity on the center of $\frakk$. Kobayashi \cite{Kob08} proved that any highest weight representation of scalar type if restricted to a holomorphic type subalgebra is discretely decomposable and the decomposition is multiplicity free. He further determined the explicit decomposition for representations in the relative holomorphic discrete series. For arbitrary highest weight representation he gives a general algorithm to find the explicit branching law. We find the explicit decomposition for the non-zero discrete Wallach point by an easy computation in the Fock model. For $j=m,n$ we denote by $\td\pi_\lambda^{\so(2,j)}$ the unitary highest weight representation of $\so(2,j)$ of scalar type with Wallach parameter $\lambda$ as constructed above. Further we let $\calH^k(\RR^{n-m})$ be the irreducible representation of $\so(n-m)$ on the space of spherical harmonics on $\RR^{n-m}$ of degree $k$.

\begin{thmalph}[see Theorem \ref{thm:BranchingSO(n,2)}]\label{thm:IntroBranching}
The unitary highest weight representation of $\frakg=\so(2,n)$ belonging to the minimal non-zero discrete Wallach point $\lambda=\frac{n-2}{2}$ decomposes under restriction to the subalgebra $\frakh=\so(2,m)\oplus\so(n-m)$ as follows:
\begin{align*}
 \td\pi_\lambda^{\so(2,n)} &= \bigoplus_{k=0}^\infty{\td\pi_{\lambda+k}^{\so(2,m)}\boxtimes\calH^k(\RR^{n-m})}.
\end{align*}
\end{thmalph}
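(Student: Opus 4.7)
The plan is to compute the decomposition directly in the Fock model of Theorem~A. For $\frakg=\so(2,n)$ at the minimal non-zero discrete Wallach point $\lambda=\tfrac{n-2}{2}$ the underlying Jordan algebra $V$ is the spin factor of rank $r=2$, and the variety $\calX_\lambda=\calX_1$ is the complex null cone of the Jordan norm in $V_\CC\cong\CC^n$. The symmetric decomposition $\frakh=\so(2,m)\oplus\so(n-m)$ corresponds to an orthogonal Jordan-algebra splitting $V=V'\oplus W$, where $V'$ is again a spin factor of rank $2$ and $W\cong\RR^{n-m}$, so that the defining equation of $\calX_1$ takes the separated form $Q'(z')+Q''(z'')=0$ with $Q''$ the standard quadratic form on $\CC^{n-m}$.

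Step~1. The classical $O(n-m)$-decomposition of $\calP(\CC^{n-m})$ into powers of $Q''$ times spherical harmonics, combined with the relation $Q''(z'')=-Q'(z')$ valid on $\calX_1$, produces an $\so(2,m)_\CC\oplus\so(n-m)_\CC$-equivariant isomorphism
\begin{align*}
\calP(\calX_1)\cong\bigoplus_{k=0}^\infty\calP(\calX_1')\boxtimes\calH^k(\RR^{n-m}),
\end{align*}
where $\calX_1'\subseteq V'_\CC$ is the null cone attached to $\so(2,m)$. The map sends a monomial of the form $P(z')H(z'')$ with $H\in\calH^k$ to $P\otimes H$, and surjectivity uses that on $\calX_1$ every power of $Q''$ is absorbed into a polynomial in $z'$.

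Step~2. I would then verify that the $\so(2,m)$-action on the summand $\calP(\calX_1')\boxtimes\calH^k$ coincides with the Fock action $\td\rho_{\lambda+k}$ of $\so(2,m)$. For $\frakk'$ this follows from Proposition~\ref{prop:FockModelKAction}: the central element $Z'_0\in Z(\frakk')$ acts as the restriction of $Z_0\in Z(\frakk)$ plus an Euler-type correction in $z''$ whose eigenvalue on $\calH^k$ equals $k$; since $r'=2$, this shifts the scalar on the $\frakp^+$-null vector from $-i\lambda$ to $-i(\lambda+k)$, which is the scalar parameterizing the Wallach point $\lambda+k$ of $\so(2,m)$. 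For $\frakp^\pm$ one must check that the Bessel operator $\calB_\lambda$, applied to a function of the form $P(z')H(z'')$ with $H\in\calH^k$, restricts on the first factor to the Bessel operator $\calB_{\lambda+k}^{V'}$ associated to $\so(2,m)$. This is the main obstacle; it can be dispatched by splitting $\calB_\lambda$ with respect to $V=V'\oplus W$ and using the standard identity that rewrites the flat Laplacian on $\CC^{n-m}$ applied to $|z''|^{2k}$-free harmonic coefficients of degree $k$ as an Euler operator producing precisely the shift $\lambda\mapsto\lambda+k$.

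Step~3. The algebraic decomposition is orthogonal in $\calF_\lambda$ since distinct $SO(n-m)$-isotypes are orthogonal, and Kobayashi's \cite{Kob08} theorem on discrete decomposability and multiplicity-freeness for the restriction of a scalar-type unitary highest weight representation to a holomorphic-type symmetric subalgebra guarantees that each algebraic summand completes to an irreducible unitary subrepresentation. Combined with the identification in Step~2 and the fact that $\calH^k(\RR^{n-m})$ is the irreducible $\so(n-m)$-representation of degree $k$, this yields the claimed Hilbert space direct sum for $\rho_\lambda\cong\td\pi_\lambda^{\so(2,n)}$.
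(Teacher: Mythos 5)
Your overall strategy is the same as the paper's: work in the Fock model, split the variables, use the classical $O(n-m)$-harmonic decomposition modulo the null-cone relation, and then match the $\so(2,m)$-action on each summand with $\td\rho_{\lambda+k}^{\so(2,m)}$ by comparing Bessel operators. However, Step~1 contains a concrete error: the first tensor factor must be the \emph{full} polynomial ring $\CC[Z_1,\ldots,Z_m]=\calP(V'_\CC)$, not $\calP(\calX_1')=\CC[Z_1,\ldots,Z_m]/(Q'(z'))$. On $\calX_1^{\so(2,n)}$ the single relation $Q'(z')+Q''(z'')=0$ lets you replace each power $Q''(z'')^\ell$ by $(-Q'(z'))^\ell$, and these are absorbed freely into the $z'$-factor; the quotient does not impose $Q'(z')=0$. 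You can see the discrepancy at the level of Hilbert series: $\calP(\calX_1^{\so(2,n)})$ is a quadric hypersurface in $n$ variables, so its Hilbert series is $(1-t^2)/(1-t)^n$, which equals $\frac{1}{(1-t)^m}\cdot\frac{1-t^2}{(1-t)^{n-m}}$ (free ring times harmonics) but not $\frac{(1-t^2)^2}{(1-t)^n}$ (your version).

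This error propagates into Step~2 and is not cosmetic. For $m<n$ every parameter $\lambda+k=\frac{n+2k-2}{2}>\frac{m-2}{2}$ lies in the continuous part $\calW_{\cont}$ of the Wallach set for $\so(2,m)$, so the Fock model of $\td\pi_{\lambda+k}^{\so(2,m)}$ lives on the open $L'_\CC$-orbit $\calX_2'=V'_\CC$, with underlying $(\frakg',\frakk')$-module the full $\calP(V'_\CC)$; moreover the Bessel operator $\calB_{\lambda+k}^{V'}$ is not tangential to the null cone $\calX'_1$ for such $\lambda+k$, so the $\so(2,m)$-action you want does not preserve $\calP(\calX_1')$ at all. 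With the first factor corrected to $\calP(V'_\CC)$, the remainder of your proposal — in particular the splitting $\Box^n=\Box^m+\Delta^{n-m}$, $E^n=E^m+E^{n-m}$ with $E^{n-m}$ acting by $k$ on $\calH^k$, so that $B_\lambda^{n,j}$ restricts to $B_{\lambda+k}^{m,j}\otimes\id$ for $j\le m$ — recovers exactly the lemma used in the paper's proof of Theorem~\ref{thm:BranchingSO(n,2)}, and the completeness/orthogonality argument in Step~3 is fine.
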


We remark that for odd $n$ the representation $\td\pi_\lambda^{\so(2,n)}$ cannot be obtained via the theta correspondence and hence also the branching law cannot be obtained in this way.

For even $n$ this branching law was proved by Kobayashi--{\O}rsted in \cite[Theorem A]{KO03b} and, as pointed out to the author by B. {\O}rsted, their proof actually carries over also to the case of odd $n$, although it is not explicitly stated in their paper. However, the techniques they use are more involved than the computation we do in Section \ref{sec:ApplicationBranching} to prove Theorem \ref{thm:IntroBranching}.

In the general case of a Hermitian Lie algebra $\frakg$ Kobayashi--Oshima \cite{KO12} recently classified all symmetric subalgebras of $\frakg$ such that the restriction of the minimal scalar type unitary highest weight representation of $\frakg$ is discretely decomposable. In \cite{MO12} the explicit decompositions in these cases are determined.

\subsection*{Organization of the paper}

This paper is organized as follows: In Section \ref{sec:Preliminaries} we recall the necessary background of Euclidean Jordan algebras, their corresponding groups and nilpotent orbits. We also study polynomials and differential operators such as the Bessel operators on Jordan algebras. Section \ref{sec:RealizationsUHWR} describes three known models for unitary highest weight representations of scalar type, the bounded symmetric domain model, the tube domain model and the Schr\"odinger model. We further investigate a natural complexification of the latter one. Section \ref{sec:BesselFunctions} deals with Bessel functions on Jordan algebras. We introduce $J$-, $I$- and $K$-Bessel functions and discuss their properties. In Section \ref{sec:FockModel} the Fock model for unitary highest weight representations of scalar type is constructed. We calculate the reproducing kernel of the Fock space and investigate rings of differential operators on the associated varieties. The Segal--Bargmann transform intertwining the Fock model and the Schr\"odinger model is introduced in Section \ref{sec:BargmannTransform}. Here we also give a formula for the intertwiner between Fock model and bounded symmetric domain model. In Section \ref{sec:UnitaryInversionOperator} we use the Bargmann transform to obtain the integral kernel of the unitary inversion operator. We then describe Whittaker vectors in the Schr\"odinger model in terms of this integral kernel. Finally we use the Fock model in Section \ref{sec:ApplicationBranching} to obtain in a very simple way the branching law for the smallest non-zero highest weight representation of $\so(2,n)$ restricted to $\so(2,m)\oplus\so(n-m)$.\\

\textbf{Acknowledgements:} We thank T. Kobayashi and B. \O rsted for helpful discussions on the topic of this paper.\\

\textbf{Notation:} $\NN=\{1,2,3,\ldots\}$, $\NN_0=\NN\cup\{0\}$, $\RR^\times=\RR\setminus\{0\}$, $\RR_+=\{x\in\RR:x>0\}$, $\CC^\times=\CC\setminus\{0\}$, $\TT=\{z\in\CC:|z|=1\}$,\\
$V^*$: algebraic dual of a vector space $V$, $V'$: topological dual of a topological vector space $V$.

\newpage
\section{Preliminaries}\label{sec:Preliminaries}

In this section we set up the notation and recall the construction of three known models for unitary highest weight representations of Hermitian Lie groups of tube type. For this we use the theory of Euclidean Jordan algebras which formalizes various aspects in a simple fashion. We mostly follow the notation of \cite{FK94}.

\subsection{Simple Euclidean Jordan algebras}

Let $V$ be a simple Euclidean Jordan algebra of dimension $n=\dim V$ with unit element $e$. For $x\in V$ we denote by $L(x)\in\End(V)$ the multiplication by $x$ and by
\begin{align*}
 P(x) ={}& 2L(x)^2-L(x^2)
\intertext{the \textit{quadratic representation}. Its polarized version is given by}
 P(x,y) ={}& L(x)L(y)+L(y)L(x)-L(xy), & x,y\in V.
\intertext{For $x,y\in V$ we also define the \textit{box operator} $x\Box y\in\End(V)$ by}
 x\Box y :={}& L(xy)+[L(x),L(y)]
\end{align*}
and note the identity $(x\Box y)z=P(x,z)y$ for $x,y,z\in V$.

The generic minimal polynomial of $V$ provides the \textit{Jordan trace} $\tr\in V^*$ and the \textit{Jordan determinant} $\Delta$, a homogeneous polynomial of degree $r=\rk V$, the rank of $V$. Both can be expressed in terms of $L(x)$ and $P(x)$ (see \cite[Proposition III.4.2]{FK94}):
\begin{align*}
 \Tr L(x) &= \frac{n}{r}\tr(x), & \Det P(x) &= \Delta(x)^{\frac{2n}{r}},
\end{align*}
for $x\in V$ ($\frac{n}{r}$ is always an integer). Here we use $\Tr$ and $\Det$ for the trace and determinant of a linear operator. Using the Jordan trace we define the \textit{trace form}
\begin{align*}
 (x|y) &:= \tr(xy), & x,y\in V,
\end{align*}
which is positive definite since $V$ is Euclidean. Both $L(x)$ and $P(x)$ are symmetric operators with respect to the trace form (see \cite[Proposition II.4.3]{FK94}). For any orthonormal basis $(e_\alpha)_\alpha\subseteq V$ with respect to the trace form we have the identity (see \cite[Exercise III.6]{FK94})
\begin{align}
 \sum_\alpha{e_\alpha^2} &= \frac{n}{r}e.\label{eq:SumOfSquares}
\end{align}
We use the trace form to normalize the Lebesgue measure $\td x$ on $V$. The trace form also defines a norm on $V$ by
\begin{align*}
 |x| &:= \sqrt{(x|x)}, & x\in V.
\end{align*}

Let $c\in V$ be an \textit{idempotent}, i.e. $c^2=c$. The only possible eigenvalues of the symmetric operator $L(c)$ are $1$, $\frac{1}{2}$ and $0$. Denote by $V(c,1)$, $V(c,\frac{1}{2})$ and $V(c,0)$ the corresponding eigenspaces. The decomposition
\begin{align*}
 V &= V(c,1)\oplus V(c,\tfrac{1}{2})\oplus V(c,0)
\end{align*}
is called \textit{Peirce decomposition}. The subspaces $V(c,1)$ and $V(c,0)$ are themselves simple Euclidean Jordan algebras with identity elements $c$ and $e-c$, respectively. An idempotent $c\in V$ is called \textit{primitive} if it is non-zero and cannot be written as a nontrivial sum of two idempotents. Further, two idempotents $c_1,c_2\in V$ are called \textit{orthogonal} if $c_1c_2=0$. (This implies $(c_1|c_2)=0$.) A set of primitive orthogonal idempotents which add up to the identity $e$ is called a \textit{Jordan frame}. The cardinality of every Jordan frame is equal to the \textit{rank} $r$ of $V$. Further we say that an idempotent $c\in V$ has \textit{rank} $k$ if it can be written as the sum of $k$ primitive orthogonal idempotents. From now on we fix a Jordan frame $c_1,\ldots,c_r$. The symmetric operators $L(c_1),\ldots,L(c_r)$ commute pairwise and therefore we obtain the mutual Peirce decomposition
\begin{align*}
 V &= \bigoplus_{1\leq i\leq j\leq r}{V_{ij}},\\
\intertext{where}
 V_{ii} &= V(c_i,1), && 1\leq i\leq r,\\
 V_{ij} &= V(c_i,\tfrac{1}{2})\cap V(c_j,\tfrac{1}{2}), && 1\leq i<j\leq r.
\end{align*}
The spaces $V_{ii}$ are one-dimensional and spanned by $c_i$ whereas the spaces $V_{ij}$, $1\leq i<j\leq r$ have a common dimension $d$. In particular, this implies the dimension formula
\begin{align*}
 \frac{n}{r} &= 1+(r-1)\frac{d}{2}.
\end{align*}

\subsection{The structure group and its orbits}\label{sec:StructureGroup}

The \textit{structure group} $\Str(V)$ of $V$ is defined to be the group of $g\in\GL(V)$ such that
\begin{align*}
 P(gx) &= gP(x)g^\#, & x\in V,
\end{align*}
where $g^\#$ denotes the adjoint with respect to the trace form. This is by \cite[Exercise VIII.5]{FK94} equivalent to the existence of a scalar $\chi(g)\in\RR^\times$ such that
\begin{align*}
 \Delta(gx) &= \chi(g)\Delta(x), & x\in V.
\end{align*}
The map $\chi:\Str(V)\to\RR^\times$ defines a character of $\Str(V)$ which is on the identity component $L:=\Str(V)_0$ given by (see \cite[Proposition III.4.3]{FK94})
\begin{align*}
 \chi(g) &= \Det(g)^{\frac{r}{n}}, & g\in L.
\end{align*}
Let $O(V)$ denote the orthogonal group of $V$ with respect to the trace form. Then
\begin{equation*}
 K^L := L\cap O(V)
\end{equation*}
is a maximal compact subgroup of $L$, the corresponding Cartan involution being $\vartheta(g)=(g^{-1})^\#$. The group $K^L$ coincides with the identity component of the group of Jordan algebra \textit{automorphisms}, i.e.
\begin{align*}
 K^L &= \{g\in L:g(x\cdot y)=gx\cdot gy\,\forall x,y\in V\}.
\intertext{At the same time $K^L$ is the stabilizer subgroup of the unit element $e$, i.e.}
 K^L &= \{g\in L:ge=e\}.
\end{align*}
The Lie algebra $\frakl=\str(V)$ of $L$ has the Cartan decomposition (see \cite[Proposition VIII.2.6]{FK94})
\begin{align*}
 \frakl &= \frakk^\frakl\oplus\frakp^\frakl,
\intertext{where $\frakk^\frakl$ is the Lie algebra of $K^L$ consisting of all \textit{derivations} of $V$ and $\frakp^\frakl$ is the space of multiplication operators:}
 \frakk^\frakl &= \{D\in\gl(V):D(x\cdot y)=Dx\cdot y+x\cdot Dy,\,\forall x,y\in V\},\\
 \frakp^\frakl &= L(V) = \{L(x):x\in V\}.
\end{align*}

The group $L$ acts on $V$ with finitely many orbits. The orbit $\Omega:=L\cdot e$ is a symmetric cone and is isomorphic to the Riemannian symmetric space $L/K^L$. The Lebesgue measure $\td x$ on $V$ clearly restricts to $\Omega$. The following integral formula due to J.-L. Clerc \cite[Proposition 2.7]{Cle88} is used in Section \ref{sec:KBessel} to calculate $K$-Bessel functions on boundary orbits:

\begin{lemma}\label{lem:IntFormulaOmega}
Let $c\in V$ be an idempotent of rank $k$ and let $\Omega_1$ and $\Omega_0$ denote the symmetric cones in the Euclidean subalgebras $V(c,1)$ and $V(c,0)$, respectively. Further we let $\Delta^0(x)$ be the determinant function of the Euclidean Jordan algebra $V(c,0)$. Then the following integral formula holds:
\begin{multline*}
 \int_\Omega{f(x)\td x} = 2^{-k(r-k)d}\int_{\Omega_1}{\int_{V(c,\frac{1}{2})}{\int_{\Omega_0}{f\left(\exp(c\Box x_{\frac{1}{2}})(x_1+x_0)\right)}}}\\
 {{{\Delta^0(x_0)^{kd}\td x_0}\td x_{\frac{1}{2}}}\td x_1}.
\end{multline*}
\end{lemma}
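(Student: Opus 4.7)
The plan is to interpret the formula as a change of variables under the map
\[
\Phi: \Omega_1 \times V(c,\tfrac12) \times \Omega_0 \longrightarrow \Omega, \qquad \Phi(x_1, x_{1/2}, x_0) := \exp(c \Box x_{1/2})(x_1 + x_0),
\]
showing that $\Phi$ is a diffeomorphism onto $\Omega$ and computing its Jacobian to be $2^{-k(r-k)d} \Delta^0(x_0)^{kd}$.

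First I would make the action of $c \Box x_{1/2}$ explicit on the Peirce decomposition $V = V(c,1) \oplus V(c,\tfrac12) \oplus V(c,0)$. Using the identity $(x \Box y) z = P(x, z) y$ together with the fact that $L(c)$ acts on the three Peirce summands by the scalars $1$, $\tfrac12$, $0$, a direct computation with the Peirce multiplication rules shows that $c\Box x_{1/2}$ annihilates $V(c,1)$, sends $V(c,\tfrac12)$ into $V(c,1)$ by $y_{1/2} \mapsto (x_{1/2} y_{1/2})_{V(c,1)}$, and acts on $V(c,0)$ as the left multiplication $L(x_{1/2})$. Thus $c\Box x_{1/2}$ is nilpotent of order three, $\exp(c\Box x_{1/2}) = \id + (c\Box x_{1/2}) + \tfrac12 (c\Box x_{1/2})^2$, and
\[
\Phi(x_1, x_{1/2}, x_0) = \Bigl[x_1 + \tfrac12\bigl(x_{1/2}(x_{1/2} x_0)\bigr)_{V(c,1)}\Bigr] + x_{1/2} x_0 + x_0,
\]
with summands in $V(c,1)$, $V(c,\tfrac12)$, $V(c,0)$, respectively. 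This triangular form, together with the invertibility of $L(y_0)|_{V(c,1/2)}$ for $y_0 \in \Omega_0$, allows one to invert $\Phi$ explicitly from the Peirce components of $y \in \Omega$ (using that $y_0 \in \Omega_0$ for $y \in \Omega$, a standard consequence of the Peirce theory for symmetric cones), showing that $\Phi$ is a diffeomorphism onto $\Omega$.

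Reading off from the explicit formula, the differential $d\Phi$ is block upper-triangular with respect to $V = V(c,1) \oplus V(c,\tfrac12) \oplus V(c,0)$, with diagonal blocks $\id_{V(c,1)}$, $L(x_0)|_{V(c,1/2)}$, and $\id_{V(c,0)}$, so the Jacobian equals $\Det\bigl(L(x_0)|_{V(c,1/2)}\bigr)$. To evaluate this determinant I would extend a Jordan frame $c_{k+1}, \ldots, c_r$ of $V(c,0)$ to a Jordan frame $c_1, \ldots, c_r$ of $V$ with $c = c_1 + \cdots + c_k$ and spectrally decompose $x_0 = \sum_{l=k+1}^r \lambda_l c_l$. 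The mutual Peirce decomposition reads $V(c,\tfrac12) = \bigoplus_{1 \leq i \leq k < j \leq r} V_{ij}$ with each $V_{ij}$ of dimension $d$, and for $z \in V_{ij}$ one has $L(x_0) z = \sum_l \lambda_l L(c_l) z = \tfrac{\lambda_j}{2} z$. Hence
\[
\Det\bigl(L(x_0)|_{V(c,1/2)}\bigr) = \prod_{i=1}^k \prod_{j=k+1}^r (\lambda_j/2)^d = 2^{-k(r-k)d}\,\Delta^0(x_0)^{kd},
\]
since $\Delta^0(x_0) = \prod_{j=k+1}^r \lambda_j$. The change of variables formula then yields the claim. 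The main obstacle is the explicit identification of the nilpotent action of $c\Box x_{1/2}$ on the Peirce pieces and the verification of the diffeomorphism property; once these are in place, the Jacobian evaluation reduces to the elementary product above.
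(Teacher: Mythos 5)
Your proposal is correct and follows essentially the same route as the paper: both define the map $\Phi(x_1,x_{1/2},x_0)=\exp(c\Box x_{1/2})(x_1+x_0)$, exhibit it as a diffeomorphism onto $\Omega$ whose Jacobian is block upper-triangular in the Peirce decomposition, and reduce to evaluating $\Det\bigl(L(x_0)|_{V(c,1/2)}\bigr)$ before applying the change-of-variables formula. Where the paper cites Lassalle \cite{Las87} for the explicit form of $\Phi$ and the diffeomorphism property and cites \cite[Propositions IV.4.1 \& IV.4.2]{FK94} for the determinant identity, you derive both directly from the Peirce multiplication rules and a Jordan frame, which is a clean self-contained substitute for the same facts.
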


\begin{proof}
Let $\Phi:\Omega_1\times V(c,\frac{1}{2})\times\Omega_0\to V$ be defined by $\Phi(x_1,x_{\frac{1}{2}},x_0)=\exp(c\Box x_{\frac{1}{2}})(x_1+x_0)$. By \cite[Proposition 3]{Las87} this map is a diffeomorphism onto $\Omega$ and is explicitly given by
\begin{align*}
 \Phi(x_1,x_{\frac{1}{2}},x_0) &= \underbrace{x_1+\frac{1}{2}c(x_{\frac{1}{2}}(x_{\frac{1}{2}}x_0))}_{\in V(c,1)}+\underbrace{x_{\frac{1}{2}}x_0}_{\in V(c,\frac{1}{2})}+\underbrace{x_0}_{\in V(c,0)}.
\end{align*}
Hence, in the Peirce decomposition $V=V(c,1)\oplus V(c,\frac{1}{2})\oplus V(c,0)$ the Jacobian of $\Phi$ takes the form
\begin{align*}
D\Phi(x_1,x_{\frac{1}{2}},x_0) &= \left(\begin{array}{ccc}\1 & * & *\\0 & L(x_0)|_{V(c,\frac{1}{2})} & *\\0 & 0 & \1\end{array}\right).
\end{align*}
By \cite[Propositions IV.4.1 \& IV.4.2]{FK94} we have
\begin{align*}
 \Det(L(x_0)|_{V(c,\frac{1}{2})}) &= 2^{-k(r-k)d}\Delta^0(x_0)^{kd}
\end{align*}
and hence
\begin{align*}
 \left|\Det(D\Phi(x_1,x_{\frac{1}{2}},x_0))\right| &= 2^{-k(r-k)d}\Delta^0(x_0)^{kd}.
\end{align*}
Now the claimed integral formula follows from the transformation formula.
\end{proof}

The boundary $\partial\Omega$ of the symmetric cone $\Omega$ has a stratification into lower dimensional orbits. In fact
\begin{align}
 \overline{\Omega} &= \calO_0\cup\calO_1\cup\ldots\cup\calO_r,\label{eq:StratificationOmega}
\end{align}
where
\begin{align*}
 \calO_j &:= L\cdot e_j & \mbox{with} && e_j &= c_1+\cdots+c_j.
\end{align*}
(We use the convention $e_0=0$ here.) Note that $\calO_r=\Omega$. Each $x\in\calO_j$ has a polar decomposition $x=ka$ with $k\in K^L$ and $a=\sum_{i=1}^j{a_ic_i}$, $a_1,\ldots,a_j>0$ (see \cite[Chapter VI.2]{FK94}). Assuming $a_1\geq\ldots\geq a_j>0$ the element $a$ is unique. We define complex powers of $x=ka\in\calO_k$ by $x^s:=ka^s$, $a^s=\sum_{i=1}^j{a_i^sc_i}$ for $s\in\CC$. Note that for $s\in\NN$ this definition agrees with the usual definition of powers by the Jordan algebra multiplication. Since every $k\in K^L$ is a Jordan algebra automorphism the identity $x^s\cdot x^t=x^{s+t}$ holds for $s,t\in\CC$.

The \textit{Gamma function} of $\Omega$ is for $\Re\lambda>(r-1)\frac{d}{2}$ defined by the absolutely converging integral
\begin{align*}
 \Gamma_\Omega(\lambda) :={}& \int_\Omega{e^{-\tr(x)}\Delta(x)^{\lambda-\frac{n}{r}}\td x}
\intertext{and is extended meromorphically to the whole complex plane by the identity (see \cite[Theorem VII.1.1]{FK94})}
 \Gamma_\Omega(\lambda) ={}& (2\pi)^{\frac{n-r}{2}}\prod_{j=1}^r{\Gamma\left(\lambda-(j-1)\frac{d}{2}\right)}.
\end{align*}
Using the Gamma function we define the \textit{Riesz distributions} $R_\lambda\in\calS'(V)$ by
\begin{align*}
 \langle R_\lambda,\varphi\rangle &:= \frac{2^{r\lambda}}{\Gamma_\Omega(\lambda)}\int_\Omega{\varphi(x)\Delta(x)^{\lambda-\frac{n}{r}}\td x}, & \varphi\in\calS(V).
\end{align*}
This integral converges for $\lambda>(r-1)\frac{d}{2}$ and defines a tempered distribution $R_\lambda\in\calS'(V)$ which has an analytic continuation to all $\lambda\in\CC$ (see \cite[Proposition VII.2.1 \&\ Theorem VII.2.2]{FK94}). The distribution $R_\lambda$ is positive if and only if $\lambda$ belongs to the \textit{Wallach set} (sometimes referred to as the \textit{Berezin--Wallach set}, see \cite[Theorem VII.3.1]{FK94})
\begin{align*}
 \calW &= \left\{0,\frac{d}{2},\ldots,(r-1)\frac{d}{2}\right\}\cup\left((r-1)\frac{d}{2},\infty\right).
\end{align*}
Note that the Wallach set consists of a discrete part $\calW_{\disc}$ and a continuous part $\calW_{\cont}$ given by
\begin{align*}
 \calW_{\disc} &= \left\{0,\frac{d}{2},\ldots,(r-1)\frac{d}{2}\right\}, & \calW_{\cont} &= \left((r-1)\frac{d}{2},\infty\right).
\end{align*}
For $\lambda\in\calW_{\cont}$ in the continuous part the distribution $R_\lambda$ is an $L$-equivariant measure $\td\mu_\lambda$ supported on $\overline{\Omega}$ which is absolutely continuous with respect to the Lebesgue measure $\td x$. Hence the boundary $\partial\Omega=\overline{\calO_{r-1}}$ is a set of measure zero. For $\lambda=k\frac{d}{2}\in\calW_{\disc}$ in the discrete part the distribution $R_\lambda$ is an $L$-equivariant measure $\td\mu_\lambda$ supported on $\overline{\calO_k}$ for which $\partial\calO_k=\overline{\calO_{k-1}}$ is a set of measure zero (see \cite[Proposition VII.2.3]{FK94}). If we put
\begin{equation*}
 \calO_\lambda := \begin{cases}\calO_k & \mbox{for $\lambda=k\frac{d}{2}$, $0\leq k\leq r-1$,}\\\Omega & \mbox{for $\lambda>(r-1)\frac{d}{2}$,}\end{cases}
\end{equation*}
then we obtain measure spaces $(\calO_\lambda,\td\mu_\lambda)$ for $\lambda\in\calW$. The $L$-equivariant measures $\td\mu_\lambda$ transform by
\begin{align*}
 \td\mu_\lambda(gx) &= \chi(g)^\lambda\td\mu(x), & g\in L.
\end{align*}
Note that with this normalization of the measures $\td\mu_\lambda$ the function $\psi_0(x)=e^{-\tr(x)}$ always has norm $1$ in $L^2(\calO_\lambda,\td\mu_\lambda)$. In fact, for $\lambda>(r-1)\frac{d}{2}$ we find
\begin{align*}
 \int_{\calO_\lambda}{|\psi_0(x)|^2\td\mu_\lambda(x)} &= \frac{2^{r\lambda}}{\Gamma_\Omega(\lambda)}\int_\Omega{e^{-2\tr(x)}\Delta(x)^{\lambda-\frac{n}{r}}\td x}\\
 &= \frac{1}{\Gamma_\Omega(\lambda)}\int_\Omega{e^{-\tr(x)}\Delta(x)^{\lambda-\frac{n}{r}}\td x} = 1,
\end{align*}
and the general case follows by analytic continuation.

In terms of the polar decomposition of each orbit $\calO_k$ we have the following integral formula (cf. \cite[Proposition 1.8]{HKM12})
\begin{align}
 \int_{\calO_\lambda}{f(x)\td\mu_\lambda(x)} &= \const\cdot\int_{K^L}{\int_{t_1>\ldots>t_k}{f(ke^{\bf t})J_\lambda({\bf t})\td{\bf t}}\td k},\label{eq:IntFormulaOlambda}
\end{align}
where we use the notation ${\bf t}=(t_1,\ldots,t_k)$,
\begin{align}
 e^{\bf t} &= \sum_{i=1}^k{e^{t_i}c_i},\label{eq:DefExpt}\\
 J_\lambda({\bf t}) &= e^{\frac{r\lambda}{k}\sum_{j=1}^k{t_j}}\prod_{1\leq i<j\leq k}{\sinh^d\left(\frac{t_i-t_j}{2}\right)},\label{eq:Jlambda}
\end{align}
and $k\in\{0,\ldots,r\}$ such that $\calO_\lambda=\calO_k$.

\subsection{Conformal group and KKT algebra}\label{sec:ConformalGroup}

For $a\in V$ let $n_a$ be the translation by $a$, i.e.
\begin{align*}
 n_a(x) &= x+a, & x\in V.
\end{align*}
Denote by $N:=\{n_a:a\in V\}$ the group of all translations. Further define the \textit{conformal inversion} $j$ by
\begin{align*}
 j(x) &:= -x^{-1}, & \mbox{for $x\in V$ invertible.}
\end{align*}
The map $j$ defines a rational transformation of $V$ of order $2$. The \textit{conformal group} $\Co(V)$ of $V$ is defined as the subgroup of all rational transformations of $V$ generated by $N$, $\Str(V)$ and $j$:
\begin{equation*}
 \Co(V) := \langle N,\Str(V),j\rangle_{\textup{grp}}.
\end{equation*}
We let $G:=\Co(V)_0$ be its identity component. $G$ is a connected simple Hermitian Lie group of tube type with trivial center and every such group occurs in this fashion (see Table \ref{tb:Groups} for a classification).

Conjugation with $j$ defines a Cartan involution $\vartheta$ of $G$ by
\begin{align*}
 \vartheta(g) &= j\circ g\circ j, & g\in G.
\end{align*}
Its restriction to $L$ agrees with the previously introduced Cartan involution $\vartheta(g)=(g^{-1})^\#$, $g\in L$. Let $K:=G^\vartheta$ be the corresponding maximal compact subgroup of $K$.

The Lie algebra $\frakg$ of $G$ is called \textit{Kantor--Koecher--Tits algebra}. It acts on $V$ by quadratic vector fields of the form
\begin{align*}
 X(z) &= u+Tz-P(z)v, & z\in V,
\end{align*}
with $u,v\in V$ and $T\in\frakl$. For convenience we write $X=(u,T,v)\in\frakg$ for short. In this notation the Lie bracket of $X_j=(u_j,T_j,v_j)$, $j=1,2$, is given by (see \cite[Proposition X.5.8]{FK94})
\begin{equation*}
 [X_1,X_2] = (T_1u_2-T_2u_1,[T_1,T_2]+2(u_1\Box v_2)-2(u_2\Box v_1),-T_1^\# v_2+T_2^\# v_1).
\end{equation*}
This yields the grading
\begin{align*}
 \frakg &= \frakn\oplus\frakl\oplus\overline{\frakn},
\end{align*}
where $\frakn\cong V$ is the Lie algebra of $N$ and $\overline{\frakn}=\vartheta\frakn$. Thus,
\begin{align*}
 \frakn &= \{(u,0,0):u\in V\}
\intertext{acts via constant vector fields, $\frakl$ via linear vector fields and}
 \overline{\frakn} &= \{(0,0,v):v\in V\}
\end{align*}
by quadratic vector fields. Note that the abelian subalgebras $\frakn$ and $\overline{\frakn}$ together generate $\frakg$ as a Lie algebra.

On $\frakg$ the Cartan involution $\vartheta$ acts via 
\begin{align*}
 \vartheta(u,D+L(a),v) &= (-v,D-L(a),-u)
\end{align*}
and hence gives the Cartan decomposition $\frakg=\frakk\oplus\frakp$ with
\begin{align*}
 \frakk &= \{(u,D,-u):u\in V,D\in\frakk^\frakl\},\\
 \frakp &= \{(u,L(a),u):u,a\in V\}.
\end{align*}
The Lie algebra $\frakk$ of $K$ has a one-dimensional center (see \cite[Lemma 1.6.2]{Moe10})
\begin{align*}
 Z(\frakk) &= \RR Z_0, & Z_0 &= -\tfrac{1}{2}(e,0,-e),
\end{align*}
and hence the universal cover $\widetilde{K}$ of $K$ is the direct product of $\RR$ with a compact connected simply-connected semisimple group. Therefore the deck transformation group of the universal cover $\widetilde{G}$ of $G$ is a finite extension of $\ZZ$.

Under the action of $K$ the space $\frakp_\CC$ decomposes into two irreducible components
\begin{align*}
 \frakp_\CC &= \frakp^+\oplus\frakp^-.
\end{align*}
In fact, $\frakp^\pm$ is the $\pm i$ eigenspace of $\ad(Z_0)$ on $\frakp_\CC$ and is explicitly given by
\begin{align*}
 \frakp^\pm &= \{(u,\pm2iL(u),u):u\in V_\CC\}.
\end{align*}

We set 
\begin{align*}
 E &:= (e,0,0), & H &:= (0,2\,\id,0), & F &:= (0,0,e).
\end{align*}
Then $(E,F,H)$ forms an $\sl_2$-triple in $\frakg$. We define a \textit{Cayley type transform} $C\in\operatorname{Int}(\frakg_\CC)$ by the formula
\begin{align}
 C &:= \exp(-\tfrac{1}{2}i\ad(E))\exp(-i\ad(F)).\label{eq:DefCayleyTypeTransform}
\end{align}
It is then routine to check the following formulas:
\begin{align}
 C(a,0,0) &= \textstyle(\frac{a}{4},iL(a),a),\label{eq:CayleyTypeTransform1}\\
 C(0,L(a)+D,0) &= \textstyle(i\frac{a}{4},D,-ia),\label{eq:CayleyTypeTransform2}\\
 C(0,0,a) &= \textstyle(\frac{a}{4},-iL(a),a).\label{eq:CayleyTypeTransform3}
\end{align}
From these formulas it is easy to see that the transform $C$ induces isomorphisms
\begin{align*}
 \frakk_\CC&\stackrel{\sim}{\to}\frakl_\CC,\,(u,D,-u)\mapsto D+2iL(u),\\
 \frakp^+&\stackrel{\sim}{\to}\overline{\frakn}_\CC,\,(u,2iL(u),u)\mapsto(0,0,4u),\\
 \frakp^-&\stackrel{\sim}{\to}\frakn_\CC,\,(u,-2iL(u),u)\mapsto(u,0,0).
\end{align*}
Since $\frakn$ and $\overline{\frakn}$ together generate $\frakg$ as a Lie algebra it is now immediate that $\frakp^++\frakp^-$ generates $\frakg_\CC$ as a Lie algebra. Hence the real form $\{(a,L(b),a):a,b\in V\}$ of $\frakp^++\frakp^-$ generates $\frakg$.

\begin{table}[H]
\begin{center}
\begin{tabular}{|c|c|c|c|c|c|}
  \cline{1-6}
  $V$ & $\co(V)$ & $\str(V)$ & $n$ & $r$ & $d$\\
  \hline\hline
  $\RR$ & $\sl(2,\RR)$ & $\RR$ & $1$ & $1$ & $0$\\
  $\Sym(k,\RR)$ ($k\geq2$) & $\sp(k,\RR)$ & $\sl(k,\RR)\oplus\RR$ & $\frac{1}{2}k(k+1)$ & $k$ & $1$\\
  $\Herm(k,\CC)$ ($k\geq2$) & $\su(k,k)$ & $\sl(k,\CC)\oplus\RR$ & $k^2$ & $k$ & $2$\\
  $\Herm(k,\HH)$ ($k\geq2$) & $\so^*(4k)$ & $\su^*(2k)\oplus\RR$ & $k(2k-1)$ & $k$ & $4$\\
  $\RR^{1,k-1}$ ($k\geq3$) & $\so(2,k)$ & $\so(1,k-1)\oplus\RR$ & $k$ & $2$ & $k-2$\\
  $\Herm(3,\OO)$ & $\mathfrak{e}_{7(-25)}$ & $\mathfrak{e}_{6(-26)}\oplus\RR$ & $27$ & $3$ & $8$\\
  \hline
\end{tabular}
\caption{Simple Euclidean Jordan algebras, corresponding groups and structure constants\label{tb:Groups}}
\end{center}
\end{table}

\subsection{Complexifications}\label{sec:Complexifications}

The complexification $V_\CC$ of $V$ is a complex Jordan algebra. The Jordan trace of $V_\CC$ is given by the $\CC$-linear extension of the Jordan trace $\tr$ of $V$. The same holds for the trace form which is the $\CC$-bilinear extension of the trace form of $V$. By abuse of notation we use the same notation $\tr$ and $(-|-)$ for the complex objects. A norm on $V_\CC$ is given by
\begin{align*}
 |z| &:= \sqrt{(z|\overline{z})}, & z\in V_\CC.
\end{align*}

$V_\CC$ can also be considered as a real Jordan algebra and we write $W=V_\CC$ for it. As real Jordan algebra $W$ is simple since $V$ is simple Euclidean. The trace form $(-|-)_W$ of $W$ is of signature $(n,n)$ and explicitly given by (see \cite[Lemma 1.2.3~(1)]{Moe10})
\begin{align*}
 (z|w)_W &= 2\left((\Re(z)|\Re(w))-(\Im(z)|\Im(w))\right) = 2\Re(z|w), & z,w\in W.
\end{align*}
Note that $(-|-)_W$ is not $\CC$-linear.

The identity component $L_\CC:=\Str(V_\CC)_0$ of the structure group of the complex Jordan algebra $V_\CC$ is a natural complexification of $L$ (see \cite[Proposition VIII.2.6]{FK94}). As a real Lie group it is the same as $\Str(W)_0$, the identity component of the structure group of the real Jordan algebra $W$. Let $U\subseteq L_\CC$ denote the analytic subgroup of $L_\CC$ with Lie algebra
\begin{equation*}
 \fraku:=\frakk^\frakl+i\frakp^\frakl,
\end{equation*}
then $U$ is a maximal compact subgroup of $L_\CC$. The isomorphism $C:\frakk_\CC\to\frakl_\CC$ introduced in Section \ref{sec:ConformalGroup} restricts to an isomorphism $\frakk\stackrel{\sim}{\to}\fraku$ which integrates to a covering map $\eta:\widetilde{K}\to U\subseteq L_\CC$ with differential
\begin{equation*}
 \td\eta(u,D,-u)=D+2iL(u).
\end{equation*}
Since $\fraku_\CC=\frakl_\CC$ the subgroup $U$ is totally real in $L_\CC$.

For $g\in L_\CC$ we denote by $g^\#$ the adjoint of $g$ with respect to the $\CC$-bilinear trace form $(-|-)$ of $V_\CC$. Since $(-|-)_W=2\Re(-|-)$ this is also the adjoint with respect to the trace form $(-|-)_W$ of $W$. Further put
\begin{align*}
 g^* &:= \overline{g}^\#, & g\in L_\CC.
\end{align*}
Then $g\mapsto(g^{-1})^*$ defines a Cartan involution of $L_\CC$ with corresponding maximal compact subgroup $U$.

The determinant function $\Delta_W$ of the real Jordan algebra $W$ is a polynomial of degree $2r$. As in the Euclidean case an element $g\in\Str(W)$ is characterized by the property that there exists a scalar $\chi_W(g)\in\RR^\times$ such that
\begin{align*}
 \Delta_W(gz) &= \chi_W(g)\Delta_W(z), & z\in W.
\end{align*}
This defines a real character $\chi_W:L_\CC\to\RR_+$.

The group $L_\CC$ has only one open orbit $L_\CC\cdot e$ (see \cite[Theorem 4.2]{Kan98}). Its boundary again has a stratification by lower-dimensional orbits. More precisely we have
\begin{align*}
 \overline{L_\CC\cdot e} &= \calX_0\cup\calX_1\cup\ldots\cup\calX_r,
\end{align*}
where $\calX_k=L_\CC\cdot e_k$ (see \cite[Theorem 4.2]{Kan98}). Note that $\calO_k\subseteq\calX_k$ is totally real and $\calX_k$ is closed under conjugation. The closure $\overline{\calX_k}$ of each $L_\CC$-orbit is an affine algebraic subvariety of $V_\CC$ (see \cite[Theorem 2.9]{GK98}). From \cite[Section 1.5.2]{Moe10} it is easy to deduce the following formula for the dimension of $\calX_k$:
\begin{align}
 \dim_\CC\calX_k &= \dim_\RR\calO_k = k+k(2r-k-1)\frac{d}{2}.\label{eq:OrbitDimension}
\end{align}
As in the real case every $z\in\calX_k$ has a polar decomposition $z=ua$ with $u\in U$ and $a=\sum_{i=1}^k{a_ic_i}$, $a_1\geq\ldots\geq a_k>0$, the element $a$ being unique with this property (see \cite[Proposition X.3.2]{FK94}).

For $\lambda\in\calW$ and $k\in\{0,\ldots,r\}$ such that $\calO_\lambda=\calO_k$ we similarly put $\calX_\lambda:=\calX_k$.

\begin{proposition}
For every $\lambda\in\calW$ the measure $\td\nu_\lambda$ on $\calX_\lambda$ defined by
\begin{align}
 \int_{\calX_\lambda}{f(z)\td\nu_\lambda(z)} &= \int_U{\int_{\calO_\lambda}{f(ux^{\frac{1}{2}})\td\mu_\lambda(x)}\td u}\label{eq:IntFormulaOnXlambda}
\end{align}
is the unique (up to scalar multiples) $L_\CC$-equivariant measure on $\calX_\lambda$ which transforms by $\chi_W^\lambda$.
\end{proposition}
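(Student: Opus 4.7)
The plan is to establish three things: (i) $\td\nu_\lambda$ is a well-defined Radon measure on $\calX_\lambda$; (ii) it is $L_\CC$-equivariant with transformation factor $\chi_W^\lambda$; (iii) such a measure is unique up to a positive scalar. For \textbf{uniqueness (iii)}, I would appeal to standard theory: since $L_\CC$ acts transitively on $\calX_\lambda = L_\CC\cdot e_k$, any two $L_\CC$-equivariant Radon measures transforming by the same positive character are proportional, so (iii) is automatic once (i) and (ii) are in hand.

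For \textbf{well-definedness (i)}, I would observe that the map $\Phi\colon U\times\calO_\lambda\to\calX_\lambda$, $(u,x)\mapsto ux^{1/2}$, is well-defined (via the polar decomposition $x = \ell a$ of $x\in\calO_\lambda$, setting $x^{1/2} := \ell\sum\sqrt{a_i}c_i$), continuous, and surjective (for $z = ua\in\calX_\lambda$ in polar form, $z = \Phi(u, a^2)$ with $a^2 \in \calO_\lambda$). The push-forward of $\td u\otimes\td\mu_\lambda$ under $\Phi$ is then a Radon measure on $\calX_\lambda$; local finiteness follows since $\Phi$ is proper modulo the compact group $U$.

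For \textbf{equivariance (ii)}, the $U$-invariance of $\td\nu_\lambda$ is immediate from left-invariance of $\td u$ and is consistent with $\chi_W|_U = \{1\}$. Since $\fraku + \frakl + [\fraku, \frakl] = \frakl_\CC$ (the missing component $i\frakk^\frakl$ is supplied by $[i\frakp^\frakl, \frakp^\frakl] \subseteq i\frakk^\frakl$), the subgroups $U$ and $L$ generate $L_\CC$, and it remains to verify $L$-equivariance. Since $\Delta_W(z) = |\Delta_\CC(z)|^2$ implies $\chi_W|_L = \chi^2$, the identity to check is
\[ \int_U\int_{\calO_\lambda} f(g^{-1}ux^{1/2})\,\td\mu_\lambda(x)\,\td u = \chi(g)^{2\lambda}\int_U\int_{\calO_\lambda} f(ux^{1/2})\,\td\mu_\lambda(x)\,\td u, \qquad g\in L. \]
The mechanism is clearest for $g = e^t\,\mathrm{id} = \exp(tL(e))$: then $g^{-1}ux^{1/2} = u(e^{-2t}x)^{1/2}$, and substituting $y = e^{-2t}x$ in the inner integral with $\td\mu_\lambda(e^{2t}y) = e^{2tr\lambda}\td\mu_\lambda(y) = \chi(g)^{2\lambda}\td\mu_\lambda(y)$ yields the required factor; the exponent doubles because the square-root halves the speed of the action. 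For general $g\in L$ I would use $L = K^L\cdot\exp(\fraka)\cdot K^L$ with $\fraka = \sum\RR L(c_j)$, absorb the $K^L$-factors into $U$ via the outer Haar integral, and handle the $\exp(\fraka)$-part by a coordinate-wise version of the same substitution, invoking the polar-coordinate formula \eqref{eq:IntFormulaOlambda} for $\td\mu_\lambda$.

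The \textbf{hard part} will be $L$-equivariance for general $g$, because the square-root map $x\mapsto x^{1/2}$ is not itself $L$-equivariant: $gx^{1/2}$ is not of the form $(g'x)^{1/2}$ for a simple $g'$. The resolution is to work in polar coordinates, where the Cartan subspace $\fraka$ acts diagonally on the Jordan frame and the square-root simply halves exponential coordinates; this is exactly what produces the factor of $2$ matching $\chi_W = \chi^2$.
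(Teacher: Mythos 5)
Your strategy is genuinely different from the paper's. The paper cites \cite[Proposition~1.8]{HKM12} for both the existence of a $\chi_W^\lambda$-equivariant measure on $\calX_\lambda$ and its expression in polar coordinates $z=ue^{\mathbf t}$ with Jacobian $J_\lambda^W(\mathbf t)$; the actual work in the proof is then a one-line Jacobian identity $J_\lambda^W(\mathbf t)=2^{-k(k-1)d/2}J_\lambda(2\mathbf t)$, a substitution $\mathbf t\mapsto\tfrac12\mathbf t$, and absorbing $K^L$ into $U$ to convert the cited polar formula into \eqref{eq:IntFormulaOnXlambda}. You instead try to make the whole proposition self-contained: define $\td\nu_\lambda$ as a push-forward, verify $U$-invariance, observe $\chi_W|_L=\chi^2$, and then chase $L$-equivariance by hand. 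Your overall scaffolding (uniqueness is automatic on a transitive space, $U$-invariance is free, $L$ and $U$ generate $L_\CC$, $\chi_W|_U=1$ and $\chi_W|_L=\chi^2$) is correct, and the $g=e^t\id$ computation is right.

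However, there is a real gap in the $L$-equivariance step, precisely where you relegate the work to ``a coordinate-wise version of the same substitution''. The KAK reduction is fine: given $U$-invariance of $\td\nu_\lambda$ (meaning $\int f(v^{-1}z)\td\nu_\lambda(z)=\int f(z)\td\nu_\lambda(z)$ for $v\in U$), one does reduce to $a\in\exp(\fraka)$ by writing $g=k_1ak_2$, absorbing $k_1$ by a change of variable $z\mapsto k_1z$, and $k_2$ by applying $U$-invariance to the function $w\mapsto f(k_2^{-1}w)$. But the remaining statement
\begin{equation*}
\int_U\int_{\calO_\lambda}f\bigl(a^{-1}\,u\,x^{\frac12}\bigr)\,\td\mu_\lambda(x)\,\td u
\;\stackrel{?}{=}\;\chi(a)^{2\lambda}\int_U\int_{\calO_\lambda}f\bigl(u\,x^{\frac12}\bigr)\,\td\mu_\lambda(x)\,\td u
\end{equation*}
cannot be handled by substituting inside the $\td\mu_\lambda$-integral, because $a^{-1}$ sits to the \emph{left} of $u$ and for a non-central $a\in\exp(\fraka)$ one cannot commute $a^{-1}u=u\,a^{-1}$. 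The substitution $y=a^{-2}x$ works only when $a$ is a scalar, i.e.\ $a\in\exp(\RR L(e))$; for $r\geq2$ one needs all of $\fraka=\bigoplus_j\RR L(c_j)$, and there the polar decomposition of $a^{-1}ux^{1/2}$ involves $u$ in an essential, nonlinear way. Writing $\td\mu_\lambda$ in $K^L\times\fraka$-polar coordinates via \eqref{eq:IntFormulaOlambda}, as you suggest, does not resolve this: one still has $a^{-1}\,u\,k\,e^{\mathbf t/2}$ and must move $a^{-1}$ past $uk\in U$. To close the argument you would need either to pass to the polar parametrization of $\calX_\lambda$ \emph{itself} (not of $\calO_\lambda$) and compute the Jacobian of the $L_\CC$-action there --- which is exactly the content of \cite[Proposition~1.8]{HKM12} that the paper quotes --- or to prove relative invariance infinitesimally via a Lie-derivative computation of the density. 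Neither is in your sketch, so the proposal as written does not establish $L_\CC$-equivariance.
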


\begin{proof}
Except for the integral formula \eqref{eq:IntFormulaOnXlambda} this is essentially \cite[Proposition 1.8]{HKM12}. By \cite[Proposition 1.8]{HKM12} a $\chi_W^\lambda$-equivariant measure on $\calX_\lambda$ is given by
\begin{align*}
 f\mapsto\int_U{\int_{t_1>\ldots>t_k}{f(ue^{\bf t})J_\lambda^W({\bf t})\td{\bf t}}\td u},
\end{align*}
where $e^{\bf t}$ is as in \eqref{eq:DefExpt},
\begin{align*}
 J_\lambda^W({\bf t}) &= e^{\frac{2r\lambda}{k}\sum_{j=1}^k{t_j}}\prod_{1\leq i<j\leq k}{\sinh^d\left(\frac{t_i-t_j}{2}\right)\cosh^d\left(\frac{t_i-t_j}{2}\right)},
\end{align*}
and $k\in\{0,\ldots,r\}$ such that $\calX_k=\calX_\lambda$. Using $\sinh x\cdot\cosh x=\frac{1}{2}\sinh 2x$ we find
\begin{align*}
 J_\lambda^W({\bf t}) &= 2^{-k(k-1)\frac{d}{2}}e^{\frac{r\lambda}{k}\sum_{j=1}^k{2t_j}}\prod_{1\leq i<j\leq k}{\sinh^d\left(t_i-t_j\right)} = 2^{-k(k-1)\frac{d}{2}}J_\lambda(2{\bf t})
\end{align*}
with $J_\lambda({\bf s})$ as in \eqref{eq:Jlambda}. Now note that $K^L\subseteq U$ and $(ke^{\bf t})^{\frac{1}{2}}=ke^{\frac{1}{2}{\bf t}}$ for $k\in K^L$ and hence
\begin{align*}
& \int_U{\int_{t_1>\ldots>t_k}{f(ue^{\bf t})J_\lambda^W({\bf t})\td{\bf t}}\td u}\\
 ={}& \const\cdot\int_U{\int_{K^L}{\int_{t_1>\ldots>t_k}{f(uke^{\bf t})J_\lambda(2{\bf t})\td{\bf t}}\td k}\td u}\\
 ={}& \const\cdot\int_U{\int_{K^L}{\int_{s_1>\ldots>s_k}{f(uke^{\frac{1}{2}{\bf s}})J_\lambda({\bf s})\td{\bf s}}\td k}\td u}\\
 ={}& \const\cdot\int_U{\int_{\calO_\lambda}{f(ux^{\frac{1}{2}})\td\mu_\lambda(x)}\td u},
\end{align*}
where we have used the integral formula \eqref{eq:IntFormulaOlambda} for the last equality.
\end{proof}

\subsection{Nilpotent orbits and the Kostant--Sekiguchi correspondence}\label{sec:NilpotentOrbits}

\subsubsection*{Nilpotent coadjoint $G$-orbits}

We identify $\frakg^*$ with $\frakg$ by means of the Killing form and view coadjoint $G$-orbits on $\frakg^*$ as adjoint $G$-orbits on $\frakg$. Further we use the embedding $V\hookrightarrow\frakg,\,u\mapsto(u,0,0)$ to identify the $L$-orbits $\calO_k\subseteq V$, $k=0,\ldots,r$, with $L$-orbits in $\frakn\cong V$. Since $\frakn$ is nilpotent, the $G$-orbits
\begin{equation*}
 \calO_k^G := G\cdot(e_k,0,0)
\end{equation*}
are nilpotent adjoint orbits in $\frakg$. Since further $\calO_k=L\cdot(e_k,0,0)$ we clearly have $\calO_k\subseteq\calO_k^G$. As usual we endow $\calO_k^G$ with the Kirillov--Kostant--Souriau symplectic form. The following result was proved in \cite[Theorem 2.9 (4)]{HKM12} for the minimal orbit $\calO_1$. For the general case the author could not find the statement in the existing literature.

\begin{proposition}
$\calO_k\subseteq\calO_k^G$ is a Lagrangian subvariety.
\end{proposition}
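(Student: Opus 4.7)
The plan is to verify the two defining properties of a Lagrangian submanifold of the symplectic manifold $(\calO_k^G,\omega)$, where $\omega$ is the Kirillov--Kostant--Souriau form: (i) isotropy $\omega|_{\calO_k}\equiv 0$ and (ii) the half-dimension equality $\dim_\RR\calO_k=\tfrac12\dim_\RR\calO_k^G$. Identifying $\frakg^*\cong\frakg$ via the Killing form $B$, the KKS form at $E':=(e_k,0,0)$ reads $\omega_{E'}(\ad(X)E',\ad(Y)E')=B(E',[X,Y])$. Since $L\subseteq G$ acts by symplectomorphisms on $\calO_k^G$ and $\calO_k=L\cdot E'$ (under the identification $V\hookrightarrow\frakn$, the natural action of $L$ on $V$ matches its adjoint action on $\frakn$), it suffices to verify both properties at the base point $E'$.

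For (i), tangent vectors in $T_{E'}\calO_k$ are of the form $\ad(Z)E'$ with $Z\in\frakl$. For $Z_1,Z_2\in\frakl$ we have $[Z_1,Z_2]\in\frakl$. Under the grading $\frakg=\frakn\oplus\frakl\oplus\overline{\frakn}$ (of weights $2,0,-2$ with respect to $\ad H$, $H=(0,2\,\id,0)$), only weights summing to zero pair nontrivially under $B$, so in particular $B(\frakn,\frakl)=0$. Since $E'\in\frakn$, this gives $\omega_{E'}(\ad(Z_1)E',\ad(Z_2)E')=B(E',[Z_1,Z_2])=0$.

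For (ii), the bracket formulas of Section \ref{sec:ConformalGroup} yield $\ad(\frakn)E'=0$ (as $\frakn$ is abelian and $E'\in\frakn$), $\ad(\frakl)E'=\{(Te_k,0,0):T\in\frakl\}\subseteq\frakn$, which is exactly $T_{E'}\calO_k$, and $\ad(\overline{\frakn})E'=\{(0,-2(e_k\Box v),0):v\in V\}\subseteq\frakl$. The last two pieces lie in different graded components, so the decomposition of $T_{E'}\calO_k^G$ is direct, and the dimension identity reduces to showing $\dim\{e_k\Box v:v\in V\}=\dim_\RR\calO_k$.

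The main obstacle is the identification of $\ker(v\mapsto e_k\Box v)$. Using $(e_k\Box v)z=P(e_k,z)v$ and writing $v=v_1+v_{1/2}+v_0$ along the Peirce decomposition $V=V(e_k,1)\oplus V(e_k,\tfrac12)\oplus V(e_k,0)$: testing $z=e_k$ gives $P(e_k)v=v_1$, so $v_1=0$; testing $z=e-e_k\in V(e_k,0)$ and applying the Peirce multiplication rules yields $P(e_k,e-e_k)v=\tfrac12 v_{1/2}$, so $v_{1/2}=0$; conversely, $P(e_k,z)v_0=0$ for all $z\in V$ and all $v_0\in V(e_k,0)$ by the Peirce rules. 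Hence the kernel equals $V(e_k,0)$, a Euclidean Jordan subalgebra of rank $r-k$ with $\dim V(e_k,0)=(r-k)+\binom{r-k}{2}d$. Combining with $n=r+\binom{r}{2}d$, a short computation gives $n-\dim V(e_k,0)=k+\tfrac{k(2r-k-1)}{2}d=\dim_\RR\calO_k$ by \eqref{eq:OrbitDimension}, completing the proof.
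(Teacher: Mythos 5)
Your proposal is correct and takes essentially the same approach as the paper: compute the stabilizer/tangent space of $\calO_k^G$ at $(e_k,0,0)$ via the bracket formula and Peirce decomposition, then count dimensions against \eqref{eq:OrbitDimension}. The paper leaves the isotropy argument and the identification $\ker(v\mapsto e_k\Box v)=V(e_k,0)$ implicit, both of which you spell out explicitly, but the underlying computation is the same.
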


\begin{proof}
Since $\calO_k\subseteq\frakn$ and $\frakn$ is abelian the symplectic form vanishes on $\calO$ and it remains to show that $\dim\calO_k^G=2\dim\calO_k$. By \eqref{eq:OrbitDimension} we have
\begin{align*}
 \dim\calO_k &= k+k(2r-k-1)\frac{d}{2}.
\end{align*}
To determine $\dim\calO_k^G$ we note that
\begin{align*}
 0 &= [(u,T,v),(e_k,0,0)] = (Te_k,-2e_k\Box v,0)
\end{align*}
if and only if $Te_k=0$ and $v\in V(e_k,0)$. Hence
\begin{align*}
 \dim\calO_k^G &= \dim\calO_k+\dim(V/V(e_k,0))\\
 &= \dim\calO_k+\dim V(e_k,1)+\dim V(e_k,\tfrac{1}{2})\\
 &= \dim\calO_k+(k+k(k-1)\tfrac{d}{2})+k(r-k)d\\
 &= 2\dim\calO_k.\qedhere
\end{align*}
\end{proof}

\subsubsection*{Nilpotent $K_\CC$-orbits}

Via the Kostant--Sekiguchi correspondence the nilpotent adjoint $G$-orbits $\calO_k^G\subseteq\frakg^*$ correspond to nilpotent $K_\CC$-orbits $\calO_k^{K_\CC}\subseteq\frakp_\CC^*$. Identifying $\frakp^*$ with $\frakp$ by means of the Killing form we view $\calO_k^{K_\CC}$ as $K_\CC$-orbits in $\frakp_\CC=\frakp^++\frakp^-$. Following \cite{Sek87} we let $(E_k,F_k,H_k)$ be the strictly normal $\sl_2$-triple in $\frakg$ given by
\begin{align*}
 E_k &:= (e_k,0,0), & H_k &:= (0,2L(e_k),0), & F_k &:= (0,0,e_k).
\end{align*}
We form a new $\sl_2$-triple $(E_k^d,F_k^d,H_k^d)$ by putting
\begin{align*}
 E_k^d &:= \frac{1}{2}(E_k+F_k+iH_k) = \frac{1}{2}(e_k,2iL(e_k),e_k),\\
 H_k^d &:= i(E_k-F_k) = i(e_k,0,-e_k),\\
 F_k^d &:= \frac{1}{2}(E_k+F_k-iH_k) = \frac{1}{2}(e_k,-2iL(e_k),e_k).
\end{align*}
Then $E_k^d\in\frakp^+$ and the $K_\CC$ orbit $\calO_k^{K_\CC}$ corresponding to $\calO_k^G$ is given by
\begin{equation*}
 \calO_k^{K_\CC} = K_\CC\cdot E_k^d.
\end{equation*}
Since $\frakp^+$ is $K_\CC$-stable we have $\calO_k^{K_\CC}\subseteq\frakp^+$.

If we use the embedding $V_\CC\hookrightarrow\frakg_\CC,\,u\mapsto(0,0,u)$ to identify the $L_\CC$-orbits $\calX_k\subseteq V_\CC$, $k=0,\ldots,r$, with $L_\CC$-orbits in $\overline{\frakn}_\CC\cong V_\CC$ we obtain the following result:

\begin{proposition}
The Cayley type transform $C\in\Int(\frakg_\CC)$ is a bijection from the $K_\CC$-orbit $\calO_k^{K_\CC}\subseteq\frakp^+$ onto the $L_\CC$-orbit $\calX_k\subseteq\overline{\frakn}_\CC$ for every $k\in\{0,\ldots,r\}$.
\end{proposition}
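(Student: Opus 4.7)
The plan is to reduce the claim to a direct computation. First, I compute the image of a representative: since $E_k^d=\frac{1}{2}(e_k,2iL(e_k),e_k)\in\frakp^+$, the isomorphism $\frakp^+\stackrel{\sim}{\to}\overline{\frakn}_\CC$, $(u,2iL(u),u)\mapsto(0,0,4u)$, from Section~\ref{sec:ConformalGroup} gives $C(E_k^d)=(0,0,2e_k)$, which under the identification $\overline{\frakn}_\CC\cong V_\CC$ corresponds to $2e_k\in V_\CC$. Bijectivity as a set map is automatic from the fact that $C$ is an automorphism of $\frakg_\CC$.

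Second, I transport the orbit action across $C$. Since $C\in\Int(\frakg_\CC)$ is a Lie algebra automorphism and any automorphism $\phi$ of $\frakg_\CC$ satisfies $\phi\circ\ad(X)\circ\phi^{-1}=\ad(\phi X)$, conjugation by $C$ sends $\ad(\frakk_\CC)$ to $\ad(C\frakk_\CC)=\ad(\frakl_\CC)$. Consequently it carries the connected Lie subgroup $\Ad(K_\CC)\subseteq\Int(\frakg_\CC)$, whose Lie algebra is $\ad(\frakk_\CC)$, to the connected Lie subgroup $\Ad(L_\CC)$. Therefore
\[
 C(\calO_k^{K_\CC})=C\bigl(\Ad(K_\CC)\cdot E_k^d\bigr)=\Ad(L_\CC)\cdot C(E_k^d)=\Ad(L_\CC)\cdot(0,0,2e_k).
\]

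Third, I translate the adjoint action of $L_\CC$ on $\overline{\frakn}_\CC$ into the natural linear action on $V_\CC$. The Lie bracket formula of Section~\ref{sec:ConformalGroup} yields $\ad(0,T,0)(0,0,v)=(0,0,-T^\#v)$ for $T\in\frakl_\CC$ and $v\in V_\CC$; exponentiating and using $(\exp T)^\#=\exp(T^\#)$ gives $\Ad(g)(0,0,v)=(0,0,(g^{-1})^{\#}v)$ for $g\in L_\CC$. Since $g\mapsto(g^{-1})^{\#}$ is a group automorphism of $L_\CC$, the orbit $\Ad(L_\CC)\cdot(0,0,2e_k)$ corresponds via $\overline{\frakn}_\CC\cong V_\CC$ to $L_\CC\cdot 2e_k$, and because $\{e^{zL(e)}:z\in\CC\}=\CC^\times\cdot\id$ is contained in $L_\CC$, this equals $L_\CC\cdot e_k=\calX_k$.

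The one mildly delicate point is the identification used in the second step that $\Ad(K_\CC)$ really is the connected subgroup of $\Int(\frakg_\CC)$ with Lie algebra $\ad(\frakk_\CC)$; this uses that $K_\CC$, as the complexification of the connected group $K=G^\vartheta$, is itself connected, so its adjoint image agrees with the exponential image of $\ad(\frakk_\CC)$. Once this is noted, the remainder is bookkeeping with the explicit formulas \eqref{eq:CayleyTypeTransform1}--\eqref{eq:CayleyTypeTransform3} and the Lie bracket identity of Section~\ref{sec:ConformalGroup}.
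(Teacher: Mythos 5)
Your argument is correct and matches the paper's: compute the image of the representative $E_k^d$ under $C$, then use that $C$ restricts to an isomorphism $\frakk_\CC\stackrel{\sim}{\to}\frakl_\CC$ to transport the orbit $K_\CC\cdot E_k^d$ onto the corresponding $L_\CC$-orbit; you merely spell out the routine checks (that $\Ad(K_\CC)$ is the connected subgroup with Lie algebra $\ad(\frakk_\CC)$, and that the adjoint action of $L_\CC$ on $\overline{\frakn}_\CC$ identifies with the standard linear action up to the automorphism $g\mapsto(g^{-1})^\#$) that the paper's one\nobreakdash-line proof suppresses. Incidentally, your value $C(E_k^d)=(0,0,2e_k)$ is the correct one, whereas the paper's proof states $(0,0,4e_k)$ --- a harmless slip since $\CC^\times\id\subseteq L_\CC$ makes $\calX_k$ invariant under scaling, so the conclusion is unaffected.
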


\begin{proof}
Observe that $C(E_k^d)=(0,0,4e_k)$. Since the Lie algebra homomorphism $C$ is a bijection from $\frakk_\CC$ onto $\frakl_\CC$ it gives a bijection between the orbits $\calO_k^{K_\CC}=K_\CC\cdot E_k^d$ and $\calX_k=L_\CC\cdot(0,0,4e_k)$ which shows the claim.
\end{proof}

\subsection{Differential operators}\label{sec:DiffOperators}

\subsubsection*{General notation}

For a map $f:X\to X$ on a (real or complex) vector space $X$ we write $Df(x):X\to X$ for its (real or complex) Jacobian at the point $x\in X$. For a complex-valued function $g:X\to\CC$ we denote the directional derivative in the direction of $a\in X$ by $\partial_ag$:
\begin{align*}
 \partial_ag(x) &= \left.\frac{\td}{\td t}\right|_{t=0}g(x+ta).
\end{align*}

\subsubsection*{Bessel operators - the real case}

Let $J$ be a real semisimple Jordan algebra with trace form $(-|-)_J$. The \textit{gradient} with respect to $(-|-)_J$ will be denoted by $\frac{\partial}{\partial x}$. If $(e_\alpha)_\alpha\subseteq J$ is a basis of $J$ with dual basis $(\widehat{e}_\alpha)_\alpha\subseteq J$ with respect to the trace form then the gradient is in coordinates $x=\sum_\alpha{x_\alpha e_\alpha}\in J$ expressed as
\begin{align*}
 \frac{\partial}{\partial x} &= \sum_\alpha{\frac{\partial}{\partial x_\alpha}\widehat{e}_\alpha}.
\end{align*}
In terms of the directional derivative the gradient is characterized by the identity $\left(a\left|\frac{\partial}{\partial x}\right.\right)_J=\partial_a$ for $a\in J$. The \textit{Bessel operator} $\calB_\lambda^J$ with parameter $\lambda\in\CC$ is a vector-valued second order differential operator on $J$ given by
\begin{align*}
 \calB_\lambda^J &:= P\left(\frac{\partial}{\partial x}\right)x+\lambda\frac{\partial}{\partial x}.
\end{align*}
In coordinates it is given by
\begin{align*}
 \calB_\lambda^J &= \sum_{\alpha,\beta}{\frac{\partial^2}{\partial x_\alpha\partial x_\beta}P(\widehat{e}_\alpha,\widehat{e}_\beta)x}+\lambda\sum_\alpha{\frac{\partial}{\partial x_\alpha}\widehat{e}_\alpha}.
\end{align*}
Denote by $\ell$ the left-action of $\Str(J)$ on functions on $J$ given by $\ell(g)f(x)=f(g^{-1}x)$. Then the Bessel operator satisfies the following equivariance property:
\begin{align}
 \ell(g)\calB_\lambda^J\ell(g^{-1}) &= g^\#\calB_\lambda^J, & g\in\Str(J),\label{eq:BesselEquivariance}
\end{align}
where $g^\#$ denotes the adjoint with respect to the trace form of $J$. We further have the following product rule:
\begin{align}
 \calB_\lambda^J\left[f(x)g(x)\right] &= \calB_\lambda^Jf(x)\cdot g(x)+2P\left(\frac{\partial f}{\partial x},\frac{\partial g}{\partial x}\right)x+f(x)\cdot\calB_\lambda^Jg(x).\label{eq:BesselProdRule}
\end{align}

For either $J=V$ or $J=W=V_\CC$ the operator $\calB_\lambda^J$ is tangential to the orbit $\Str(J)_0\cdot e_k$ if $\lambda=k\frac{d}{2}$, $0\leq k\leq r-1$, and hence defines a differential operator on this orbit (see \cite[Theorem 1.7.5]{Moe10}). On $K^L$-invariant (resp. $U$-invariant) functions on $V$ (resp. $W$) we have the following formulas:

\begin{proposition}\label{prop:BesselOnInvariants}
\begin{enumerate}
\item \textup{(\cite[Theorem XV.2.7]{FK94})} Let $f\in C^\infty(V)$ be $K^L$-invariant. Then with $F(a_1,\ldots,a_r)=f(a)$, $a=\sum_{i=1}^r{a_ic_i}$, we have
\begin{align*}
 \calB^V_\lambda f(a) &= \sum_{i=1}^r{(\calB^V_\lambda)^iF(a_1,\ldots,a_r)c_i},
\end{align*}
where
\begin{multline*}
 (\calB^V_\lambda)^i = a_i\frac{\partial^2}{\partial a_i^2}+\left(\lambda-(r-1)\frac{d}{2}\right)\frac{\partial}{\partial a_i}\\
 +\frac{d}{2}\sum_{j\neq i}{\frac{1}{a_i-a_j}\left(a_i\frac{\partial}{\partial a_i}-a_j\frac{\partial}{\partial a_j}\right)}.
\end{multline*}
\item \textup{(\cite{MS12c})} Let $f\in C^\infty(W)$ be $U$-invariant. Then with $F(a_1,\ldots,a_r)=f(a)$, $a=\sum_{i=1}^r{a_ic_i}$, we have
\begin{align*}
 \calB^W_\lambda f(a) &= \sum_{i=1}^r{(\calB^W_\lambda)^iF(a_1,\ldots,a_r)c_i},
\end{align*}
where
\begin{multline*}
 (\calB^W_\lambda)^i = \frac{1}{4}\Bigg(a_i\frac{\partial^2}{\partial a_i^2}+\left(2\lambda-1-(r-1)d\right)\frac{\partial}{\partial a_i}\\
 +\frac{d}{2}\sum_{j\neq i}{\left(\frac{1}{a_i-a_j}+\frac{1}{a_i+a_j}\right)\left(a_i\frac{\partial}{\partial a_i}-a_j\frac{\partial}{\partial a_j}\right)}\Bigg).
\end{multline*}
\end{enumerate}
\end{proposition}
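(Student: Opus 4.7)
The plan is to exploit the equivariance \eqref{eq:BesselEquivariance} of the Bessel operator to reduce each statement to a radial computation and then to evaluate $\calB_\lambda$ in an orthonormal basis adapted to the mutual Peirce decomposition.

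First I would establish the vector-valued form $\calB_\lambda f(a)=\sum_i(\calB_\lambda)^iF\cdot c_i$. Since elements of $K^L$ (resp.\ $U$) act orthogonally with respect to the trace form one has $k^\#=k^{-1}$, and \eqref{eq:BesselEquivariance} applied to an invariant $f$ becomes the covariance $(\calB_\lambda f)(kx)=k\cdot(\calB_\lambda f)(x)$. Evaluating at a regular diagonal point $a=\sum a_ic_i$ and letting $k$ range over $\Stab_{K^L}(a)$ (which acts on each off-diagonal Peirce component without non-zero fixed vector) forces $(\calB_\lambda f)(a)\in\bigoplus_i\RR c_i$; continuity extends this to arbitrary diagonal $a$.

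For part (1), I would fix an orthonormal basis of $V$ consisting of $c_1,\ldots,c_r$ together with orthonormal bases $\{e_\alpha^{(ij)}\}_{\alpha=1}^d$ of each $V_{ij}$, $i<j$. Substituting into the coordinate form of $\calB^V_\lambda$ and projecting onto $c_i$ produces three contributions. The first-order piece gives $\lambda\,\partial_{a_i}F$ (only the basis vector $c_i$ has a non-zero $c_i$-component). The diagonal second-order piece gives $a_i\,\partial_{a_i}^2F$ via the Peirce identity $P(c_k)a=a_kc_k$. The off-diagonal contributions from $V_{ij}$ rest on the Peirce computation $P(x_{ij})a=\tfrac12\|x_{ij}\|^2(a_jc_i+a_ic_j)$ for $x_{ij}\in V_{ij}$, which provides an $a_j$ coefficient in the $c_i$-direction, while cross-Peirce pairs and blocks $V_{jk}$ with $j,k\neq i$ contribute zero. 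Combined with the radial-part identity $\partial_v^2f(a)=\frac{1}{a_i-a_j}(\partial_{a_i}-\partial_{a_j})F$ for $K^L$-invariant $f$ and any unit $v\in V_{ij}$ (proved by Taylor-expanding $f$ along $\exp(t(c_i\Box v))\cdot a$, the source of the singular factor $\frac{1}{a_i-a_j}$), summing over the $d$ basis elements of each $V_{ij}$ and over $j\ne i$, and applying the algebraic rewrite
\begin{align*}
\frac{a_j}{a_i-a_j}(\partial_{a_i}F-\partial_{a_j}F)=\frac{1}{a_i-a_j}(a_i\partial_{a_i}F-a_j\partial_{a_j}F)-\partial_{a_i}F,
\end{align*}
one obtains both the stated off-diagonal term and the shift $\lambda\mapsto\lambda-(r-1)\tfrac{d}{2}$ in the first-order coefficient (coming from the $r-1$ summed correction terms).

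Part (2) proceeds along identical lines with $U$ replacing $K^L$ and the real Jordan algebra $W=V_\CC$ replacing $V$. The technical modifications are: each off-diagonal Peirce block of $W$ relative to the real frame $c_1,\ldots,c_r$ decomposes into two real $d$-dimensional pieces (corresponding to the two types of restricted roots of $\str(W)$), producing the two singular kernels $\frac{1}{a_i-a_j}$ and $\frac{1}{a_i+a_j}$ with their respective radial-part formulas $\partial_v^2f(a)=\frac{\pm1}{a_i\mp a_j}(\partial_{a_i}\mp\partial_{a_j})F$; the indefinite trace form $(\,\cdot\,|\,\cdot\,)_W=2\Re(\,\cdot\,|\,\cdot\,)$ rescales the gradient, accounting for the overall factor $\tfrac{1}{4}$ and the modified first-order coefficient $2\lambda-1-(r-1)d$. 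The main obstacle in both parts is the off-diagonal second-order analysis: combining the Peirce multiplication rules with the radial-part formalism for invariant differential operators on the symmetric cones $\Omega$ and $L_\CC\cdot e$ in order to pinpoint the singular first-order corrections.
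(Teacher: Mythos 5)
The paper cites this result without proof --- part (1) is \cite[Theorem XV.2.7]{FK94} and part (2) is the preprint \cite{MS12c} --- so there is no in-paper argument to compare against. Your strategy is nevertheless the standard one, and your treatment of part (1) is correct: the equivariance \eqref{eq:BesselEquivariance} together with $k^\#=k^{-1}$ for $k\in K^L$ forces $\calB_\lambda f(a)\in\bigoplus_i\RR c_i$ at a regular diagonal point; the Peirce identities $P(c_i)a=a_ic_i$ and $P(v)a=\tfrac12(v|v)(a_jc_i+a_ic_j)$ for $v\in V_{ij}$ give the coefficients while all other mixed terms either vanish by invariance or have trivial $c_i$-component; the radial identity $\partial_v^2f(a)=\tfrac{1}{a_i-a_j}(\partial_{a_i}-\partial_{a_j})F$ for unit $v\in V_{ij}$ supplies the singular kernel; and your rewrite converts $\tfrac{a_j}{a_i-a_j}(\partial_{a_i}F-\partial_{a_j}F)$ into the stated form while producing the $-(r-1)\tfrac d2$ shift.

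For part (2), however, your catalogue of modifications is incomplete in a way that would give the wrong constant. You attribute the first-order coefficient $2\lambda-1-(r-1)d$ to the gradient rescaling caused by $(\,\cdot\,|\,\cdot\,)_W=2\Re(\,\cdot\,|\,\cdot\,)$ together with the off-diagonal rewrite. The rescaling does produce the $\tfrac14$ and turns $\lambda$ into $2\lambda$, and rewriting the two singular sums does account for $-(r-1)d$, but the extra $-1$ comes from a \emph{third} family of off-Cartan directions that has no analogue in the $V$-case: the imaginary-diagonal vectors $ic_i\in W_{ii}=\CC c_i$, which are tangent to the $U$-orbit. For $U$-invariant $f$ one has $f(a+t\,ic_i)=F(\ldots,|a_i+it|,\ldots)$, hence $\partial_{ic_i}^2f(a)=\tfrac1{a_i}\partial_{a_i}F$, and since $(ic_i|ic_i)_W<0$ the corresponding dual-basis vector carries a sign, with $P(ic_i)a=-a_ic_i$. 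The product is precisely the term $-\tfrac14\partial_{a_i}F$ for each $i$, i.e.\ the $-1$ in $2\lambda-1$. The rank-one check $V=\RR$, $W=\CC$ makes this vivid: there $d=0$, there are no off-diagonal Peirce blocks at all, and the whole $-1$ comes from $\partial_y^2f(a)=F'(a)/a$ against $P(\widehat{\,i\,})z=-\tfrac14 z$. Omitting these $r$ directions would produce $\tfrac14\bigl(a_i\partial_{a_i}^2+(2\lambda-(r-1)d)\partial_{a_i}+\cdots\bigr)$, which is wrong. The remaining claim --- that $W_{ij}=V_{ij}\oplus iV_{ij}$ splits into the pieces responsible for the $\tfrac1{a_i-a_j}$ and $\tfrac1{a_i+a_j}$ kernels respectively --- is correct and checkable by second-order perturbation of the polar decomposition $z=ua$ in $W$; just be careful to track the sign from the indefinite dual basis when $v\in iV_{ij}$.
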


For $J=V$ we will write $\calB_\lambda:=\calB^V_\lambda$ for short.

\subsubsection*{Bessel operators - the complex case}

We also need the Bessel operator of the complex Jordan algebra $V_\CC$. It is a holomorphic vector-valued differential operator on $V_\CC$ which is defined in the same way as the Bessel operator of $V$. More precisely, if $(e_\alpha)_\alpha\subseteq V_\CC$ is a $\CC$-basis of $V_\CC$ with dual basis $(\widehat{e}_\alpha)_\alpha\subseteq V_\CC$ with respect to the trace form $(-|-)$ (which is $\CC$-bilinear) we define the gradient of $V_\CC$ by
\begin{align*}
 \frac{\partial}{\partial z} &= \sum_\alpha{\frac{\partial}{\partial z_\alpha}\widehat{e}_\alpha},\\
\intertext{where}
 \frac{\partial}{\partial z_\alpha} &= \frac{1}{2}\left(\frac{\partial}{\partial x_\alpha}-i\frac{\partial}{\partial y_\alpha}\right)
\end{align*}
is the Wirtinger derivative and $z=\sum_\alpha{z_\alpha e_\alpha}$ with $z_\alpha=x_\alpha+iy_\alpha$. The Bessel operator $\calB_\lambda^{V_\CC}$ of $V_\CC$ is then defined by
\begin{align*}
 \calB_\lambda^{V_\CC} &= P\left(\frac{\partial}{\partial z}\right)z+\lambda\frac{\partial}{\partial z}\\
 &= \sum_{\alpha,\beta}{\frac{\partial^2}{\partial z_\alpha\partial z_\beta}P(\widehat{e}_\alpha,\widehat{e}_\beta)z}+\lambda\sum_\alpha{\frac{\partial}{\partial z_\alpha}\widehat{e}_\alpha}.
\end{align*}
For a holomorphic function $f$ on $V_\CC$ we have
\begin{align*}
 \left.\left(\calB_\lambda^{V_\CC}f\right)\right|_V &= \calB_\lambda^V(f|_V)
\end{align*}
and hence $\calB_\lambda^{V_\CC}$ is a natural complexification of the Bessel operator $\calB_\lambda$ of $V$. Therefore, by abuse of notation, we also abbreviate $\calB_\lambda=\calB_\lambda^{V_\CC}$.

\subsection{Bounded symmetric domains of tube type}

Let $T_\Omega:=V+i\Omega\subseteq V_\CC$ be the tube domain associated with the symmetric cone $\Omega$. Each rational transformation $g\in G$ extends to a holomorphic automorphism of $T_\Omega$. This establishes an isomorphism between $G$ and the identity component $\Aut(T_\Omega)_0$ of the group of all holomorphic automorphisms of $T_\Omega$ (see \cite[Theorem X.5.6]{FK94}). Under this isomorphism the maximal compact subgroup $K\subseteq G$ corresponds to the stabilizer subgroup of the element $ie\in T_\Omega$ so that $T_\Omega\cong G/K$. We identify an element $g\in G$ with its holomorphic extension to $T_\Omega$.

Let
\begin{align*}
 p:T_\Omega\to V_\CC,\,p(z):=(z-ie)(z+ie)^{-1},
\end{align*}
and define $\calD:=p(T_\Omega)$. Then $p$ restricts to a holomorphic isomorphism $p:T_\Omega\stackrel{\sim}{\to}\calD$ with inverse the \textit{Cayley transform} (see \cite[Theorem X.4.3]{FK94})
\begin{align}
 c:\calD\stackrel{\sim}{\to}T_\Omega,\,c(w)=i(e+w)(e-w)^{-1}.\label{eq:DefCayleyTransform}
\end{align}
The open set $\calD\subseteq V_\CC$ is a bounded symmetric domain of tube type and every such domain occurs in this fashion. Clearly $G$ is also isomorphic to the identity component $\Aut(\calD)_0$ of the group of all holomorphic automorphisms of $\calD$ via the conjugation map
\begin{align}
 \alpha: G\to\Aut(\calD)_0,\,g\mapsto p\circ g\circ p^{-1}=c^{-1}\circ g\circ c.\label{eq:DefinitionAlpha}
\end{align}
The stabilizer subgroup of the origin $0\in\calD$ corresponds to the maximal compact subgroup $K\subseteq G$ and also $\calD\cong G/K$.

Denote by $\Sigma\subseteq\partial\calD$ the Shilov boundary of $\calD$. The group $U$ acts transitively on $\Sigma$ and the stabilizer of $e\in\Sigma$ is equal to $K^L\subseteq U$ (see \cite[Proposition X.3.1 \&\ Theorem X.4.6]{FK94}). Hence $\Sigma\cong U/K^L$ is a compact symmetric space. Denote by $\td\sigma$ the normalized $U$-invariant measure on $\Sigma\cong U/K^L$. We write $L^2(\Sigma)$ for $L^2(\Sigma,\td\sigma)$ and denote the corresponding $L^2$-inner product by $\langle-,-\rangle_\Sigma$.

\subsection{Polynomials}\label{sec:Polynomials}

In this Section we recall known properties for the space of polynomials on $V_\CC$.

\subsubsection*{Principal minors and decompositions}

Let $\calP(V_\CC)$ denote the space of holomorphic polynomials on $V_\CC$. The space $\calP(V_\CC)$ carries a natural representation $\ell$ of $L_\CC$ given by
\begin{align*}
 \ell(g)p(z) &= p(g^{-1}z), & g\in L_\CC,p\in\calP(V_\CC).
\end{align*}
Since $L$ and $U$ are both real forms of the complex group $L_\CC$ the decompositions of $\calP(V_\CC)$ into irreducible $L$- and $U$-representations are the same. This decomposition can be described as follows:

For $j\in\{1,\ldots,r\}$ we denote by $P_j$ the orthogonal projection $V\to V(e_j,1)$. Let $\Delta_{V(e_j,1)}$ be the Jordan determinant of the Euclidean Jordan algebra $V(e_j,1)$. We define a polynomial $\Delta_j$ on $V$ by the formula
\begin{align*}
 \Delta_j(x) &:= \Delta_{V(e_j,1)}(P_jx), & x\in V,
\end{align*}
and extend it to a holomorphic polynomial on $V_\CC$. The polynomials $\Delta_1,\ldots,\Delta_r$ are called \textit{principal minors} of $V$. Note that $\Delta_r=\Delta$ is the Jordan determinant of $V$. For ${\bf m}\in\NN_0^r$ we write ${\bf m}\geq0$ if $m_1\geq\ldots\geq m_r\geq0$. If ${\bf m}\geq0$ we define a polynomial $\Delta_{\bf m}$ on $V_\CC$ by
\begin{align*}
 \Delta_{\bf m}(z) &:= \Delta_1(z)^{m_1-m_2}\cdots\Delta_{r-1}(z)^{m_{r-1}-m_r}\Delta_r(z)^{m_r}, & z\in V_\CC.
\end{align*}
The polynomials $\Delta_{\bf m}(z)$ are called \textit{generalized power functions}. Define $\calP_{\bf m}(V_\CC)$ to be the subspace of $\calP(V_\CC)$ spanned by $\ell(g)\Delta_{\bf m}$ for $g\in L$. (Equivalently one can let $g\in U$ or $g\in L_\CC$.) We write $d_{\bf m}:=\dim\calP_{\bf m}(V_\CC)$ for its dimension.

\begin{theorem}[Hua--Kostant--Schmid, see e.g. {\cite[Theorem XI.2.4]{FK94}}]\label{thm:HuaKostantSchmid}
For each ${\bf m}\geq0$ the space $\calP_{\bf m}(V_\CC)$ is an irreducible $L$-module and the space $\calP(V_\CC)$ decomposes into irreducible $L$-modules as
\begin{align*}
 \calP(V_\CC) &= \bigoplus_{\bf m\geq0}{\calP_{\bf m}(V_\CC)}.
\end{align*}
\end{theorem}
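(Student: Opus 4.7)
The plan is to deduce both assertions via highest-weight theory for the reductive group $L_\CC$ acting on $\calP(V_\CC)$. Fix the abelian subspace $\fraka_\CC:=\linspan_\CC\{L(c_1),\ldots,L(c_r)\}$, extend it to a Cartan subalgebra $\frakh\subseteq\frakl_\CC$, and choose a positive system compatible with the ordering $c_1>c_2>\cdots>c_r$; let $B\subseteq L_\CC$ be the corresponding Borel with unipotent radical $N^L$. With $(\varepsilon_i)$ dual to $(L(c_i))$, the mutual Peirce decomposition realizes the $\fraka_\CC$-weight-space decomposition of $V_\CC$: weight $\varepsilon_i$ on $V_{ii}\otimes\CC$ and weight $\tfrac12(\varepsilon_i+\varepsilon_j)$ on $V_{ij}\otimes\CC$ for $i<j$.

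I would first verify that each $\Delta_j$ is a $B$-semi-invariant of weight $\omega_j:=\varepsilon_1+\cdots+\varepsilon_j$. Since $\Delta_j=\Delta_{V(e_j,1)}\circ P_j$, the parabolic subgroup of $L_\CC$ stabilizing $V(e_j,1)_\CC$ acts there through the structure group of $V(e_j,1)$, so $\Delta_j$ carries a multiplier character whose restriction to $\fraka_\CC$ is computed to be $\omega_j$ by evaluating on $\exp(tL(c_i))$. Consequently $\Delta_{\bf m}$ is a $B$-eigenvector with dominant weight $\lambda_{\bf m}=\sum_j m_j\varepsilon_j$, and the cyclic module $\calP_{\bf m}(V_\CC)=U(\frakl_\CC)\cdot\Delta_{\bf m}$ (using Zariski-density of $L$ in $L_\CC$) is the finite-dimensional irreducible $\frakl_\CC$-module $V(\lambda_{\bf m})$. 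Distinct ${\bf m}\ge0$ produce distinct dominant weights $\lambda_{\bf m}$, so the $\calP_{\bf m}(V_\CC)$ are pairwise non-isomorphic irreducibles and their sum is direct.

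The main obstacle is exhaustion. The $L_\CC$-action preserves the polynomial degree, so each homogeneous piece of $\calP(V_\CC)$ is a finite-dimensional semisimple $L_\CC$-module, and by complete reducibility it suffices to show every $B$-semi-invariant polynomial on $V_\CC$ is proportional to some $\Delta_{\bf m}$. For this I would argue that the Zariski open set $U:=\{z\in V_\CC:\Delta_j(z)\ne0\text{ for all }j\}$ is a single $B$-orbit via a Gauss-type decomposition $z=n\cdot\exp(\sum t_iL(c_i))\cdot e$ arising inductively from the Peirce block structure, each non-vanishing $\Delta_j$ providing exactly the invertibility needed to peel off the sub-Jordan algebra $V(e_j,1)$. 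A $B$-semi-invariant polynomial $p$ of weight $\lambda=\sum_i k_i\varepsilon_i$ is then determined on $U$ by $p(nte)=\chi_\lambda(t)p(e)$, and using $\Delta_j(\sum a_ic_i)=a_1\cdots a_j$ (so $a_i=\Delta_i/\Delta_{i-1}$ with $\Delta_0:=1$) the character $\chi_\lambda$ rewrites as $\prod_j\Delta_j^{k_j-k_{j+1}}$ (with $k_{r+1}:=0$). Polynomiality of $p$ on $V_\CC$ forces $k_j-k_{j+1}\ge0$ for all $j$, i.e.\ $\lambda$ is dominant, whence $p\in\CC\cdot\Delta_{\bf m}$ with $m_j=k_j$. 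The technical heart of the proof is therefore the Gauss-type decomposition and the identification of $U$ as a single $B$-orbit, both of which reduce to a careful inductive analysis of invertibility in the Peirce block structure.
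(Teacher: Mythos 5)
The paper does not actually prove Theorem~\ref{thm:HuaKostantSchmid}: it is stated as a citation to Faraut--Kor\'anyi \cite[Theorem XI.2.4]{FK94} and used as a black box throughout. So there is no ``paper's own proof'' to compare against; your proposal should be judged on its own terms against the standard argument in the cited reference.

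Your proof is correct and is in substance the classical highest-weight argument. The three ingredients you isolate are exactly the right ones: (i) the principal minors $\Delta_j$ are semi-invariants for a Borel $B=TN^L$ of $L_\CC$ (they are killed by the ``lower triangular'' nilradical and transform by $\omega_j=\varepsilon_1+\cdots+\varepsilon_j$ on the split torus $\exp(\fraka_\CC)$, $\fraka_\CC=\linspan_\CC\{L(c_i)\}$, while being fixed by the compact part $M$ of the torus since $M$ stabilizes the Jordan frame and acts by automorphisms on each $V(e_j,1)$); (ii) any $B$-semi-invariant polynomial is forced to be $\Delta_{\bf m}$ by evaluating on the open $B$-orbit and invoking polynomiality; (iii) complete reducibility of the degree-graded pieces then gives both multiplicity-freeness and exhaustion at once, and Zariski density of $L$ in $L_\CC$ transfers $L_\CC$-irreducibility to $L$-irreducibility. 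The technical heart you correctly flag --- that $\{z\in V_\CC:\Delta_j(z)\neq 0\ \forall j\}$ is a single $B$-orbit via a Gauss/Cholesky-type decomposition built inductively from the Peirce blocks --- is precisely the complexification of the simple transitivity of the triangular group on $\Omega$ in \cite[Chapter~VI.3]{FK94} (the $\Sym(k)$ case being the familiar $LDL^{\mathsf T}$ factorization). Your weight computation $\prod_i a_i^{k_i}=\prod_j\Delta_j^{k_j-k_{j+1}}$ and the resulting dominance constraint $k_j\geq k_{j+1}$ are correct. One minor caveat worth spelling out if this were to be written in full: since for the rank-$2$ orthogonal case $\fraka_\CC$ is a proper subspace of the Cartan $\frakh$, one must check that the $\Delta_j$ are eigenvectors for the \emph{full} torus $T$ and not just $\exp(\fraka_\CC)$; this follows because $T\cap M_\CC$ fixes $e$ and normalizes $N^L$, as noted above.
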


The same spaces $\calP_{\bf m}(V_\CC)$ occur in the decomposition of $L^2(\Sigma)$ into irreducible $U$-representations. To make this precise let $\ZZ_+^r$ be the set of all ${\bf m}\in\ZZ^r$ with $m_1\geq\ldots\geq m_r$. For ${\bf m}\in\ZZ_+^r$ we define another tuple ${\bf m}':=(m_1-m_r,\ldots,m_{r-1}-m_r,0)$. Then ${\bf m}'\geq0$ and we define
\begin{align*}
 \calP_{\bf m}(\Sigma) &:= \{\Delta^{m_r}p|_\Sigma:p\in\calP_{{\bf m}'}(V_\CC)\}.
\end{align*}
If ${\bf m}\geq0$ then $\calP_{\bf m}(\Sigma)$ coincides with the space of restrictions of polynomials in $\calP_{\bf m}(V_\CC)$ to $\Sigma$.

\begin{theorem}[Cartan--Helgason, see e.g. {\cite[Theorem XII.2.2]{FK94}}]\label{thm:CartanHelgason}
For each ${\bf m}\in\ZZ^r_+$ the space $\calP_{\bf m}(\Sigma)$ is an irreducible unitary $U$-representation and the space $L^2(\Sigma)$ decomposes into the direct Hilbert space sum
\begin{align*}
 L^2(\Sigma) &= \bigoplushat_{{\bf m}\in\ZZ^r_+}{\calP_{\bf m}(\Sigma)}.
\end{align*}
\end{theorem}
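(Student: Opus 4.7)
The plan is to deduce Theorem \ref{thm:CartanHelgason} from the Hua--Kostant--Schmid decomposition (Theorem \ref{thm:HuaKostantSchmid}) together with the Peter--Weyl/Schur orthogonality for the compact group $U$ and a Stone--Weierstrass density argument on $\Sigma$. The geometric fact that makes it all work is that on the Shilov boundary every point is \emph{Jordan unitary}, so that complex conjugation interchanges polynomials with polynomials divided by powers of $\Delta$.

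\textbf{Step 1 (each summand is an irreducible unitary $U$-module).} First I would observe that the map
\begin{equation*}
 \calP_{{\bf m}'}(V_\CC)|_\Sigma \to \calP_{\bf m}(\Sigma),\qquad p|_\Sigma \mapsto \Delta^{m_r}\cdot p|_\Sigma,
\end{equation*}
is a well-defined linear bijection intertwining the $U$-action up to the character $\chi_{V_\CC}^{m_r}|_U$. Since $\Sigma=U\cdot e$ and since $U$ is compact, the real character $\chi_{V_\CC}|_U$ must take values in $\TT$; in particular $|\Delta|\equiv 1$ on $\Sigma$, so this character is unitary. Combining this with Theorem \ref{thm:HuaKostantSchmid} shows that $\calP_{\bf m}(\Sigma)$ is an irreducible unitary $U$-submodule of $L^2(\Sigma)$ for every ${\bf m}\in\ZZ_+^r$. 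The restriction map itself is injective because polynomials that vanish on the totally real submanifold $\calO_r\subseteq\Sigma$ vanish identically on $V_\CC$.

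\textbf{Step 2 (pairwise orthogonality).} The correspondence ${\bf m}\leftrightarrow({\bf m}',m_r)$ is a bijection between $\ZZ_+^r$ and $\{{\bf n}\geq 0:n_r=0\}\times\ZZ$. Since tensoring by the non-trivial character $\chi_{V_\CC}^{m_r}|_U$ shifts the central character of $Z(\frakl)\cap\fraku$, and since different values of ${\bf m}'$ already give non-isomorphic irreducible $L$-modules by Theorem \ref{thm:HuaKostantSchmid}, the modules $\calP_{\bf m}(\Sigma)$ and $\calP_{\bf n}(\Sigma)$ are non-isomorphic as $U$-representations whenever ${\bf m}\neq{\bf n}$. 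Schur orthogonality for $U$ then gives $\calP_{\bf m}(\Sigma)\perp\calP_{\bf n}(\Sigma)$ in $L^2(\Sigma)$.

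\textbf{Step 3 (density; the main obstacle).} The remaining task is to show that $\calA:=\sum_{{\bf m}\in\ZZ_+^r}\calP_{\bf m}(\Sigma)$ is dense in $L^2(\Sigma)$, which I would do via Stone--Weierstrass on the compact Hausdorff space $\Sigma$. By the bijection of Step 2 one has $\calA=\bigcup_{k\geq 0}\Delta^{-k}\calP(V_\CC)|_\Sigma$, which is a unital subalgebra of $C(\Sigma)$ separating points (ordinary holomorphic polynomials already do). The delicate point is closure of $\calA$ under complex conjugation: every $\sigma\in\Sigma$ is Jordan unitary, i.e.\ $\sigma\cdot\overline{\sigma}=e$, so that $\overline{\sigma}=\sigma^{-1}=\Delta(\sigma)^{-1}\sigma^\#$, where the Jordan adjoint $\sigma^\#$ is a polynomial expression in $\sigma$. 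Consequently the conjugate of any polynomial restricted to $\Sigma$ lies in $\calA$. Stone--Weierstrass gives density of $\calA$ in $C(\Sigma)$, and since $\td\sigma$ is a finite measure this implies density in $L^2(\Sigma)$. Steps 1--3 together yield the claimed orthogonal Hilbert sum decomposition.
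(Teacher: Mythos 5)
The paper proves nothing here: Theorem~\ref{thm:CartanHelgason} is stated purely as a citation to \cite[Theorem XII.2.2]{FK94}, so there is no in-paper argument to compare against, and your task was effectively to supply a self-contained proof. The route you chose --- irreducibility from Theorem~\ref{thm:HuaKostantSchmid}, pairwise orthogonality from Schur, and density from Stone--Weierstrass via the Jordan-unitary identity $\overline{\sigma}=\sigma^{-1}=\Delta(\sigma)^{-1}\sigma^{\#}$ on the Shilov boundary --- is sound and arguably more elementary than the standard spherical-function approach. Steps~2 and~3 check out; in particular the reduction $\calA=\bigcup_{k\geq0}\Delta^{-k}\calP(V_\CC)|_\Sigma$ is justified because $\calP_{\bf n}(V_\CC)=\Delta^{n_r}\calP_{{\bf n}'}(V_\CC)$, which is immediate from $\Delta_{\bf n}=\Delta_{{\bf n}'}\Delta^{n_r}$ together with the $L_\CC$-equivariance (up to a character) of multiplication by $\Delta^{n_r}$.

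There is, however, a genuine error in the justification of injectivity in Step~1. You write ``polynomials that vanish on the totally real submanifold $\calO_r\subseteq\Sigma$ vanish identically,'' but $\calO_r=\Omega$ is the \emph{open symmetric cone} inside the Euclidean real form $V$, a non-compact set, whereas $\Sigma$ is a compact subset of $V_\CC$; they are not nested. In fact $\Omega\cap\Sigma=\{e\}$: any $\sigma\in\Sigma\cap V$ satisfies $\sigma=\overline{\sigma}=\sigma^{-1}$, hence $\sigma^2=e$, and the only such element of $\Omega$ is $e$. The conclusion you want --- injectivity of the restriction $\calP_{{\bf m}'}(V_\CC)\to C(\Sigma)$ --- is true, and the cleanest repair is entirely in the spirit of what you are already doing: the kernel of the restriction map is a $U$-invariant subspace of the irreducible $U$-module $\calP_{{\bf m}'}(V_\CC)$, hence is either zero or everything, and the latter is excluded because $\Delta_{{\bf m}'}(e)=1\neq0$ and $e\in\Sigma$. (Alternatively, $\Sigma=U\cdot e$ is a totally real submanifold of $V_\CC$ of full real dimension $\dim\frakp^\frakl=n=\dim_\CC V_\CC$, so any holomorphic polynomial vanishing on $\Sigma$ vanishes identically; but the irreducibility argument is shorter.) With this repair your proof is complete.
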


We now study the restriction of polynomials to the orbits $\calX_k\subseteq V_\CC$ and $\calO_k\subseteq V$. Note that for ${\bf m}\geq0$ and $k\in\{0,\ldots,r-1\}$ the condition $m_{k+1}=0$ means $m_{k+1}=\ldots=m_r=0$. For convenience we also use this notation for $k=r$. In this case $m_{k+1}=0$ should mean no restriction on ${\bf m}$.

\begin{proposition}[{\cite[Proposition I.7~(vi)~(a)]{HN01}}]
For $x\in\calX_k$ we have $\Delta_{k+1}(x)=\ldots=\Delta_r(x)=0$. In particular, $\Delta_{\bf m}$ vanishes on $\calX_k$ iff $m_{k+1}\neq0$, ${\bf m}\geq0$.
\end{proposition}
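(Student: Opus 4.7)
The plan is to reduce the claim to a direct computation of the principal minors $\Delta_j$ at the base point $e_k \in \calX_k$, and then propagate to the entire $L_\CC$-orbit $\calX_k = L_\CC \cdot e_k$.

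\textbf{Evaluation at $e_k$.} For $j \in \{1,\ldots,r\}$, note that $L(e_j)c_i = c_i$ if $i \leq j$ and $L(e_j)c_i = 0$ if $i > j$, so $c_i \in V(e_j,1)$ precisely when $i \leq j$. Therefore if $j \geq k$, then $e_k = c_1+\cdots+c_k$ lies entirely in $V(e_j,1)$ and $P_j e_k = e_k$; if $j \leq k$, then $P_j e_k = c_1+\cdots+c_j = e_j$, the identity of the rank-$j$ Euclidean Jordan algebra $V(e_j,1)$ with frame $c_1,\ldots,c_j$. Since the Jordan determinant on a Jordan algebra with frame $c_1,\ldots,c_j$ sends $\sum b_i c_i$ to $\prod b_i$, one reads off
\[
\Delta_j(e_k) = \begin{cases} 1 & \text{if } j \leq k, \\ 0 & \text{if } j > k. \end{cases}
\]

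\textbf{Propagation to $\calX_k$.} The subtlety is that $\Delta_j$ is not a relative $L_\CC$-invariant for $j < r$, so the vanishing at $e_k$ does not automatically transfer to the orbit. However, $\calX_k$ is $L_\CC$-stable, so the restriction map $\calP(V_\CC) \to \CC[\calX_k]$ is $L_\CC$-equivariant and its kernel is a sum of isotypic components $\calP_{\bf m}(V_\CC)$ by Theorem \ref{thm:HuaKostantSchmid}. In particular, $\Delta_j|_{\calX_k}\equiv 0$ iff the entire isotypic component $\calP_{(1^j,0^{r-j})}(V_\CC)$ (with $j$ ones) vanishes on $\calX_k$. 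To conclude the vanishing for $j>k$, I would invoke the structural description $\overline{\calX_k} = \{x \in V_\CC : \Delta_{k+1}(x) = \cdots = \Delta_r(x) = 0\}$, which is the substance of \cite[Proposition I.7~(vi)~(a)]{HN01}: the inclusion $\overline{\calX_k} \subseteq \{\Delta_{k+1}=\cdots=\Delta_r=0\}$ is handled by Step 1 together with the $L_\CC$-invariance of the joint vanishing locus (derivable from the behaviour of the principal minors under the Peirce-parabolic subgroup of $L_\CC$ stabilising the flag $V(e_1,1)\subset V(e_2,1)\subset\cdots$), while the reverse inclusion uses the stratification $\overline{L_\CC\cdot e} = \bigcup_j \calX_j$ and the dimension formula \eqref{eq:OrbitDimension}.

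\textbf{Deduction of the second assertion.} Writing $\Delta_{\bf m}(z) = \prod_{j=1}^r \Delta_j(z)^{m_j - m_{j+1}}$ with the convention $m_{r+1}:=0$, two cases arise. If $m_{k+1}=0$ then ${\bf m}\geq 0$ forces $m_j=0$ for all $j>k$, so only $\Delta_1,\ldots,\Delta_k$ appear; by Step 1 each such factor evaluates to $1$ at $e_k$, hence $\Delta_{\bf m}(e_k)=1$ and $\Delta_{\bf m}$ does not vanish on $\calX_k$. If $m_{k+1}\geq 1$, let $j^\star := \max\{j : m_j \geq 1\}$; then $j^\star \geq k+1$ and $m_{j^\star} - m_{j^\star+1} = m_{j^\star} \geq 1$, so $\Delta_{j^\star}$ divides $\Delta_{\bf m}$. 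By the first part $\Delta_{j^\star}$ vanishes on $\calX_k$, and hence so does $\Delta_{\bf m}$. The main obstacle in the whole argument is the propagation step: without the structural identification of $\overline{\calX_k}$ as the joint zero locus of $\Delta_{k+1},\ldots,\Delta_r$, one has only vanishing at a single point, and no direct transformation law for $\Delta_j$ under the full group $L_\CC$ to carry it across the orbit.
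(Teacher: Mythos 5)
The paper gives no proof of this Proposition: it is attributed wholesale to \cite[Proposition I.7~(vi)~(a)]{HN01}, so there is no internal argument to compare against. Your evaluation $\Delta_j(e_k)=1$ for $j\le k$ and $\Delta_j(e_k)=0$ for $j>k$ is correct, and your derivation of the ``in particular'' clause from the first sentence (that $m_{k+1}\ge1$ forces some $\Delta_{j^\star}$ with $j^\star\ge k+1$ to divide $\Delta_{\bf m}$ with positive exponent, while $m_{k+1}=0$ gives $\Delta_{\bf m}(e_k)=1\neq0$) is also correct and is exactly the routine verification the word ``in particular'' delegates to the reader.

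The genuine gap is in the first sentence. You rightly diagnose that $\Delta_j(e_k)=0$ does not propagate along the orbit, since $\Delta_j$ is not an $L_\CC$-relative invariant for $j<r$; but your way of crossing this obstacle is simply to re-cite \cite{HN01} for the identity $\overline{\calX_k}=\{\Delta_{k+1}=\cdots=\Delta_r=0\}$, which is the claim at issue and therefore circular as a proof (though consistent with the paper's own citation). The parenthetical suggestion that $L_\CC$-invariance of this zero locus is ``derivable from the behaviour of the principal minors under the Peirce-parabolic subgroup'' is not carried through and would not work as stated: the $\Delta_j$ transform by characters only under the triangular/Peirce subgroup, not under all of $L_\CC$, so invariance of the locus is precisely what needs to be proved, not an obvious consequence. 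A self-contained route using only material already in this paper is available: for $j>k$ and $\lambda=k\tfrac{d}{2}$ one has $(\lambda)_{(1^j,0^{r-j})}=0$, so Lemma~\ref{lem:LaplaceTransformPolynomials} gives $\int_{\calO_k}e^{-(x|y)}\Delta_j(x)\,\td\mu_\lambda(x)=0$ for every $y\in\Omega$; injectivity of the Laplace transform on $\calO_k$ forces $\Delta_j|_{\calO_k}\equiv0$, and this extends to all of $\calX_k$ by holomorphy since $\calO_k\subseteq\calX_k$ is totally real.
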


For $0\leq k\leq r$ let $\calP(\calX_k)$ and $\calP_{\bf m}(\calX_k)$ be the images of $\calP(V_\CC)$ and $\calP_{\bf m}(V_\CC)$ under the restriction map $C^\infty(V_\CC)\to C^\infty(\calX_k)$. Similar we define $\calP(\calO_k)$ and $\calP_{\bf m}(\calO_k)$ to be the images of $\calP(V_\CC)$ and $\calP_{\bf m}(V_\CC)$ under the restriction map $C^\infty(V_\CC)\to C^\infty(\calO_k)$.

\begin{corollary}\label{cor:HuaKostantSchmid}
Let $k\in\{0,\ldots,r\}$ and ${\bf m}\geq0$. Then $\calP_{\bf m}(\calX_k)$ (resp. $\calP_{\bf m}(\calO_k)$) is non-trivial if and only if $m_{k+1}=0$. In this case the restriction from $V_\CC$ to $\calX_k$ (resp. $\calO_k$) induces an isomorphism $\calP_{\bf m}(V_\CC)\cong\calP_{\bf m}(\calX_k)$ (resp. $\calP_{\bf m}(V_\CC)\cong\calP_{\bf m}(\calO_k)$) of $L$-modules (resp. $L_\CC$-modules). In particular we have the following decompositions:
\begin{align*}
 \calP(\calX_k) &= \bigoplus_{{\bf m}\geq0,\,m_{k+1}=0}{\calP_{\bf m}(\calX_k)}, &
 \calP(\calO_k) &= \bigoplus_{{\bf m}\geq0,\,m_{k+1}=0}{\calP_{\bf m}(\calO_k)}.
\end{align*}
\end{corollary}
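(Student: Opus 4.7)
The strategy is to view the restriction maps $r_{\calX_k}: \calP(V_\CC) \to \calP(\calX_k)$ and $r_{\calO_k}: \calP(V_\CC) \to \calP(\calO_k)$ as morphisms of representations and to use the Hua--Kostant--Schmid decomposition of Theorem~\ref{thm:HuaKostantSchmid} to identify their kernels. Both maps are surjective by definition, so the task reduces to locating the kernel inside the decomposition $\calP(V_\CC) = \bigoplus_{{\bf m}\geq0} \calP_{\bf m}(V_\CC)$ into irreducibles.

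First I would handle the easy direction. If $m_{k+1}\neq 0$, the proposition cited from \cite{HN01} shows that $\Delta_{\bf m}$ already vanishes on $\calX_k$, hence on $\calO_k\subseteq\calX_k$. Since $\calX_k$ is $L_\CC$-stable and $\calP_{\bf m}(V_\CC) = \linspan\{\ell(g)\Delta_{\bf m} : g\in L_\CC\}$, every element of $\calP_{\bf m}(V_\CC)$ restricts to zero on $\calX_k$, giving $\calP_{\bf m}(\calX_k) = \calP_{\bf m}(\calO_k) = 0$. Conversely, if $m_{k+1}=0$, a direct evaluation at $e_k$ shows $\Delta_{\bf m}$ does not vanish identically on $\calX_k$: since $P_j e_k = c_1+\cdots+c_j$ is the unit of $V(e_j,1)$ for $j\leq k$, we get $\Delta_j(e_k)=1$ for $j\leq k$, and the exponents $m_{k+1}-m_{k+2},\ldots,m_r$ are all zero, so $\Delta_{\bf m}(e_k)=1$.

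For the isomorphism claim, note that $r_{\calX_k}|_{\calP_{\bf m}(V_\CC)}$ is $L_\CC$-equivariant (the $L_\CC$-irreducibility of $\calP_{\bf m}(V_\CC)$ follows from its $L$-irreducibility by Weyl's unitary trick, using that $L$ is a real form of $L_\CC$ and the representation is algebraic). An $L_\CC$-equivariant map out of an irreducible module is either zero or injective; by the previous paragraph it is not zero when $m_{k+1}=0$, hence injective and therefore an isomorphism onto its image $\calP_{\bf m}(\calX_k)$. To pass to $\calO_k$, observe that $\calX_k = L_\CC\cdot\calO_k$ and $\calO_k\subseteq\calX_k$ is a totally real submanifold (of maximal real dimension by \eqref{eq:OrbitDimension}); a holomorphic polynomial on $V_\CC$ that vanishes on $\calO_k$ therefore vanishes on the $L_\CC$-saturation $\calX_k$ by the identity principle. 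Thus $r_{\calO_k}|_{\calP_{\bf m}(V_\CC)}$ has the same kernel as $r_{\calX_k}|_{\calP_{\bf m}(V_\CC)}$, yielding the isomorphism $\calP_{\bf m}(V_\CC)\cong\calP_{\bf m}(\calO_k)$ of $L$-modules.

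Finally, the decompositions follow by assembling these pieces: the kernel of $r_{\calX_k}$ (respectively $r_{\calO_k}$) on all of $\calP(V_\CC)$ is exactly $\bigoplus_{{\bf m}\geq0,\,m_{k+1}\neq0}\calP_{\bf m}(V_\CC)$, so the induced map factors to an isomorphism from $\bigoplus_{{\bf m}\geq0,\,m_{k+1}=0}\calP_{\bf m}(V_\CC)$ onto $\calP(\calX_k)$ (respectively $\calP(\calO_k)$), and the image summands remain internally direct because they correspond to distinct summands of the ambient Hua--Kostant--Schmid decomposition. The main subtle point is justifying the passage from $\calX_k$ to $\calO_k$; this is where the totally real nature of $\calO_k\subseteq\calX_k$ together with $\calX_k=L_\CC\cdot\calO_k$ is essential, and it is the only step that uses more than linear algebra and the two cited decomposition/vanishing results.
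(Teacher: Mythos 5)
Your proof is correct and follows the argument the paper clearly intends: the Corollary is stated without proof immediately after the $\Delta_{\bf m}$-vanishing proposition, and you read it off from that proposition, the Hua--Kostant--Schmid decomposition, Schur's lemma on each irreducible $\calP_{\bf m}(V_\CC)$, and the totally-real embedding $\calO_k\subseteq\calX_k$ already noted in Section~\ref{sec:NilpotentOrbits}. The non-vanishing check $\Delta_{\bf m}(e_k)=1$ for $m_{k+1}=0$ is a nice explicit touch.
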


For $\lambda\in\CC$ and ${\bf m}\geq0$ we define the \textit{Pochhammer symbol} $(\lambda)_{\bf m}$ by
\begin{align*}
 (\lambda)_{\bf m} &:= \prod_{i=1}^r{\left(\lambda-(i-1)\frac{d}{2}\right)_{m_i}},
\end{align*}
where $(a)_n=a(a+1)\cdots(a+n-1)$ denotes the classical Pochhammer symbol for $a\in\CC$, $n\in\NN_0$.

\begin{lemma}[{\cite[Lemma XI.2.3]{FK94}}]\label{lem:LaplaceTransformPolynomials}
For $p\in\calP_{\bf m}(V_\CC)$ and $\lambda\in\calW$ we have
\begin{align*}
 \int_{\calO_\lambda}{e^{-(x|y)}p(x)\td\mu_\lambda(x)} &= 2^{r\lambda}(\lambda)_{\bf m}\Delta(y)^{-\lambda}p(y^{-1}), & y\in\Omega.
\end{align*}
\end{lemma}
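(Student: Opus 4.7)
The plan is a Schur-style reduction to a single evaluation, followed by Gindikin's Gamma integral on the cone. Denote the left-hand side by $F_p(y)$ and the right-hand side by $G_p(y)$, both viewed as real-analytic functions on $\Omega$ depending linearly on $p\in\calP_{\bf m}(V_\CC)$. Using the transformation rule $\td\mu_\lambda(g\,\cdot)=\chi(g)^\lambda\td\mu_\lambda$, the adjointness identity $(gx|y)=(x|g^\# y)$, the equality $\chi(g^\#)=\chi(g)$, and the standard Jordan-algebra identities $(gy)^{-1}=g^{-\#}y^{-1}$ and $\Delta(gy)=\chi(g)\Delta(y)$ for $g\in L=\Str(V)_0$, the change of variables $x\mapsto g^{-\#}x$ shows that both $F_p$ and $G_p$ satisfy the same transformation law
$$F_p(gy)=\chi(g)^{-\lambda}F_{\ell(g^\#)p}(y),\qquad G_p(gy)=\chi(g)^{-\lambda}G_{\ell(g^\#)p}(y),$$
where $\ell(g^\#)p=p\circ g^{-\#}$ remains in $\calP_{\bf m}(V_\CC)$ by the $L$-invariance of this subspace (Theorem~\ref{thm:HuaKostantSchmid}). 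Since $L$ acts transitively on $\Omega$ with stabilizer $K^L$ at $e$, it suffices to prove $F_p(e)=2^{r\lambda}(\lambda)_{\bf m}\,p(e)$ for every $p\in\calP_{\bf m}(V_\CC)$.

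The two linear functionals $\ell_1(p):=F_p(e)$ and $\ell_2(p):=p(e)$ are both $K^L$-invariant on $\calP_{\bf m}(V_\CC)$: $K^L$ fixes $e$, and $\chi|_{K^L}\equiv 1$ makes the measure $\td\mu_\lambda$ invariant under $x\mapsto k^{-1}x$. Now $\calP_{\bf m}(V_\CC)$ is irreducible as an $L$-module (Theorem~\ref{thm:HuaKostantSchmid}), and by the Cartan--Helgason theorem for the noncompact Riemannian symmetric space $\Omega\cong L/K^L$ it is spherical with a one-dimensional space of $K^L$-fixed vectors, spanned by the spherical polynomial $\Phi_{\bf m}$ (\cite[Proposition XI.3.1]{FK94}). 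Hence $(\calP_{\bf m}(V_\CC)^*)^{K^L}$ is also one-dimensional, which forces $\ell_1=c_{\bf m}\,\ell_2$ for a scalar $c_{\bf m}$. I would fix $c_{\bf m}$ by testing on $p=\Delta_{\bf m}$, which satisfies $\Delta_{\bf m}(e)=\prod_j\Delta_j(e)^{m_j-m_{j+1}}=1$ because each $e_j$ is the unit of $V(e_j,1)$.

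For $\lambda\in\calW_{\cont}$ the constant is then evaluated by Gindikin's Gamma integral
$$\int_\Omega e^{-\tr(x)}\Delta_{\bf m}(x)\Delta(x)^{\lambda-\frac{n}{r}}\td x=\Gamma_\Omega(\lambda+{\bf m})$$
(cf.\ \cite[Ch.~VII]{FK94}), combined with the identity $\Gamma_\Omega(\lambda+{\bf m})/\Gamma_\Omega(\lambda)=(\lambda)_{\bf m}$ arising directly from the product formula for $\Gamma_\Omega$, giving $c_{\bf m}=2^{r\lambda}(\lambda)_{\bf m}$. For the discrete Wallach points $\lambda=k\frac{d}{2}$ I would pass by analytic continuation in $\lambda$: the map $\lambda\mapsto F_p(y)$ is entire because the Riesz distributions $R_\lambda$ admit an entire continuation when paired against exponentially decaying test functions on $\Omega$, the right-hand side depends polynomially on $\lambda$ via $(\lambda)_{\bf m}$, and the two agree on the non-empty open set $\calW_{\cont}$. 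In the degenerate subcase $m_{k+1}\neq 0$ at $\lambda=k\frac{d}{2}$ the Pochhammer factor $(\lambda-k\frac{d}{2})_{m_{k+1}}$ makes $(\lambda)_{\bf m}$ vanish while simultaneously $p|_{\calO_\lambda}\equiv 0$ by Corollary~\ref{cor:HuaKostantSchmid}, so both sides are automatically zero. The only nontrivial input is the sphericality/uniqueness statement for $(L,K^L)$; once this is granted, everything else amounts to a change of variables together with a single $\Gamma_\Omega$-integral.
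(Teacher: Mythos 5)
The paper itself offers no proof here — it simply cites \cite[Lemma~XI.2.3]{FK94} — so there is no internal argument to compare against. Your proposal correctly reconstructs the Faraut--Koranyi argument: the $L$-equivariance reduces both sides to the evaluation at $y=e$, where the one-dimensionality of $(\calP_{\bf m}(V_\CC)^*)^{K^L}$ (from irreducibility and $K^L$-sphericality) forces the two functionals to be proportional, and the normalization constant $2^{r\lambda}(\lambda)_{\bf m}$ drops out of the Gindikin integral $\int_\Omega e^{-\tr x}\Delta_{\bf m}(x)\Delta(x)^{\lambda-n/r}\td x=\Gamma_\Omega(\lambda+{\bf m})$ for $p=\Delta_{\bf m}$. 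You also supply the two details that FK94's Lemma~XI.2.3 (stated there for $\Re\lambda>(r-1)\tfrac d2$) does not itself cover but which the paper implicitly needs: the extension to the discrete Wallach points via analytic continuation of $\lambda\mapsto\langle R_\lambda,\varphi\rangle$, and the observation that in the degenerate case $m_{k+1}\neq 0$ both $(\lambda)_{\bf m}$ and $p|_{\calO_\lambda}$ vanish simultaneously. The only point where the argument is slightly informal is the claim that pairing $R_\lambda$ against a non-Schwartz but exponentially decaying function on $\overline\Omega$ still yields an entire function of $\lambda$; this is standard (cut off outside a cone slightly bigger than $\overline\Omega$, as $R_\lambda$ is supported in $\overline\Omega$) and is at the same level of rigor that the paper itself uses in, e.g., Lemma~\ref{lem:IntPhimKBessel}, so I would regard the proposal as correct.
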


\subsubsection*{Spherical polynomials}\label{sec:SphericalPolynomials}

The representations $\calP_{\bf m}(V_\CC)$ of $L$ (resp. $U$) are $K^L$-spherical. The $K^L$-spherical vectors in $\calP_{\bf m}(V_\CC)$ are spanned by (see \cite[Proposition XI.3.1]{FK94})
\begin{align*}
 \Phi_{\bf m}(z) &= \int_{K^L}{\Delta_{\bf m}(kz)\td k}, & z\in V_\CC.
\end{align*}
The $L^2(\Sigma)$-norm of these functions are given by (see \cite[Proposition XI.4.1~(i)]{FK94})
\begin{align}
 \|\Phi_{\bf m}\|_\Sigma^2 &= \frac{1}{d_{\bf m}}, & {\bf m}\geq0.\label{eq:L2SigmaNormOfPhim}
\end{align}

By \cite[Corollary XI.3.4]{FK94} there exists a unique polynomial $\Phi_{\bf m}(z,w)$ on $V_\CC\times V_\CC$ holomorphic in $z$ and antiholomorphic in $w$ such that
\begin{align*}
 \Phi_{\bf m}(gz,w) &= \Phi_{\bf m}(z,g^*w), && \forall\,g\in L_\CC,\\
 \Phi_{\bf m}(x,x) &= \Phi_{\bf m}(x^2), && \forall\,x\in V.
\end{align*}

\begin{lemma}\label{lem:PropertiesPhimDoubleVariable}
\begin{enumerate}
\item For all $z\in V_\CC$ we have
\begin{align*}
 \Phi_{\bf m}(z,e) = \Phi_{\bf m}(z).
\end{align*}
\item For $x\in V$ and $y\in\overline{\Omega}$ we have
\begin{align*}
 \Phi_{\bf m}(x,y) &= \Phi_{\bf m}(P(y^{\frac{1}{2}})x).
\end{align*}
\item For $z,w\in V_\CC$ we have
\begin{align*}
 \overline{\Phi_{\bf m}(z,w)} &= \Phi_{\bf m}(w,z).
\end{align*}
\end{enumerate}
\end{lemma}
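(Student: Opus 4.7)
The plan is to prove the three assertions in order, exploiting the uniqueness of the two-variable spherical polynomial stated in \cite[Corollary XI.3.4]{FK94}, together with the $K^L$-invariance of the one-variable $\Phi_{\bf m}$ in $\calP_{\bf m}(V_\CC)$.

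For (1), I set $f(z):=\Phi_{\bf m}(z,e)$, which is a holomorphic polynomial on $V_\CC$. Because elements of $K^L\subseteq O(V)$ satisfy $k^{-1}=k^*$ and fix $e$, the equivariance $\Phi_{\bf m}(gz,w)=\Phi_{\bf m}(z,g^*w)$ at $g=k$ and $w=e$ yields $f(kz)=f(z)$, so $f$ is $K^L$-invariant. Next, from the construction of $\Phi_{\bf m}(z,w)$ in \cite[Proposition XI.3.3]{FK94} (where it appears essentially as the reproducing kernel of $\calP_{\bf m}(V_\CC)$ in the Fischer inner product), the polynomial $z\mapsto\Phi_{\bf m}(z,w_0)$ lies in $\calP_{\bf m}(V_\CC)$ for any fixed $w_0$. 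Hence $f\in\calP_{\bf m}(V_\CC)^{K^L}$, a space which is one-dimensional and spanned by $\Phi_{\bf m}$, so $f=c\,\Phi_{\bf m}$ for some scalar $c$. Evaluating at $e$ using the second defining property of $\Phi_{\bf m}(z,w)$ gives $f(e)=\Phi_{\bf m}(e,e)=\Phi_{\bf m}(e^2)=\Phi_{\bf m}(e)=1$, and since $\Phi_{\bf m}(e)=\int_{K^L}\Delta_{\bf m}(ke)\td k=\Delta_{\bf m}(e)=1$, I conclude $c=1$ and $f=\Phi_{\bf m}$.

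Assertion (2) then follows by applying the equivariance with the self-adjoint element $g=P(y^{1/2})\in L$, valid for $y\in\Omega$ since $y^{1/2}$ is well-defined and invertible there. Using $P(y^{1/2})e=(y^{1/2})^2=y$ and $g^*=g$, one gets
\begin{align*}
 \Phi_{\bf m}(x,y) &= \Phi_{\bf m}\bigl(x,P(y^{1/2})e\bigr) = \Phi_{\bf m}\bigl(P(y^{1/2})x,e\bigr) = \Phi_{\bf m}\bigl(P(y^{1/2})x\bigr),
\end{align*}
the last equality being (1). Both sides are continuous in $y\in\overline{\Omega}$, so the identity extends by continuity (noting $P(y^{1/2})$ extends continuously since $y^{1/2}$ is defined on $\overline{\Omega}$ via the polar decomposition).

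For (3), I define $\Psi(z,w):=\overline{\Phi_{\bf m}(w,z)}$ and verify that $\Psi$ satisfies the three properties characterizing $\Phi_{\bf m}(z,w)$ in \cite[Corollary XI.3.4]{FK94}. The holomorphy type of $\Psi$ (holomorphic in $z$, antiholomorphic in $w$) is immediate by conjugation. The equivariance identity $\Phi_{\bf m}(u,gv)=\Phi_{\bf m}(g^*u,v)$, obtained from the original equivariance by replacing $g$ with $g^*$, gives
\begin{align*}
 \Psi(gz,w) &= \overline{\Phi_{\bf m}(w,gz)} = \overline{\Phi_{\bf m}(g^*w,z)} = \Psi(z,g^*w).
\end{align*}
Finally, for $x\in V$, the polynomial $\Phi_{\bf m}(x^2)$ is real-valued since $\Delta_{\bf m}$ is real-valued on $V$ and the Haar integral over $K^L$ preserves reality, so $\Psi(x,x)=\overline{\Phi_{\bf m}(x,x)}=\overline{\Phi_{\bf m}(x^2)}=\Phi_{\bf m}(x^2)$. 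Uniqueness then forces $\Psi(z,w)=\Phi_{\bf m}(z,w)$, which after swapping $z\leftrightarrow w$ is exactly (3). The only non-obvious step in the whole proof is the assertion that $\Phi_{\bf m}(\cdot,e)$ lies in $\calP_{\bf m}(V_\CC)$ as opposed to a larger space of polynomials of degree $|{\bf m}|$; this is the one place where I must invoke the explicit bihomogeneous structure of $\Phi_{\bf m}(z,w)$ from the Faraut--Koranyi construction rather than just its characterizing properties.
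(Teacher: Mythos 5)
Your proof is correct, and parts (1) and (3) take a genuinely different route from the paper's. For (1), the paper computes directly with fractional powers: for $x\in\Omega$ it writes $\Phi_{\bf m}(x,e)=\Phi_{\bf m}(P(x^{1/4})x^{1/2},e)=\Phi_{\bf m}(x^{1/2},P(x^{1/4})e)=\Phi_{\bf m}(x^{1/2},x^{1/2})=\Phi_{\bf m}(x)$, then extends holomorphically using that $\Omega$ is totally real in $V_\CC$; you instead observe that $\Phi_{\bf m}(\cdot,e)$ is a $K^L$-invariant element of $\calP_{\bf m}(V_\CC)$ and use one-dimensionality of the spherical line, pinning down the constant by evaluation at $e$. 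Your route is more representation-theoretic and avoids fractional powers, at the cost of invoking (as you correctly flag) the fact that $\Phi_{\bf m}(\cdot,w_0)\in\calP_{\bf m}(V_\CC)$, which follows since $\Phi_{\bf m}(z,w)$ is proportional to the reproducing kernel of $\calP_{\bf m}(V_\CC)$. For (3), the paper again works geometrically in $\Omega$: it cites \cite[Lemma XIV.1.2]{FK94} to find $k\in K^L$ with $P(y^{1/2})x=kP(x^{1/2})y$, uses (2) and $K^L$-invariance to get symmetry $\Phi_{\bf m}(x,y)=\Phi_{\bf m}(y,x)$, combines with reality of $\Phi_{\bf m}$ on $V$, and then continues holomorphically; you instead verify that $\Psi(z,w):=\overline{\Phi_{\bf m}(w,z)}$ satisfies the three properties characterizing $\Phi_{\bf m}$ in \cite[Corollary XI.3.4]{FK94} and appeal to uniqueness. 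The uniqueness argument is cleaner and avoids the extra geometric lemma, though the paper's version makes the underlying symmetry $\Phi_{\bf m}(x,y)=\Phi_{\bf m}(y,x)$ on $\Omega\times\Omega$ more transparent. Part (2) is essentially identical in both. All the side-claims you use — $k^*=k^{-1}$ for $k\in K^L$, $g^*=g$ for $g=P(y^{1/2})$ with $y\in\Omega$, $(g^*)^*=g$, and reality of $\Phi_{\bf m}$ on $V$ — check out.
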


\begin{proof}
\begin{enumerate}
\item First let $x\in\Omega$. Recall the complex powers $x^s$, $s\in\CC$, defined in Section \ref{sec:StructureGroup}. Then
\begin{align*}
 \Phi_{\bf m}(x,e) &= \Phi_{\bf m}(P(x^{\frac{1}{4}})x^{\frac{1}{2}},e) = \Phi_{\bf m}(x^{\frac{1}{2}},P(x^{\frac{1}{4}})e)\\
 &= \Phi_{\bf m}(x^{\frac{1}{2}},x^{\frac{1}{2}}) = \Phi_{\bf m}(x).
\end{align*}
Since both sides are holomorphic in $x$ and $\Omega\subseteq V_\CC$ is totally real, this also holds for $x\in V_\CC$.
\item Let $x\in V$ and $y\in\Omega$. Then by (1) we obtain
\begin{equation*}
 \Phi_{\bf m}(x,y) = \Phi_{\bf m}(x,P(y^{\frac{1}{2}})e) = \Phi_{\bf m}(P(y^{\frac{1}{2}})x,e) = \Phi_{\bf m}(P(y^{\frac{1}{2}})x).
\end{equation*}
Now both sides are continuous in $y\in\overline{\Omega}$ and the claim follows.
\item First let $x,y\in\Omega$. Then by \cite[Lemma XIV.1.2]{FK94} there exists $k\in K^L$ such that $P(y^{\frac{1}{2}})x=kP(x^{\frac{1}{2}})y$. Using (2) and the $K^L$-invariance of $\Phi_{\bf m}$ we find that
\begin{equation*}
 \Phi_{\bf m}(x,y) = \Phi_{\bf m}(P(y^{\frac{1}{2}})x) = \Phi_{\bf m}(P(x^{\frac{1}{2}})y) = \Phi_{\bf m}(y,x).
\end{equation*}
Since $\Phi_{\bf m}(x,y)\in\RR$ for $x,y\in V$, we obtain
\begin{align*}
 \overline{\Phi_{\bf m}(z,w)} &= \Phi_{\bf m}(w,z) & \forall\,z,w\in\Omega.
\end{align*}
Both sides are holomorphic in $z$ and antiholomorphic in $w$ and $\Omega\subseteq V_\CC$ is totally real. Hence, the formula also holds for $z,w\in V_\CC$.\qedhere
\end{enumerate}
\end{proof}

\begin{lemma}\label{lem:PhimVanishing}
If ${\bf m}\geq0$ with $m_{k+1}\neq0$, then $\Phi_{\bf m}(z,w)=0$ for all $z\in V_\CC$ and $w\in\overline{\calX_k}$.
\end{lemma}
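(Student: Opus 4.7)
The plan is to reduce the statement to Corollary~\ref{cor:HuaKostantSchmid} by showing that $\Phi_{\bf m}(z,w)$, as a polynomial in one of its variables, always lies in the module $\calP_{\bf m}(V_\CC)$; since every polynomial in that module vanishes on $\calX_k$ (and hence on $\overline{\calX_k}$) when $m_{k+1}\neq 0$, the claim is immediate.

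First I would show that for each fixed $w\in V_\CC$, the polynomial $z\mapsto\Phi_{\bf m}(z,w)$ lies in $\calP_{\bf m}(V_\CC)$. For $w$ in the open orbit $L_\CC\cdot e\subseteq V_\CC$, write $w=g^{*}e$ with $g\in L_\CC$ (permissible since $L_\CC$ is stable under the involution $g\mapsto g^{*}$). The defining covariance of $\Phi_{\bf m}(\cdot,\cdot)$ together with Lemma~\ref{lem:PropertiesPhimDoubleVariable}(1) gives
\begin{equation*}
 \Phi_{\bf m}(z,w)=\Phi_{\bf m}(z,g^{*}e)=\Phi_{\bf m}(gz,e)=\Phi_{\bf m}(gz)=\ell(g^{-1})\Phi_{\bf m}(z).
\end{equation*}
By definition $\calP_{\bf m}(V_\CC)$ is the $L_\CC$-span of $\Phi_{\bf m}$, so $\Phi_{\bf m}(\cdot,w)\in\calP_{\bf m}(V_\CC)$. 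The subset $L_\CC\cdot e$ is open and dense in $V_\CC$, and $\calP_{\bf m}(V_\CC)$ is finite-dimensional, hence closed in $\calP(V_\CC)$; together with the fact that $w\mapsto\Phi_{\bf m}(\cdot,w)$ is antiholomorphic (hence continuous into this finite-dimensional space), the membership extends to every $w\in V_\CC$.

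Next, by Lemma~\ref{lem:PropertiesPhimDoubleVariable}(3) we have $\overline{\Phi_{\bf m}(z,w)}=\Phi_{\bf m}(w,z)$, so for each fixed $z\in V_\CC$ the polynomial $w\mapsto\Phi_{\bf m}(w,z)$ also lies in $\calP_{\bf m}(V_\CC)$. Now assume $m_{k+1}\neq 0$. By Corollary~\ref{cor:HuaKostantSchmid}, $\calP_{\bf m}(\calX_k)=0$, i.e.\ every element of $\calP_{\bf m}(V_\CC)$ vanishes identically on $\calX_k$, and by continuity of polynomials also on the closure $\overline{\calX_k}$. Hence for any $z\in V_\CC$ and any $w\in\overline{\calX_k}$,
\begin{equation*}
 \overline{\Phi_{\bf m}(z,w)}=\Phi_{\bf m}(w,z)=0,
\end{equation*}
so $\Phi_{\bf m}(z,w)=0$, as claimed.

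There is essentially no serious obstacle; the only mildly delicate point is the first step, where one must argue that $\Phi_{\bf m}(\cdot,w)\in\calP_{\bf m}(V_\CC)$ for \emph{every} $w\in V_\CC$, not just for $w$ in the open orbit. This is handled by the density of $L_\CC\cdot e$ combined with finite-dimensionality of $\calP_{\bf m}(V_\CC)$, so no new techniques beyond Lemma~\ref{lem:PropertiesPhimDoubleVariable} and Corollary~\ref{cor:HuaKostantSchmid} are needed.
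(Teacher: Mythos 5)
Your proof is correct, and it takes a route that is genuinely different from the paper's, though both rest on the same two ingredients: the covariance $\Phi_{\bf m}(gz,w)=\Phi_{\bf m}(z,g^*w)$ and the vanishing of $\calP_{\bf m}$ on $\calX_k$ from Corollary~\ref{cor:HuaKostantSchmid}. The paper reduces to $w=e_k$ via the covariance, then uses Lemma~\ref{lem:PropertiesPhimDoubleVariable}~(2) and (3) to write $\Phi_{\bf m}(x,e_k)=\Phi_{\bf m}(P(x^{1/2})e_k)$ for $x\in\Omega$, and finishes by observing $P(x^{1/2})e_k=gg^\#e_k\in\calO_k$ so that the one-variable spherical polynomial kills it; this is a concrete, Peirce-decomposition argument. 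You instead establish the stronger intermediate fact that $z\mapsto\Phi_{\bf m}(z,w)$ lies in $\calP_{\bf m}(V_\CC)$ for \emph{every} $w\in V_\CC$ — checked directly on the open $L_\CC$-orbit and extended by density together with finite-dimensionality (equivalently: the coefficients of this polynomial-valued antiholomorphic map in $w$ satisfy the closed linear conditions cutting out $\calP_{\bf m}(V_\CC)$ on a dense set, hence everywhere) — and then invoke the conjugation symmetry $\overline{\Phi_{\bf m}(z,w)}=\Phi_{\bf m}(w,z)$ to swap arguments and conclude. This is more module-theoretic and also handles $w\in\overline{\calX_k}$ uniformly without reducing to a base point. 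One cosmetic remark: the citation of Lemma~\ref{lem:PropertiesPhimDoubleVariable}~(3) in your second step is unnecessary there — the membership $w\mapsto\Phi_{\bf m}(w,z_0)\in\calP_{\bf m}(V_\CC)$ already follows from your first step by relabeling the variable slots — though of course (3) is exactly what you need in the final display to pass from $\Phi_{\bf m}(w,z)=0$ to $\Phi_{\bf m}(z,w)=0$.
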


\begin{proof}
Write $w=he_k$ with $g\in L_\CC$. Then
\begin{align*}
 \Phi_{\bf m}(z,w) &= \Phi_{\bf m}(z,ge_k) = \Phi_{\bf m}(g^*z,e_k).
\end{align*}
Therefore, it suffices to show that $\Phi_{\bf m}(-,e_k)=0$ as a polynomial on $V_\CC$. Since $\Phi_{\bf m}(z,w)$ is holomorphic in the first variable it suffices to show that $\Phi_{\bf m}(-,e_k)=0$ as a polynomial on $V$ and since $\Omega\subseteq V$ is open it is enough to prove $\Phi_{\bf m}(x,e_k)=0$ for $x\in\Omega$. But for $x\in\Omega$ we have by Lemma \ref{lem:PropertiesPhimDoubleVariable}~(2) and (3)
\begin{align*}
 \Phi_{\bf m}(x,e_k) &= \Phi_{\bf m}(P(x^{\frac{1}{2}})e_k).
\end{align*}
Now write $x^{\frac{1}{2}}=ge$ with $g\in L$, then
\begin{align*}
 P(x^{\frac{1}{2}})e_k &= P(ge)e_k = gP(e)g^\# e_k = gg^\# e_k
\end{align*}
and hence $P(x^{\frac{1}{2}})e_k\in\calO_k=L\cdot e_k$. But by Corollary \ref{cor:HuaKostantSchmid} we have $\Phi_{\bf m}|_{\calO_k}=0$ since $m_{k+1}\neq0$ and the proof is complete.
\end{proof}

\begin{example}\label{ex:SphericalPolynomialsRank1}
On the rank $1$ orbit $\calX_1$ the spherical polynomials $\Phi_{\bf m}(z,w)$ are non-zero if and only if ${\bf m}=(m,0,\ldots,0)$, $m\in\NN_0$, and in this case
\begin{align*}
 \Phi_{\bf m}(z,w) &= \frac{(\frac{n}{r})_m}{d_{\bf m}m!}(z|\overline{w})^m, & z,w\in\calX_1.
\end{align*}
(This can e.g. be derived from the expansion of $\tr(x)^k$ into the polynomials $\Phi_{\bf m}(x)$, see \cite[Chapter XI.5]{FK94}.)
\end{example}

\subsubsection*{The Fischer inner product}

We equip $\calP(V_\CC)$ with the \textit{Fischer inner product}
\begin{align*}
 [p,q] &:= \left.p\left(\frac{\partial}{\partial z}\right)\overline{q}(z)\right|_{z=0}, & p,q\in\calP(V_\CC),
\end{align*}
where $\overline{q}(z):=\overline{q(\overline{z})}$, $z\in V_\CC$. The action of $U$ on $\calP(V_\CC)$ is unitary with respect to this inner product and hence the irreducible constituents $\calP_{\bf m}(V_\CC)$ are pairwise orthogonal. Since $U$ also acts unitarily on $\calP(V_\CC)$ with respect to the inner product on $L^2(\Sigma)$ the two inner products are proportional on each irreducible constituent $\calP_{\bf m}(V_\CC)$ (see \cite[Corollary XI.4.2]{FK94}):
\begin{align}
 [p,q] &= \left(\frac{n}{r}\right)_{\bf m}\langle p,q\rangle_\Sigma, & p,q\in\calP_{\bf m}(V_\CC).\label{eq:RelFischerSigma}
\end{align}
Denote by $K^{\bf m}(z,w)$ the reproducing kernel of $\calP_{\bf m}(V_\CC)$ with respect to the Fischer inner product. By \cite[Propositions XI.3.3 \&\ XI.4.1]{FK94} we have
\begin{align*}
 K^{\bf m}(z,w) &= \frac{d_{\bf m}}{(\frac{n}{r})_{\bf m}}\Phi_{\bf m}(z,w), & z,w\in V_\CC.
\end{align*}
Since by \cite[Proposition XI.1.1]{FK94} the completion of $\calP(V_\CC)$ with respect to the Fischer inner product is a Hilbert space with reproducing kernel $K(z,w)=e^{(z|\overline{w})}$ we obtain the following expansion:
\begin{align}
 e^{(z|\overline{w})} &= \sum_{{\bf m}\geq0}{\frac{d_{\bf m}}{(\frac{n}{r})_{\bf m}}\Phi_{\bf m}(z,w)}, & z,w\in V_\CC.\label{eq:ExponentialTraceExpansion}
\end{align}

\begin{lemma}\label{lem:SchurOrthogonalityLemma}
For ${\bf m},{\bf n}\geq0$ and $p\in\calP_{\bf m}(V_\CC)$, $q\in\calP_{\bf n}(V_\CC)$:
\begin{align*}
 \int_U{p(uz)\overline{q(uw)}\td u} &= \Phi_{\bf m}(z,w)\langle p,q\rangle_\Sigma & \forall\,z,w\in V_\CC.
\end{align*}
\end{lemma}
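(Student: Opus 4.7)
The plan is to use Schur's lemma together with a computation on the Shilov boundary $\Sigma$ to determine the proportionality constant.

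First I would set $I(z,w):=\int_U p(uz)\overline{q(uw)}\,\td u$ and record three structural facts. Since $\calP_{\bf m}(V_\CC)$ is a finite-dimensional $U$-invariant subspace of $\calP(V_\CC)$ and $u\mapsto\ell(u^{-1})p$ is continuous, $z\mapsto I(z,w)$ lies in $\calP_{\bf m}(V_\CC)$ for every fixed $w$, and symmetrically $\overline{I(z,\cdot)}\in\calP_{\bf n}(V_\CC)$ for every fixed $z$. Left-invariance of the Haar measure on $U$ gives diagonal $U$-invariance $I(vz,vw)=I(z,w)$ for $v\in U$. Thus $I$ is a $U$-invariant element of $\calP_{\bf m}(V_\CC)\otimes_\CC\overline{\calP_{\bf n}(V_\CC)}$, which is naturally identified with $\Hom_\CC(\calP_{\bf n}(V_\CC),\calP_{\bf m}(V_\CC))$.

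Next I would invoke Schur's lemma. When ${\bf m}\neq{\bf n}$, the $U$-representations $\calP_{\bf m}(V_\CC)$ and $\calP_{\bf n}(V_\CC)$ are inequivalent irreducible (Theorem \ref{thm:HuaKostantSchmid}, noting that $U$ and $L$ have the same complexification and hence the same decomposition), so $I\equiv 0$; on the other side $\langle p,q\rangle_\Sigma=0$ by orthogonality of distinct isotypic components in $L^2(\Sigma)$, so both sides vanish. When ${\bf m}={\bf n}$, the $U$-invariants form a one-dimensional space, and I would show that $\Phi_{\bf m}(z,w)$ spans it: by the bi-equivariance $\Phi_{\bf m}(gz,w)=\Phi_{\bf m}(z,g^*w)$ together with the identity $u^*=u^{-1}$ for $u\in U$ (valid because $U$ is the fixed-point set of the Cartan involution $g\mapsto(g^{-1})^*$), $\Phi_{\bf m}$ is $U$-invariant on the diagonal and nonzero, hence a basis of the one-dimensional invariant subspace. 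Therefore
\begin{equation*}
 I(z,w)=c(p,q)\,\Phi_{\bf m}(z,w)
\end{equation*}
for a scalar $c(p,q)\in\CC$ depending sesquilinearly on $(p,q)$.

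Finally I would pin down the constant by restricting to $\Sigma$ and integrating the diagonal. For the left-hand side, Fubini together with $U$-invariance of $\td\sigma$ and unitarity of $\ell$ on $L^2(\Sigma)$ gives
\begin{equation*}
 \int_\Sigma I(z,z)\,\td\sigma(z)=\int_U\int_\Sigma p(uz)\overline{q(uz)}\,\td\sigma(z)\,\td u=\int_U\langle p,q\rangle_\Sigma\,\td u=\langle p,q\rangle_\Sigma.
\end{equation*}
For the right-hand side, writing any $z\in\Sigma$ as $z=ue$ with $u\in U$ and using $u^*=u^{-1}$ together with Lemma \ref{lem:PropertiesPhimDoubleVariable}(1),
\begin{equation*}
 \Phi_{\bf m}(z,z)=\Phi_{\bf m}(ue,ue)=\Phi_{\bf m}(e,u^*ue)=\Phi_{\bf m}(e,e)=\Phi_{\bf m}(e)=1,
\end{equation*}
so $\int_\Sigma\Phi_{\bf m}(z,z)\,\td\sigma(z)=1$ and hence $c(p,q)=\langle p,q\rangle_\Sigma$, finishing the proof.

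The only subtle point—and the step I expect to require the most care to write up cleanly—is the identification in the Schur step of the one-dimensional $U$-invariant subspace with the span of $\Phi_{\bf m}(z,w)$. Everything else is bookkeeping: the Schur dichotomy on one side, and on the other side the happy fact that $\Phi_{\bf m}$ is identically $1$ on the diagonal of $\Sigma$, which is precisely what makes the $\Sigma$-average reproduce the $L^2(\Sigma)$ inner product of $p$ and $q$.
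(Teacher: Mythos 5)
Your argument is correct but proceeds along a genuinely different route from the paper's. The paper's proof is short and computational: it writes $p(uz)=[p,K^{\bf m}(-,uz)]=[\ell(u^{-1})p,K^{\bf m}(-,z)]$ in terms of the Fischer reproducing kernel $K^{\bf m}$, applies the quantitative Schur orthogonality relations for matrix coefficients over $U$ to reduce the integral to $\tfrac{1}{d_{\bf m}}[p,q]\,\overline{[K^{\bf m}(-,z),K^{\bf n}(-,w)]}$, and then converts back to the $\Sigma$-inner product using the proportionality $[p,q]=(\tfrac{n}{r})_{\bf m}\langle p,q\rangle_\Sigma$ and the explicit formula $K^{\bf m}=\tfrac{d_{\bf m}}{(n/r)_{\bf m}}\Phi_{\bf m}$; the dimension $d_{\bf m}$ and the constant $(\tfrac{n}{r})_{\bf m}$ appear and are cancelled exactly by the known kernel formula. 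Your proof instead uses the qualitative form of Schur's lemma: recognizing $I(z,w)$ as a $U$-invariant element of $\calP_{\bf m}(V_\CC)\otimes\overline{\calP_{\bf n}(V_\CC)}\cong\Hom_\CC(\calP_{\bf n},\calP_{\bf m})$, identifying $\Phi_{\bf m}$ as the unique invariant when ${\bf m}={\bf n}$, and then normalizing by restricting to the diagonal of $\Sigma\times\Sigma$, where the happy identity $\Phi_{\bf m}(ue,ue)=\Phi_{\bf m}(e,e)=1$ makes the $\Sigma$-average coincide with $\langle p,q\rangle_\Sigma$. Your route avoids the Fischer reproducing kernel machinery and the constant $(\tfrac{n}{r})_{\bf m}$ entirely, at the cost of the somewhat more conceptual Schur step. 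One small gap worth closing explicitly: you assert that $\Phi_{\bf m}$ spans the one-dimensional invariant subspace but do not verify that $\Phi_{\bf m}(z,w)$ actually lies in $\calP_{\bf m}(V_\CC)\otimes\overline{\calP_{\bf m}(V_\CC)}$ (being $U$-invariant and nonzero alone does not place it there). This follows from $\Phi_{\bf m}(z,ge)=\Phi_{\bf m}(g^*z,e)=\ell((g^*)^{-1})\Phi_{\bf m}(z)\in\calP_{\bf m}(V_\CC)$ for $g\in L_\CC$ together with Zariski density of $L_\CC\cdot e$ in $V_\CC$, or simply by quoting the identity $K^{\bf m}=\tfrac{d_{\bf m}}{(n/r)_{\bf m}}\Phi_{\bf m}$; but as written it is a missing step.
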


\begin{proof}
Using the reproducing kernel property and the Schur orthogonality relations we obtain
\begin{align*}
 \int_U{p(uz)\overline{q(uw)}\td u} &= \int_U{[p,K^{\bf m}(-,uz)]\overline{[q,K^{\bf n}(-,uw)]}\td u}\\
 &= \int_U{[\ell(u^{-1})p,K^{\bf m}(-,z)]\overline{[\ell(u^{-1})q,K^{\bf n}(-,w)]}\td u}\\
 &= \frac{1}{d_{\bf m}}[p,q]\overline{[K^{\bf m}(-,z),K^{\bf n}(-,w)]}.
\end{align*}
With \eqref{eq:RelFischerSigma} and
\begin{align*}
 \overline{[K^{\bf m}(-,z),K^{\bf n}(-,w)]} &= \delta_{\bf mn}\overline{K^{\bf m}(w,z)} = \delta_{\bf mn}\frac{d_{\bf m}}{(\frac{n}{r})_{\bf m}}\Phi_{\bf m}(z,w)
\end{align*}
the claimed identity follows.
\end{proof}

\subsubsection*{Laguerre functions}

For ${\bf m}\geq0$ the polynomial $\Phi_{\bf m}(e+x)$ is $K^L$-invariant of degree $|{\bf m}|$ and can hence be written as a linear combination of the $K^L$-invariant polynomials $\Phi_{\bf n}(x)$ for $|{\bf n}|\leq|{\bf m}|$. Following \cite[Chapter XV.4]{FK94} we define the \textit{generalized binomial coefficients} ${\bf m\choose n}$ by the formula
\begin{align*}
 \Phi_{\bf m}(e+x) &= \sum_{|{\bf n}|\leq|{\bf m}|}{{\bf m\choose n}\Phi_{\bf n}(x)}, & x\in V.
\end{align*}
Using the generalized binomial coefficients we define the \textit{generalized Laguerre polynomials} $L_{\bf m}^\lambda(x)$ by
\begin{align}
 L_{\bf m}^\lambda(x) &:= (\lambda)_{\bf m}\sum_{|{\bf n}|\leq|{\bf m}|}{{\bf m\choose n}\frac{1}{(\lambda)_{\bf n}}\Phi_{\bf n}(-x)}, & x\in V,\label{eq:DefLagPoly}\\
\intertext{and the \textit{generalized Laguerre functions} $\ell_{\bf m}^\lambda(x)$ by}
 \ell_{\bf m}^\lambda(x) &:= e^{-\tr(x)}L_{\bf m}^\lambda(2x), & x\in V.\label{eq:DefLagFunc}
\end{align}
Both $L_{\bf m}^\lambda(x)$ and $\ell_{\bf m}^\lambda(x)$ are $K^L$-invariant. Note that for $\lambda>(r-1)\frac{d}{2}$ we have $(\lambda)_{\bf n}\neq0$ for all ${\bf n}\geq0$ and hence $L_{\bf m}^\lambda(x)$ and $\ell_{\bf m}^\lambda(x)$ are defined on $V$. For $x\in\overline{\calO_k}$, $k=0,\ldots,r-1$, we further have $\Phi_{\bf n}(x)=0$ if $n_{k+1}\neq0$. Therefore the sum in \eqref{eq:DefLagPoly} reduces to a sum over ${\bf n}$ with $n_{k+1}=\ldots=n_r=0$. For such ${\bf n}$ we have $(\lambda)_{\bf n}\neq0$ for $\lambda=k\frac{d}{2}$ and hence the expression \eqref{eq:DefLagPoly} is defined. This gives Laguerre polynomials $L_{\bf m}^\lambda(x)$ and Laguerre functions $\ell_{\bf m}^\lambda(x)$ on $\overline{\calO_\lambda}$ for each $\lambda\in\calW$.

Properties of the generalized Laguerre functions have been studied in \cite{ADO06,DOZ03}.

\newpage
\section{Three different realizations of unitary highest weight representations of scalar type}\label{sec:RealizationsUHWR}

We describe three known realizations of highest weight representations of scalar type: the \textit{bounded symmetric domain model}, the \textit{tube domain model} and the \textit{Schr\"odinger model}. We further discuss a complexification of the Schr\"odinger model which we use in Section \ref{sec:ActionOnFockSpace} to construct yet another model, the Fock model.

\subsection{The Schr\"odinger model}\label{sec:SchrödingerModel}

The highest weight representation of the universal cover $\widetilde{G}$ belonging to the Wallach point $\lambda\in\calW$ can be realized on $L^2(\calO_\lambda,\td\mu_\lambda)$. We sketch the construction here (see e.g. \cite{Moe10} for details). First, there is a Lie algebra representation $\td\pi_\lambda$ of $\frakg$ on $C^\infty(\calO_\lambda)$ for every $\lambda\in\calW$ given by
\begin{align*}
 \td\pi_\lambda(u,0,0) &= i(u|x),\\
 \td\pi_\lambda(0,T,0) &= \partial_{T^\# x}+\frac{r\lambda}{2n}\Tr(T^\#),\\
 \td\pi_\lambda(0,0,v) &= i(v|\calB_\lambda).
\end{align*}
Note that for $\lambda=k\frac{d}{2}$, $k=0,\ldots,r-1$, the Bessel operator $\calB_\lambda$ is tangential to the orbit $\calO_k$ and hence defines a differential operator acting on $C^\infty(\calO_k)$ (see Section \ref{sec:DiffOperators}). The representation $\td\pi_\lambda$ is further infinitesimally unitary with respect to the $L^2$-inner product on $L^2(\calO_\lambda,\td\mu_\lambda)$.

The subrepresentation of $(\td\pi_\lambda,C^\infty(\calO_\lambda))$ generated by the function
\begin{align*}
 \psi_0(x) &= e^{-\tr(x)}, & x\in\calO_\lambda,
\end{align*}
defines a $(\frakg,\widetilde{K})$-module $(\td\pi_\lambda,W^\lambda)$ whose underlying vector space turns out to be (see e.g. \cite[Proposition XIII.3.2]{FK94})
\begin{align*}
 W^\lambda &= \calP(\calO_\lambda)e^{-\tr(x)}.
\end{align*}
The $(\frakg,\widetilde{K})$-module $(\td\pi_\lambda,W^\lambda)$ integrates to an irreducible unitary representation $(\pi_\lambda,L^2(\calO_\lambda,\td\mu_\lambda))$ of $\widetilde{G}$. The minimal $\widetilde{K}$-type is spanned by the function $\psi_0$ which is $\widetilde{K}$-equivariant:
\begin{align*}
 \pi_\lambda(k)\psi_0 &= \xi_\lambda(k)\psi_0, & k\in\widetilde{K},
\end{align*}
where $\xi_\lambda:\widetilde{K}\to\TT$ is the character of $\widetilde{K}$ with differential $\td\xi_\lambda(u,T,-u)=i\lambda\tr(u)$. This implies that the representation $\pi_\lambda$ descends to a finite cover of $G$ if and only if $\lambda\in\QQ$ (which holds in particular for $\lambda\in\calW_{\disc}$). For $\lambda>\frac{2n}{r}-1$ the representation $\pi_\lambda$ belongs to the relative holomorphic discrete series of $\widetilde{G}$.

\subsection{The tube domain model}\label{sec:TubeDomainModel}

For $\lambda\in\CC$ consider the function
\begin{align*}
 T_\Omega\times T_\Omega\to\CC,\,(z,w)\mapsto\Delta\left(\frac{z-\overline{w}}{2i}\right)^{-\lambda}.
\end{align*}
It is of positive type if and only if $\lambda\in\calW$ (see \cite[Proposition XIII.1.2 \&\ Theorem XIII.2.4]{FK94}). Denote by $\calH^2_\lambda(T_\Omega)$ the Hilbert space of holomorphic functions on $T_\Omega$ which has the function $\Delta(\frac{z-\overline{w}}{2i})^{-\lambda}$ as reproducing kernel. For $\lambda>1+(r-1)d$ this space coincides with the space of holomorphic functions $F$ on $T_\Omega$ such that
\begin{align*}
 \int_{T_\Omega}{|F(z)|^2\Delta(y)^{\lambda-\frac{2n}{r}}\td x\td y} < \infty,
\end{align*}
where $z=x+iy\in T_\Omega$ (see \cite[Chapter XIII.1]{FK94}). For every $\lambda\in\calW$ there is an irreducible unitary representation $\pi_\lambda^{T_\Omega}$ of $\widetilde{G}$ on $\calH_\lambda^2(T_\Omega)$. Note that $\widetilde{G}$ acts on $T_\Omega$ by composition of the action of $G=\Aut(T_\Omega)_0$ on $T_\Omega$ with the covering map $\widetilde{G}\to G$. Then the representation $\pi_\lambda^{T_\Omega}$ is given by
\begin{align*}
 \pi_\lambda^{T_\Omega}(g)F(z) &= \mu_\lambda^{T_\Omega}(g^{-1},z)F(g^{-1}z), & g\in\widetilde{G},z\in T_\Omega,
\end{align*}
where the cocycle $\mu_\lambda^{T_\Omega}(g,z)$ is given by
\begin{align*}
 \mu_\lambda^{T_\Omega}(g,z) &= \Det(Dg(z))^{\frac{r\lambda}{2n}}, & g\in\widetilde{G},z\in T_\Omega,
\end{align*}
the powers being well-defined on the universal cover $\widetilde{G}$.

The representations $\calH^2_\lambda(T_\Omega)$ and $L^2(\calO_\lambda,\td\mu_\lambda)$ are isomorphic, the intertwining operator being the \textit{Laplace transform} (see \cite[Theorems XIII.1.1 \&\ XIII.3.4]{FK94})
\begin{align}
 \calL_\lambda:L^2(\calO_\lambda,\td\mu_\lambda)\to\calH^2_\lambda(T_\Omega),\,\calL_\lambda\psi(z) &:= \int_{\calO_\lambda}{e^{i(z|x)}\psi(x)\td\mu_\lambda(x)}.\label{eq:DefLaplaceTrafo}
\end{align}
The Laplace transform $\calL_\lambda$ is a unitary isomorphism (up to a scalar) intertwining the group actions.

\subsection{The bounded symmetric domain model}\label{sec:BoundedSymmetricDomainModel}

The polynomial $h(x):=\Delta(e-x^2)$, $x\in V$, is $K^L$-invariant and therefore by \cite[Corollary XI.3.4]{FK94} there exists a unique polynomial $h(z,w)$ holomorphic in $z\in V_\CC$ and antiholomorphic in $w\in V_\CC$ such that
\begin{align*}
 h(gz,w) &= h(z,g^*w), & \forall\,g\in L_\CC,\\
 h(x,x) &= h(x), & \forall\,x\in V.
\end{align*}
We also write $h(z):=h(z,z)$ for $z\in V_\CC$. Consider the powers $h(z,w)^{-\lambda}$ for $\lambda\in\CC$ as functions on $\calD\times\calD$. Then $h(z,w)^{-\lambda}$ is positive definite on $\calD\times\calD$ if and only if $\lambda\in\calW$ (see \cite[Theorem XIII.2.7]{FK94}). The corresponding Hilbert space of holomorphic functions on $\calD$ with reproducing kernel $h(z,w)^{-\lambda}$ will be denoted by $\calH^2_\lambda(\calD)$. For $\lambda>1+(r-1)d$ this space coincides with the space of holomorphic functions $f$ on $\calD$ such that
\begin{align*}
 \int_\calD{|f(w)|^2h(w)^{\lambda-\frac{2n}{r}}\td u\td v} < \infty,
\end{align*}
where $w=u+iv\in\calD$ (see \cite[Proposition XIII.1.4]{FK94}). For each $\lambda\in\calW$ there is an irreducible unitary representation $\pi_\lambda^\calD$ of $\widetilde{G}$ on $\calH^2_\lambda(\calD)$. To give an explicit formula recall the isomorphism $\alpha:G\to\Aut(\calD)_0$ defined in \eqref{eq:DefinitionAlpha} and view it as a covering map $\alpha:\widetilde{G}\to\Aut(\calD)_0$. Then the representation $\pi_\lambda^\calD$ is given by
\begin{align*}
 \pi_\lambda^\calD(g)f(w) &= \mu_\lambda^\calD(g^{-1},w)f(\alpha(g)^{-1}w), & g\in\widetilde{G},w\in\calD,
\end{align*}
where the cocycle $\mu_\lambda^\calD(g,w)$ is given by
\begin{align*}
 \mu_\lambda^\calD(g,w) &= \Det(D(\alpha(g))(w))^{\frac{r\lambda}{2n}}, & g\in\widetilde{G},w\in\calD,
\end{align*}
the powers being well-defined on the universal cover $\widetilde{G}$.

The representations $\calH^2_\lambda(\calD)$ and $\calH^2_\lambda(T_\Omega)$ are isomorphic, the intertwining operator being given by
\begin{align*}
 \gamma_\lambda:\calH^2_\lambda(T_\Omega)\to\calH^2_\lambda(\calD),\,\gamma_\lambda F(w):=\Delta(e-w)^{-\lambda}F(c(w)),
\end{align*}
where $c(w)$ is the Cayley transform defined in \eqref{eq:DefCayleyTransform} (see \cite[Proposition XIII.1.3 \&\ Theorem XIII.3.4]{FK94}). The operator $\gamma_\lambda$ is unitary (up to a scalar) and intertwines the group actions.

\subsection{$\frakk$-type decompositions}\label{sec:KTypeDecompositions}

In the bounded symmetric domain model the $\widetilde{K}$-type decomposition is very explicit. Let $\lambda\in\calW$ and $0\leq k\leq r$ such that $\calO_\lambda=\calO_k$. Then $\calH^2_\lambda(\calD)$ decomposes into the direct Hilbert space sum
\begin{align*}
 \calH^2_\lambda(\calD) &= \bigoplushat_{{\bf m}\geq0,\,m_{k+1}=0}{\,\calP_{\bf m}(V_\CC)},
\end{align*}
each summand $\calP_{\bf m}(V_\CC)$ is irreducible under the action $\pi^\calD_\lambda(\widetilde{K})$ and on it the norm is given by (see \cite[Theorem XIII.2.7]{FK94})
\begin{align}
 \|p\|_{\calH^2_\lambda(\calD)}^2 &= \frac{\left(\frac{n}{r}\right)_{\bf m}}{(\lambda)_{\bf m}}\|p\|_\Sigma^2, & p\in\calP_{\bf m}(V_\CC).\label{eq:NormOnBdSymDomainModel}
\end{align}
Further, the $\frakk^\frakl$-spherical vector in each $\widetilde{K}$-type $\calP_{\bf m}(V_\CC)$ is the spherical polynomial $\Phi_{\bf m}(z)$ which has norm
\begin{align*}
 \|\Phi_{\bf m}\|^2_{\calH^2_\lambda(\calD)} &= \frac{(\frac{n}{r})_{\bf m}}{d_{\bf m}(\lambda)_{\bf m}}.
\end{align*}

Correspondingly the underlying $(\frakg,\widetilde{K})$-module $W^\lambda$ of the Schr\"odinger model $L^2(\calO_\lambda,\td\mu_\lambda)$ decomposes into $\widetilde{K}$-types
\begin{align*}
 W^\lambda &= \bigoplus_{{\bf m\geq0},\,m_{k+1}=0}{W^\lambda_{\bf m}},
\end{align*}
where $W^\lambda_{\bf m}=\calL_\lambda^{-1}\circ\gamma_\lambda^{-1}(\calP_{\bf m}(V_\CC))$. The $\frakk^\frakl$-spherical vector in $W^\lambda_{\bf m}$ is given by the Laguerre function $\ell_{\bf m}^\lambda(x)$ defined in Section \ref{sec:Polynomials} (see \cite[Proposition XV.4.2]{FK94}) which has norm (see \cite[Corollary XV.4.3~(i)]{FK94})
\begin{align}
 \|\ell_{\bf m}^\lambda\|_{L^2(\calO_\lambda,\td\mu_\lambda)}^2 &= \frac{(\frac{n}{r})_{\bf m}(\lambda)_{\bf m}}{d_{\bf m}}.\label{eq:NormLaguerreFunctions}
\end{align}

\subsection{Complexification of the Schr\"odinger model}\label{sec:ComplexificationSchrödinger}

The infinitesimal action $\td\pi_\lambda$ in the Schr\"odinger model is given by second order differential operators on $\calO_\lambda$ with polynomial coefficients. Hence the action can be extended to an action $\td\pi_\lambda^\CC$ of $\frakg$ on $C^\infty(\calX_\lambda)$ by holomorphic differential operators. More precisely, let $D$ be a differential operator on $V$ with polynomial coefficients. Choose some basis $e_1,\ldots,e_n$ of $V$ and write $x=\sum_{j=1}^n{x_je_j}\in V$. Then $D$ is in coordinates given by
\begin{align*}
 D = \sum_{\alpha\in\NN_0^n}{c_\alpha(x)\frac{\partial^{|\alpha|}}{\partial x_1^{\alpha_1}\cdots\partial x_n^{\alpha_n}}}
\end{align*}
with polynomials $c_\alpha(x)$ of which only finitely many are non-zero. The coefficients extend uniquely to holomorphic polynomials $c_\alpha(z)$ on $V_\CC$. We define the complexification $D^\CC$ of $D$ to a holomorphic differential operator on $V_\CC$ by
\begin{align*}
 D^\CC &:= \sum_{\alpha\in\NN_0^n}{c_\alpha(z)\frac{\partial^{|\alpha|}}{\partial z_1^{\alpha_1}\cdots\partial z_n^{\alpha_n}}},
\end{align*}
where
\begin{align*}
 \frac{\partial}{\partial z_j} &= \frac{1}{2}\left(\frac{\partial}{\partial x_j}-i\frac{\partial}{\partial y_j}\right)
\end{align*}
denotes the \textit{Wirtinger derivative} for $z_j=x_j+iy_j$ in coordinates $z=\sum_{j=1}^n{z_je_j}\in V_\CC$. The operator $D^\CC$ is a complexification of $D$ in the sense that for a holomorphic function $f$ on $V_\CC$ we have
\begin{align*}
 (D^\CC f)|_V &= D(f|_V).
\end{align*}
Applying this procedure to $\td\pi_\lambda(X)$ for $X\in\frakg$ we put
\begin{align*}
 \td\pi_\lambda^\CC(X) &:= \td\pi_\lambda(X)^\CC.
\end{align*}
It remains to show that these holomorphic differential operators are actually tangential to the orbit $\calX_\lambda$ and hence define an action $\td\pi_\lambda^\CC$ of $\frakg$ on $C^\infty(\calX_\lambda)$. For this we use the Schr\"odinger model of certain unipotent representations of the complex group $G_\CC=\Co(V_\CC)$ (the conformal group of the complex Jordan algebra $V_\CC$), viewed as a real Lie group.

In \cite[Proposition 2.14]{HKM12} the authors construct a representation $\td\tau_\lambda$ of $\frakg_\CC$, viewed as a real Lie algebra, on $C^\infty(\calX_\lambda)$ for each $\lambda\in\calW$. It is explicitly given by
\begin{align*}
 \td\tau_\lambda(u,0,0) &= i(u|z)_W,\\
 \td\tau_\lambda(0,T,0) &= \partial_{T^\# z}+\frac{r\lambda}{2n}\Tr_W(T^\#),\\
 \td\tau_\lambda(0,0,v) &= i(v|\calB_\lambda^W)_W.
\end{align*}
By $\Tr_W$ we mean the real trace of an operator on the real vector space $W$. Note that $\td\tau_\lambda$ does not act via holomorphic differential operators, but via real differential operators up to second order on $\calX_\lambda$. Further, its is shown in \cite[Theorem 1.12]{HKM12} that the representation $\td\tau_\lambda$ is infinitesimally unitary with respect to the inner product of $L^2(\calX_\lambda,\td\nu_\lambda)$.

We have the following result relating $\td\tau_\lambda$ to the complexification $\td\pi_\lambda^\CC$:

\begin{proposition}\label{prop:PiCC}
For $X\in\frakg$ we have
\begin{align}
 \td\pi_\lambda^\CC(X) &= \frac{1}{2}\left(\td\tau_\lambda(X)-i\td\tau_\lambda(iX)\right).\label{eq:ComplexifiedRep}
\end{align}
In particular, for every $X=(u,T,v)\in\frakg$ and all $F,G\in C^\infty(\calX_\lambda)$ we have
\begin{equation*}
 \int_{\calX_\lambda}{\td\pi_\lambda^\CC(u,T,v)F(z)\cdot G(z)\td\nu_\lambda(z)} = \int_{\calX_\lambda}{F(z)\cdot\td\pi_\lambda^\CC(u,-T,v)G(z)\td\nu_\lambda(z)}.
\end{equation*}
\end{proposition}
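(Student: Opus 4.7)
The plan is to split the statement into the algebraic identity \eqref{eq:ComplexifiedRep} and the bilinear integration-by-parts statement, and to treat each on the three generating subspaces $(u,0,0)$, $(0,T,0)$, $(0,0,v)$ of $\frakg$ (with $u,v\in V$ and $T\in\frakl$).

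For \eqref{eq:ComplexifiedRep}, the backbone of all three checks is the linear-algebraic identity
\[
 (w|z)_W - i\,(iw|z)_W \;=\; 2(w|z)\qquad(w,z\in V_\CC),
\]
coming from $(\cdot|\cdot)_W = 2\Re(\cdot|\cdot)$, together with its analogues for the Wirtinger derivative, namely $\partial_w^{\RR}-i\,\partial_{iw}^{\RR}=2\,\partial_w^{\text{Wirt}}$, and for the trace, $\Tr_W T - i\,\Tr_W(iT) = 2\Tr T$ (the second summand vanishes because $\Tr_W$ is zero on $i\cdot\End_\CC(V_\CC)$). Substituted into the explicit formulas for $\td\tau_\lambda(X)$ and $\td\tau_\lambda(iX)$, these directly reproduce $\td\pi_\lambda^\CC(X)$ on $(u,0,0)$ and $(0,T,0)$. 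The case $(0,0,v)$ requires the additional observation that the gradient on $W$ with respect to $(\cdot|\cdot)_W$ coincides with the holomorphic Wirtinger gradient of $V_\CC$, obtained by computing the $(\cdot|\cdot)_W$-dual of the real basis $(e_\alpha,ie_\alpha)$ of $W$; this identifies $\calB_\lambda^W$ with $\calB_\lambda^{V_\CC}$ as differential operators and closes this case by the same scalar identity. Tangentiality of $\td\pi_\lambda^\CC(X)$ to $\calX_\lambda$ is then immediate from that of $\td\tau_\lambda(X)$ and $\td\tau_\lambda(iX)$.

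For the bilinear identity, the case $(u,0,0)$ is trivial since $\td\pi_\lambda^\CC(u,0,0)$ is multiplication by $i(u|z)$ and hence symmetric under the bilinear pairing $\int FG\,\td\nu_\lambda$. For $(0,T,0)$ I would use the $L_\CC$-equivariance $g_{\ast}\td\nu_\lambda = \chi_W(g)^{-\lambda}\td\nu_\lambda$: differentiating at $g=e$ in the directions $T^\#$ and $iT^\#$ and combining via the Wirtinger decomposition from step one yields
\[
 \int_{\calX_\lambda}\partial_{T^\#z}^{\text{Wirt}}h\,\td\nu_\lambda \;=\; -\frac{r\lambda}{n}\Tr T^\#\int_{\calX_\lambda}h\,\td\nu_\lambda.
\]
Taking $h=FG$ and substituting into $\td\pi_\lambda^\CC(0,T,0)=\partial_{T^\#z}^{\text{Wirt}}+\tfrac{r\lambda}{2n}\Tr T^\#$, the explicit trace term cancels half of the boundary contribution and the surviving piece is $-\int F\,\partial_{T^\#z}^{\text{Wirt}}G\,\td\nu_\lambda - \tfrac{r\lambda}{2n}\Tr T^\#\int FG\,\td\nu_\lambda = \int F\cdot\td\pi_\lambda^\CC(0,-T,0)G\,\td\nu_\lambda$. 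The case $(0,0,v)$ requires the second-order analog: formal self-adjointness of the scalar operator $(v|\calB_\lambda^{V_\CC})$ against $\td\nu_\lambda$ under the bilinear pairing. This can be derived by combining the Bessel product rule \eqref{eq:BesselProdRule} with the first-order integration-by-parts above, or alternatively by holomorphic extension from the Schr\"odinger analog on $L^2(\calO_\lambda,\td\mu_\lambda)$, which is a direct consequence of the infinitesimal unitarity of $\td\pi_\lambda$.

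The main obstacle I anticipate is the $(0,0,v)$-case of the bilinear identity: the straightforward conjugation trick available for the first two generators fails for the Bessel piece because, under the identification $\calB_\lambda^W=\calB_\lambda^{V_\CC}$, the operator $i(v|\calB_\lambda^W)_W$ annihilates antiholomorphic functions and so does not enjoy the simple $\overline{\td\tau_\lambda(Y)\bar G}=\td\tau_\lambda(\sigma Y)G$ behaviour. Instead, one must carry out a genuine second-order integration by parts on $(\calX_\lambda,\td\nu_\lambda)$ via the Bessel product rule. Once that is in place, the remaining verifications are routine bookkeeping on the three generator types.
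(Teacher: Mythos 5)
Your treatment of the identity \eqref{eq:ComplexifiedRep} is in substance the same as the paper's: the paper also verifies the identity by the Wirtinger-type splitting on generating subspaces (it uses $\frakn$ and $\overline{\frakn}$ and appeals to the Lie-algebra-homomorphism property to skip $\frakl$; you check all three summands directly, which is equally valid since \eqref{eq:ComplexifiedRep} is a pointwise-linear statement). Your structural observation that the gradient of $W$ with respect to $(\cdot|\cdot)_W$ literally coincides with the Wirtinger gradient of $V_\CC$ — and hence $\calB_\lambda^W = \calB_\lambda^{V_\CC}$ as operators — is correct and amounts to a clean repackaging of the coordinate computation the paper carries out for the $\overline{\frakn}$-case. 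One slip along the way: your justification ``$\Tr_W$ is zero on $i\cdot\End_\CC(V_\CC)$'' is false as stated (e.g.\ $\Tr_W(i\cdot iT)=\Tr_W(-T)\neq0$ generically); what is true, and what you actually need, is that $\Tr_W(iS)=0$ whenever $S$ is the $\CC$-linear extension of a \emph{real} operator on $V$ — which holds here because $T^\#\in\str(V)$.

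For the bilinear integral identity, however, your route diverges from the paper and contains a genuine gap. The paper deduces this part as a one-line consequence of \eqref{eq:ComplexifiedRep} together with the known fact (from [HKM12, Theorem 1.12]) that $\td\tau_\lambda$ is \emph{infinitesimally unitary} on $L^2(\calX_\lambda,\td\nu_\lambda)$: once you know the operators $\td\tau_\lambda(X)$ and $\td\tau_\lambda(iX)$ are skew-Hermitian and have real (for $X$ in $\frakl$) respectively purely imaginary (for $X$ in $\frakn\oplus\overline{\frakn}$) coefficients, the bilinear statement follows by formal conjugation. You instead try to establish the needed formal self-adjointness from scratch. This works for $(u,0,0)$ (multiplication operators are always symmetric) and for $(0,T,0)$ (via differentiating the $L_\CC$-equivariance of $\td\nu_\lambda$ in directions $T^\#$ and $iT^\#$). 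But you correctly flag the $(0,0,v)$ case as the real obstacle, and neither of your two proposed resolutions closes it. The Bessel product rule \eqref{eq:BesselProdRule} leaves a cross term $2P(\partial F/\partial z,\partial G/\partial z)z$ that is not obviously integrable against $\td\nu_\lambda$ by anything weaker than the unitarity you are trying to avoid; and ``holomorphic extension from the Schr\"odinger analog'' is not a well-defined operation here — the identity $\int_{\calO_\lambda}\calB_\lambda f\cdot g\,\td\mu_\lambda=\int_{\calO_\lambda}f\cdot\calB_\lambda g\,\td\mu_\lambda$ lives on a totally real submanifold with an unrelated measure, and there is no analytic-continuation principle that ports a quadratic integral identity from $(\calO_\lambda,\td\mu_\lambda)$ to $(\calX_\lambda,\td\nu_\lambda)$. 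The missing ingredient is precisely the cited unitarity of $\td\tau_\lambda$ on $L^2(\calX_\lambda,\td\nu_\lambda)$; invoking it replaces your hardest case by a trivial reality/imaginarity bookkeeping argument.
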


\begin{remark}
The formula \eqref{eq:ComplexifiedRep} can be understood as an analog of the Wirtinger derivative
\begin{align*}
 \frac{\partial}{\partial z} &= \frac{1}{2}\left(\frac{\partial}{\partial x}-i\frac{\partial}{\partial y}\right).
\end{align*}
\end{remark}

\begin{proof}[{Proof of Proposition \ref{prop:PiCC}}]
First note that since $\frakn$ and $\overline{\frakn}$ together generate $\frakg$ as a Lie algebra it suffices to show \eqref{eq:ComplexifiedRep} for $X\in\frakn$ and $X\in\overline{\frakn}$. Further note that for $z\in V_\CC$ we have
\begin{align}
 (a|z) &= \frac{1}{2}\left((a|z)_W-i(ia|z)_W\right), & a\in V.\label{eq:WirtingerForInnerProduct}
\end{align}
Then \eqref{eq:ComplexifiedRep} is immediate for $X\in\frakn$. Now let $X=(0,0,a)\in\overline{\frakn}$. Let $(e_\alpha)$ be any orthonormal basis of $V$ with respect to the trace form $(-|-)$. Write $x=\sum_\alpha{x_\alpha e_\alpha}$. We view $W=V_\CC$ as a real Jordan algebra. The vectors
\begin{align*}
 f_\alpha &:= \frac{1}{\sqrt{2}}e_\alpha & \mbox{and} && g_\alpha &:= \frac{1}{\sqrt{2}}ie_\alpha
\end{align*}
constitute an $\RR$-basis of $W$. Its dual basis with respect to the trace form $(-|-)_W$ is given by $(\widehat{f}_\alpha:=f_\alpha)_\alpha\cup(\widehat{g}_\alpha:=-g_\alpha)_\alpha$. We write $z=\sum_\alpha{z_\alpha e_\alpha}=\sum_\alpha{(a_\alpha f_\alpha+b_\alpha g_\alpha)}$ with $a_\alpha,b_\alpha\in\RR$ and $z_\alpha=\frac{1}{\sqrt{2}}(a_\alpha+ib_\alpha)$. Hence,
\begin{align*}
 \frac{\partial}{\partial a_\alpha} &= \frac{1}{\sqrt{2}}\frac{\partial}{\partial x_\alpha} & \mbox{and} && \frac{\partial}{\partial b_\alpha} &= \frac{1}{\sqrt{2}}\frac{\partial}{\partial y_\alpha}
\end{align*}
with $z_\alpha=x_\alpha+iy_\alpha$, $x_\alpha,y_\alpha\in\RR$. With \eqref{eq:WirtingerForInnerProduct} we have
\begin{align*}
 & \frac{1}{2}\left(\td\tau_\lambda(X)-i\td\tau_\lambda(iX)\right) = \frac{i}{2}\left((a|\calB_\lambda^W)-i(ia|\calB_\lambda^W)\right)\\
 ={}& i\sum_{\alpha,\beta}{\Bigg(\frac{\partial^2}{\partial a_\alpha\partial a_\beta}(a|P(\widehat{f}_\alpha,\widehat{f}_\beta)z)+2\frac{\partial^2}{\partial a_\alpha\partial b_\beta}(a|P(\widehat{f}_\alpha,\widehat{g}_\beta)z)}\\
 & \hspace{1.7cm}{+\frac{\partial^2}{\partial b_\alpha\partial b_\beta}(a|P(\widehat{g}_\alpha,\widehat{g}_\beta)z)\Bigg)}+i\lambda\sum_\alpha{\Bigg(\frac{\partial}{\partial a_\alpha}\widehat{f}_\alpha+\frac{\partial}{\partial b_\alpha}\widehat{g}_\alpha\Bigg)}\\
 ={}& \frac{i}{4}\sum_{\alpha,\beta}{\Bigg(\frac{\partial^2}{\partial x_\alpha\partial x_\beta}(a|P(e_\alpha,e_\beta)z)-2i\frac{\partial^2}{\partial x_\alpha\partial y_\beta}(a|P(e_\alpha,e_\beta)z)}\\
 & \hspace{1.5cm}{-\frac{\partial^2}{\partial y_\alpha\partial y_\beta}(a|P(e_\alpha,e_\beta)z)\Bigg)}+\frac{i\lambda}{2}\sum_\alpha{\Bigg(\frac{\partial}{\partial x_\alpha}e_\alpha-i\frac{\partial}{\partial y_\alpha}e_\alpha\Bigg)}\\
 ={}& i\sum_{\alpha,\beta}{\frac{\partial^2}{\partial z_\alpha\partial z_\beta}(a|P(e_\alpha,e_\beta)z)}+i\lambda\sum_\alpha{\frac{\partial}{\partial z_\alpha}e_\alpha} = \td\pi_\lambda^\CC(X).
\end{align*}
This shows \eqref{eq:ComplexifiedRep} for $X\in\overline{\frakn}$ and hence it follows for all $X\in\frakg$.\\
The stated integral formula now follows from \eqref{eq:ComplexifiedRep} using the fact that $\td\tau_\lambda(X)$ is given by skew-adjoint real differential operators operators on $L^2(\calX_\lambda,\td\nu_\lambda)$ with real coefficients if $X=(0,T,0)\in\frakg$ and purely imaginary coefficients if $X=(u,0,v)\in\frakg$.
\end{proof}

Since $\td\tau_\lambda$ restricts to an action on $C^\infty(\calX_\lambda)$ by differential operators, the same is true for $\td\pi_\lambda^\CC$ by the previous proposition. Therefore, $\td\pi_\lambda^\CC$ is a representation of $\frakg$ on $C^\infty(\calX_\lambda)$ by holomorphic polynomial differential operators of order at most $2$.

\newpage
\section{Bessel functions on Jordan algebras}\label{sec:BesselFunctions}

In this section we study \textit{$J$-, $I$- and $K$-Bessel functions} on symmetric cones and their boundary orbits. These functions play a fundamental role in the study of Schr\"odinger and Fock models and the intertwining operators between them.

\subsection{$J$-Bessel function}\label{sec:JBessel}

For $\lambda\in\CC$ with $(\lambda)_{\bf m}\neq0$ for all ${\bf m}\geq0$ and $z,w\in V_\CC$ we put
\begin{align*}
 \calJ_\lambda(z,w) &:= \sum_{{\bf m}\geq0}{(-1)^{|{\bf m}|}\frac{d_{\bf m}}{(\frac{n}{r})_{\bf m}(\lambda)_{\bf m}}\Phi_{\bf m}(z,w)}.
\end{align*}
(This notation agrees with the one used in \cite[Chapter XV.2]{FK94}.) One problem is that for a discrete Wallach point $\lambda=k\frac{d}{2}$, $k=0,\ldots,r-1$, we have $(\lambda)_{\bf m}=0$ for all ${\bf m}\geq0$ with $m_{k+1}\neq0$. However, by Lemma \ref{lem:PhimVanishing} we find that for $z\in\overline{\calX_k}$ or $w\in\overline{\calX_k}$:
\begin{align*}
 \calJ_\lambda(z,w) &= \sum_{{\bf m}\geq0, m_{k+1}=0}{(-1)^{|{\bf m}|}\frac{d_{\bf m}}{(\frac{n}{r})_{\bf m}(\lambda)_{\bf m}}\Phi_{\bf m}(z,w)}.
\end{align*}
In this expression the coefficients are non-singular at $\lambda=k\frac{d}{2}$ since $(\lambda)_{\bf m}\neq0$ for all ${\bf m}\geq0$ with $m_{k+1}=0$. Therefore we obtain for each $\lambda\in\calW$ a $J$-Bessel function $\calJ_\lambda(z,w)$ on $V_\CC\times\overline{\calX_\lambda}$. It only remains to show convergence of the defining series.

\begin{lemma}\label{lem:JBesselConvergence}
For $\lambda\in\calW$ the series for $\calJ_\lambda(z,w)$ converges absolutely for all $z\in V_\CC$ and $w\in\overline{\calX_\lambda}$ and the following estimate holds:
\begin{align*}
 |\calJ_\lambda(z,w)| &\leq C(1+|z|\cdot|w|)^{\frac{r(2n-1)}{4}}e^{2r\sqrt{|z|\cdot|w|}} & \forall\,z\in V_\CC,w\in\overline{\calX_\lambda}
\end{align*}
for some constant $C>0$ which depends only on the structure constants of $V$ and $\lambda$.
\end{lemma}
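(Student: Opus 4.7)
The plan is to combine three standard estimates: a pointwise bound on the spherical polynomial $\Phi_{\bf m}(z,w)$, a Stirling lower bound on the Pochhammer symbol $(\lambda)_{\bf m}$, and a recognition of the resulting majorant as a product of classical modified Bessel series.

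First I would prove the pointwise estimate $|\Phi_{\bf m}(z,w)|\leq|z|^{|{\bf m}|}|w|^{|{\bf m}|}$. Applying Cauchy--Schwarz to the reproducing identity of Lemma~\ref{lem:SchurOrthogonalityLemma} with $p=q=\Phi_{\bf m}$, together with the normalisation $\|\Phi_{\bf m}\|_\Sigma^2=1/d_{\bf m}$ from \eqref{eq:L2SigmaNormOfPhim}, yields $|\Phi_{\bf m}(z,w)|^2\leq\Phi_{\bf m}(z,z)\Phi_{\bf m}(w,w)$, the diagonal values being real by Lemma~\ref{lem:PropertiesPhimDoubleVariable}~(3). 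To bound these, I would use the polar decomposition $z=ua$ with $u\in U$ and $a=\sum_ia_ic_i$, $a_i\geq0$, noting that $U$ acts by $(\cdot|\cdot)_W$-isometries. Combining this with the transformation rule $\Phi_{\bf m}(gz,w)=\Phi_{\bf m}(z,g^*w)$ reduces the problem to estimating $\Phi_{\bf m}(a,a)=\Phi_{\bf m}(a^2)$ via Lemma~\ref{lem:PropertiesPhimDoubleVariable}~(2). Standard bounds on the generalised Jack polynomial $\Phi_{\bf m}$ restricted to $\overline\Omega$, together with its homogeneity of degree $|{\bf m}|$, give $\Phi_{\bf m}(a^2)\leq(\max_i a_i^2)^{|{\bf m}|}\leq|z|^{2|{\bf m}|}$.

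Second I would estimate the coefficients $d_{\bf m}/[(n/r)_{\bf m}(\lambda)_{\bf m}]$. The dimension formula for $\calP_{\bf m}(V_\CC)$ yields $d_{\bf m}\leq C(1+|{\bf m}|)^N$ for a fixed exponent $N=N(r,d)$, and Stirling's inequality $(\alpha)_m\geq c_\alpha\,m!\,m^{\alpha-1}$ (valid for $\alpha>0$) gives, in both Wallach regimes with support index $s\in\{k,r\}$, a factorial-squared lower bound
\begin{align*}
 (n/r)_{\bf m}(\lambda)_{\bf m}\geq c\prod_{i\leq s}(m_i!)^2\cdot Q({\bf m})^{-1}
\end{align*}
for some polynomial $Q$ of fixed degree, the effective summation being restricted to $m_{s+1}=0$ as noted before the lemma.

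Finally, these bounds reduce the absolute series to a majorant of the form $\sum_{\bf m}R({\bf m})\prod_{i\leq s}(|z|\,|w|)^{m_i}/(m_i!)^2$ with $R({\bf m})$ of polynomial growth. Dropping the ordering $m_1\geq\ldots\geq m_s$ costs at most a factor $s!$, after which the sum factorises over indices $i\leq s$ into univariate modified Bessel series $\sum_m t^m/(m!)^2=I_0(2\sqrt t)\leq C(1+t)^{-1/4}e^{2\sqrt t}$. The product of $s\leq r$ such factors produces the exponential $e^{2r\sqrt{|z||w|}}$, and the polynomial weight $R({\bf m})$, summed against the remaining factorial factors and the algebraic $(1+t)^{-1/4}$ prefactors from each $I_0$, contributes the polynomial correction $(1+|z|\,|w|)^{r(2n-1)/4}$. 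The main obstacle is not the qualitative growth rate, which falls out cleanly from the $r$-fold product structure, but the careful bookkeeping required to pin down the precise exponent $r(2n-1)/4$ from the interplay between the degree of $d_{\bf m}$, the Stirling subleading corrections in each factor of $(\lambda)_{\bf m}$, and the algebraic prefactors in the asymptotic expansion of $I_0$.
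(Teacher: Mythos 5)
Your proof is correct and reaches the stated estimate. It follows the same overall architecture as the paper's argument — majorise the Bessel series term by term, collapse the Pochhammer coefficients to factorials up to polynomial corrections, factor the majorant into $r$ univariate modified-Bessel-type series, and invoke classical asymptotics — but it handles the pivotal step, a pointwise bound on $|\Phi_{\bf m}(z,w)|$, via a different and arguably cleaner route. The paper manipulates the two-variable spherical polynomial directly: two successive polar decompositions reduce $\Phi_{\bf m}(z,w)$ to $\Phi_{\bf m}(u'b)$ with $b\in\overline\Omega$ depending jointly on $z$ and $w$, then the growth bound \cite[Theorem XII.1.1~(i)]{FK94} gives $|\Phi_{\bf m}(u'b)|\leq b_1^{m_1}\cdots b_k^{m_k}$, and only at the very last step is $|b|\leq|z|\cdot|w|$ used. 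You decouple $z$ and $w$ at the outset: Cauchy--Schwarz applied to the Schur orthogonality identity of Lemma~\ref{lem:SchurOrthogonalityLemma} yields $|\Phi_{\bf m}(z,w)|^2\leq\Phi_{\bf m}(z,z)\Phi_{\bf m}(w,w)$, after which the polar decomposition and the same FK94 one-variable bound give $\Phi_{\bf m}(z,z)=\Phi_{\bf m}(a^2)\leq|z|^{2|{\bf m}|}$. This discards the finer joint dependence on $(z,w)$ earlier than the paper does, but since both proofs retain only $|z|\cdot|w|$ in the final answer, nothing is lost. Two small remarks. First, when you enlarge the summation domain from $\{m_1\geq\cdots\geq m_s\geq0\}$ to all of $\NN_0^s$ you are strictly increasing a sum of nonnegative terms, so you are already on the right side of the inequality and the extra factor $s!$ you invoke is unnecessary. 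Second, you candidly note that your sketch does not pin down the specific exponent $\tfrac{r(2n-1)}{4}$; the paper makes no claim of sharpness here and that exponent is an artefact of its particular bookkeeping (the factor $(1+m)^{n-1}$ from $d_{\bf m}$, the ${_0F_1}$ asymptotics, and the Cauchy--Schwarz on the eigenvalues $b_j$), so this is acceptable so long as any polynomial prefactor would do for the downstream applications — which it does in Sections~\ref{sec:FockModel} and~\ref{sec:BargmannTransform}.
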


\begin{proof}
Let $z,w\in V_\CC$. Then by \cite[Proposition X.3.2]{FK94} there exist $u\in U$ and $a=\sum_{j=1}^r{a_jc_j}$ with $a_1\geq\ldots\geq a_r\geq0$ such that $w=ua=uP(a^{\frac{1}{2}})e$. With Lemma \ref{lem:PropertiesPhimDoubleVariable} (2) we find
\begin{equation*}
 \Phi_{\bf m}(z,w) = \Phi_{\bf m}(z,uP(a^{\frac{1}{2}})e) = \Phi_{\bf m}(P(a^{\frac{1}{2}})u^{-1}z,e) = \Phi_{\bf m}(P(a^{\frac{1}{2}})u^{-1}z).
\end{equation*}
Now suppose further that $w\in\overline{\calX_k}$, $0\leq k\leq r$, and $m_{k+1}=0$. Then $a_{k+1}=\ldots=a_r=0$ and hence $P(a^{\frac{1}{2}})$ projects onto $V(e_k,1)\subseteq\overline{\calX_k}$. Thus we find that $P(a^{\frac{1}{2}})u^{-1}z\in\overline{\calX_k}$ and again by \cite[Proposition X.3.2]{FK94} we can write
\begin{align*}
 P(a^{\frac{1}{2}})u^{-1}z &= u'b,
\end{align*}
where $u'\in U$ and $b=\sum_{j=1}^k{b_jc_j}$, $b_1\geq\ldots\geq b_k\geq0$. Now, by \cite[Theorem XII.1.1~(i)]{FK94} we obtain
\begin{align*}
 |\Phi_{\bf m}(z,w)| &= |\Phi_{\bf m}(u'b)| \leq b_1^{m_1}\cdots b_k^{m_k}.
\end{align*}
We further have the following obvious inequalities (assuming $m_{k+1}=0$ for $\lambda=k\tfrac{d}{2}$)
\begin{align*}
 d_{\bf m} &\leq \dim\calP_{|{\bf m}|}(V_\CC) = {n+|{\bf m}|-1\choose n-1} \leq C_1(1+|{\bf m}|)^{n-1}\\
 &\leq C_1(1+m_1)^{n-1}\cdots(1+m_k)^{n-1},\\
 (\tfrac{n}{r})_{\bf m} &= \prod_{j=1}^r{(\tfrac{n}{r}-(j-1)\tfrac{d}{2})_{m_j}} \geq \prod_{j=1}^r{(1)_{m_j}} = {\bf m}!,\\
 (\lambda)_{\bf m} &= \prod_{j=1}^k{(\lambda-(j-1)\tfrac{d}{2})_{m_j}} \geq \prod_{j=1}^r{(\lambda-(k-1)\tfrac{d}{2})_{m_j}}.
\end{align*}
We abbreviate $\lambda':=\lambda-(k-1)\frac{d}{2}>0$. Putting things together gives
\begin{align*}
 |\calJ_\lambda(z,w)| &\leq \sum_{{\bf m}\geq0, m_{k+1}=0}{\frac{d_{\bf m}}{(\frac{n}{r})_{\bf m}(\lambda)_{\bf m}}|\Phi_{\bf m}(z,w)|}\\
 &\leq C_1\sum_{{\bf m}\in\NN_0^k}{\frac{(1+m_1)^{n-1}\cdots(1+m_k)^{n-1}}{{\bf m}!(\lambda')_{m_1}\cdots(\lambda')_{m_k}}b_1^{m_1}\cdots b_k^{m_k}}\\
 &= C_1\prod_{j=1}^k\left(\sum_{m=0}^\infty{\frac{(1+m)^{n-1}}{m!(\lambda')_m}b_j^m}\right).
\end{align*}
Now note that $mb^m=(b\frac{\td}{\td b})b^m$, so
\begin{align*}
 \sum_{m=0}^\infty{\frac{(1+m)^{n-1}}{m!(\lambda')_m}b^m} &= \left(1+b\frac{\td}{\td b}\right)^{n-1}\sum_{m=0}^\infty{\frac{1}{m!(\lambda')_m}b^m}\\
 &= \left(1+b\frac{\td}{\td b}\right)^{n-1}{_0F_1}(\lambda';b),
\end{align*}
where ${_0F_1}(\beta;z)$ denotes the hypergeometric function. For ${_0F_1}(\beta;z)$ we have the obvious identity
\begin{align*}
 \frac{\td}{\td z}{_0F_1}(\beta;z) &= \frac{1}{\beta}{_0F_1}(\beta+1;z)
\end{align*}
and hence $\left(1+b\frac{\td}{\td b}\right)^{n-1}{_0F_1}(\lambda';b)$ is a linear combination of functions of the type
\begin{align*}
 &b^k{_0F_1}(\lambda'+k;b), & k=0,\ldots,n-1.
\end{align*}
Now by \cite[equations (4.5.2) \&\ (4.8.5)]{AAR99} the asymptotic behaviour of the hypergeometric function as $z\to\infty$ can be estimated by
\begin{align*}
 |{_0F_1}(\beta;z)| &\lesssim |z|^{\frac{1-2\beta}{4}}e^{2|z|^{\frac{1}{2}}}
\end{align*}
and hence we obtain
\begin{align*}
 \left|\left(1+b\frac{\td}{\td b}\right)^{n-1}{_0F_1}(\lambda';b)\right| &\leq C_2(1+|b|)^{\frac{2n-2\lambda'-1}{4}}e^{2\sqrt{b}} & \forall\,b\in[0,\infty),
\end{align*}
the constant $C_2>0$ only depending on $n$ and $\lambda'$. Inserting this into the estimate for $\calJ_\lambda(z,w)$ gives
\begin{align*}
 |\calJ_\lambda(z,w)| &\leq C_1C_2^k\prod_{j=1}^k\left((1+|b_j|)^{\frac{2n-2\lambda'-1}{4}}e^{2\sqrt{b_j}}\right).
\end{align*}
Now note that
\begin{equation*}
 |b| = |u'b| = |P(a^{\frac{1}{2}})u^{-1}z| \leq \|P(a^{\frac{1}{2}})\|\cdot |u^{-1}z| = \|P(a^{\frac{1}{2}})\|\cdot |z|,
\end{equation*}
where $\|P(a^{\frac{1}{2}})\|^2$ is the largest eigenvalue of $P(a^{\frac{1}{2}})^\#P(a^{\frac{1}{2}})=P(a^{\frac{1}{2}})^2$. Since $P(a^{\frac{1}{2}})^2$ acts on $V_{ij}$ by $a_ia_j$, its largest eigenvalue is $a_1^2$. Hence
\begin{align*}
 |b| &\leq a_1|z| \leq |a|\cdot|z| = |w|\cdot|z|.
\end{align*}
Altogether we finally obtain
\begin{align*}
  |\calJ_\lambda(z,w)| &\leq C_1C_2^k\left(\prod_{j=1}^k{(1+|b_j|)}\right)^{\frac{2n-2\lambda'-1}{4}}e^{2(\sqrt{b_1}+\cdots+\sqrt{b_r})}\\
  &\leq C_1C_2^k(1+b_1+\cdots+b_r)^{\frac{r(2n-2\lambda'-1)}{4}}e^{2\sqrt{r}\sqrt{b_1+\cdots+b_r}}\\
  &\leq C_1C_2^kr^{\frac{r(2n-2\lambda'-1)}{8}}(1+|b|)^{\frac{r(2n-2\lambda'-1)}{4}}e^{2r\sqrt{|b|}}\\
  &\leq C(1+|z|\cdot|w|)^{\frac{r(2n-1)}{4}}e^{2r\sqrt{|z|\cdot|w|}}
\end{align*}
with $C=C_1C_2^kr^{\frac{r(2n-1)}{8}}>0$ which gives the claim.
\end{proof}

The estimate obtained in Lemma \ref{lem:JBesselConvergence} is not sharp, but suffices for most of our purposes. Recently, Nakahama~\cite{Nak12} obtained a sharper estimate which we use in Section~\ref{sec:WhittakerVectors} to find explicit Whittaker vectors:

\begin{proposition}[{\cite[Corollary 1.2]{Nak12}}]\label{prop:JBesselSharpEstimate}
For $\lambda\in\calW$ and $k\in\NN_0$ with $\Re\lambda+k>\frac{2n}{r}-1$ there exists a constant $C_{\lambda,k}>0$ such that
\begin{align*}
 |\calJ_\lambda(z^2,e)| &\leq C_{\lambda,k}(1+|z|_1^k)e^{2|\Im z|_1}, & z\in\overline{\calX_\lambda},
\end{align*}
where $|z|_1=\sum_{j=1}^r{|a_j|}$, $z=u\sum_{j=1}^r{a_jc_j}$, $u\in U$, $a_j\in\RR$.
\end{proposition}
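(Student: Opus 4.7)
The estimate refines Lemma~\ref{lem:JBesselConvergence} by replacing the isotropic bound $e^{2r\sqrt{|z|\cdot|w|}}$ with the directional bound $e^{2|\Im z|_1}$, which is sharp in real spectral directions and reflects the oscillatory character of $\calJ_\lambda$. My plan is to proceed in two stages.

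\emph{Base case.} First I would establish the estimate
\begin{align*}
 |\calJ_\lambda(z^2, e)| &\leq C_\lambda\, e^{2|\Im z|_1}, & z \in \overline{\calX_\lambda},
\end{align*}
under the stronger assumption $\Re\lambda > \tfrac{2n}{r} - 1$ (the relative holomorphic discrete series range). In this regime the measure $\td\mu_\lambda$ is absolutely continuous on $\Omega$ with smooth density, and inverting the Laplace transform identity of Lemma~\ref{lem:LaplaceTransformPolynomials} (applied to the constant polynomial) produces a Bochner--Poisson type integral representation
\begin{align*}
 \calJ_\lambda(z^2, e) &= c_\lambda \int_D e^{i(z | y)} \, \td\sigma_\lambda(y),
\end{align*}
where $D \subset V$ is a domain on which the spectrum of $y$ lies in $[-1,1]$ and $\td\sigma_\lambda$ is a finite positive measure. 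Using the polar decomposition $z = u \sum_j a_j c_j$ with $u \in U$ and $a_j \in \RR$, together with the $U$-invariance of $\calJ_\lambda(-, e)$ inherited from the $K^L$-invariance of the $\Phi_{\bf m}$ after analytic continuation, the phase $(z|y)$ is controlled by $|\Im z|_1$ times the spectral radius of $y$; combined with the quadratic substitution $z \mapsto z^2$ this yields the factor of $2$ in the exponent.

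\emph{Reduction via parameter shift.} For general $\lambda \in \calW$ with $\Re\lambda + k > \tfrac{2n}{r} - 1$ I would apply a shift identity in $\lambda$ to reduce to the base case at parameter $\lambda + k$. Starting from the defining series of $\calJ_\lambda(z, e)$ and the Bessel differential equation $\calB_\lambda \calJ_\lambda(-, e) = -e\cdot \calJ_\lambda(-, e)$, combined with the product rule \eqref{eq:BesselProdRule}, one derives a Cherednik--Opdam style recursion expressing $\calJ_\lambda(z^2, e)$ as a differential operator of bounded order in $z$ with polynomial coefficients of total degree at most $k$ in the spectral $\ell^1$-norm $|z|_1$, applied to $\calJ_{\lambda+k}(z^2, e)$. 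Since differentiating a holomorphic function controlled by $e^{2|\Im z|_1}$ preserves this exponential bound up to a constant, the base case applied to $\calJ_{\lambda+k}$ yields the desired bound
\begin{align*}
 |\calJ_\lambda(z^2, e)| &\leq C_{\lambda, k}(1 + |z|_1^k)\, e^{2|\Im z|_1}.
\end{align*}
The discrete Wallach values are accommodated by restricting all identities to the associated variety $\overline{\calX_\lambda}$; by Lemma~\ref{lem:PhimVanishing} the spherical polynomials $\Phi_{\bf m}$ with $m_{k+1} \neq 0$ vanish there, so the apparent singularities $(\lambda)_{\bf m}^{-1}$ in the defining series are harmless and every step extends holomorphically in $\lambda$.

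\emph{Main obstacle.} The crux of the argument is the construction of the shift identity in the reduction step. Unlike the classical one-variable recursion $J_{\nu-1}(z) + J_{\nu+1}(z) = (2\nu/z)J_\nu(z)$, which immediately steps the parameter down at the cost of rational factors, the Jordan-algebra analogue requires a careful combination of the Bessel operator equation with polynomial multiplications to guarantee that each shift step introduces at most one additional power of $|z|_1$ in the polynomial factor rather than a higher power or an unwanted exponential. Nakahama's argument in~\cite{Nak12} circumvents an explicit combinatorial construction by exploiting the representation-theoretic realization of $\calJ_\lambda$ as the integral kernel of the unitary inversion operator on $L^2(\calO_\lambda, \td\mu_\lambda)$ (cf.\ Theorem~\ref{thm:IntroUnitaryInversion}) and propagating estimates from the holomorphic discrete series range into the Wallach regime by holomorphic continuation in $\lambda$.
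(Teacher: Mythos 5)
The paper gives no proof of this proposition: it is imported from Nakahama \cite[Corollary 1.2]{Nak12}, and the bracketed citation is all that the paper supplies. There is therefore no internal proof for you to match, and the question becomes whether your sketch could be completed into a self-contained argument.

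As written it cannot. You name the parameter-shift identity as the ``main obstacle'' and then leave it unconstructed, which means the discrete Wallach range --- the only part of the statement that actually requires passing through $\lambda+k$ --- is not established by your route; you simply defer to what Nakahama does without reproducing it. The base case is also not quite right. If you really had a Bochner-type identity $\calJ_\lambda(z^2,e)=c_\lambda\int_D e^{i(z|y)}\,\td\sigma_\lambda(y)$ with the spectrum of $y\in D$ confined to $[-1,1]$, then $|\Im(z|y)|\le|\Im z|_1\cdot\rho(y)\le|\Im z|_1$ would give only $e^{|\Im z|_1}$, a factor of two short of the claimed bound. The factor of two is \emph{not} produced by the substitution $z\mapsto z^2$, as you assert: in the rank-one model of Example~\ref{ex:JBesselRank1} it is already present in the normalization $\calJ_\lambda(w,e)=\Gamma(\lambda)\,\widetilde J_{\lambda-1}\bigl(2\sqrt{\tr w}\,\bigr)$, and the role of $w\mapsto z^2$ is only to make the argument of the classical Bessel function scale linearly in the spectral coordinates $a_j$ of $z$, so that one sees $\widetilde J_{\lambda-1}(2a_j)$ instead of $\widetilde J_{\lambda-1}(2\sqrt{a_j})$. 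To obtain the stated bound either the domain $D$ must have spectral radius two, or the scaling must be absorbed into $\td\sigma_\lambda$; in any case this needs to be tracked through a precise Jordan-algebraic Poisson integral, not inferred from ``inverting'' Lemma~\ref{lem:LaplaceTransformPolynomials}. Finally, the suggestion that Nakahama relies on the unitary inversion operator of Theorem~\ref{thm:IntegralKernelInversionOperator} is orthogonal to the issue of completing your proof --- whatever Nakahama's method, you would still need to show that each step of the recursion adds at most one power of $|z|_1$, and that step is absent.
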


\begin{example}\label{ex:JBesselRank1}
On the rank $1$ orbit we have, using Lemma \ref{lem:PhimVanishing} and Example \ref{ex:SphericalPolynomialsRank1}
\begin{equation*}
 \calJ_\lambda(z,w) = \sum_{m=0}^\infty{\frac{(-1)^m}{m!(\lambda)_m}(z|\overline{w})^m} = \Gamma(\lambda)\widetilde{J}_{\lambda-1}(2\sqrt{(z|\overline{w})}), \qquad z,w\in\calX_1,
\end{equation*}
where $\widetilde{J}_\alpha(z)=(\frac{z}{2})^{-\alpha}J_\alpha(z)$ is the classical \textit{renormalized $J$-Bessel function} which is an even entire function on $\CC$.
\end{example}

The following proposition is clear with the results of Section \ref{sec:SphericalPolynomials}:

\begin{proposition}\label{prop:JBesselProperties}
The $J$-Bessel function $\calJ_\lambda(z,w)$ has the following properties:
\begin{enumerate}
\item $\calJ_\lambda(z,w)=\overline{\calJ_\lambda(w,z)}$ for $z\in V_\CC$, $w\in\overline{\calX_\lambda}$,
\item $\calJ_\lambda(gz,w)=\calJ_\lambda(z,g^*w)$ for $z\in V_\CC$, $w\in\overline{\calX_\lambda}$, $g\in L_\CC$.
\end{enumerate}
\end{proposition}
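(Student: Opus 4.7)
The plan is to derive both identities term-by-term from the corresponding properties of the spherical polynomials $\Phi_{\bf m}(z,w)$ established in Lemma \ref{lem:PropertiesPhimDoubleVariable} together with their defining $L_\CC$-equivariance. Since $\lambda\in\calW\subseteq\RR$ and $d_{\bf m}$, $(\tfrac{n}{r})_{\bf m}$ are positive reals, the scalar coefficients $(-1)^{|{\bf m}|}d_{\bf m}/((\tfrac{n}{r})_{\bf m}(\lambda)_{\bf m})$ in the defining series are real whenever they are nonzero. The absolute convergence already proved in Lemma \ref{lem:JBesselConvergence} lets me exchange complex conjugation and the $L_\CC$-action with the infinite sum.

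For part (1), I would take the complex conjugate of the series defining $\calJ_\lambda(w,z)$ and pull the conjugation inside. Invoking Lemma \ref{lem:PropertiesPhimDoubleVariable}(3), each term $\overline{\Phi_{\bf m}(w,z)}$ becomes $\Phi_{\bf m}(z,w)$, reproducing the series for $\calJ_\lambda(z,w)$.

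For part (2), I would apply the defining equivariance $\Phi_{\bf m}(gz,w)=\Phi_{\bf m}(z,g^*w)$ termwise. A small point to check is that $g^{*}w\in\overline{\calX_\lambda}$ so that the right-hand side lies in the domain where the series makes sense; this follows because each $\calX_k$ is an $L_\CC$-orbit and $\overline{\calX_k}=\calX_0\cup\cdots\cup\calX_k$ is therefore a union of $L_\CC$-orbits, hence $L_\CC$-stable (and $g^*\in L_\CC$).

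The only mildly delicate point is the bookkeeping at the discrete Wallach parameters $\lambda=k\tfrac{d}{2}$, where $(\lambda)_{\bf m}=0$ for indices with $m_{k+1}\neq 0$. As pointed out immediately after the definition of $\calJ_\lambda$, for $w\in\overline{\calX_\lambda}$ such terms are harmless because Lemma \ref{lem:PhimVanishing} shows $\Phi_{\bf m}(z,w)$ itself vanishes there, so the effective sum is over ${\bf m}$ with $m_{k+1}=0$, where every coefficient is finite and the termwise arguments above apply verbatim. For part (2) this restriction is preserved: $g^*w$ still lies in $\overline{\calX_\lambda}$, so the same multi-indices contribute on both sides. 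I do not anticipate any serious obstacle; the proof is essentially a direct transcription of the properties of $\Phi_{\bf m}$ to the generating series.
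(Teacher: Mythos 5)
Your proposal is correct and matches the paper's intent exactly: the text simply declares the proposition ``clear with the results of Section~\ref{sec:SphericalPolynomials}'', which is precisely the termwise transfer from $\Phi_{\bf m}(z,w)$ via Lemma~\ref{lem:PropertiesPhimDoubleVariable}~(3) for part~(1) and the defining $L_\CC$-equivariance for part~(2), using the reality of the coefficients and absolute convergence from Lemma~\ref{lem:JBesselConvergence} to justify the manipulations. You have also correctly flagged and resolved the two small points (stability of $\overline{\calX_\lambda}$ under $L_\CC$, and vanishing of the singular terms at discrete Wallach points) that make the argument rigorous.
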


To prove the differential equation for $\calJ_\lambda(z,w)$ we use the same method as Faraut--Koranyi \cite[Theorem XV.2.6]{FK94} for the one-variable $J$-Bessel function $\calJ(x,e)$. The first step is to calculate the Laplace transform as defined in \eqref{eq:DefLaplaceTrafo}.

\begin{lemma}\label{lem:LaplaceTrafoJBessel}
Fix $w\in\overline{\calX_\lambda}$ and consider the function $\calJ_\lambda(-,w)$ on $V$ given by $x\mapsto\calJ_\lambda(x,w)$. The Laplace transform of $\calJ_\lambda(-,w)$ is given by
\begin{align*}
 \left(\calL_\lambda\calJ_\lambda(-,w)\right)(z) &= 2^{r\lambda}\Delta(-iz)^{-\lambda}e^{-i(z^{-1}|\overline{w})}, & z\in T_\Omega.
\end{align*}
\end{lemma}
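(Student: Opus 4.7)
The plan is to expand $\calJ_\lambda(x,w)$ in its Hua--Kostant--Schmid series, exchange summation with the Laplace integral, apply Lemma \ref{lem:LaplaceTransformPolynomials} termwise, and resum via the reproducing-kernel identity \eqref{eq:ExponentialTraceExpansion}. For each ${\bf m}\geq 0$ the function $x\mapsto\Phi_{\bf m}(x,w)$ lies in $\calP_{\bf m}(V_\CC)$, so Lemma \ref{lem:LaplaceTransformPolynomials} (extended from $y\in\Omega$ to $y\in V_\CC$ with $\Re y\in\Omega$ by analytic continuation in $y$, using the principal branch of $\Delta^{-\lambda}$) applied with $y=-iz$ for $z\in T_\Omega$ gives
\begin{align*}
\int_{\calO_\lambda}{e^{i(z|x)}\Phi_{\bf m}(x,w)\,\td\mu_\lambda(x)} = 2^{r\lambda}(\lambda)_{\bf m}\Delta(-iz)^{-\lambda}\Phi_{\bf m}((-iz)^{-1},w).
\end{align*}
For discrete $\lambda=k\frac{d}{2}$ both sides vanish trivially when $m_{k+1}\neq 0$ (the left by Corollary \ref{cor:HuaKostantSchmid}, the right because $(\lambda)_{\bf m}=0$), so we may restrict the upcoming summation to $\{{\bf m}:m_{k+1}=0\}$, i.e.\ to those terms appearing in the defining series of $\calJ_\lambda(\cdot,w)$ on $\overline{\calX_\lambda}$.

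To justify exchanging sum and integral, observe that for $z=z_1+iz_2\in T_\Omega$ one has $|e^{i(z|x)}|=e^{-(z_2|x)}$, and self-duality of $\overline{\Omega}$ furnishes $c>0$ with $(z_2|x)\geq c|x|$ for all $x\in\calO_\lambda\subseteq\overline{\Omega}$; combining this exponential decay with the bound of Lemma \ref{lem:JBesselConvergence} produces an integrable dominating function, so dominated convergence applies. Summing the termwise identities yields
\begin{align*}
\calL_\lambda\calJ_\lambda(-,w)(z) = 2^{r\lambda}\Delta(-iz)^{-\lambda}\sum_{{\bf m}\geq 0}{(-1)^{|{\bf m}|}\frac{d_{\bf m}}{(\frac{n}{r})_{\bf m}}\Phi_{\bf m}((-iz)^{-1},w)}.
\end{align*}

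Since $\Phi_{\bf m}$ is homogeneous of degree $|{\bf m}|$ in its first argument we have $(-1)^{|{\bf m}|}\Phi_{\bf m}((-iz)^{-1},w)=\Phi_{\bf m}(-(-iz)^{-1},w)$, and the Jordan-algebra identity $(-iz)^{-1}=iz^{-1}$ gives $-(-iz)^{-1}=-iz^{-1}$. Applying \eqref{eq:ExponentialTraceExpansion} with its first argument equal to $-iz^{-1}$ then produces
\begin{align*}
\sum_{{\bf m}\geq 0}{\frac{d_{\bf m}}{(\frac{n}{r})_{\bf m}}\Phi_{\bf m}(-iz^{-1},w)} = e^{(-iz^{-1}|\overline{w})} = e^{-i(z^{-1}|\overline{w})},
\end{align*}
which combined with the previous display gives the claimed formula. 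The main obstacle is the analytic-continuation step: Lemma \ref{lem:LaplaceTransformPolynomials} is stated for $y\in\Omega$, and its extension to $y\in\Omega-iV$ requires a consistent choice of branch for $\Delta(-iz)^{-\lambda}$ on $T_\Omega$ (the same branch that appears in the tube-domain model of Section \ref{sec:TubeDomainModel}); once this is arranged the rest is bookkeeping with absolutely convergent series.
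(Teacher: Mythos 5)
Your proof is correct and follows exactly the paper's route: expand $\calJ_\lambda(\cdot,w)$ in its Peter--Weyl series, push the Laplace transform through termwise via Lemma \ref{lem:LaplaceTransformPolynomials} at $y=-iz$, and resum using \eqref{eq:ExponentialTraceExpansion}, with the sign bookkeeping $(-iz)^{-1}=iz^{-1}$ handled as in the paper. You have simply made explicit two steps the paper treats silently — the analytic continuation of Lemma \ref{lem:LaplaceTransformPolynomials} from $y\in\Omega$ to $\Re y\in\Omega$, and the dominated-convergence justification for swapping $\sum_{\bf m}$ and $\int_{\calO_\lambda}$ (the domination coming from the exponential decay $|e^{i(z|x)}|=e^{-(z_2|x)}\leq e^{-c|x|}$ paired with the absolute-convergence bound in Lemma \ref{lem:JBesselConvergence}, whose proof in fact bounds the full series $\sum_{\bf m}|\cdot|$) — so this is a welcome tightening of the same argument rather than a different one.
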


\begin{proof}
Let $0\leq k\leq r$ be such that $\calO_\lambda=\calO_k$. The we find, using Lemma~\ref{lem:LaplaceTransformPolynomials} and \eqref{eq:ExponentialTraceExpansion}:
\begin{align*}
 & \left(\calL_\lambda\calJ_\lambda(-,w)\right)(z)\\
 ={}& \int_{\calO_\lambda}{e^{i(z|x)}\calJ_\lambda(x,w)\td\mu_\lambda(x)}\\
 ={}& \sum_{{\bf m}\geq0, m_{k+1}=0}{(-1)^{|{\bf m}|}\frac{d_{\bf m}}{(\frac{n}{r})_{\bf m}(\lambda)_{\bf m}}\int_{\calO_\lambda}{e^{i(z|x)}\Phi_{\bf m}(x,w)\td\mu_\lambda(x)}}\\
 ={}& 2^{r\lambda}\sum_{{\bf m}\geq0}{(-1)^{|{\bf m}|}\frac{d_{\bf m}}{(\frac{n}{r})_{\bf m}}\Delta(-iz)^{-\lambda}\Phi_{\bf m}(iz^{-1},w)}\\
 ={}& 2^{r\lambda}\Delta(-iz)^{-\lambda}e^{-i(z^{-1}|\overline{w})}.\qedhere
\end{align*}
\end{proof}

\begin{proposition}\label{prop:DiffEqJBessel}
For $\lambda\in\calW$ the function $\calJ_\lambda(z,w)$ solves the following differential equation:
\begin{align*}
 (\calB_\lambda)_z\calJ_\lambda(z,w) &= -\overline{w}\calJ_\lambda(z,w), & z\in V_\CC,w\in\overline{\calX_\lambda}.
\end{align*}
\end{proposition}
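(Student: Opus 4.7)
Following the one-variable argument of \cite[Theorem~XV.2.6]{FK94}, I reduce the equation to an elementary calculation on the tube domain side via the Laplace transform $\calL_\lambda$ of \eqref{eq:DefLaplaceTrafo}. Its value on $\calJ_\lambda(-,w)$ is already given by Lemma~\ref{lem:LaplaceTrafoJBessel}. I first establish the identity for $\lambda\in\calW_{\cont}$ (where $\overline{\calX_\lambda}=V_\CC$) and then extend to the discrete Wallach points by analytic continuation in $\lambda$.

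\textbf{Intertwining.} A direct computation from $\calB_\lambda=P(\partial/\partial x)x+\lambda\partial/\partial x$ gives
\begin{equation*}
 (\calB_\lambda)_x e^{i(z|x)}=\bigl(-P(z)x+i\lambda z\bigr)e^{i(z|x)}.
\end{equation*}
Pairing with $v\in V$, multiplying by $f(x)$, integrating against $\td\mu_\lambda$, and transferring the operator $(v|\calB_\lambda)$ off $f$ (the requisite bilinear symmetry $\int((v|\calB_\lambda)f)\,g\,\td\mu_\lambda=\int f(v|\calB_\lambda)g\,\td\mu_\lambda$ being a consequence of the skew-adjointness of $\td\pi_\lambda(0,0,v)=i(v|\calB_\lambda)$ on $L^2(\calO_\lambda,\td\mu_\lambda)$ together with the reality of the coefficients of $\calB_\lambda$), and finally recognising $\int xf(x)e^{i(z|x)}\,\td\mu_\lambda=-i\partial_z\calL_\lambda f(z)$, one obtains the key intertwining identity
\begin{equation*}
 \calL_\lambda(\calB_\lambda f)(z)=iP(z)\tfrac{\partial}{\partial z}\calL_\lambda f(z)+i\lambda z\cdot\calL_\lambda f(z),\qquad z\in T_\Omega.
\end{equation*}
The decay needed to discard boundary terms comes from $|e^{i(z|x)}|=e^{-(\Im z|x)}$ with $\Im z\in\Omega$ together with the growth bound of Lemma~\ref{lem:JBesselConvergence}.

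\textbf{Explicit calculation.} Substitute $f=\calJ_\lambda(-,w)$ and the explicit formula from Lemma~\ref{lem:LaplaceTrafoJBessel}. Using the standard identities $\partial_a\Delta(-iz)^{-\lambda}=-\lambda(z^{-1}|a)\Delta(-iz)^{-\lambda}$ and $\partial_a z^{-1}=-P(z)^{-1}a$,
\begin{equation*}
 \tfrac{\partial}{\partial z}\bigl[\Delta(-iz)^{-\lambda}e^{-i(z^{-1}|\overline{w})}\bigr]=\bigl[-\lambda z^{-1}+iP(z)^{-1}\overline{w}\bigr]\Delta(-iz)^{-\lambda}e^{-i(z^{-1}|\overline{w})}.
\end{equation*}
Applying $iP(z)$ and adding $i\lambda z\cdot\calL_\lambda f(z)$, the Jordan identity $P(z)z^{-1}=z$ forces the $\lambda$-terms to cancel, leaving
\begin{equation*}
 \calL_\lambda(\calB_\lambda\calJ_\lambda(-,w))(z)=-\overline{w}\cdot\calL_\lambda(\calJ_\lambda(-,w))(z)=\calL_\lambda(-\overline{w}\,\calJ_\lambda(-,w))(z).
\end{equation*}
Injectivity of $\calL_\lambda$ (unitary onto $\calH^2_\lambda(T_\Omega)$ up to a scalar) delivers the identity on $\calO_\lambda=\Omega$, and holomorphy in $z$ propagates it to all of $V_\CC$.

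\textbf{Discrete Wallach points and main obstacle.} For $\lambda=k\tfrac{d}{2}$ with $0\leq k\leq r-1$ and $w\in\overline{\calX_k}$, Lemma~\ref{lem:PhimVanishing} restricts the series defining $\calJ_\lambda(z,w)$ to the terms with $m_{k+1}=0$; for these the coefficient $(\lambda)_{\bf m}^{-1}=\prod_{i=1}^k(\lambda-(i-1)\tfrac{d}{2})_{m_i}^{-1}$ is holomorphic on a connected complex neighbourhood of the ray $((k-1)\tfrac{d}{2},\infty)$, and $\calB_\lambda$ depends polynomially on $\lambda$. Hence both sides of the claimed equation are holomorphic in $\lambda$ on this neighbourhood and agree on the open subset $\calW_{\cont}$ by the preceding step, so they coincide throughout and in particular at $\lambda=k\tfrac{d}{2}$. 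The only genuinely non-routine step is the integration by parts hidden in the intertwining relation: one must ensure that the growth of $\calJ_\lambda(x,w)$ is dominated by the decay of the Laplace kernel, which is precisely what Lemma~\ref{lem:JBesselConvergence} provides.
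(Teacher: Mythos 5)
Your proof is correct and follows essentially the same route as the paper's: apply the Laplace transform, use the symmetry of $\calB_\lambda$ on $L^2(\calO_\lambda,\td\mu_\lambda)$ to move it onto the exponential kernel, compute explicitly on the tube domain via Lemma~\ref{lem:LaplaceTrafoJBessel}, conclude on $\Omega$ by injectivity of $\calL_\lambda$, propagate to $V_\CC$ by holomorphy in $z$, and extend to the discrete Wallach points by analytic continuation in $\lambda$. You have spelled out two things the paper leaves implicit — why the integration by parts is legitimate (skew-adjointness of $\td\pi_\lambda(0,0,v)$) and why the $\lambda$-continuation is valid at $k\tfrac d2$ (truncation of the series to $m_{k+1}=0$ via Lemma~\ref{lem:PhimVanishing} keeps $(\lambda)_{\bf m}^{-1}$ holomorphic near the point) — but the argument itself is the same.
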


\begin{proof}
First note that it suffices to show the differential equation for $\lambda>(r-1)\frac{d}{2}$, then the general case follows by analytic continuation. Further, it suffices to show the differential equation for $z\in\Omega$ as $\calJ_\lambda(z,w)$ is holomorphic in $z\in V_\CC$. Since the Laplace transform $\calL_\lambda$ is injective on functions on $\Omega$ the differential equation is equivalent to the identity
\begin{equation}
 \calL_\lambda(\calB_\lambda\calJ_\lambda(-,w)) = -\overline{w}\calL_\lambda\calJ_\lambda(-,w).\label{eq:LaplaceTrafoOnJBesselDiffEq}
\end{equation}
Let $z\in T_\Omega$, then using the symmetry of the Bessel operator $\calB_\lambda$ we find
\begin{align*}
 & \left(\calL_\lambda(\calB_\lambda\calJ_\lambda(-,w))\right)(z)\\
 ={}& \int_{\calO_\lambda}{e^{i(z|x)}(\calB_\lambda)_x\calJ_\lambda(x,w)\td\mu_\lambda(x)}\\
 ={}& \int_{\calO_\lambda}{(\calB_\lambda)_xe^{i(z|x)}\calJ_\lambda(x,w)\td\mu_\lambda(x)}\\
 ={}& \int_{\calO_\lambda}{(P(iz)x+i\lambda z)e^{i(z|x)}\calJ_\lambda(x,w)\td\mu_\lambda(x)}\\
 ={}& i\left(P(z)\frac{\partial}{\partial z}+\lambda z\right)\left((\calL_\lambda\calJ_\lambda(-,w)\right)(z).
\end{align*}
Now by Lemma~\ref{lem:LaplaceTrafoJBessel} we have $\left(\calL_\lambda\calJ_\lambda(-,w)\right)(z)=2^{r\lambda}\Delta(-iz)^{-\lambda}e^{-i(z^{-1}|\overline{w})}$. Using $D(x^{-1})=-P(x)^{-1}$ and $\partial_y\Delta(x)=\Delta(x)(y|x^{-1})$ we find
\begin{align*}
 & i\left(P(z)\frac{\partial}{\partial z}+\lambda z\right)\left[\Delta(-iz)^{-\lambda}e^{-i(z^{-1}|\overline{w})}\right]\\
 ={}& i\left(-\lambda P(z)z^{-1}+iP(z)P(z)^{-1}\overline{w}+\lambda z\right)\Delta(-iz)^{-\lambda}e^{-i(z^{-1}|\overline{w})}\\
 ={}& -\overline{w}\Delta(-iz)^{-\lambda}e^{-i(z^{-1}|\overline{w})}
\end{align*}
and \eqref{eq:LaplaceTrafoOnJBesselDiffEq} follows.
\end{proof}

\subsection{$I$-Bessel function}\label{sec:IBessel}

For $\lambda\in\calW$, $z,w\in V_\CC$ and $z\in\overline{\calX_\lambda}$ or $w\in\overline{\calX_\lambda}$ we put
\begin{equation*}
 \calI_\lambda(z,w) := \calJ_\lambda(-z,w) = \calJ_\lambda(z,-w) = \sum_{{\bf m}\geq0}{\frac{d_{\bf m}}{(\frac{n}{r})_{\bf m}(\lambda)_{\bf m}}\Phi_{\bf m}(z,w)}.
\end{equation*}
By definition the $I$-Bessel function $\calI_\lambda(z,w)$ also satisfies the estimates in Proposition \ref{lem:JBesselConvergence} and has the same properties as in Proposition \ref{prop:JBesselProperties}

\begin{example}\label{ex:IBesselRank1}
By Example \ref{ex:JBesselRank1} it is immediate that on the rank $1$ orbit we have
\begin{align*}
 \calI_\lambda(z,w) &= \Gamma(\lambda)\widetilde{I}_{\lambda-1}(2\sqrt{(z|\overline{w})}), & z,w\in\calX_1,
\end{align*}
where $\widetilde{I}_\alpha(z)=(\frac{z}{2})^{-\alpha}I_\alpha(z)$ is the \textit{classical renormalized $I$-Bessel function} which is an even entire function on $\CC$.
\end{example}

\begin{lemma}\label{lem:IBesselIntFormula}
For $\lambda\in\calW$ and $y\in\Omega$, $z\in V_\CC$
\begin{align*}
 \int_{\calO_\lambda}{e^{-(x|y)}\calI_\lambda(x,z)\td\mu_\lambda(x)} &= 2^{r\lambda}\Delta(y)^{-\lambda}e^{(y^{-1}|\overline{z})}.
\end{align*}
\end{lemma}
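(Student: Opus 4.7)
The plan is to deduce the formula directly from Lemma~\ref{lem:LaplaceTrafoJBessel} via a substitution, exploiting the elementary identity $\calI_\lambda(x,z)=\calJ_\lambda(x,-z)$ built into the definition of the $I$-Bessel function.

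First, I will substitute $z\mapsto iy$ in the formula of Lemma~\ref{lem:LaplaceTrafoJBessel}. For $y\in\Omega$ we have $iy\in i\Omega\subseteq T_\Omega$, so the formula applies. Using $(iy|x)=i(x|y)$ one obtains $e^{i(iy|x)}=e^{-(x|y)}$; furthermore $\Delta(-i(iy))=\Delta(y)$ and $(iy)^{-1}=-iy^{-1}$, whence $-i((iy)^{-1}|\overline{w})=-(y^{-1}|\overline{w})$. Together these rewrite Lemma~\ref{lem:LaplaceTrafoJBessel} as
\begin{equation*}
 \int_{\calO_\lambda}e^{-(x|y)}\calJ_\lambda(x,w)\td\mu_\lambda(x)=2^{r\lambda}\Delta(y)^{-\lambda}e^{-(y^{-1}|\overline{w})}
\end{equation*}
for $y\in\Omega$ and $w\in\overline{\calX_\lambda}$.

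Second, I will substitute $w=-z$. Since $\calX_\lambda$ is an $L_\CC$-orbit and the scalar transformation $-\id$ lies in $L_\CC$, the set $\overline{\calX_\lambda}$ is closed under negation, so the substitution is admissible for $z\in\overline{\calX_\lambda}$ (which is the relevant range; for $\lambda$ in the continuous part $\overline{\calX_\lambda}=V_\CC$). By definition $\calJ_\lambda(x,-z)=\calI_\lambda(x,z)$ and $e^{-(y^{-1}|\overline{-z})}=e^{(y^{-1}|\overline{z})}$, so the claimed formula follows at once.

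There is no real obstacle, since the substitution-based proof is essentially automatic once Lemma~\ref{lem:LaplaceTrafoJBessel} is in hand. As an alternative, more hands-on argument, one could imitate the proof of Lemma~\ref{lem:LaplaceTrafoJBessel} verbatim: expand $\calI_\lambda(x,z)=\sum_{{\bf m}\geq0,\,m_{k+1}=0}\frac{d_{\bf m}}{(\frac{n}{r})_{\bf m}(\lambda)_{\bf m}}\Phi_{\bf m}(x,z)$ (with $k$ such that $\calO_\lambda=\calO_k$), justify the interchange of sum and integral using the exponential estimate of Lemma~\ref{lem:JBesselConvergence} together with the decay of $e^{-(x|y)}$ on $\calO_\lambda$ for $y\in\Omega$, apply Lemma~\ref{lem:LaplaceTransformPolynomials} to each polynomial $\Phi_{\bf m}(-,z)\in\calP_{\bf m}(V_\CC)$, and recognize the resulting series as the expansion \eqref{eq:ExponentialTraceExpansion} of $e^{(y^{-1}|\overline{z})}$. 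The mild technical step in this variant is the verification that $\Phi_{\bf m}(-,z)$ indeed lies in $\calP_{\bf m}(V_\CC)$, which follows from the transformation property $\Phi_{\bf m}(gz',w)=\Phi_{\bf m}(z',g^*w)$ evaluated on the open orbit $L_\CC\cdot e$ and polynomial continuation. The substitution approach sketched above neatly bypasses this bookkeeping.
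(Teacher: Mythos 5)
Your proof is correct and is exactly the route the paper takes: the paper's own proof of this lemma is the single sentence ``This follows immediately from Lemma~\ref{lem:LaplaceTrafoJBessel},'' and your substitutions $z\mapsto iy$ (with $iy\in i\Omega\subseteq T_\Omega$) followed by $w\mapsto -z$ unpack that claim faithfully. One observation worth keeping: your careful restriction to $z\in\overline{\calX_\lambda}$ is not merely a convenience but is in fact the correct domain of validity --- for a discrete Wallach point $\lambda = k\tfrac{d}{2}$ with $k<r$ and $z\notin\overline{\calX_\lambda}$, expanding both sides against \eqref{eq:ExponentialTraceExpansion} shows the left side produces only $\sum_{m_{k+1}=0}\tfrac{d_{\bf m}}{(n/r)_{\bf m}}\Phi_{\bf m}(y^{-1},z)$ while the right side $e^{(y^{-1}|\overline{z})}$ contains the additional terms with $m_{k+1}\neq 0$, which do not vanish since $y^{-1}\in\Omega$ and $z\notin\overline{\calX_k}$ (this is already visible in the degenerate case $\lambda=0$, where the left side is the constant $1$ but the right side is $e^{(y^{-1}|\overline{z})}$). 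So the hypothesis ``$z\in V_\CC$'' in the lemma's statement is a mild over-claim; the paper only ever invokes the lemma with its second argument in $\overline{\calX_\lambda}$, which is precisely what your argument (and Lemma~\ref{lem:LaplaceTrafoJBessel}) delivers.
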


\begin{proof}
This follows immediately from Lemma~\ref{lem:LaplaceTrafoJBessel}.
\end{proof}

\begin{proposition}\label{prop:DiffEqIBessel}
For $\lambda\in\calW$ the function $\calI_\lambda(z,w)$ solves the following differential equation:
\begin{align*}
 (\calB_\lambda)_z\calI_\lambda(z,w) &= \overline{w}\calI_\lambda(z,w), & z,w\in\calX_\lambda.
\end{align*}
\end{proposition}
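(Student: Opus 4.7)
The plan is to deduce this differential equation directly from Proposition \ref{prop:DiffEqJBessel} via the simple identity
\[
 \calI_\lambda(z,w) = \calJ_\lambda(z,-w),
\]
which is already recorded at the very beginning of Section \ref{sec:IBessel}. Indeed, since $\Phi_{\bf m}(z,w)$ is antiholomorphic of degree $|{\bf m}|$ in $w$, we have $\Phi_{\bf m}(z,-w)=(-1)^{|{\bf m}|}\Phi_{\bf m}(z,w)$, and the power series definitions of $\calJ_\lambda$ and $\calI_\lambda$ match term by term.

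Next, I would apply the Bessel operator, which acts only in the variable $z$ and therefore treats $-w$ as a parameter. Writing $w':=-w$ and invoking Proposition \ref{prop:DiffEqJBessel} for $w'\in\overline{\calX_\lambda}$ (note $\calX_\lambda$ is invariant under $w\mapsto-w$, as it is an $L_\CC$-orbit through $e_k$ and $-\mathrm{id}\in L_\CC$) gives
\[
 (\calB_\lambda)_z\calI_\lambda(z,w) = (\calB_\lambda)_z\calJ_\lambda(z,w') = -\overline{w'}\,\calJ_\lambda(z,w') = \overline{w}\,\calI_\lambda(z,w),
\]
using $\overline{-w}=-\overline{w}$. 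This yields the claim.

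There is essentially no obstacle: the entire proof is a sign-change reduction to Proposition \ref{prop:DiffEqJBessel}. As a sanity check, one could alternatively mimic the Laplace-transform argument of Proposition \ref{prop:DiffEqJBessel} directly, using Lemma \ref{lem:IBesselIntFormula} in place of Lemma \ref{lem:LaplaceTrafoJBessel}: applying $\calL_\lambda$ to both sides of the claimed identity and using the symmetry of $\calB_\lambda$ reduces the equation to the identity
\[
 i\left(P(z)\tfrac{\partial}{\partial z}+\lambda z\right)\!\left[\Delta(-iz)^{-\lambda}e^{i(z^{-1}|\overline{w})}\right] = \overline{w}\,\Delta(-iz)^{-\lambda}e^{i(z^{-1}|\overline{w})},
\]
which follows from $D(z^{-1})=-P(z)^{-1}$ and $\partial_y\Delta(z)=\Delta(z)(y|z^{-1})$ exactly as in Proposition \ref{prop:DiffEqJBessel}. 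Either route gives the result.
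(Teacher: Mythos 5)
Your first-paragraph argument is exactly the paper's proof: the paper also reduces Proposition \ref{prop:DiffEqIBessel} to Proposition \ref{prop:DiffEqJBessel} via the identity $\calI_\lambda(z,w)=\calJ_\lambda(z,-w)$. Your added remarks (that $\calX_\lambda$ is stable under $w\mapsto -w$, and the optional Laplace-transform rederivation via Lemma \ref{lem:IBesselIntFormula}) are correct but not needed beyond the one-line reduction.
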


\begin{proof}
Since $\calI_\lambda(z,w)=\calJ_\lambda(z,-w)$ this is equivalent to Proposition \ref{prop:DiffEqJBessel}.
\end{proof}

\subsection{$K$-Bessel function}\label{sec:KBessel}

For $\lambda\in\CC$ and $x\in\Omega$ we put
\begin{align}
 \calK_\lambda(x) &:= \int_\Omega{e^{-\tr(u^{-1})-(x|u)}\Delta(u)^{\lambda-\frac{2n}{r}}\td u}\label{eq:DefKBessel1}\\
 &= \int_\Omega{e^{-\tr(v)-(x|v^{-1})}\Delta(v)^{-\lambda}\td v}.\label{eq:DefKBessel2}
\end{align}
Note that our normalization of the parameter $\lambda$ differs from the one used in \cite{Cle88} and \cite[Chapter XVI.3]{FK94}. By \cite[Proposition XVI.3.1]{FK94} these integrals converge for all $\lambda\in\CC$ and $x\in\Omega$. Since the integrand is positive on $\Omega$ we have $\calK_\lambda(x)>0$ for $x\in\Omega$. To extend the $K$-Bessel function also to lower rank orbits we need the following result due to J.-L. Clerc \cite[Th\'eor\`{e}me 4.1]{Cle88}:

\begin{proposition}\label{prop:KBesselOnLowerRankOrbits}
Let $c\in V$ be an idempotent of rank $k$. Let $\Omega_1$ and $\Omega_0$ be the symmetric cones in the Euclidean Jordan algebras $V(c,1)$ and $V(c,0)$, respectively. Further, let $\calK_\lambda^1$ be the $K$-Bessel function of $\Omega_1$, $\Gamma_{\Omega_0}$ the Gamma function of $\Omega_0$ and $n_0$ and $r_0$ the dimension and rank of $V(c,0)$. Then for $x_1\in\Omega_1$
\begin{align}
 \calK_\lambda(x_1) &= (2\pi)^{k(r-k)\frac{d}{2}}\Gamma_{\Omega_0}\left(\frac{n_0}{r_0}+k\frac{d}{2}-\lambda\right)\calK_\lambda^1(x_1).\label{eq:KBesselOnLowerRankOrbits}
\end{align}
\end{proposition}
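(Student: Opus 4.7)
The plan is to apply the Peirce-type integration formula of Lemma~\ref{lem:IntFormulaOmega} directly to the defining integral \eqref{eq:DefKBessel1} and simplify each factor in the integrand under the substitution $u=\exp(c\Box u_{\frac12})(u_1+u_0)$ with $(u_1,u_{\frac12},u_0)\in\Omega_1\times V(c,\tfrac12)\times\Omega_0$. Two facts make the substitution effective. First, since the Peirce subspaces $V(c,1)$, $V(c,\tfrac12)$, $V(c,0)$ are mutually orthogonal with respect to the trace form and $x_1\in V(c,1)$, only the $V(c,1)$-component of $u$ contributes to $(x_1|u)$; inspecting the explicit formula for $\Phi$ in the proof of Lemma~\ref{lem:IntFormulaOmega} shows that this component equals $u_1+\tfrac12 c(u_{\frac12}(u_{\frac12}u_0))$, so $(x_1|u)=(x_1|u_1)+\tfrac12(x_1|u_{\frac12}(u_{\frac12}u_0))$. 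Second, $c\Box u_{\frac12}$ is traceless as an endomorphism of $V$, hence $\chi(\exp(c\Box u_{\frac12}))=1$, and $\Delta(u)=\Delta_1(u_1)\Delta^0(u_0)$ by the product formula for the Jordan determinant on the direct sum of orthogonal idempotent subalgebras $V(c,1)\oplus V(c,0)$.

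Next I would compute $\tr(u^{-1})$. Writing $g:=\exp(c\Box u_{\frac12})\in\Str(V)$, we have $u^{-1}=(g^\#)^{-1}(u_1+u_0)^{-1}=(g^\#)^{-1}(u_1^{-1}+u_0^{-1})$, so
\begin{equation*}
\tr(u^{-1})=(e\,|\,u^{-1})=(g^{-1}e\,|\,u_1^{-1}+u_0^{-1}).
\end{equation*}
Using $(c\Box u_{\frac12})e=\tfrac12 u_{\frac12}$ and expanding $\exp(-c\Box u_{\frac12})e$ in powers of $u_{\frac12}$, the Peirce components are $e_1=c$ on $V(c,1)$, $e_0=e-c$ on $V(c,0)$, a piece linear in $u_{\frac12}$, and a piece quadratic in $u_{\frac12}$ that projects onto $V(c,1)\oplus V(c,0)$. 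Pairing against $u_1^{-1}+u_0^{-1}$ and combining with the symmetric pairing $(x_1|\tfrac12 c(u_{\frac12}(u_{\frac12}u_0)))$ that appeared above, the cross term simplifies (by the Peirce multiplication rules) so that the dependence on $u_{\frac12}$ in the full exponent is a positive-definite quadratic form $q_{u_0}(u_{\frac12})=\tfrac12(u_{\frac12}\,|\,L(u_0^{-1})u_{\frac12})+\tfrac12(x_1u_{\frac12}\,|\,u_{\frac12}u_0)$ plus the $u_{\frac12}$-independent term $\tr_1(u_1^{-1})+\tr_0(u_0^{-1})+(x_1|u_1)$.

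With this separation the triple integral factors. The Gaussian integral over $V(c,\tfrac12)$ is standard and evaluates to $(2\pi)^{\dim V(c,1/2)/2}\det(\,\cdot\,)^{-1/2}$ where the determinant is of $L(u_0^{-1})|_{V(c,1/2)}$ (the $x_1$-dependent piece of the quadratic form does not contribute to the final count of $\Delta^0(u_0)$-powers because of a Wick-type cancellation). By Proposition~IV.4.2 of \cite{FK94} this determinant equals a power of $\Delta^0(u_0)$, and since $\dim V(c,\tfrac12)=k(r-k)d$ the Gaussian contributes exactly $(2\pi)^{k(r-k)d/2}$ after the $\Delta^0(u_0)^{kd}$ Jacobian from Lemma~\ref{lem:IntFormulaOmega} is absorbed. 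The remaining $u_0$-integral, after the change of variables $u_0\mapsto u_0^{-1}$ (which transforms $\Delta^0$ by \cite[Proposition~III.4.2]{FK94}), becomes precisely the defining integral of the Gamma function $\Gamma_{\Omega_0}\!\left(\tfrac{n_0}{r_0}+k\tfrac{d}{2}-\lambda\right)$ of the cone $\Omega_0$. Finally the $u_1$-integral reproduces the defining integral \eqref{eq:DefKBessel1} for the $K$-Bessel function $\calK_\lambda^1(x_1)$ of $\Omega_1$.

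The principal technical obstacle is the bookkeeping in the second paragraph: showing that when the cross term between $(x_1|u)$ and $\tr(u^{-1})$ is included, the combined integrand in $u_{\frac12}$ is still a quadratic exponential with the prefactor of $\Delta^0(u_0)$ coming out exactly to match the Jacobian $\Delta^0(u_0)^{kd}$ and the $\lambda$-shift $+k\tfrac{d}{2}$ in the argument of $\Gamma_{\Omega_0}$. This is a direct Peirce-calculus computation using only $L(c)|_{V(c,1/2)}=\tfrac12\id$ and the Jordan identity, but it must be executed carefully to produce the exponent $k(r-k)d/2$ and the precise shift $\tfrac{n_0}{r_0}+k\tfrac{d}{2}-\lambda$.
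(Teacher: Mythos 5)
There is a genuine gap: you apply Lemma \ref{lem:IntFormulaOmega} to the \emph{first} integral \eqref{eq:DefKBessel1}, whereas the computation only closes if one starts from the \emph{second} integral \eqref{eq:DefKBessel2}. With \eqref{eq:DefKBessel1} both exponential factors produce $u_{\frac12}$--quadratic pieces, and they do not combine into a form whose Gaussian determinant factors off: writing $g=\exp(c\Box u_{\frac12})$ one has $g^{-1}e=e-\tfrac12 u_{\frac12}+\tfrac14 (u_{\frac12}^2)_1$, so
\begin{equation*}
\tr\big(\phi(u)^{-1}\big)=(g^{-1}e\,|\,u_1^{-1}+u_0^{-1})=\tr_1(u_1^{-1})+\tr_0(u_0^{-1})+\tfrac14\big(L(u_1^{-1})u_{\frac12}\,\big|\,u_{\frac12}\big),
\end{equation*}
so the quadratic piece from the trace involves $u_1^{-1}$, not $u_0^{-1}$ as you write. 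Adding the piece $\tfrac12\big(x_1\,\big|\,c\,(u_{\frac12}(u_{\frac12}u_0))\big)$ coming from $(x_1|\phi(u))$, the $u_{\frac12}$--Gaussian has operator
\begin{equation*}
M \;=\; \tfrac14\,L(u_1^{-1})\big|_{V(c,\frac12)}\;+\;M_2(x_1,u_0),
\end{equation*}
and $\Det(M)^{-1/2}$ entangles $u_1$, $u_0$ and $x_1$ simultaneously — there is no reason it should split as (constant)$\times$(function of $u_1,x_1$). The ``Wick-type cancellation'' you appeal to for the $x_1$--dependent piece is not justified and, as far as I can see, does not occur. A second symptom of the same problem is the exponent on $\Delta^1(u_1)$: from \eqref{eq:DefKBessel1} you would get $\Delta^1(u_1)^{\lambda-\frac{2n}{r}}$, but $\calK_\lambda^1$ via \eqref{eq:DefKBessel1} on $V(c,1)$ wants $\Delta^1(u_1)^{\lambda-\frac{2n_1}{r_1}}$, and these differ by $(r-k)d$ since $\tfrac{n}{r}-\tfrac{n_1}{r_1}=(r-k)\tfrac d2$.

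The correct device, used in the paper, is to start from \eqref{eq:DefKBessel2}: there $\phi(u)^{-1}=\exp\big((e-c)\Box(-u_{\frac12})\big)(u_1^{-1}+u_0^{-1})$ has $V(c,1)$--component exactly $u_1^{-1}$ (since $(e-c)\Box(-u_{\frac12})$ moves each Peirce component strictly \emph{downward}), whence $(x_1|\phi(u)^{-1})=(x_1|u_1^{-1})$ with no $u_{\frac12}$--dependence at all; the only $u_{\frac12}$--quadratic term comes from $\tr(\phi(u))=\tr_1(u_1)+\tfrac14\big(L(u_0)u_{\frac12}\,\big|\,u_{\frac12}\big)+\tr_0(u_0)$ and involves $u_0$ alone; and $\Delta(\phi(u))^{-\lambda}=\Delta^1(u_1)^{-\lambda}\Delta^0(u_0)^{-\lambda}$ so the $u_1$--integral is precisely $\calK_\lambda^1(x_1)$ via \eqref{eq:DefKBessel2} with no exponent mismatch. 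After those three observations the triple integral factors verbatim. You should switch to \eqref{eq:DefKBessel2} and redo the bookkeeping; the rest of your outline (the Peirce decomposition of $g^{-1}e$, the identity $(c\Box u_{\frac12})e=\tfrac12 u_{\frac12}$, the determinant formula of \cite[Prop.~IV.4.2]{FK94}) is then exactly what is needed.
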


\begin{proof}
Denote by $\Delta^1(x)$ and $\Delta^0(x)$ the determinant functions of $V(c,1)$ and $V(c,0)$, respectively. Using Lemma \ref{lem:IntFormulaOmega} we obtain
\begin{align*}
 \calK_\lambda(x_1) ={}& \int_\Omega{e^{-\tr(u)-(x_1|u^{-1})}\Delta(u)^{-\lambda}\td u}\\
 ={}& 2^{-k(r-k)d}\int_{\Omega_1}{\int_{V(c,\frac{1}{2})}{\int_{\Omega_0}{e^{-\tr(\phi(u))}e^{-(x_1|\phi(u)^{-1})}}}}\\
 & \hspace{4cm}{{{\Delta^0(u_0)^{kd}\Delta(\phi(u))^{-\lambda}\td u_0}\td u_{\frac{1}{2}}}\td u_1}
\end{align*}
with $\phi(u)=\exp(c\Box u_{\frac{1}{2}})(u_1+u_0)$. Note that
\begin{align*}
 (c\Box u_{\frac{1}{2}}): V(c,1)&\to 0,\\
 (c\Box u_{\frac{1}{2}}): V(c,{\textstyle\frac{1}{2}})&\to V(c,1),\,x\mapsto c(u_{\frac{1}{2}}x),\\
 (c\Box u_{\frac{1}{2}}): V(c,0)&\to V(c,{\textstyle\frac{1}{2}}),\,x\mapsto u_{\frac{1}{2}}x,
\end{align*}
and hence
\begin{align*}
 \phi(u) &= \underbrace{\left(u_1+\frac{1}{2}c(u_{\frac{1}{2}}(u_{\frac{1}{2}}u_0))\right))}_{\in V(c,1)}+\underbrace{(u_{\frac{1}{2}}u_0)}_{\in V(c,\frac{1}{2})}+\underbrace{u_0}_{\in V(c,0)},\\
\intertext{whence}
 \tr(\phi(u)) &= \tr(u_1)+\frac{1}{4}(L(u_0)u_{\frac{1}{2}}|u_{\frac{1}{2}})+\tr(u_0).
\end{align*}
Further, we have
\begin{align*}
 \phi(u)^{-1} &= \exp(c\Box u_{\frac{1}{2}})^{-\#}(u_1+u_0) = \exp((e-c)\Box(-u_{\frac{1}{2}}))(u_1^{-1}+u_0^{-1})
\intertext{and by the same argument as above}
 (\phi(u)^{-1}|x_1) &= (u_1^{-1}|x_1).
\end{align*}
Finally
\begin{align*}
 \Delta(\phi(u)) &= \Delta(\exp(c\Box u_{\frac{1}{2}})(u_1+u_0))\\
 &= \Det(\exp(c\Box u_{\frac{1}{2}}))^{\frac{r}{n}}\Delta(u_1+u_0)\\
 &= \exp\left(\frac{r}{n}\Tr(c\Box u_{\frac{1}{2}})\right)\Delta^1(u_1)\Delta^0(u_0)\\
\intertext{and since $\Tr(c\Box u_{\frac{1}{2}})=\frac{1}{2}\tr(u_{\frac{1}{2}})=0$ we obtain}
 &= \Delta^1(u_1)\Delta^0(u_0).
\end{align*}
We now first calculate the integral over $\Omega_1$:
\begin{align*}
 \int_{\Omega_1}{e^{-\tr(u_1)-(x_1|u_1^{-1})}\Delta^1(u_1)^{-\lambda}\td u_1} &= \calK^1_\lambda(x_1).
\end{align*}
Next we calculate the integral over $V(c,\frac{1}{2})$:
\begin{align*}
 \int_{V(c,\frac{1}{2})}{e^{-\frac{1}{4}(L(u_0)u_{\frac{1}{2}}|u_{\frac{1}{2}})}\td u_{\frac{1}{2}}} &= \pi^{\frac{1}{2}\dim V(c,\frac{1}{2})}\Det\left(\frac{1}{4}L(u_0)_{V(c,\frac{1}{2})}\right)^{-\frac{1}{2}}
\end{align*}
by the well-known integral formula for Gaussians. Further,
\begin{align*}
 \Det(2L(u_0)|_{V(c,\frac{1}{2})}) &= \Delta^0(u_0)^{kd}
\end{align*}
by {\cite[Propositions IV.4.1 \& IV.4.2]{FK94}}. Finally we deal with the integral over $\Omega_0$:
\begin{align*}
 \int_{\Omega_0}{e^{-\tr(u_0)}\Delta^0(u_0)^{\frac{kd}{2}-\lambda}\td u_0} &= \Gamma_{\Omega_0}\left(\frac{n_0}{r_0}+\frac{kd}{2}-\lambda\right).
\end{align*}
Putting all together we obtain the claimed formula.
\end{proof}

This shows that for $\lambda$ near $k\frac{d}{2}$ the Bessel function $\calK_\lambda(x)$ is defined for $x\in\calO_k$ and hence we obtain Bessel functions $\calK_\lambda$ on $\calO_\lambda$ for $\lambda\in\calW$. Note that by \eqref{eq:KBesselOnLowerRankOrbits} the function $\calK_\lambda$ is positive on $\calO_\lambda$.

\begin{example}\label{ex:KBessel}
\begin{enumerate}
\item For $V=\RR$ we have by \cite[formula 3.471~(9)]{GR65}
\begin{align*}
 \calK_\lambda(x) &= 2\widetilde{K}_{\lambda-1}(2\sqrt{x}), & x\in\Omega=\RR_+,
\end{align*}
where $\widetilde{K}_\alpha(z)=\left(\frac{z}{2}\right)^{-\alpha}K_\alpha(z)$ is the \textit{classical renormalized $K$-Bessel function}.
\item In the general case the Bessel function $\calK_\lambda$ is by Proposition \ref{prop:KBesselOnLowerRankOrbits} on the rank $1$ orbit $\calO_1$ given by
\begin{align*}
 \calK_\lambda(x) &= \const\cdot\calK_\lambda^1(|x|c_1) = \const\cdot\widetilde{K}_{\lambda-1}(2\sqrt{|x|}), & x\in\calO_1.
\end{align*}
\end{enumerate}
\end{example}

\begin{lemma}\label{lem:IntPhimKBessel}
For $\lambda\in\calW$ and ${\bf m}\geq0$ we have
\begin{align*}
 \int_{\calO_\lambda}{p(x)\calK_\lambda(x)\td\mu_\lambda(x)} &= 2^{r\lambda}\Gamma_\Omega(\tfrac{n}{r})(\tfrac{n}{r})_{\bf m}(\lambda)_{\bf m}p(e), & p\in\calP_{\bf m}(V_\CC).
\end{align*}
In particular, for every $N>0$ we have
\begin{align*}
 \int_{\calO_\lambda}{(1+|x|)^N\calK_\lambda(x)\td\mu_\lambda(x)} &< \infty.
\end{align*}
\end{lemma}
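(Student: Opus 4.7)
The plan is to substitute the integral definition \eqref{eq:DefKBessel2} of $\calK_\lambda(x)$ and swap the order of integration so that the integral over $\calO_\lambda$ becomes a Laplace transform of the polynomial $p$, evaluated by Lemma~\ref{lem:LaplaceTransformPolynomials}. Writing
\begin{equation*}
 \int_{\calO_\lambda}p(x)\calK_\lambda(x)\td\mu_\lambda(x) = \int_{\calO_\lambda}\int_\Omega p(x)e^{-\tr(v)-(x|v^{-1})}\Delta(v)^{-\lambda}\td v\,\td\mu_\lambda(x)
\end{equation*}
and (provisionally) interchanging integrals, Lemma~\ref{lem:LaplaceTransformPolynomials} applied with $y=v^{-1}\in\Omega$ gives, using $\Delta(v^{-1})=\Delta(v)^{-1}$ and $(v^{-1})^{-1}=v$,
\begin{equation*}
 \int_{\calO_\lambda}e^{-(x|v^{-1})}p(x)\td\mu_\lambda(x) = 2^{r\lambda}(\lambda)_{\bf m}\Delta(v)^\lambda p(v),
\end{equation*}
so the two factors $\Delta(v)^{\pm\lambda}$ cancel and the remaining integral becomes $2^{r\lambda}(\lambda)_{\bf m}\int_\Omega e^{-\tr(v)}p(v)\td v$. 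Applying Lemma~\ref{lem:LaplaceTransformPolynomials} a second time with the Wallach parameter $\tfrac{n}{r}\in\calW_{\cont}$ (so that $\calO_{n/r}=\Omega$ and $\td\mu_{n/r}=\tfrac{2^n}{\Gamma_\Omega(n/r)}\td v$) and $y=e$ evaluates this as $\Gamma_\Omega(\tfrac{n}{r})(\tfrac{n}{r})_{\bf m}p(e)$, and the combined factor reproduces $2^{r\lambda}\Gamma_\Omega(\tfrac{n}{r})(\tfrac{n}{r})_{\bf m}(\lambda)_{\bf m}p(e)$ as claimed.

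To justify the interchange one cannot apply Tonelli's theorem directly because $p(x)$ is complex-valued and sign-indefinite. Instead I would first establish absolute convergence through the elementary estimate $|p(x)|\leq C(1+\tr(x^2))^{\lceil |{\bf m}|/2 \rceil}$ on $V$ (valid since $\tr(x^2)=|x|^2$ is a non-negative polynomial of degree $2$), and run the above computation with $p$ replaced by the non-negative polynomial $q(x):=(1+\tr(x^2))^k$, $k=\lceil |{\bf m}|/2 \rceil$. Decomposing $q$ into its finitely many $\calP_{\bf n}(V_\CC)$-components via Theorem~\ref{thm:HuaKostantSchmid} and applying Lemma~\ref{lem:LaplaceTransformPolynomials} termwise yields a finite value, which bounds $\int_{\calO_\lambda}|p(x)|\calK_\lambda(x)\td\mu_\lambda(x)$ by Tonelli applied to the non-negative double integrand; Fubini then legitimizes the formal manipulation. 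The second (``in particular'') assertion follows by the same majorant argument: bound $(1+|x|)^N\leq C_N(1+\tr(x^2))^{\lceil N/2 \rceil}$ and apply the first part of the lemma to each $\calP_{\bf n}$-component of the right-hand side.

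The main obstacle is precisely this absolute-convergence step; once Fubini is secured, the identity reduces to a two-step application of Lemma~\ref{lem:LaplaceTransformPolynomials}. A subtlety worth flagging is that for the discrete Wallach points $\lambda=k\tfrac{d}{2}$ the $K$-Bessel function $\calK_\lambda$ on $\calO_k$ is defined via Proposition~\ref{prop:KBesselOnLowerRankOrbits}, but the defining integral \eqref{eq:DefKBessel2} still converges for $x$ lying in $\overline{\Omega}\supseteq\calO_k$ and agrees there with that extension, so the proof is uniform in $\lambda\in\calW$. Consistency in the vanishing cases $m_{k+1}\neq 0$ at $\lambda=k\tfrac{d}{2}$ is automatic: both sides are zero since $p|_{\calO_k}=0$ by Corollary~\ref{cor:HuaKostantSchmid} and $(\lambda)_{\bf m}=0$.
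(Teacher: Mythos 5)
Your proof is correct, and its core computation is the same as the paper's: insert the integral representation of $\calK_\lambda$, interchange integration, and apply Lemma~\ref{lem:LaplaceTransformPolynomials} twice. Using \eqref{eq:DefKBessel2} directly rather than the paper's \eqref{eq:DefKBessel1} followed by the substitution $v=u^{-1}$ is a cosmetic rearrangement. The one substantive place where you diverge from the paper is the handling of the discrete Wallach points: the paper proves the identity for $\lambda>(r-1)\tfrac{d}{2}$ and then invokes analytic continuation (in $\lambda$, both $\calK_\lambda$ and $\td\mu_\lambda=R_\lambda$ being meromorphic families), while you work uniformly by asserting pointwise convergence of \eqref{eq:DefKBessel2} on $\calO_\lambda$. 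That route does work, but your phrasing overclaims: the integral does not converge for arbitrary $x\in\overline\Omega$ at a fixed $\lambda$ (already for $V=\RR$, $\lambda>1$ and $x=0$ it diverges). What Clerc's Proposition~\ref{prop:KBesselOnLowerRankOrbits} actually provides, via Lemma~\ref{lem:IntFormulaOmega}, is convergence on the particular orbit $\calO_k$ for $\Re\lambda<k\tfrac{d}{2}+1$, which covers the Wallach point $\lambda=k\tfrac{d}{2}$; it is that statement, not convergence on all of $\overline\Omega$, that licenses your substitution. Two points are genuine additions over the paper's terse argument: your explicit Tonelli/Fubini majorant $q(x)=(1+\tr(x^2))^{\lceil|{\bf m}|/2\rceil}$ closes a gap the paper leaves implicit, and your observation that both sides vanish identically when $\lambda=k\tfrac{d}{2}$ and $m_{k+1}\neq0$ (via Corollary~\ref{cor:HuaKostantSchmid} and $(\lambda)_{\bf m}=0$) is a useful sanity check that the paper does not spell out.
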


\begin{proof}
It suffices to show the claim for $\lambda>(r-1)\frac{d}{2}$. The general case then follows by analytic continuation. For $\lambda>(r-1)\frac{d}{2}$ we have, using Lemma \ref{lem:LaplaceTransformPolynomials}
\begin{align*}
 & \int_{\calO_\lambda}{p(x)\calK_\lambda(x)\td\mu_\lambda(x)}\\
 ={}& \frac{2^{r\lambda}}{\Gamma_\Omega(\lambda)}\int_\Omega{p(x)\calK_\lambda(x)\Delta(x)^{\lambda-\frac{n}{r}}\td x}\\
 ={}& \frac{2^{r\lambda}}{\Gamma_\Omega(\lambda)}\int_\Omega{\int_\Omega{p(x)e^{-\tr(u^{-1})-(x|u)}\Delta(u)^{\lambda-\frac{2n}{r}}\Delta(x)^{\lambda-\frac{n}{r}}\td u}\td x}\\
 ={}& 2^{r\lambda}(\lambda)_{\bf m}\int_\Omega{e^{-\tr(u^{-1})}p(u^{-1})\Delta(u)^{-\frac{2n}{r}}\td u}\\
\intertext{and under the coordinate change $v=u^{-1}$, $\td v=\Delta(u)^{-\frac{2n}{r}}\td u$, this is}
 ={}& 2^{r\lambda}(\lambda)_{\bf m}\int_\Omega{e^{-\tr(v)}p(v)\td v}\\
 ={}&  2^{r\lambda}(\lambda)_{\bf m}\Gamma_\Omega(\tfrac{n}{r})(\tfrac{n}{r})_{\bf m}p(e),
\end{align*}
where we have again used Lemma \ref{lem:LaplaceTransformPolynomials} for the last equality. This shows the desired integral formula.\\
For the second claim we observe that by the previous calculations every polynomial can be integrated against the positive measure $\calK_\lambda(x)\td\mu_\lambda(x)$ since $\calP(V_\CC)=\bigoplus_{{\bf m}\geq0}{\calP_{\bf m}(V_\CC)}$ and hence the claim follows.
\end{proof}

\begin{proposition}\label{prop:KBesselDiffEq}
The function $\calK_\lambda(x)$ solves the following differential equation:
\begin{align*}
 \calB_\lambda\calK_\lambda(x) &= e\calK_\lambda(x).
\end{align*}
\end{proposition}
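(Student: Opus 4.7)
The plan is to differentiate under the integral sign in \eqref{eq:DefKBessel1} and then establish the resulting identity by a single integration by parts on $\Omega$.

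\emph{Reduction to the open cone.} For $\lambda = k\tfrac{d}{2} \in \calW_{\disc}$, Proposition~\ref{prop:KBesselOnLowerRankOrbits} expresses $\calK_\lambda$ on $\calO_\lambda = \calO_k$, up to a positive constant, in terms of the $K$-Bessel function $\calK_\lambda^1$ of the rank-$k$ Euclidean Jordan subalgebra $V(e_k,1)$; note that $\lambda > (k-1)\tfrac{d}{2}$ automatically places $\lambda$ in the continuous Wallach set of $V(e_k,1)$. Since $\calB_\lambda$ is tangential to $\calO_k$ and $\calK_\lambda$ is $K^L$-invariant, restricting to $\Omega_1 \subset V(e_k,1)$ and invoking the equivariance \eqref{eq:BesselEquivariance} reduces the claim to the case $\lambda > (r-1)\tfrac{d}{2}$ and $x \in \Omega$, that is, the open-orbit case for the ambient Jordan algebra.

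\emph{Differentiation under the integral.} Treating $u \in \Omega$ as a parameter and using $\tfrac{\partial}{\partial x}e^{-(x|u)} = -u\,e^{-(x|u)}$, $\partial_a\partial_b\,e^{-(x|u)} = (a|u)(b|u)e^{-(x|u)}$ together with the polarization identity $\sum_{\alpha,\beta}(a|e_\alpha)(b|e_\beta)P(e_\alpha,e_\beta)x = P(a,b)x$ for an orthonormal basis of $V$, one obtains
\begin{equation*}
  \calB_\lambda\,e^{-(x|u)} = \bigl(P(u)x - \lambda u\bigr)\,e^{-(x|u)}.
\end{equation*}
Exchange of derivative and integral in \eqref{eq:DefKBessel1}, justified by the super-polynomial decay of $e^{-\tr(u^{-1})}$ near $\partial\Omega$ and of $e^{-(x|u)}$ near infinity, reduces the target identity $\calB_\lambda \calK_\lambda = e\,\calK_\lambda$, after pairing with an arbitrary test vector $c \in V$, to
\begin{equation*}
  \int_\Omega \bigl[(P(u)c\mid x) - \lambda(c|u) - (c|e)\bigr]\,\Phi(u)\,\td u = 0,\qquad \Phi(u):= e^{-\tr(u^{-1})-(x|u)}\Delta(u)^{\lambda-\tfrac{2n}{r}}.
\end{equation*}

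\emph{Integration by parts.} Consider the polynomial vector field $Y(u) := P(u)c$ on $\Omega$. Since $(P(u)c\mid x)\,e^{-(x|u)} = -\partial_{Y(u)}e^{-(x|u)}$, integration by parts against the weight $\Phi_0(u) := e^{-\tr(u^{-1})}\Delta(u)^{\lambda-\tfrac{2n}{r}}$ (with boundary terms vanishing by the same decay) converts the claim to the pointwise identity $\operatorname{div}Y(u) + \partial_{Y(u)}\log\Phi_0(u) = \lambda(c|u) + (c|e)$. For the logarithmic derivative, $\partial_a\tr(u^{-1}) = -(a|u^{-2})$ and $\partial_a\log\Delta(u) = (a|u^{-1})$, combined with the Jordan identities $P(u)u^{-1} = u$ and $P(u)u^{-2} = e$, yield $\partial_{Y(u)}\log\Phi_0(u) = (c|e) + \bigl(\lambda - \tfrac{2n}{r}\bigr)(c|u)$. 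For the divergence, $\partial_b P(u) = 2P(u,b)$ combined with commutativity of the Jordan product produces the crucial collapse $P(e_\alpha,u)e_\alpha = e_\alpha(ue_\alpha) + u\,e_\alpha^2 - (e_\alpha u)e_\alpha = u\cdot e_\alpha^2$, after which \eqref{eq:SumOfSquares} gives $\operatorname{div}Y(u) = \tfrac{2n}{r}(c|u)$. The two contributions sum to $\lambda(c|u)+(c|e)$, as required. The delicate step is precisely this divergence calculation: commutativity must collapse the three terms in $P(e_\alpha,u)e_\alpha$ to $u\cdot e_\alpha^2$ so that \eqref{eq:SumOfSquares} yields exactly the $\tfrac{2n}{r}(c|u)$ needed to cancel the $-\tfrac{2n}{r}(c|u)$ contribution of the $\Delta$-weight, leaving the clean right-hand side $\lambda(c|u) + (c|e)$.
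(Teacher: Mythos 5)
Your core computation is correct and follows essentially the same route as the paper: differentiate $\calB_\lambda$ under the integral~\eqref{eq:DefKBessel1} to obtain $\calB_\lambda e^{-(x|u)}=(P(u)x-\lambda u)e^{-(x|u)}$, transfer the resulting $P(u)\frac{\partial}{\partial u}+\lambda u$ onto the weight $e^{-\tr(u^{-1})}\Delta(u)^{\lambda-\frac{2n}{r}}$ by integration by parts, and cash out the $\frac{2n}{r}u$ contribution via~\eqref{eq:SumOfSquares}. Your divergence/log-derivative packaging through the vector field $Y(u)=P(u)c$ is a clean reformulation of the component-by-component integration by parts that the paper writes out, and the collapse $P(e_\alpha,u)e_\alpha=u\cdot e_\alpha^2$ is exactly the key identity the paper uses (the paper's $\sum_\alpha 2P(e_\alpha,u)e_\alpha=2\big(\sum_\alpha e_\alpha^2\big)u$).

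Delete your opening ``reduction to the open cone'' paragraph. It is unnecessary: the calculation you carry out works verbatim for every $\lambda\in\CC$ and $x\in\Omega$, which is the domain on which $\calK_\lambda$ is defined by~\eqref{eq:DefKBessel1}, and this is precisely how the paper proceeds. It is also not justified as stated. The identity $\calB_\lambda\calK_\lambda=e\calK_\lambda$ is $V$-valued, and invoking Proposition~\ref{prop:KBesselOnLowerRankOrbits} to replace $\calK_\lambda$ on $\calO_k$ by the $K$-Bessel function $\calK_\lambda^1$ on $\Omega_1\subseteq V(e_k,1)$ can at best control the $V(e_k,1)$-component of $\calB_\lambda^V\calK_\lambda$; you say nothing about the $V(e_k,\tfrac{1}{2})$- and $V(e_k,0)$-components, nor do you establish that the ambient Bessel operator $\calB_\lambda^V$ restricted to $\calO_k$ and then to the slice $\Omega_1$ is $\calB_\lambda^{V(e_k,1)}$. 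Since none of this is needed, simply discard that paragraph and begin directly with the differentiation under the integral.
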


\begin{proof}
Differentiating under the integral we obtain
\begin{align*}
 & \calB_\lambda\calK_\lambda(x)\\
 ={}& \int_\Omega{(P(-u)x+\lambda(-u))e^{-\tr(u^{-1})-(x|u)}\Delta(u)^{\lambda-\frac{2n}{r}}\td u}\\
 ={}& \int_\Omega{-\left(P(u)\frac{\partial}{\partial u}+\lambda u\right)e^{-(x|u)}\cdot e^{-\tr(u^{-1})}\Delta(u)^{\lambda-\frac{2n}{r}}\td u}\\
 ={}& \sum_\alpha{\int_\Omega{-\left(P(u)e_\alpha\frac{\partial}{\partial u_\alpha}+\lambda u\right)e^{-(x|u)}\cdot e^{-\tr(u^{-1})}\Delta(u)^{\lambda-\frac{2n}{r}}\td u}}\\
 ={}& \sum_\alpha{\int_\Omega{e^{-(x|u)}\cdot\left(\frac{\partial}{\partial u_\alpha}P(u)e_\alpha-\lambda u\right)\left[e^{-\tr(u^{-1})}\Delta(u)^{\lambda-\frac{2n}{r}}\right]\td u}}.\\
\intertext{Using $\partial_yP(x)=2P(x,y)$, $D(x^{-1})=-P(x)^{-1}$ and $\partial_y\Delta(x)=\Delta(x)(y|x^{-1})$ we obtain}
 ={}& \sum_\alpha{\int_\Omega{e^{-(x|u)}\Big(2P(e_\alpha,u)e_\alpha+P(u)e_\alpha\tr(P(u)^{-1}e_\alpha)}}\\
 & +(\lambda-\tfrac{2n}{r})(e_\alpha|u^{-1})P(u)e_\alpha-\lambda u\Big)e^{-\tr(u^{-1})}\Delta(u)^{\lambda-\frac{2n}{r}}\td u\\
 ={}& \int_\Omega{e^{-(x|u)}\Big(2\Big(\sum_\alpha{e_\alpha^2}\Big)u+P(u)P(u)^{-1}e}\\
 & +(\lambda-\tfrac{2n}{r})P(u)u^{-1}-\lambda u\Big)e^{-\tr(u^{-1})}\Delta(u)^{\lambda-\frac{2n}{r}}\td u.
\intertext{By \eqref{eq:SumOfSquares} this is}
 ={}& \int_\Omega{e^{-(x|u)}\Big(\tfrac{2n}{r}u+e+(\lambda-\tfrac{2n}{r})u-\lambda u\Big)e^{-\tr(u^{-1})}\Delta(u)^{\lambda-\frac{2n}{r}}\td u}\\
 ={}& e\calK_\lambda(x).\qedhere
\end{align*}
\end{proof}

Now let $\lambda\in\calW$. For $z\in\calX_\lambda$, $z=ua$ with $u\in U$, $a=\sum_{i=1}^r{t_ic_i}$, $t_i\geq0$, we put
\begin{align*}
 \omega_\lambda(z) &:= \calK_\lambda\left(\left(\frac{a}{2}\right)^2\right).
\end{align*}
We note that $\omega_\lambda$ is positive on $\calX_\lambda$.

\begin{proposition}\label{prop:OmegaDiffEq}
The function $\omega_\lambda(z)$ solves the following differential equation:
\begin{align*}
 \calB_\lambda\omega_\lambda(z) &= \frac{\overline{z}}{4}\omega_\lambda(z), & z\in\calX_\lambda.
\end{align*}
\end{proposition}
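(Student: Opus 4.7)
The plan is to reduce the identity to the already-established equation $\calB_\lambda\calK_\lambda(x)=e\calK_\lambda(x)$ from Proposition \ref{prop:KBesselDiffEq} via a change of spectral variables in the polar decomposition, and then to propagate the resulting identity at radial points $a=\sum_{i=1}^{r}t_ic_i\in V$ to all of $\calX_\lambda$ by $U$-equivariance of the (real) Bessel operator $\calB_\lambda^W$ on $W=V_\CC$.

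First I would observe that $\omega_\lambda$ is $U$-invariant. Indeed, if $z=ua$ is a polar decomposition, then the radial component $a$ is uniquely determined by the $U$-orbit of $z$, and by definition $\omega_\lambda(z)=\calK_\lambda((a/2)^2)$. Hence Proposition \ref{prop:BesselOnInvariants}(2) applies: writing $F(t_1,\dots,t_r):=\omega_\lambda\bigl(\sum t_ic_i\bigr)=G(s_1,\dots,s_r)$ with $G(s_1,\dots,s_r):=\calK_\lambda\bigl(\sum s_ic_i\bigr)$ and $s_i:=t_i^2/4$, we have
\begin{equation*}
\calB_\lambda^W\omega_\lambda(a)=\sum_{i=1}^{r}(\calB_\lambda^W)^iF(t_1,\dots,t_r)\,c_i.
\end{equation*}

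The main step is the radial change-of-variables identity $(\calB_\lambda^W)^iF=\tfrac{t_i}{4}(\calB_\lambda^V)^iG$. Using $\partial/\partial t_i=(t_i/2)\partial/\partial s_i$, one checks that $t_i\,\partial^2F/\partial t_i^2=(t_i/2)\partial G/\partial s_i+t_is_i\,\partial^2G/\partial s_i^2$, which combines with the coefficient of $\partial F/\partial t_i$ to give the correct first-order coefficient $t_i(\lambda-(r-1)d/2)$ matching Proposition \ref{prop:BesselOnInvariants}(1) times $t_i$. For the off-diagonal terms the crucial simplification is
\begin{equation*}
\frac{1}{t_i-t_j}+\frac{1}{t_i+t_j}=\frac{2t_i}{t_i^2-t_j^2}=\frac{t_i}{2(s_i-s_j)},
\end{equation*}
which converts the hyperbolic factor appearing in the radial $\calB_\lambda^W$ into the rational factor appearing in the radial $\calB_\lambda^V$, again producing the extra $t_i/4$ after collecting the prefactor $1/4$ in Proposition \ref{prop:BesselOnInvariants}(2). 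Combined with Proposition \ref{prop:KBesselDiffEq}, which forces $(\calB_\lambda^V)^iG=G$ for every $i$, this yields
\begin{equation*}
\calB_\lambda^W\omega_\lambda(a)=\sum_{i=1}^{r}\frac{t_i}{4}G\,c_i=\frac{a}{4}\omega_\lambda(a).
\end{equation*}

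Finally, I extend from radial points to arbitrary $z=ua\in\calX_\lambda$ using the equivariance \eqref{eq:BesselEquivariance} and the $U$-invariance of $\omega_\lambda$. These give $(\calB_\lambda^W\omega_\lambda)(ua)=(u^{-1})^{\#}\cdot(\calB_\lambda^W\omega_\lambda)(a)$. Since $U$ is by definition fixed by the Cartan involution $g\mapsto(g^{-1})^{*}$ with $g^{*}=\overline{g}^{\#}$, one has $u^{\#}=\overline{u}^{-1}$, hence $(u^{-1})^{\#}=\overline{u}$ as $\CC$-linear (and therefore also $\RR$-linear) operators on $W$. Therefore
\begin{equation*}
(\calB_\lambda^W\omega_\lambda)(ua)=\overline{u}\cdot\frac{a}{4}\omega_\lambda(a)=\frac{\overline{ua}}{4}\omega_\lambda(ua)=\frac{\overline{z}}{4}\omega_\lambda(z),
\end{equation*}
which is the desired identity. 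The only genuinely computational step is the radial substitution $s_i=t_i^2/4$; the equivariance argument and the identity $(u^{-1})^{\#}=\overline{u}$ on $U$ are routine once recorded.
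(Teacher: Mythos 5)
Your radial computation and the equivariance argument are correct and match the first two--thirds of the paper's own proof. However, you have established the identity for the wrong operator. Throughout you work with $\calB_\lambda^W$, the Bessel operator of the \emph{real} Jordan algebra $W=V_\CC$ (which acts by real second--order derivatives on $W\cong\RR^{2n}$), and you conclude with $(\calB_\lambda^W\omega_\lambda)(z)=\tfrac{\overline z}{4}\omega_\lambda(z)$. But the symbol $\calB_\lambda$ in the proposition denotes the \emph{holomorphic} Bessel operator $\calB_\lambda^{V_\CC}$ of the complex Jordan algebra $V_\CC$, built out of Wirtinger derivatives $\partial/\partial z_\alpha$. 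These two differential operators do not coincide on the non-holomorphic function $\omega_\lambda$, so your final line does not give the claimed statement. (That the proposition really concerns $\calB_\lambda^{V_\CC}$ is forced by its downstream use in Proposition~\ref{prop:BesselAdjoint}, where the step ``$\overline{G(z)}$ is antiholomorphic and hence $\calB_\lambda$ passes through it'' only makes sense for the Wirtinger operator.)

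What is missing is the final conversion step that the paper supplies via Proposition~\ref{prop:PiCC}. That proposition records the Wirtinger-type relation $\td\pi_\lambda^\CC(X)=\tfrac12\bigl(\td\tau_\lambda(X)-i\,\td\tau_\lambda(iX)\bigr)$, which for $X=(0,0,a)$, $a\in V$, unwinds to
\begin{equation*}
 i\,(a\,|\,\calB_\lambda)\;=\;\tfrac{i}{2}\Bigl((a\,|\,\calB_\lambda^W)_W - i\,(ia\,|\,\calB_\lambda^W)_W\Bigr).
\end{equation*}
Applying this to $\omega_\lambda$, plugging in your identity $\calB_\lambda^W\omega_\lambda=\tfrac{\overline z}{4}\omega_\lambda$, and then using the relation $(a|w)=\tfrac12\bigl((a|w)_W-i(ia|w)_W\bigr)$ from~\eqref{eq:WirtingerForInnerProduct} gives $(a\,|\,\calB_\lambda\omega_\lambda)=(a\,|\,\tfrac{\overline z}{4})\,\omega_\lambda$ for every $a\in V$, and hence $\calB_\lambda\omega_\lambda=\tfrac{\overline z}{4}\omega_\lambda$. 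Until you add this step your argument proves a different (though related) identity.
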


\begin{proof}
Recall the operators $\calB_\lambda^V$ and $\calB_\lambda^W$ acting on functions of $r$ variables (see Proposition \ref{prop:BesselOnInvariants}). Let $F(a_1,\ldots,a_r)=\calK_\lambda(a)$, $a=\sum_{i=1}^r{a_ic_i}$, then $F$ solves the system $(\calB_\lambda^V)^iF=F$, $i=1,\ldots,r$ by Propositions \ref{prop:BesselOnInvariants} and \ref{prop:KBesselDiffEq}. Put $G(a_1,\ldots,a_r):=\omega_\lambda(a)=F((\frac{a_1}{2})^2,\ldots,(\frac{a_r}{2})^2)$. Then
\begin{align*}
 & (\calB_\lambda^W)^iG(a_1,\ldots,a_r)\\
 ={}& \frac{1}{4}\Bigg(a_i\frac{\partial^2}{\partial a_i^2}+\left(2\lambda-1-(r-1)d\right)\frac{\partial}{\partial a_i}\\
 & +\frac{d}{2}\sum_{j\neq i}{\left(\frac{1}{a_i-a_j}+\frac{1}{a_i+a_j}\right)\left(a_i\frac{\partial}{\partial a_i}-a_j\frac{\partial}{\partial a_j}\right)}\Bigg)G(a_1,\ldots,a_r)\\
 ={}& \frac{1}{4}\Bigg(a_i\left(\frac{a_i^2}{4}\frac{\partial^2}{\partial a_i^2}+\frac{1}{2}\frac{\partial}{\partial a_i}\right)+\frac{a_i}{2}\left(2\lambda-1-(r-1)d\right)\frac{\partial}{\partial a_i}\\
 & +\frac{d}{2}\sum_{j\neq i}{\left(\frac{1}{a_i-a_j}+\frac{1}{a_i+a_j}\right)\left(\frac{a_i^2}{2}\frac{\partial}{\partial a_i}-\frac{a_j^2}{2}\frac{\partial}{\partial a_j}\right)}\Bigg)F\left(\left(\frac{a_1}{2}\right)^2,\ldots,\left(\frac{a_r}{2}\right)^2\right)\\
 ={}& \frac{a_i}{4}\Bigg(\left(\frac{a_i}{2}\right)^2\frac{\partial^2}{\partial a_i^2}+\left(\lambda-(r-1)\frac{d}{2}\right)\frac{\partial}{\partial a_i}\\
 & +\frac{d}{2}\sum_{j\neq i}{\frac{1}{(\frac{a_i}{2})^2-(\frac{a_j}{2})^2}\left(\left(\frac{a_i}{2}\right)^2\frac{\partial}{\partial a_i}-\left(\frac{a_j}{2}\right)^2\frac{\partial}{\partial a_j}\right)}\Bigg)F\left(\left(\frac{a_1}{2}\right)^2,\ldots,\left(\frac{a_r}{2}\right)^2\right)\\
 ={}& \frac{a_i}{4}(\calB_\lambda^V)^iF\left(\left(\frac{a_1}{2}\right)^2,\ldots,\left(\frac{a_r}{2}\right)^2\right) = \frac{a_i}{4}F\left(\left(\frac{a_1}{2}\right)^2,\ldots,\left(\frac{a_r}{2}\right)^2\right)\\
 ={}& \frac{a_i}{4}G(a_1,\ldots,a_r).
\end{align*}
Hence we obtain $\calB_\lambda^W\omega_\lambda(z)=\frac{\overline{z}}{4}\omega_\lambda(z)$ for $z=a$. Since $\omega_\lambda(z)$ is further $U$-invariant, we obtain with \eqref{eq:BesselEquivariance} for $z=ua$ with $u\in U$ and $a=\sum_{i=1}^r{a_ic_i}$:
\begin{equation*}
 \calB_\lambda^W\omega_\lambda(z) = (u^{-1})^\#\calB_\lambda^W\omega_\lambda(a) = \overline{u}\frac{a}{4}\omega_\lambda(a) = \frac{\overline{z}}{4}\omega_\lambda(z)
\end{equation*}
since $\overline{u}^{\#}=u^{-1}$ for $u\in U$. Finally we use Proposition \ref{prop:PiCC} to find for every $a\in V$:
\begin{align*}
 i(a|\calB_\lambda)\omega_\lambda(z) &= \td\pi_\lambda^\CC(0,0,a)\omega_\lambda(z)\\
 &= \frac{1}{2}\left(\td\tau_\lambda(0,0,a)\omega_\lambda(z)-i\td\tau_\lambda(0,0,ia)\omega_\lambda(z)\right)\\
 &= \frac{1}{2}i\left((a|\calB_\lambda^W)_W\omega_\lambda(z)-i(ia|\calB_\lambda^W)_W\omega_\lambda(z)\right)\\
 &= \frac{1}{2}i\left((a|\textstyle{\frac{\overline{z}}{4}})_W-i(ia|\textstyle{\frac{\overline{z}}{4}})_W\right)\omega_\lambda(z)\\
 &= i(a|\textstyle{\frac{\overline{z}}{4}})\omega_\lambda(z).
\end{align*}
Since this holds for any $a\in V$ we find $\calB_\lambda\omega_\lambda(z)=\frac{\overline{z}}{4}\omega_\lambda(z)$ and the proof is complete.
\end{proof}

\begin{example}
On the rank $1$ orbit the function $\omega_\lambda$ takes by Example \ref{ex:KBessel}~(2) the form
\begin{align*}
 \omega_\lambda(z) &= \omega_\lambda(|z|c_1) = \calK_\lambda\left(\frac{|z|^2}{4}c_1\right) = \const\cdot\widetilde{K}_{\lambda-1}(|z|).
\end{align*}
\end{example}

\newpage
\section{A Fock model for unitary highest weight representations of scalar type}\label{sec:FockModel}

In this section we construct a \textit{Fock space} $\calF_\lambda=\calF(\calX_\lambda,\omega_\lambda\td\nu_\lambda)$ of holomorphic functions on the orbit $\calX_\lambda$ for every $\lambda\in\calW$, calculate its reproducing kernel and find a realization on $\calF_\lambda$ of the unitary highest weight representation corresponding to the Wallach point $\lambda$.

\subsection{Construction of the Fock space}

Let $\lambda\in\calW$. Recall the positive function $\omega_\lambda\in C^\infty(\calX_\lambda)$ from Section \ref{sec:KBessel}. We endow the space $\calP(\calX_\lambda)$ of polynomials on $\calX_\lambda$ with the $L^2$-inner product of $L^2(\calX_\lambda,\omega_\lambda\td\nu_\lambda)$:
\begin{align}
 \langle F,G\rangle_{\calF_\lambda} &:= \frac{1}{c_\lambda}\int_{\calX_\lambda}{F(z)\overline{G(z)}\omega_\lambda(z)\td\nu_\lambda(z)}, & F,G\in\calP(\calX_\lambda)\label{eq:DefFockInnerProduct}
\end{align}
with $c_\lambda=2^{3r\lambda}\Gamma_\Omega(\frac{n}{r})$. This turns $\calP(\calX_\lambda)$ into a pre-Hilbert space. Its completion $\calF_\lambda:=\calF(\calX_\lambda,\omega_\lambda\td\nu_\lambda)$ will be called the \textit{Fock space} on $\calX_\lambda$.

It remains to show that the integral in \eqref{eq:DefFockInnerProduct} converges. Using the polarization principle the following lemma suffices:

\begin{lemma}
For $F\in\calP(V_\CC)$ we have
\begin{align*}
 \int_{\calX_\lambda}{|F(z)|^2\omega_\lambda(z)\td\nu_\lambda(z)} &< \infty.
\end{align*}
\end{lemma}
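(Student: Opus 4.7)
The plan is to reduce the integral to one over $\calO_\lambda$ using the integral formula \eqref{eq:IntFormulaOnXlambda} for $\td\nu_\lambda$, then apply the already established Lemma \ref{lem:IntPhimKBessel}. More precisely, I would first rewrite
\[
 \int_{\calX_\lambda}{|F(z)|^2\omega_\lambda(z)\td\nu_\lambda(z)} = \int_U{\int_{\calO_\lambda}{|F(ux^{1/2})|^2\omega_\lambda(ux^{1/2})\td\mu_\lambda(x)}\td u}.
\]
The factor $\omega_\lambda(ux^{1/2})$ is independent of $u\in U$: indeed, if $x=k\sum_{i=1}^ka_ic_i$ is a polar decomposition of $x\in\calO_\lambda$ with $k\in K^L\subseteq U$, then $x^{1/2}=k\sum_{i=1}^k\sqrt{a_i}\,c_i$ is a polar decomposition of $x^{1/2}\in\calX_\lambda$, so by the very definition of $\omega_\lambda$ and by the $U$-invariance that this definition entails,
\[
 \omega_\lambda(ux^{1/2})=\omega_\lambda(x^{1/2})=\calK_\lambda\!\left(\tfrac{x}{4}\right).
\]

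Next I would estimate $|F(ux^{1/2})|^2$ by a polynomial in $|x|$, uniformly in $u\in U$. Since $F\in\calP(V_\CC)$, there exist $C_1>0$ and $N\in\NN_0$ with $|F(z)|^2\leq C_1(1+|z|)^{2N}$ for all $z\in V_\CC$. Compactness of $U$ gives a constant $M:=\sup_{u\in U}\|u\|<\infty$, and the polar-decomposition identities above yield $|x^{1/2}|^2=\sum_{i=1}^k a_i\leq \sqrt{r}\,|x|$ for $x\in\calO_\lambda$. Combining these, one obtains
\[
 |F(ux^{1/2})|^2\leq C_2(1+|x|)^N \qquad\text{for all }u\in U,\,x\in\calO_\lambda,
\]
for a suitable constant $C_2>0$.

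Putting these two estimates together, the original integral is bounded by
\[
 \vol(U)\cdot C_2\int_{\calO_\lambda}{(1+|x|)^N\calK_\lambda\!\left(\tfrac{x}{4}\right)\td\mu_\lambda(x)}.
\]
A change of variables $y=x/4$, using the equivariance $\td\mu_\lambda(gx)=\chi(g)^\lambda\td\mu_\lambda(x)$ applied to $g=\frac{1}{4}\id\in L$, rewrites this as a constant multiple of $\int_{\calO_\lambda}(1+4|y|)^N\calK_\lambda(y)\td\mu_\lambda(y)$, which is finite by Lemma \ref{lem:IntPhimKBessel}. There is no real obstacle here; the only point requiring care is the identification $\omega_\lambda(x^{1/2})=\calK_\lambda(x/4)$ via polar decomposition, and the use of the $L$-equivariance of $\td\mu_\lambda$ under the center of $L$ to absorb the factor of $4$.
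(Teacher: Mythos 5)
Your proposal is correct and follows essentially the same route as the paper: pass from $\calX_\lambda$ to $U\times\calO_\lambda$ via the integral formula for $\td\nu_\lambda$, use the polar decomposition to bound $|F(ux^{1/2})|^2$ by a polynomial in $|x|$ uniformly in $u\in U$, and then invoke Lemma~\ref{lem:IntPhimKBessel} for the finiteness of the resulting $\calK_\lambda$-integral. You are a bit more careful than the paper about tracking the factor $\omega_\lambda(ux^{1/2})=\calK_\lambda(x/4)$ (the paper's display drops the $1/4$), though this scaling is irrelevant for convergence and is absorbed by the $L$-equivariance of $\td\mu_\lambda$ exactly as you say.
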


\begin{proof}
Using the integral formula \eqref{eq:IntFormulaOnXlambda} we obtain
\begin{align*}
 \int_{\calX_\lambda}{|F(z)|^2\omega_\lambda(z)\td\nu_\lambda(z)} &= \int_U{\int_{\calO_\lambda}{|F(ux^{\frac{1}{2}})|^2\calK_\lambda(x)\td\mu_\lambda(x)}\td u}.
\end{align*}
Now put
\begin{align*}
 p(x) &:= \int_U{|F(ux)|^2\td u}.
\end{align*}
Clearly $p$ is a polynomial on $V$, so there are constants $C_1>0$ and $N\in\NN$ such that $|p(x)|\leq C_1(1+|x|)^N$. Now, every $x\in\overline{\Omega}$ has a decomposition $x=ka$ with $k\in K^L$ and $a=\sum_{j=1}^r{a_jc_j}$, $a_j\geq0$. In this decomposition the square root is of the form $x^{\frac{1}{2}}=ka^{\frac{1}{2}}$ with $a^{\frac{1}{2}}=\sum_{j=1}^r{a_j^{\frac{1}{2}}c_j}$. Since the norm $|-|$ on $V$ is $K^L$-invariant we obtain
\begin{align*}
 |p(x^{\frac{1}{2}})| &\leq C_1\left(1+(a_1+\cdots+a_r)^{\frac{1}{2}}\right)^N \leq C_1\left(1+\sqrt{r}(a_1^2+\cdots+a_r^2)^{\frac{1}{4}}\right)^N\\
 &\leq C_1\sqrt{r}\left(1+|x|^{\frac{1}{2}}\right)^N \leq C_1\sqrt{r}\left(\sqrt{2}(1+|x|)^{\frac{1}{2}}\right)^N\\
 &= C_2\left(1+|x|\right)^{\frac{N}{2}}
\end{align*}
with $C_2=C_1\sqrt{r}2^{\frac{N}{2}}$. Hence, we find
\begin{align*}
\int_{\calX_\lambda}{|F(z)|^2\omega_\lambda(z)\td\nu_\lambda(z)} &= \int_{\calO_\lambda}{p(x)\calK_\lambda(x)\td\mu_\lambda(x)}\\
&\leq C_2\int_{\calO_\lambda}{(1+|x|)^{\frac{N}{2}}\calK_\lambda(x)\td\mu_\lambda(x)}.
\end{align*}
The latter integral is finite by Lemma \ref{lem:IntPhimKBessel} and the proof is complete.
\end{proof}

We explicitly calculate the norms on the finite-dimensional subspaces $\calP_{\bf m}(\calX_\lambda)$.

\begin{proposition}\label{prop:FockNormCalculations}
Let ${\bf m},{\bf n}\geq0$ and $F\in\calP_{\bf m}(\calX_\lambda)$, $G\in\calP_{\bf n}(\calX_\lambda)$. Then
\begin{align*}
 \langle F,G\rangle_{\calF_\lambda} &= 4^{|{\bf m}|}(\tfrac{n}{r})_{\bf m}(\lambda)_{\bf m}\langle F,G\rangle_\Sigma.
\end{align*}
In particular the subspaces $\calP_{\bf m}(\calX_\lambda)\subseteq\calP(\calX_\lambda)$ are pairwise orthogonal with respect to the inner product $\langle-,-\rangle_{\calF_\lambda}$ and for $F=\Phi_{\bf m}$ we have
\begin{align*}
 \|\Phi_{\bf m}\|_{\calF_\lambda}^2 &= \frac{4^{|{\bf m}|}(\tfrac{n}{r})_{\bf m}(\lambda)_{\bf m}}{d_{\bf m}}.
\end{align*}
\end{proposition}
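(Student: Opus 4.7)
The plan is to reduce everything to an integral on $\calO_\lambda$ by integrating out the $U$-variable using Schur orthogonality, and then to evaluate that integral against $\calK_\lambda$ via Lemma~\ref{lem:IntPhimKBessel}. First, I would unfold the Fock inner product using the integral formula \eqref{eq:IntFormulaOnXlambda}, writing
\begin{equation*}
\langle F,G\rangle_{\calF_\lambda} \;=\; \frac{1}{c_\lambda}\int_U\!\!\int_{\calO_\lambda} F(ux^{1/2})\,\overline{G(ux^{1/2})}\,\omega_\lambda(ux^{1/2})\,\td\mu_\lambda(x)\,\td u.
\end{equation*}
The key observation is that $\omega_\lambda(ux^{1/2})=\calK_\lambda(x/4)$: indeed, for $x=kb\in\calO_\lambda$ with $k\in K^L\subseteq U$ and $b=\sum_i b_ic_i$, $b_i\geq 0$, the polar decomposition of $ux^{1/2}$ is $(uk)\cdot b^{1/2}$, so by definition of $\omega_\lambda$ and $K^L$-invariance of $\calK_\lambda$ we obtain $\omega_\lambda(ux^{1/2})=\calK_\lambda((b^{1/2}/2)^2)=\calK_\lambda(b/4)=\calK_\lambda(x/4)$. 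This removes the $u$-dependence from the weight.

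Next, since restriction $\calP_{\bf m}(V_\CC)\to\calP_{\bf m}(\calX_\lambda)$ is an isomorphism (Corollary~\ref{cor:HuaKostantSchmid}), I may lift $F,G$ to $V_\CC$ and apply Lemma~\ref{lem:SchurOrthogonalityLemma} with $z=w=x^{1/2}$:
\begin{equation*}
\int_U F(ux^{1/2})\,\overline{G(ux^{1/2})}\,\td u \;=\; \Phi_{\bf m}(x^{1/2},x^{1/2})\,\langle F,G\rangle_\Sigma \;=\; \Phi_{\bf m}(x)\,\langle F,G\rangle_\Sigma,
\end{equation*}
using the characterizing identity $\Phi_{\bf m}(x^{1/2},x^{1/2})=\Phi_{\bf m}((x^{1/2})^2)=\Phi_{\bf m}(x)$. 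For ${\bf m}\neq{\bf n}$ the right-hand side vanishes by orthogonality of the $U$-types, which already yields the orthogonality statement. Thus the problem reduces to computing $\int_{\calO_\lambda}\Phi_{\bf m}(x)\calK_\lambda(x/4)\,\td\mu_\lambda(x)$.

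For this scalar integral I change variables $x=4y$ using the $\chi^\lambda$-equivariance of $\td\mu_\lambda$: since $\chi(4\cdot\id)=4^r$, the measure picks up $4^{r\lambda}$, while $\Phi_{\bf m}(4y)=4^{|{\bf m}|}\Phi_{\bf m}(y)$ by homogeneity. Lemma~\ref{lem:IntPhimKBessel} combined with $\Phi_{\bf m}(e)=1$ then gives
\begin{equation*}
\int_{\calO_\lambda}\Phi_{\bf m}(x)\calK_\lambda(x/4)\,\td\mu_\lambda(x) \;=\; 4^{r\lambda+|{\bf m}|}\cdot 2^{r\lambda}\,\Gamma_\Omega(\tfrac{n}{r})\,(\tfrac{n}{r})_{\bf m}(\lambda)_{\bf m}.
\end{equation*}
Dividing by $c_\lambda=2^{3r\lambda}\Gamma_\Omega(\tfrac{n}{r})$ the powers of $2$ collapse to a clean $4^{|{\bf m}|}$, yielding the stated formula. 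The final assertion on $\|\Phi_{\bf m}\|_{\calF_\lambda}^2$ follows by inserting $\|\Phi_{\bf m}\|_\Sigma^2=1/d_{\bf m}$ from \eqref{eq:L2SigmaNormOfPhim}. The one step that requires care is the verification $\omega_\lambda(ux^{1/2})=\calK_\lambda(x/4)$, because $\omega_\lambda$ is defined via polar decomposition on the complex orbit $\calX_\lambda$ while $\calK_\lambda$ lives on the real orbit $\calO_\lambda$; but $K^L$-invariance of $\calK_\lambda$ (immediate from its defining integral) bridges the two, and everything else is bookkeeping of homogeneity factors.
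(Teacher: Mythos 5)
Your proof is correct and follows the same route the paper takes: unfold $\langle F,G\rangle_{\calF_\lambda}$ via the integral formula \eqref{eq:IntFormulaOnXlambda}, apply the Schur orthogonality Lemma~\ref{lem:SchurOrthogonalityLemma} to integrate out $U$, and evaluate the remaining $\calO_\lambda$-integral with Lemma~\ref{lem:IntPhimKBessel} after rescaling $x\mapsto 4y$. The only difference is that you spell out the verification $\omega_\lambda(ux^{1/2})=\calK_\lambda(x/4)$ via $U$- and $K^L$-invariance, a step the paper's proof leaves implicit.
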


\begin{proof}
Using the integral formula \eqref{eq:IntFormulaOnXlambda}, Lemma \ref{lem:SchurOrthogonalityLemma} and Lemma \ref{lem:IntPhimKBessel} we obtain
\begin{align*}
 \langle F,G\rangle_{\calF_\lambda} &= \frac{1}{c_\lambda}\int_{\calX_\lambda}{F(z)\overline{G(z)}\omega_\lambda(z)\td\nu_\lambda(z)}\\
 &= \frac{1}{c_\lambda}\int_{\calO_\lambda}{\int_U{F(ux^{\frac{1}{2}})\overline{G(ux^{\frac{1}{2}})}\omega_\lambda(x^{\frac{1}{2}})\td\mu_\lambda(x)}\td u}\\
 &= \frac{1}{c_\lambda}\langle F,G\rangle_\Sigma\int_{\calO_\lambda}{\Phi_{\bf m}(x)\calK_\lambda({\textstyle\frac{x}{4}})\td\mu_\lambda(x)}\\
 &= \frac{4^{r\lambda+|{\bf m}|}}{c_\lambda}\langle F,G\rangle_\Sigma\int_{\calO_\lambda}{\Phi_{\bf m}(y)\calK_\lambda(y)\td\mu_\lambda(y)}\\
 &=4^{|{\bf m}|}(\tfrac{n}{r})_{\bf m}(\lambda)_{\bf m}\langle F,G\rangle_\Sigma.
\end{align*}
Since $\|\Phi_{\bf m}\|_\Sigma^2=\frac{1}{d_{\bf m}}$ by \eqref{eq:L2SigmaNormOfPhim} this finishes the proof.
\end{proof}

\begin{remark}\label{rem:ComparisonNormsFockBdSymDomain}
Comparing the norm on $\calF_\lambda$ with the norm on the space $\calH_\lambda^2(\calD)$ gives by \eqref{eq:NormOnBdSymDomainModel}
\begin{align*}
 \|F\|_{\calF_\lambda}^2 &= 4^{|{\bf m}|}(\lambda)_{\bf m}^2\|F\|_{\calH_\lambda^2(\calD)}^2 & \forall\,F\in\calP_{\bf m}(V_\CC).
\end{align*}
\end{remark}

If we denote by $\calO(\calX_\lambda)$ the space of holomorphic functions on the complex manifold $\calX_\lambda$, we obtain the following result:

\begin{proposition}\label{prop:FockContPointEvaluations}
$\calO(\calX_\lambda)\cap L^2(\calX_\lambda,\omega_\lambda\td\nu_\lambda)$ is a closed subspace of $L^2(\calX_\lambda,\omega_\lambda\td\nu_\lambda)$ and the point evaluation $\calO(\calX_\lambda)\cap L^2(\calX_\lambda,\omega_\lambda\td\nu_\lambda)\to\CC,\,F\mapsto F(z),$ is continuous for every $z\in\calX_\lambda$. In particular, $\calF_\lambda\subseteq\calO(\calX_\lambda)\cap L^2(\calX_\lambda,\omega_\lambda\td\nu_\lambda)$ and the point evaluation $\calF_\lambda\to\CC,\,F\mapsto F(z)$ is continuous for every $z\in\calX_\lambda$.
\end{proposition}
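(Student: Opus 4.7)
The plan is to establish the continuity of point evaluations on the larger space $\calO(\calX_\lambda)\cap L^2(\calX_\lambda,\omega_\lambda\td\nu_\lambda)$ first; closedness of the subspace and the inclusion $\calF_\lambda\subseteq\calO(\calX_\lambda)\cap L^2(\calX_\lambda,\omega_\lambda\td\nu_\lambda)$ will then follow by the standard Bergman-space argument.

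Fix $z_0\in\calX_\lambda$. Since $\calX_\lambda=L_\CC\cdot e_k$ is a smooth complex submanifold of $V_\CC$, I would choose local holomorphic coordinates around $z_0$ together with a closed polydisk $P$ centered at $z_0$ whose closure is contained in the coordinate chart. Iterating the one-variable mean-value property yields, for any $F\in\calO(\calX_\lambda)$,
\[
 F(z_0) \;=\; \frac{1}{\vol(P)}\int_P F(w)\,\td m(w),
\]
where $\td m$ denotes Lebesgue measure in the chart; Cauchy--Schwarz then gives $|F(z_0)|^2\leq\vol(P)^{-1}\int_P |F(w)|^2\td m(w)$.

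The crucial step is to compare $\td m$ on $\overline{P}$ with the measure $\omega_\lambda\td\nu_\lambda$. The density $\omega_\lambda$ is strictly positive and smooth on $\calX_\lambda$ by its definition in terms of the $K$-Bessel function (positivity via Proposition \ref{prop:KBesselOnLowerRankOrbits}), and $\td\nu_\lambda$ is an $L_\CC$-equivariant positive smooth measure on the homogeneous space $\calX_\lambda$ by construction \eqref{eq:IntFormulaOnXlambda}. Hence $\omega_\lambda\td\nu_\lambda$ has a strictly positive smooth Radon--Nikodym density with respect to $\td m$ on the compact set $\overline{P}$, and there is a constant $C=C(z_0)>0$ with $\td m\leq C\cdot\omega_\lambda\td\nu_\lambda$ on $\overline{P}$. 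Combined with the mean-value estimate,
\[
 |F(z_0)|^2 \;\leq\; \frac{C}{\vol(P)}\int_{\calX_\lambda}|F(w)|^2\omega_\lambda(w)\td\nu_\lambda(w),
\]
which is the desired continuity of point evaluations; moreover, the estimate is locally uniform in $z_0$ since polydisks can be chosen uniformly on any compact subset.

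Closedness follows by a routine argument: if $(F_n)$ is Cauchy in $\calO(\calX_\lambda)\cap L^2$, it converges in $L^2$ to some $F$, and the locally uniform bound above shows $(F_n)$ converges uniformly on compact subsets of $\calX_\lambda$, so the limit is holomorphic and coincides almost everywhere with $F$. Finally, $\calF_\lambda$ is by definition the $L^2$-closure of $\calP(\calX_\lambda)$; since polynomials are holomorphic and $L^2$ by the preceding lemma, $\calF_\lambda$ is contained in the closed subspace $\calO(\calX_\lambda)\cap L^2$, and the continuity of point evaluations is inherited. The only real obstacle is to avoid using ambient Lebesgue measure on $V_\CC$ on the lower-dimensional boundary orbits $\calX_k$ with $k<r$; one must work intrinsically with the complex-manifold structure of the $L_\CC$-orbit $\calX_\lambda$ itself, where the smoothness and strict positivity of $\omega_\lambda\td\nu_\lambda$ as a density on $\calX_\lambda$ is precisely what makes the comparison work.
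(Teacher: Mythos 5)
Your proof is essentially the same as the paper's: both reduce the statement to a local one via holomorphic charts on the complex manifold $\calX_\lambda$, observe that $\omega_\lambda\td\nu_\lambda$ is comparable to Lebesgue measure in the chart (strict positivity and smoothness of the density), and then invoke the standard Bergman-space argument for holomorphic $L^2$ functions on a domain in $\CC^k$ — which the paper handles by citing Helgason while you write out the sub-mean-value plus Cauchy--Schwarz estimate. The one place worth being slightly more careful is the direction of the measure comparison: for the pointwise bound you need Lebesgue measure to be dominated locally by $\omega_\lambda\td\nu_\lambda$, which follows from your observation that the Radon--Nikodym density is continuous and strictly positive (hence bounded away from $0$ on compacta), and you state this correctly.
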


\begin{proof}
This is a local statement and hence, we may transfer it with a chart map to an open domain $U\subseteq\CC^k$. Here the measure $\omega_\lambda\td\nu_\lambda$ is absolutely continuous with respect to the Lebesgue measure $\td z$ and hence it suffices to show that $\calO(\CC^k)\cap L^2(\CC^k,\td z)\subseteq L^2(\CC^k,\td z)$ is a closed subspace with continuous point evaluations. This is done e.g. in \cite[Proposition 3.1 and Corollary 3.2]{Hel62}.
\end{proof}

The particular choice of the density $\omega_\lambda$ yields the following result:

\begin{proposition}\label{prop:BesselAdjoint}
The adjoint of $\calB_\lambda$ on $\calF_\lambda$ is $\frac{z}{4}$.
\end{proposition}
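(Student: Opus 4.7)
The plan is to reduce the proposition to the infinitesimal symmetry identity for $\td\pi_\lambda^\CC$ already proved in Proposition~\ref{prop:PiCC}, combined with the eigenvalue equation $\calB_\lambda\omega_\lambda=\tfrac{\bar z}{4}\omega_\lambda$ from Proposition~\ref{prop:OmegaDiffEq}. More precisely, since $\calB_\lambda$ takes values in $V_\CC$, I would prove the stronger (and equivalent) scalar identity that for every $a\in V$ and all $F,G\in\calP(\calX_\lambda)$
\[
 \langle (a|\calB_\lambda)F,G\rangle_{\calF_\lambda} = \langle F,\tfrac{(a|z)}{4}\,G\rangle_{\calF_\lambda};
\]
by $\RR$-linearity of both sides in $a$ and $\CC$-bilinearity of $(\cdot|\cdot)$, this gives the vector-valued statement $\calB_\lambda^* = z/4$.

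First I would apply Proposition~\ref{prop:PiCC} with $X=(0,0,a)\in\overline{\frakn}$. Since $\td\pi_\lambda^\CC(0,0,a)=i(a|\calB_\lambda)$ and the ``transpose'' $(0,0,a)\mapsto(0,-0,a)$ is the identity on this summand, the quoted integral formula reads
\[
 \int_{\calX_\lambda}(a|\calB_\lambda F)(z)\,H(z)\,\td\nu_\lambda(z) = \int_{\calX_\lambda}F(z)\,(a|\calB_\lambda H)(z)\,\td\nu_\lambda(z)
\]
for $F,H\in C^\infty(\calX_\lambda)$ (subject to integrability). I would then substitute $H(z) := \overline{G(z)}\,\omega_\lambda(z)$. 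The holomorphic Bessel operator is built entirely from Wirtinger derivatives $\partial/\partial z_j$, which annihilate the antiholomorphic factor $\overline{G(z)}$; consequently the product rule \eqref{eq:BesselProdRule} for $\calB_\lambda$ collapses, giving
\[
 \calB_\lambda\bigl[\overline{G(z)}\,\omega_\lambda(z)\bigr] = \overline{G(z)}\,\calB_\lambda\omega_\lambda(z) = \overline{G(z)}\,\tfrac{\bar z}{4}\,\omega_\lambda(z),
\]
where the last equality is Proposition~\ref{prop:OmegaDiffEq}. Plugging this back and using $\overline{(a|z)}=(a|\bar z)$ for $a\in V$ yields
\[
 \int_{\calX_\lambda}(a|\calB_\lambda F)\,\overline{G}\,\omega_\lambda\,\td\nu_\lambda = \int_{\calX_\lambda}F\cdot\overline{\tfrac{(a|z)}{4}\,G}\,\omega_\lambda\,\td\nu_\lambda,
\]
which, after dividing by $c_\lambda$, is exactly the asserted adjointness.

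The only genuine obstacle is justifying the application of Proposition~\ref{prop:PiCC} with the non-compactly supported choice $H=\overline{G}\,\omega_\lambda$; the underlying proof of that proposition rests on an integration-by-parts computation whose boundary terms must be shown to vanish. Here $F$, $G$, $\calB_\lambda F$ and $\calB_\lambda H$ all have at most polynomial growth on $\calX_\lambda$, while $\omega_\lambda(z)=\calK_\lambda((z/2)^2)$ inherits the exponential decay of the $K$-Bessel function; together with the integrability estimates contained in Lemma~\ref{lem:IntPhimKBessel} this makes every integral absolutely convergent and allows one to pass from the compactly supported case to our setting by a standard cutoff/dominated convergence argument. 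Once this technicality is dispatched, the rest of the argument is purely algebraic.
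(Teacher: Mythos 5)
Your argument is correct and follows essentially the same route as the paper's own proof: apply the symmetry identity of Proposition \ref{prop:PiCC} with $X=(0,0,a)\in\overline{\frakn}$ to move $\calB_\lambda$ across the pairing, then observe that $\calB_\lambda$ passes through the antiholomorphic factor $\overline G$ and use the eigenvalue equation $\calB_\lambda\omega_\lambda=\tfrac{\bar z}{4}\omega_\lambda$ from Proposition \ref{prop:OmegaDiffEq}. The only cosmetic difference is that you work componentwise with $(a|\calB_\lambda)$ and are more explicit about the integrability needed to apply Proposition \ref{prop:PiCC} to the non-compactly-supported test function $\overline G\,\omega_\lambda$, a point the paper leaves implicit.
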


\begin{proof}
Let $F$ and $G$ be holomorphic functions on $\calX_\lambda$. Then by Proposition \ref{prop:PiCC} we know that
\begin{align*}
 \int_{\calX_\lambda}{\calB_\lambda F(z)\overline{G(z)}\omega_\lambda(z)\td\nu_\lambda(z)} &= \int_{\calX_\lambda}{F(z)\calB_\lambda\left[\overline{G(z)}\omega_\lambda(z)\right]\td\nu_\lambda(z)}.
\intertext{The function $\overline{G(z)}$ is antiholomorphic and hence}
 &= \int_{\calX_\lambda}{F(z)\overline{G(z)}\calB_\lambda\omega_\lambda(z)\td\nu_\lambda(z)}.
\intertext{By Proposition \ref{prop:OmegaDiffEq} we have $\calB_\lambda\omega_\lambda(z)=\frac{\overline{z}}{4}\omega_\lambda(z)$ and therefore}
 &= \int_{\calX_\lambda}{F(z)\overline{\frac{z}{4}G(z)}\omega_\lambda(z)\td\nu_\lambda(z)}.\qedhere
\end{align*}
\end{proof}

\subsection{The Bessel--Fischer inner product}\label{sec:BesselFischer}

We introduce another inner product on the space $\calP(\calX_\lambda)$ of polynomials, the \textit{Bessel--Fischer inner product}. For two polynomials $p$ and $q$ it is defined by
\begin{align*}
 [p,q]_\lambda &:= \left.p(\calB_\lambda)\overline{q}(4z)\right|_{z=0},
\end{align*}
where $\overline{q}(z)=\overline{q(\overline{z})}$ is obtained by conjugating the coefficients of the polynomial $q$. A priori it is not even clear that this sesquilinear form is positive definite.

\begin{theorem}\label{thm:FischerEqualsL2}
For $p,q\in\calP(\calX_\lambda)$ we have
\begin{align}
 [p,q]_\lambda &= \langle p,q\rangle_{\calF_\lambda}.\label{eq:FischerEqualsL2}
\end{align}
\end{theorem}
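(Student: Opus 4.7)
The plan is to identify both sides as $U$-invariant sesquilinear forms on $\calP(\calX_\lambda)$, use Schur's lemma via the Hua--Kostant--Schmid decomposition to reduce to a single spherical vector in each $\calP_{\bf m}$, and finish by an explicit computation on $\Phi_{\bf m}$.

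First I would verify $U$-invariance of both forms. For $\langle-,-\rangle_{\calF_\lambda}$ this is manifest, since $\omega_\lambda$ is $U$-invariant and $\td\nu_\lambda$ is $U$-invariant (the character $\chi_W\colon L_\CC\to\RR_+$ being trivial on the compact subgroup $U$). For $[-,-]_\lambda$, the equivariance \eqref{eq:BesselEquivariance} of $\calB_\lambda$ on $V_\CC$ together with $u^\#=\overline{u}^{-1}$ for $u\in U$ gives $\ell(u)(a|\calB_\lambda)\ell(u^{-1})=(ua|\calB_\lambda)$; since $L_\CC$ is stable under complex conjugation, this extends by commutativity of the components of $\calB_\lambda$ to $(\ell(u)p)(\calB_\lambda)=\ell(\overline{u})\,p(\calB_\lambda)\,\ell(\overline{u}^{-1})$. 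Coupled with $\overline{\ell(u)q}(4z)=\ell(\overline{u})[\overline{q}(4\cdot)](z)$ and the obvious $\ell(\overline{u})$-invariance of evaluation at $z=0$, one obtains $[\ell(u)p,\ell(u)q]_\lambda=[p,q]_\lambda$ for every $u\in U$.

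Next, by Corollary \ref{cor:HuaKostantSchmid} the space $\calP(\calX_\lambda)$ decomposes as $\bigoplus_{{\bf m}\geq0,\,m_{k+1}=0}\calP_{\bf m}(\calX_\lambda)$ into pairwise non-isomorphic irreducible $U$-modules. Schur's lemma forces both $U$-invariant sesquilinear forms to vanish on $\calP_{\bf m}\times\calP_{\bf n}$ for ${\bf m}\neq{\bf n}$ and to restrict to scalar multiples of the essentially unique $U$-invariant Hermitian form on each $\calP_{\bf m}(\calX_\lambda)$. Hence there exist constants $c_{\bf m}\in\CC$ with $[p,q]_\lambda=c_{\bf m}\langle p,q\rangle_{\calF_\lambda}$ on $\calP_{\bf m}(\calX_\lambda)$, and it suffices to test on the $K^L$-spherical vector $\Phi_{\bf m}$. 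Since $\Phi_{\bf m}$ has real coefficients and is homogeneous of degree $|{\bf m}|$, one has
\[
[\Phi_{\bf m},\Phi_{\bf m}]_\lambda = 4^{|{\bf m}|}\Phi_{\bf m}(\calB_\lambda)\Phi_{\bf m}(z)\bigr|_{z=0},
\]
while Proposition \ref{prop:FockNormCalculations} gives $\langle\Phi_{\bf m},\Phi_{\bf m}\rangle_{\calF_\lambda}=4^{|{\bf m}|}(\tfrac{n}{r})_{\bf m}(\lambda)_{\bf m}/d_{\bf m}$. Thus the theorem reduces to the identity
\[
\Phi_{\bf m}(\calB_\lambda)\Phi_{\bf m}(z)\bigr|_{z=0}=\frac{(\tfrac{n}{r})_{\bf m}(\lambda)_{\bf m}}{d_{\bf m}}.
\]

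The main obstacle is this last identity. Since the classical Fischer pairing already contributes $\Phi_{\bf m}(\partial)\Phi_{\bf m}|_{z=0}=(\tfrac{n}{r})_{\bf m}/d_{\bf m}$, the extra factor $(\lambda)_{\bf m}$ must emerge from the $P(\partial)z$ summand of $\calB_\lambda=P(\partial)z+\lambda\partial$ upon iterated application. I plan to handle this by induction on $|{\bf m}|$, exploiting a Faraut--Koranyi-type recurrence $\calB_\lambda\Phi_{\bf m}=\sum_i c_i^{\bf m}(\lambda)\,\Phi_{{\bf m}-\epsilon_i}$ for the action of $\calB_\lambda$ on spherical polynomials (as in \cite[Ch.\ XIV--XV]{FK94}), together with the product rule \eqref{eq:BesselProdRule} to control cross terms, so that $|{\bf m}|$ successive applications of $\calB_\lambda$ accumulate exactly the Pochhammer product $(\lambda)_{\bf m}$ relative to the plain Fischer computation.
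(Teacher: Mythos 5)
Your strategy is genuinely different from the paper's. The paper proves \eqref{eq:FischerEqualsL2} by a short induction on $\deg q$: it observes that the two forms satisfy the \emph{same} adjunction relation, namely $[(a|\tfrac{z}{4})p,q]_\lambda=[p,(\bar a|\calB_\lambda)q]_\lambda$ (trivially, because the components of $\calB_\lambda$ commute) and $\langle(a|\tfrac{z}{4})p,q\rangle_{\calF_\lambda}=\langle p,(\bar a|\calB_\lambda)q\rangle_{\calF_\lambda}$ (Proposition \ref{prop:BesselAdjoint}), and then shows that repeated application of this relation reduces everything to the normalization $\langle\1,\1\rangle_{\calF_\lambda}=1$. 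No decomposition of $\calP(\calX_\lambda)$, no spherical vectors, and essentially no computation beyond the base case are needed. Your route, by contrast, first checks $U$-invariance of both forms, invokes the Cartan--Helgason decomposition and Schur's lemma, and then reduces to a single scalar identity per $K^L$-type.

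The first two steps of your argument are sound. The $U$-invariance of $\langle-,-\rangle_{\calF_\lambda}$ is as you say ($\chi_W$ is trivial on the compact group $U$, and $\omega_\lambda$ is $U$-invariant by construction), and the $U$-invariance of $[-,-]_\lambda$ does follow from the equivariance \eqref{eq:BesselEquivariance} together with $u^\#=\bar u^{-1}$ and the stability of $U$ under complex conjugation; one finds $(\ell(u)p)(\calB_\lambda)=\ell(\bar u)\,p(\calB_\lambda)\,\ell(\bar u^{-1})$ and $\overline{\ell(u)q}(4\,\cdot)=\ell(\bar u)\bigl[\overline q(4\,\cdot)\bigr]$, whence $[\ell(u)p,\ell(u)q]_\lambda=[p,q]_\lambda$. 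The Schur-lemma reduction to the identity
\[
 \Phi_{\bf m}(\calB_\lambda)\Phi_{\bf m}(z)\bigr|_{z=0}=\frac{(\tfrac{n}{r})_{\bf m}(\lambda)_{\bf m}}{d_{\bf m}}
\]
is also correct, using Proposition \ref{prop:FockNormCalculations}.

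The gap is in the last step, which is precisely where all of the content of the theorem is hiding, and which you leave as a ``plan.'' The recurrence $\calB_\lambda\Phi_{\bf m}=\sum_i c_i^{\bf m}(\lambda)\Phi_{{\bf m}-\epsilon_i}$ does not type-check as stated: $\calB_\lambda$ is $V_\CC$-valued, so $\calB_\lambda\Phi_{\bf m}$ is a $V_\CC$-valued polynomial, not a scalar linear combination of the $\Phi_{{\bf m}-\epsilon_i}$. What exists in \cite[Ch.~XIV--XV]{FK94} is a statement about the \emph{scalar} operator $(e|\calB_\lambda)$ (or about the radial part, cf.\ Proposition \ref{prop:BesselOnInvariants}), and even so $\Phi_{\bf m}(\calB_\lambda)$ is a degree-$|{\bf m}|$ polynomial in all the commuting components of $\calB_\lambda$, not an iterate of $(e|\calB_\lambda)$. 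Running an induction that accumulates exactly the factor $(\lambda)_{\bf m}$ would require controlling the full Pieri-type expansion and showing that the cross terms coming from the product rule \eqref{eq:BesselProdRule} organize themselves into that Pochhammer product --- a nontrivial computation that you have not supplied. As it stands, your argument proves less than the theorem; you should either carry out this identity (one workable route: apply Lemma \ref{lem:LaplaceTransformPolynomials} twice after moving $\calB_\lambda$ through the Laplace kernel using its symmetry on $L^2(\calO_\lambda,\td\mu_\lambda)$, as in the proof of Lemma \ref{lem:IntPhimKBessel}) or adopt the paper's adjunction-and-induction argument, which sidesteps the issue entirely.
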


The proof is similar to the proof of \cite[Proposition 3.8]{BSO06}

\begin{proof}
First note that for all $p,q\in\calP(\calX_\lambda)$
\begin{align*}
 [(a|\textstyle\frac{z}{4})p,q]_\lambda &= [p,(\overline{a}|\calB_\lambda)q]_\lambda & \mbox{for }a\in V_\CC,\\
 \langle(a|\textstyle\frac{z}{4})p,q\rangle_{\calF_\lambda} &= \langle p,(\overline{a}|\calB_\lambda)q\rangle_{\calF_\lambda} & \mbox{for }a\in V_\CC.
\end{align*}
In fact, the second equation follows from Proposition \ref{prop:BesselAdjoint}. The first equation is immediate since the components $(a|\calB_\lambda)$, $a\in V_\CC$, of the Bessel operator form a commuting family of differential operators on $\calX_\lambda$. Therefore, $(a|\calB_\lambda)p(\calB_\lambda)\overline{q}(4z)=4p(\calB_\lambda)\overline{(\overline{a}|\calB_\lambda)q}(4z)$ and the claim follows.\\
To prove \eqref{eq:FischerEqualsL2} we proceed by induction on $\deg(q)$. First, if $p=q=\1$, the constant polynomial with value $1$, it is clear that $[p,q]_\lambda=1$ and by Proposition \ref{prop:FockNormCalculations} we also have $\langle p,q\rangle_{\calF_\lambda}=1$. Thus, \eqref{eq:FischerEqualsL2} holds for $\deg(p)=\deg(q)=0$. If now $\deg(p)$ is arbitrary and $\deg(q)=0$ then $(\overline{a}|\calB_\lambda)q=0$ and hence
\begin{align*}
 [(a|\textstyle\frac{z}{4})p,q]_\lambda &= [p,(\overline{a}|\calB_\lambda)q]_\lambda = 0 & \mbox{and}\\
 \langle(a|\textstyle\frac{z}{4})p,q\rangle_{\calF_\lambda} &= \langle p,(\overline{a}|\calB_\lambda)q\rangle_{\calF_\lambda} = 0.
\end{align*}
Therefore, \eqref{eq:FischerEqualsL2} holds if $\deg(q)=0$. We note that \eqref{eq:FischerEqualsL2} also holds if $\deg(p)=0$ and $\deg(q)$ is arbitrary. In fact,
\begin{align*}
 [p,q]_\lambda &= p(0)\overline{q(0)} = \overline{[q,p]_\lambda} & \mbox{and} && \langle p,q\rangle_{\calF_\lambda} &= \overline{\langle q,p\rangle_{\calF_\lambda}}
\end{align*}
and \eqref{eq:FischerEqualsL2} follows from the previous considerations. Now assume \eqref{eq:FischerEqualsL2} holds for $\deg(q)\leq k$. For $\deg(q)\leq k+1$ we then have $\deg((\overline{a}|\calB_\lambda)q)\leq k$ and hence, by the assumption
\begin{align*}
 [(a|\textstyle\frac{z}{4})p,q]_\lambda &= [p,(\overline{a}|\calB_\lambda)q]_\lambda = \langle p,(\overline{a}|\calB_\lambda)q\rangle_{\calF_\lambda} = \langle(a,\textstyle\frac{z}{4})p,q\rangle_{\calF_\lambda}.
\end{align*}
This shows \eqref{eq:FischerEqualsL2} for $\deg(q)\leq k+1$ and $p(0)=0$, i.e. without constant term. But for constant $p$, i.e. $\deg(p)=0$, we have already seen that \eqref{eq:FischerEqualsL2} holds and therefore the proof is complete.
\end{proof}

\subsection{Unitary action on the Fock space}\label{sec:ActionOnFockSpace}

In Section \ref{sec:ComplexificationSchrödinger} we verified that for every $\lambda\in\calW$ the complexification $\td\pi_\lambda^\CC$ of the action $\td\pi_\lambda$ defines a Lie algebra representation of $\frakg$ on $C^\infty(\calX_\lambda)$ by holomorphic polynomial differential operators in $z$. It is clear that this action preserves the subspace $\calP(\calX_\lambda)$ of holomorphic polynomials. Using this we now construct an action of $\frakg$ on $\calP(\calX_\lambda)$ by composing the action $\td\pi_\lambda^\CC$ with the Cayley type transform $C\in\Int(\frakg_\CC)$ introduced in Section \ref{sec:ConformalGroup}.

\begin{definition}
Let $\lambda\in\calW$. On $\calP(\calX_\lambda)$ we define a $\frakg$-action $\td\rho_\lambda$ by
\begin{align*}
 \td\rho_\lambda(X) &:= \td\pi_\lambda^\CC(C(X)), & X\in\frakg.
\end{align*}
\end{definition}

\begin{proposition}\label{prop:FockKAction}
Let $\lambda\in\calW$ and $k\in\{0,\ldots,r\}$ such that $\calX_\lambda=\calX_k$. The $\frakk$-type decomposition of $(\td\rho_\lambda,\calP(\calX_\lambda))$ is given by
\begin{align*}
 \calP(\calX_\lambda) &= \bigoplus_{{\bf m}\geq0,m_{k+1}=0}^\infty{\calP_{\bf m}(\calX_\lambda)}
\end{align*}
and in every $\frakk$-type $\calP_{\bf m}(\calX_\lambda)$ the space of $\frakk^\frakl$-fixed vectors is one-dimensional and spanned by the polynomial $\Phi_{\bf m}(z)$. In particular $(\td\rho_\lambda,\calP(\calX_\lambda))$ is an admissible $(\frakg,\frakk)$-module.
\end{proposition}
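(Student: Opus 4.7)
The plan is to identify the $\frakk$-action $\td\rho_\lambda$ on $\calP(\calX_\lambda)$ with the natural $\frakl_\CC$-action (twisted by a character) via the Cayley type transform, and then invoke the Hua--Kostant--Schmid decomposition from Corollary \ref{cor:HuaKostantSchmid}.

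\textbf{Step 1 (Identification of the action).} Since $C(\frakk_\CC) = \frakl_\CC$, for every $X \in \frakk$ we have $C(X) \in \frakl_\CC$, and $\td\rho_\lambda(X) = \td\pi_\lambda^\CC(C(X))$ is an element of the $\frakl_\CC$-action described in Section \ref{sec:ComplexificationSchr�dinger} by
\[
 \td\pi_\lambda^\CC(0,T,0) F(z) = \partial_{T^\# z} F(z) + \frac{r\lambda}{2n}\Tr(T^\#) F(z), \qquad T \in \frakl_\CC.
\]
Up to the scalar character $T \mapsto \frac{r\lambda}{2n}\Tr(T^\#)$, this is precisely the derived representation of $\frakl_\CC$ coming from the linear action of $L_\CC$ on the invariant variety $\calX_\lambda$.

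\textbf{Step 2 (Decomposition).} Twisting by a scalar character does not alter the decomposition into irreducibles, so the $\frakl_\CC$-isotypic decomposition of $\calP(\calX_\lambda)$ under $\td\pi_\lambda^\CC|_{\frakl_\CC}$ coincides with the Hua--Kostant--Schmid decomposition. By Corollary \ref{cor:HuaKostantSchmid},
\[
 \calP(\calX_\lambda) = \bigoplus_{{\bf m}\geq 0,\, m_{k+1}=0} \calP_{\bf m}(\calX_\lambda),
\]
and each summand is irreducible as an $\frakl_\CC$-module. Transporting this via $C$ yields the claimed $\frakk$-type decomposition, with each $\calP_{\bf m}(\calX_\lambda)$ being $\frakk$-irreducible.

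\textbf{Step 3 (Spherical vectors).} Using formula \eqref{eq:CayleyTypeTransform2} with $a=0$, we see that $C(0,D,0) = D$ for $D \in \frakk^\frakl$, so the subalgebra $\{(0,D,0) : D \in \frakk^\frakl\}$ of $\frakk$ is mapped identically to the subalgebra $\frakk^\frakl \subseteq \frakl$. Since elements of $\frakk^\frakl$ are skew-symmetric derivations, $D^\# = -D$ and $\Tr(D^\#) = 0$, so the character term vanishes and $\td\rho_\lambda(0,D,0) = -\partial_{Dz}$ on $\calP(\calX_\lambda)$. This is the infinitesimal action of the connected group $K^L$ on polynomials restricted to $\calX_\lambda$; hence the $\frakk^\frakl$-fixed vectors in $\calP_{\bf m}(\calX_\lambda)$ coincide with the $K^L$-spherical vectors in $\calP_{\bf m}(V_\CC)$ restricted to $\calX_\lambda$, which by Section \ref{sec:SphericalPolynomials} are spanned by $\Phi_{\bf m}(z)$.

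\textbf{Step 4 (Admissibility).} Each summand $\calP_{\bf m}(\calX_\lambda)$ is finite-dimensional with $d_{\bf m} < \infty$, and the $\frakk$-modules $\calP_{\bf m}(\calX_\lambda)$ for distinct admissible ${\bf m}$ are pairwise non-isomorphic: under the isomorphism $C:\frakk_\CC \stackrel{\sim}{\to} \frakl_\CC$ they correspond to the pairwise non-isomorphic $\frakl_\CC$-modules $\calP_{\bf m}(V_\CC)$ from Theorem \ref{thm:HuaKostantSchmid}. Hence every $\frakk$-type of $(\td\rho_\lambda, \calP(\calX_\lambda))$ occurs with multiplicity exactly one, establishing admissibility. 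The only mild subtlety is the book-keeping in Step 1 to confirm that, although $\td\pi_\lambda^\CC$ was originally defined only on $\frakg$ and extended holomorphically, its restriction to $\frakl_\CC$ really does coincide with the natural linear action (twisted by the character); this is immediate since the natural $L_\CC$-action on $\calP(V_\CC)$ already extends holomorphically in $T$.
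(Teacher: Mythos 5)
Your proof is correct and takes essentially the same route as the paper: pull the $\frakk$-action across the Cayley transform $C$ to identify it (up to a character twist) with the derived $L_\CC$-action on $\calP(\calX_\lambda)$, invoke Corollary \ref{cor:HuaKostantSchmid} for the decomposition, and observe that $\frakk^\frakl$ is fixed by $C$ so the spherical vectors are the $K^L$-spherical polynomials $\Phi_{\bf m}$. The only difference is that you spell out the explicit differential operator $\td\rho_\lambda(0,D,0)=-\partial_{Dz}$ and verify the character vanishes on $\frakk^\frakl$, which the paper leaves implicit.
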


\begin{proof}
The Cayley type transform $C:\frakg_\CC\to\frakg_\CC$ maps $\frakk_\CC$ to $\frakl_\CC$ and $\frakk^\frakl$ to itself. The action of $\frakl_\CC$ in the complexified Schr\"odinger model $\td\pi_\lambda^\CC$ is induced by the action of $L_\CC$ on the orbit $\calX_\lambda=L_\CC\cdot e_k$ up to multiplication by a character. Hence the $\frakk$-type decomposition of $\calP(\calX_\lambda)$ is the same as the decomposition into $L_\CC$-representations under the natural action $\ell$ of $L_\CC$. Therefore the claimed decomposition is clear by Corollary \ref{cor:HuaKostantSchmid}. The action of $\frakk^\frakl$ is induced by the natural action of $K^L$ and hence the unique (up to scalar) $\frakk^\frakl$-invariant vector in the $\frakk$-type $\calP_{\bf m}(\calX_\lambda)$ is the unique (up to scalar) $K^L$-invariant vector in $\calP_{\bf m}(\calX_\lambda)$ under the natural action $\ell$. This finishes the proof.
\end{proof}

\begin{proposition}\label{prop:FockIrreducible}
For each $\lambda\in\calW$ the representation $(\td\rho_\lambda,\calP(\calX_\lambda))$ is an irreducible $(\frakg,\frakk)$-module.
\end{proposition}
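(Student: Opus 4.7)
The plan is to show that any nonzero $(\frakg,\frakk)$-submodule $M \subseteq \calP(\calX_\lambda)$ must coincide with the whole space. Since by Proposition~\ref{prop:FockKAction} the $\frakk$-types $\calP_{\bf m}(\calX_\lambda)$ are pairwise inequivalent and each irreducible under $\frakk$, and since the representation is $\frakk$-admissible, every $\frakk$-stable subspace (in particular $M$) is a direct sum of some of these pieces. Thus $M \neq 0$ forces $\calP_{\bf m}(\calX_\lambda) \subseteq M$ for at least one ${\bf m} \geq 0$ with $m_{k+1} = 0$.

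The action of $\frakp^\pm$ is the key. Combining the Cayley formulas \eqref{eq:CayleyTypeTransform1}--\eqref{eq:CayleyTypeTransform3} with the explicit Schr\"odinger formulas for $\td\pi_\lambda$, one sees that $\td\rho_\lambda(\frakp^+)$ is spanned by the scalar Bessel operators $(a|\calB_\lambda)$ for $a \in V_\CC$, whereas $\td\rho_\lambda(\frakp^-)$ is spanned by the multiplication operators $z \mapsto i(a|z)$. Hence $\frakp^+$ lowers polynomial degree by one and $\frakp^-$ raises it by one. Since $\frakp^+$ is abelian, the components of $\calB_\lambda$ commute, so for any $p \in \calP_{\bf m}(\calX_\lambda)$ the operator $\bar{p}(\calB_\lambda)$ is well defined and lies (up to a nonzero scalar) in $U(\td\rho_\lambda(\frakp^+)) \subseteq U(\frakg)$. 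Applied to $p$ it produces a polynomial of degree zero, and the Bessel--Fischer identity from Theorem~\ref{thm:FischerEqualsL2} together with homogeneity yields
\[
 \bar{p}(\calB_\lambda)\, p \;=\; 4^{-|{\bf m}|}[\bar{p},\bar{p}]_\lambda \;=\; 4^{-|{\bf m}|}\|\bar{p}\|_{\calF_\lambda}^2,
\]
a strictly positive constant whenever $p \neq 0$. Consequently $\1 \in M$.

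Once $\1 \in M$, iterating the multiplication operators $i(a|z)$ from $\td\rho_\lambda(\frakp^-)$ with $a$ ranging over $V_\CC$ produces every monomial on $V_\CC$, and restricting to $\calX_\lambda$ yields all of $\calP(\calX_\lambda) \subseteq M$. This forces $M = \calP(\calX_\lambda)$, proving irreducibility. The only delicate point is the constant-extraction step, which rests on the positive definiteness of the Bessel--Fischer form; this is exactly the content of Theorem~\ref{thm:FischerEqualsL2} together with positivity of the Fock norm, so no serious obstacle remains once that identification is in hand.
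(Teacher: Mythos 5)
Your argument is essentially the paper's: in both cases the key point is that Theorem \ref{thm:FischerEqualsL2} forces $q(\calB_\lambda)p$ to be a nonzero constant for appropriate $p,q\in\calP_{\bf m}(\calX_\lambda)$ (the paper uses $p=q=\Phi_{\bf m}$, you use a general $p$ and $q=\bar p$), after which one climbs back up using the multiplication operators coming from $\frakp^-$. The only cosmetic difference is that you pass through the constant $\1$ explicitly whereas the paper goes directly from one spherical vector $\Phi_{\bf m}$ to any other $\Phi_{\bf n}$ via $\Phi_{\bf n}(z)\Phi_{\bf m}(\calB_\lambda)$; the logical content is the same.
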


\begin{proof}
Let ${\bf m},{\bf n}\geq0$ be arbitrary and denote by $\calU(\frakg)$ the universal enveloping algebra of $\frakg$. Then it suffices to show that there exists an element of $\td\rho_\lambda(\calU(\frakg))$ which maps $\Phi_{\bf m}$ to $\Phi_{\bf n}$. Using Theorem \ref{thm:FischerEqualsL2} we have $0\neq\|\Phi_{\bf m}\|_{\calF_\lambda}^2=\left.\Phi_{\bf m}(\calB_\lambda)\Phi_{\bf m}(4z)\right|_{z=0}=4^{|{\bf m}|}\left.\Phi_{\bf m}(\calB_\lambda)\Phi_{\bf m}(z)\right|_{z=0}$. Since $\calB_\lambda$ is a differential operator of Euler degree $-1$ the polynomial $\Phi_{\bf m}(\calB_\lambda)\Phi_{\bf m}(z)$ is constant and by the previous observation it is non-zero. Note that $\td\rho_\lambda(\calU(\frakg))=\td\pi_\lambda^\CC(\calU(\frakg))$ contains multiplication by arbitrary polynomials and differential operators which are polynomials in the Bessel operator $\calB_\lambda$. Hence, the operator $\Phi_{\bf n}(z)\Phi_{\bf m}(\calB_\lambda)\in\td\rho_\lambda(\calU(\frakg))$ maps $\Phi_{\bf m}(z)$ to a non-zero multiple of $\Phi_{\bf n}(z)$ and the claim follows.
\end{proof}

\begin{proposition}\label{prop:FockInfUnitary}
The $(\frakg,\frakk)$-module $(\td\rho_\lambda,\calP(\calX_\lambda))$ is infinitesimally unitary with respect to the $L^2$-inner product $\langle-,-\rangle_{\calF_\lambda}$.
\end{proposition}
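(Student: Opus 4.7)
The plan is to show that $\td\rho_\lambda(X)$ is skew-Hermitian on $\calP(\calX_\lambda)$ with respect to $\langle\cdot,\cdot\rangle_{\calF_\lambda}$ for every $X \in \frakg$. As a first step, I would use that $\calP(\calX_\lambda)$ is a common invariant domain and that algebraic skew-Hermiticity on such a domain is preserved under commutators: if $\langle AF, G\rangle_{\calF_\lambda} = -\langle F, AG\rangle_{\calF_\lambda}$ and the analogue for $B$ hold on $\calP(\calX_\lambda)$, then the same identity for $[A,B]$ follows by an immediate expansion on polynomials. By the closing remark of Section \ref{sec:ConformalGroup}, the real form $\frakp = \{(u,L(a),u) : u,a \in V\}$ of $\frakp^+ + \frakp^-$ generates $\frakg$ as a Lie algebra, so it suffices to verify skew-Hermiticity for $X\in\frakp$.

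For $X = (u, L(a), u) \in \frakp$ I set $w := \tfrac{u}{2} + i\tfrac{a}{4} \in V_\CC$, so that $4\bar w = 2u - ia$. Summing the three formulas \eqref{eq:CayleyTypeTransform1}--\eqref{eq:CayleyTypeTransform3} yields $C(X) = (w, 0, 4\bar w)$, and the formulas for $\td\pi_\lambda^\CC$ (extended $\CC$-linearly in the slot arguments) then give
\begin{equation*}
  \td\rho_\lambda(X) \;=\; \td\pi_\lambda^\CC(w,0,4\bar w) \;=\; i(w\,|\,z) + 4i(\bar w\,|\,\calB_\lambda).
\end{equation*}

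The final step is to compute the adjoint of this operator on $\calF_\lambda$. Proposition \ref{prop:BesselAdjoint} states that the vector-valued operator $\calB_\lambda$ on $\calF_\lambda$ has adjoint $z/4$; pairing against $v \in V_\CC$ (using that $(\cdot|\cdot)$ is $\CC$-bilinear on $V_\CC$) this means
\begin{equation*}
  (v\,|\,\calB_\lambda)^* = \tfrac{1}{4}(\bar v\,|\,z) \qquad\text{and equivalently}\qquad (v\,|\,z)^* = 4(\bar v\,|\,\calB_\lambda).
\end{equation*}
Applying these with $v=w$ and $v=\bar w$ respectively,
\begin{equation*}
  \td\rho_\lambda(X)^* \;=\; -i(w\,|\,z)^* - 4i(\bar w\,|\,\calB_\lambda)^* \;=\; -4i(\bar w\,|\,\calB_\lambda) - i(w\,|\,z) \;=\; -\td\rho_\lambda(X),
\end{equation*}
which gives the desired skew-Hermiticity on $\frakp$, hence on all of $\frakg$ by the initial reduction.

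The substantive input is Proposition \ref{prop:BesselAdjoint}, which itself rests on the identity $\calB_\lambda\omega_\lambda = \tfrac{\bar z}{4}\omega_\lambda$ from Proposition \ref{prop:OmegaDiffEq}: this is precisely where the particular choice of density $\omega_\lambda$ in terms of the $K$-Bessel function enters. Once that identity is available, the only care needed in the present proposition is to track $w$ versus $\bar w$ correctly in the adjoint computation, which reflects the fact that $C$ sends the real form $\frakp$ of $\frakp^+ + \frakp^-$ onto the different real form $\{(w, 0, 4\bar w) : w \in V_\CC\}$ of $\frakn_\CC + \overline{\frakn}_\CC$.
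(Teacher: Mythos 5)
Your argument is correct and follows the same route as the paper: reduce to the real form $\frakp=\{(u,L(a),u)\}$ (which generates $\frakg$), apply the Cayley transform formulas to write $\td\rho_\lambda(X)$ in terms of multiplication by $z$ and the Bessel operator, and conclude skew-adjointness from Proposition~\ref{prop:BesselAdjoint}. The paper simply splits this into the two special cases $(a,0,a)$ and $(0,2L(a),0)$ for real $a$ instead of introducing the variable $w=\tfrac{u}{2}+i\tfrac{a}{4}$, but the computation and the key input are identical.
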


\begin{proof}
As remarked in Section \ref{sec:ConformalGroup} the subspace $\{(a,L(b),a):a,b\in V\}\subseteq\frakg$ generates $\frakg$ as a Lie algebra and therefore it suffices to show that the operators
\begin{align*}
 \td\rho_\lambda(a,0,a) &=\frac{1}{2}i(a|4\calB_\lambda+z) && \mbox{and} & \td\rho_\lambda(0,2L(a),0) &= \frac{1}{2}(a|4\calB_\lambda-z)
\end{align*}
are skew-adjoint on $\calP(\calX_\lambda)$ with respect to the $L^2$-inner product $\langle-,-\rangle_{\calF_\lambda}$. But this is clear by Proposition \ref{prop:BesselAdjoint}.
\end{proof}

\begin{theorem}\label{thm:UnitaryRepOnFock}
The $(\frakg,\frakk)$-module $(\td\rho_\lambda,\calP(\calX_\lambda))$ integrates to an irreducible unitary representation $\rho_\lambda$ of the universal cover $\widetilde{G}$ of $G$ on $\calF_\lambda$. This representation factors to a finite cover of $G$ if and only if $\lambda\in\QQ$. In particular it factors to a finite cover of $G$ if $\lambda\in\calW_{\disc}$.
\end{theorem}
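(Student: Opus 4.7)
The plan is to invoke Harish-Chandra's integrability theorem for irreducible admissible infinitesimally unitary $(\frakg,\widetilde{K})$-modules. All three hypotheses have been secured in the preceding results: Proposition \ref{prop:FockKAction} exhibits the multiplicity-free $\widetilde{K}$-type decomposition with finite-dimensional summands (admissibility), Proposition \ref{prop:FockIrreducible} gives irreducibility, and Proposition \ref{prop:FockInfUnitary} establishes infinitesimal unitarity with respect to $\langle\cdot,\cdot\rangle_{\calF_\lambda}$. Since $\widetilde{K}$ is simply connected and each $\widetilde{K}$-type $\calP_{\bf m}(\calX_\lambda)$ is finite-dimensional, the locally finite $\frakk$-action on $\calP(\calX_\lambda)$ integrates uniquely to a $\widetilde{K}$-action, promoting $(\td\rho_\lambda,\calP(\calX_\lambda))$ to an irreducible admissible infinitesimally unitary $(\frakg,\widetilde{K})$-module. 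Harish-Chandra's theorem then produces a unique irreducible unitary representation $\rho_\lambda$ of $\widetilde{G}$ on the Hilbert space completion $\calF_\lambda$ whose derived representation recovers $\td\rho_\lambda$.

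For the finite-cover criterion the key step is to compute the infinitesimal action of $Z_0\in Z(\frakk)$ on $\calP(\calX_\lambda)$. The Cayley type transform formulas \eqref{eq:CayleyTypeTransform1}--\eqref{eq:CayleyTypeTransform3} applied to $Z_0=-\tfrac{1}{2}(e,0,-e)$ yield
\begin{equation*}
 C(Z_0) = (0,-i\,\id,0)\in\frakl_\CC,
\end{equation*}
and hence the formula $\td\pi_\lambda^\CC(0,T,0)=\partial_{T^\#z}+\tfrac{r\lambda}{2n}\Tr(T^\#)$ with $T=-i\,\id$ gives
\begin{equation*}
 \td\rho_\lambda(Z_0) \;=\; \td\pi_\lambda^\CC(0,-i\,\id,0) \;=\; -iE - \tfrac{ir\lambda}{2},
\end{equation*}
where $E$ is the Euler operator acting by $|{\bf m}|$ on the $\widetilde{K}$-type $\calP_{\bf m}(\calX_\lambda)$. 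In particular, the minimal $\widetilde{K}$-type $\calP_0(\calX_\lambda)=\CC\cdot\1$ is an eigenspace for $Z_0$ with eigenvalue $-\tfrac{ir\lambda}{2}$, in exact agreement with $\td\xi_\lambda(Z_0)$ for the Schr\"odinger model.

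Since the kernel of $\widetilde{K}\to K$ is a cyclic subgroup of the central one-parameter group $\exp(\RR Z_0)$, the representation $\rho_\lambda$ descends to a finite cover of $G$ precisely when a generator of this kernel acts on $\calP_0(\calX_\lambda)$ by a root of unity. By the computation above this scalar has the form $\exp(-it_0\tfrac{r\lambda}{2})$ for a fixed $t_0\in\RR^\times$, and therefore has finite order iff $\tfrac{r\lambda}{2}\in\QQ$, equivalently $\lambda\in\QQ$. The inclusion $\calW_{\disc}\subseteq\tfrac{d}{2}\NN_0\subseteq\QQ$ yields the final assertion. The main technical obstacle is the Cayley transform computation of $\td\rho_\lambda(Z_0)$; once that is in hand the descent criterion is formal, and the integration step is a routine invocation of Harish-Chandra's theorem given the ingredients already collected.
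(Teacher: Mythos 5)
Your proposal follows the same two-step strategy as the paper: invoke the standard integrability result for an irreducible, admissible, infinitesimally unitary $(\frakg,\widetilde{K})$-module assembled from Propositions~\ref{prop:FockKAction}, \ref{prop:FockIrreducible} and~\ref{prop:FockInfUnitary}, then determine the descent to finite covers by computing, via the Cayley transform, the scalar by which $Z(\frakk)$ acts on the minimal $\widetilde K$-type $\CC\1$ (your $\td\rho_\lambda(Z_0)\1=-\tfrac{ir\lambda}{2}\1$ is a rescaling of the paper's $\td\rho_\lambda(e,0,-e)\1=ir\lambda\1$). The one small inaccuracy is the claim that $\ker(\widetilde K\to K)$ is a cyclic subgroup of $\exp(\RR Z_0)$ — as noted in Section~\ref{sec:ConformalGroup} it is only a finite extension of $\ZZ$ and need not sit inside the one-parameter subgroup — but since the finite part always acts by roots of unity this does not affect the conclusion.
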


\begin{proof}
By the previous results it only remains to check in which cases the minimal $\frakk$-type $\calP_{\bf0}(\calX_\lambda)=\CC\1$ integrates to a finite cover. The $\frakk$-action on $\1$ is given by
\begin{align*}
 \td\rho_\lambda(a,D,-a)\1 &= \td\pi_\lambda^\CC(0,D+2iL(a),0)\1 = \frac{r\lambda}{2n}\Tr(2iL(a))\1\\
 &= i\lambda\tr(a)\1.
\end{align*}
Therefore, the center $Z(\frakk)=\RR(e,0,-e)$ acts by
\begin{align*}
 \td\rho_\lambda(e,0,-e)\1 &= ir\lambda\1.
\end{align*}
In $K$ we have $e^{\pi(e,0,-e)}=\1$ and hence, the claim follows.
\end{proof}

In the Fock model the action of the maximal compact subgroup is quite explicit. For this recall the group homomorphism $\eta:\widetilde{K}\to U\subseteq L_\CC$ with differential $\td\eta(u,D,-u)=D+2iL(u)$ defined in Section \ref{sec:Complexifications} and the character $\xi_\lambda:\widetilde{K}\to\TT$ of $\widetilde{K}$ with differential $\td\xi_\lambda(u,T,-u)=i\lambda\tr(u)$ defined in Section \ref{sec:SchrödingerModel}.

\begin{proposition}\label{prop:FockModelKAction}
For $k\in\widetilde{K}$ we have
\begin{align*}
 \rho_\lambda(k)F(z) &= \xi_\lambda(k)F(\eta(k)^\# z), & z\in\calX_\lambda.
\end{align*}
\end{proposition}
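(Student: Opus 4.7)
The plan is to reduce the identity to the Lie algebra level. Both sides depend continuously on $k$; moreover the right--hand side is a well-defined action of $\widetilde{K}$ because $\eta:\widetilde{K}\to L_\CC$ is a group homomorphism (so $\eta(k_1k_2)^\#=\eta(k_2)^\#\eta(k_1)^\#$) and $\xi_\lambda$ is a character. Since $\widetilde{K}$ is connected and generated by its one-parameter subgroups, and since $\rho_\lambda$ integrates the $\frakg$-module $\td\rho_\lambda$ by Theorem~\ref{thm:UnitaryRepOnFock}, it suffices to check that the differentials of both sides at the identity agree on each $X=(u,D,-u)\in\frakk$, acting on the dense $\widetilde{K}$-invariant subspace $\calP(\calX_\lambda)\subseteq\calF_\lambda$.

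First I would compute the differential of the right-hand side. For $F\in\calP(\calX_\lambda)$, using $\eta(\exp tX)^\#=\exp(t\,\td\eta(X)^\#)$ together with $\td\xi_\lambda(X)=i\lambda\tr(u)$, one finds
\begin{equation*}
\left.\tfrac{d}{dt}\right|_{t=0}\xi_\lambda(\exp tX)F(\eta(\exp tX)^\# z)
=i\lambda\tr(u)F(z)+\partial_{\td\eta(X)^\# z}F(z).
\end{equation*}
Since $D\in\frakk^\frakl$ is a derivation of $V$ it is skew with respect to the trace form, so $D^\#=-D$; as $L(u)$ is symmetric we have $L(u)^\#=L(u)$. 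Consequently
\begin{equation*}
\td\eta(X)^\#=(D+2iL(u))^\#=-D+2iL(u).
\end{equation*}

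Next I would compute the left-hand side. Decomposing $(u,D,-u)=(u,0,0)+(0,D,0)+(0,0,-u)$ and applying the Cayley transform formulas \eqref{eq:CayleyTypeTransform1}--\eqref{eq:CayleyTypeTransform3} gives
\begin{equation*}
C(u,D,-u)=\bigl(\tfrac{u}{4},iL(u),u\bigr)+(0,D,0)+\bigl(-\tfrac{u}{4},iL(u),-u\bigr)=(0,\,D+2iL(u),\,0)\in\frakl_\CC,
\end{equation*}
as already observed in Section~\ref{sec:ConformalGroup}. Hence $\td\rho_\lambda(X)=\td\pi_\lambda^\CC(0,T,0)$ with $T=D+2iL(u)$, and the complexified Schr\"odinger formula of Section~\ref{sec:ComplexificationSchr�dinger} reads
\begin{equation*}
\td\pi_\lambda^\CC(0,T,0)F(z)=\partial_{T^\# z}F(z)+\tfrac{r\lambda}{2n}\Tr(T^\#)\,F(z).
\end{equation*}
Plugging in $T^\#=-D+2iL(u)$ and using $\Tr D=0$ (derivations are traceless) together with $\Tr L(u)=\tfrac{n}{r}\tr(u)$, the scalar term becomes $\tfrac{r\lambda}{2n}\cdot\tfrac{2in}{r}\tr(u)=i\lambda\tr(u)$, which matches the RHS exactly.

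There is no serious obstacle; the only points that need care are the adjoint computation $\td\eta(X)^\#=-D+2iL(u)$ (i.e.\ the skew-symmetry of derivations and the symmetry of $L(u)$ with respect to the trace form), the identification of the scalar $i\lambda\tr(u)$ via $\Tr L(u)=\tfrac{n}{r}\tr(u)$, and the interpretation of the directional derivative $\partial_{(-D+2iL(u))z}F$ as a complex-linear operation, which is legitimate because $F$ is holomorphic on $\calX_\lambda$.
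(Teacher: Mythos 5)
Your proof is correct and takes essentially the same approach as the paper: both compute the differential of the claimed group formula at the identity and match it against $\td\rho_\lambda(X)=\td\pi_\lambda^\CC(0,\td\eta(X),0)=\partial_{\td\eta(X)^\#z}+\tfrac{r\lambda}{2n}\Tr(\td\eta(X)^\#)=\partial_{\td\eta(X)^\#z}+i\lambda\tr(u)$ for $X=(u,D,-u)\in\frakk$. You simply spell out the adjoint computation ($D^\#=-D$, $L(u)^\#=L(u)$), the trace identity $\Tr L(u)=\tfrac{n}{r}\tr(u)$, and the integration step that the paper's one-line proof leaves implicit.
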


\begin{proof}
The action of $X=(u,D,-u)\in\frakk$ is given by
\begin{align*}
 \td\rho_\lambda(X) &= \td\pi_\lambda^\CC(0,\td\eta(X),0) = \partial_{\td\eta(X)^\#}+\frac{r\lambda}{2n}\Tr(\td\eta(X)^\#)\\
 &= \partial_{\td\eta(X)^\#}+i\lambda\tr(u)
\end{align*}
which implies the claim.
\end{proof}

\subsection{The reproducing kernel}

We now calculate the reproducing kernel $\KK_\lambda(z,w)$ of the Hilbert space $\calF_\lambda$. For this we first calculate the reproducing kernels $\KK_\lambda^{\bf m}(z,w)$ on the finite-dimensional subspaces $\calP_{\bf m}(\calX_\lambda)$ endowed with the inner product of $\calF_\lambda$.

\begin{lemma}\label{lem:RepKernelEquivariance}
Let $\lambda\in\calW$ and ${\bf m}\geq0$. If $\lambda=k\frac{d}{2}$, $k=0,\ldots,r-1$, we additionally assume that $m_{k+1}=\ldots=m_r=0$. Then the following invariance property holds:
\begin{align*}
 \KK_\lambda^{\bf m}(gz,w) &= \KK_\lambda^{\bf m}(z,g^*w), & z,w\in\calX_\lambda,g\in L_\CC.
\end{align*}
\end{lemma}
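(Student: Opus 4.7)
My plan is to first establish the identity when $g$ lies in the compact real form $U \subseteq L_\CC$ by a standard reproducing-kernel argument, and then extend it to all of $L_\CC$ by analytic continuation, exploiting that $U$ is totally real in $L_\CC$ (Section \ref{sec:Complexifications}).

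First I would verify that the measure $\omega_\lambda \td\nu_\lambda$ on $\calX_\lambda$ is $U$-invariant. The function $\omega_\lambda(z) = \calK_\lambda((a/2)^2)$ depends only on the radial part $a$ in the polar decomposition $z = ua$ and is therefore manifestly $U$-invariant. The measure $\td\nu_\lambda$ transforms under $L_\CC$ by the real character $\chi_W^\lambda$, but $\chi_W$ is trivial on the compact subgroup $U$, so $\td\nu_\lambda$ itself is already $U$-invariant. Consequently the natural action $(\ell(g)F)(z) := F(g^{-1}z)$ of $L_\CC$ on holomorphic functions on $\calX_\lambda$ restricts to a unitary representation of $U$ on $\calF_\lambda$, preserving each finite-dimensional subspace $\calP_{\bf m}(\calX_\lambda)$.

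From this $U$-unitarity together with the defining property of the reproducing kernel one immediately reads off, for every $u \in U$ and $z,w \in \calX_\lambda$, the identity $\KK_\lambda^{\bf m}(uz,w) = \KK_\lambda^{\bf m}(z,u^{-1}w)$. Since $u^{-1} = u^*$ for $u \in U$, this is exactly the claim restricted to $g = u \in U$.

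To upgrade to arbitrary $g \in L_\CC$ I would argue by holomorphy: the left-hand side $g \mapsto \KK_\lambda^{\bf m}(gz,w)$ is holomorphic in $g$, since $g \mapsto gz$ is holomorphic and $\KK_\lambda^{\bf m}(\cdot,w)$ is a holomorphic polynomial; the right-hand side $g \mapsto \KK_\lambda^{\bf m}(z,g^*w)$ is also holomorphic, since $g \mapsto g^*$ is antiholomorphic and $\KK_\lambda^{\bf m}(z,\cdot)$ is antiholomorphic, so their composition is holomorphic in $g$. Two holomorphic functions on the connected complex manifold $L_\CC$ that coincide on the totally real subgroup $U$ must coincide everywhere, which finishes the proof. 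The only step requiring genuine care is the $U$-invariance of the density, but this is essentially built into the construction of $\omega_\lambda$ and $\td\nu_\lambda$; everything else is routine.
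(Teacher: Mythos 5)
Your proof is correct. It has the same overall shape as the paper's argument -- establish the identity for $u\in U$ via unitarity and the reproducing-kernel property, then extend holomorphically over the totally real subgroup $U\subseteq L_\CC$ -- but the route to $U$-unitarity is genuinely different. The paper invokes the $\widetilde{K}$-action $\rho_\lambda$ in the Fock model, whose explicit form $\rho_\lambda(k)F(z)=\xi_\lambda(k)F(\eta(k)^\# z)$ comes from Proposition \ref{prop:FockModelKAction} and whose unitarity rests on the Bessel-operator adjoint identity of Proposition \ref{prop:BesselAdjoint}; the character factor $\xi_\lambda$ then cancels in the reproducing-kernel computation. You instead use the bare regular action $\ell(u)F(z)=F(u^{-1}z)$ of $U$ and argue unitarity directly from the $U$-invariance of the measure $\omega_\lambda\,\td\nu_\lambda$: the density $\omega_\lambda$ depends only on the radial part in the polar decomposition on $\calX_\lambda$, hence is $U$-invariant by construction, and $\td\nu_\lambda$ is $U$-invariant because the positive character $\chi_W:L_\CC\to\RR_+$ has no choice but to vanish on the compact group $U$. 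Your version is more self-contained, avoiding $\rho_\lambda$ and $\xi_\lambda$ entirely and relying only on Section \ref{sec:Complexifications} and the construction in Section \ref{sec:KBessel}; the paper's version has the mild advantage of fitting directly into the chain of results already developed about $\rho_\lambda$. Both arguments then use that $g\mapsto\KK_\lambda^{\bf m}(gz,w)$ is holomorphic and $g\mapsto\KK_\lambda^{\bf m}(z,g^*w)$ is holomorphic (being a composition of two antiholomorphic maps), so agreement on the totally real $U$ forces agreement on all of $L_\CC$.
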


\begin{proof}
Using Proposition \ref{prop:FockModelKAction} we find that for $k\in\widetilde{K}$ and $F\in\calP_{\bf m}(\calX_\lambda)$ we have
\begin{align*}
 \rho_\lambda(k)F(z) &= \langle\rho_\lambda(k)F,\KK_\lambda^{\bf m}(-,z)\rangle_{\calF_\lambda}\\
 &= \langle F,\rho_\lambda(k^{-1})\KK_\lambda^{\bf m}(-,z)\rangle_{\calF_\lambda}\\
 &= \overline{\xi_\lambda(k^{-1})}\langle F,\KK_\lambda^{\bf m}((\eta(k)^{-1})^\# -,z)\rangle_{\calF_\lambda}\\
 &= \xi_\lambda(k)\langle F,\KK_\lambda^{\bf m}((\eta(k)^{-1})^\# -,z)\rangle_{\calF_\lambda}
\intertext{and on the other hand}
 \rho_\lambda(k)F(z) &= \xi_\lambda(k)F(\eta(k)^\# z)\\
 &= \xi_\lambda(k)\langle F,\KK_\lambda^{\bf m}(-,\eta(k)^\# z)\rangle_{\calF_\lambda}.
\end{align*}
Since $\eta:\widetilde{K}\to U$ is surjective and $u^{-1}=u^*=\overline{u}^\#$ as well as $\overline{u}\in U$ for $u\in U$ we obtain
\begin{align*}
 \KK_\lambda^{\bf m}(uz,w) &= \KK_\lambda^{\bf m}(z,u^*w), & z,w\in\calX_\lambda, u\in U.
\end{align*}
Now both sides are holomorphic in $u\in L_\CC$ and $U\subseteq L_\CC$ is totally real. Hence the claim follows.
\end{proof}

\begin{proposition}\label{prop:RepKernelm}
Let $\lambda\in\calW$ and ${\bf m}\geq0$. If $\lambda=k\frac{d}{2}$, $k=0,\ldots,r-1$, we additionally assume that $m_{k+1}=\ldots=m_r=0$. Then
\begin{align}
 \KK_\lambda^{\bf m}(z,w) &= \frac{d_{\bf m}}{(\frac{n}{r})_{\bf m}(\lambda)_{\bf m}}\Phi_{\bf m}\left(\frac{z}{2},\frac{w}{2}\right), & z,w\in\calX_\lambda.\label{eq:RepKernelm}
\end{align}
\end{proposition}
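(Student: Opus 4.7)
The plan is to identify $\KK_\lambda^{\bf m}(z,w)$ as a scalar multiple of $\Phi_{\bf m}(z,w)$ by a Schur-style argument based on the equivariance already established in Lemma \ref{lem:RepKernelEquivariance}, and then pin down the scalar by evaluating the reproducing property on the spherical polynomial.

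First I would observe that $\KK_\lambda^{\bf m}(z,w)$ is a polynomial in $z$ and $\overline{w}$, bi-homogeneous of bidegree $(|{\bf m}|,|{\bf m}|)$, since $\calP_{\bf m}(\calX_\lambda)$ consists of polynomials of homogeneous degree $|{\bf m}|$ and the reproducing kernel admits the representation $\KK_\lambda^{\bf m}(z,w)=\sum_\alpha\phi_\alpha(z)\overline{\phi_\alpha(w)}$ for any orthonormal basis $\{\phi_\alpha\}$ of $\calP_{\bf m}(\calX_\lambda)$. By Corollary \ref{cor:HuaKostantSchmid} the restriction map induces an $L_\CC$-isomorphism $\calP_{\bf m}(V_\CC)\stackrel{\sim}{\to}\calP_{\bf m}(\calX_\lambda)$, hence $\calP_{\bf m}(\calX_\lambda)$ is irreducible as an $L_\CC$-module. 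The equivariance $\KK_\lambda^{\bf m}(gz,w)=\KK_\lambda^{\bf m}(z,g^*w)$ from Lemma \ref{lem:RepKernelEquivariance} together with Schur's lemma then implies that the space of such equivariant bi-polynomials of this fixed bidegree is one-dimensional; since $\Phi_{\bf m}(z,w)$ is characterized (up to scalar) by exactly the same equivariance among polynomials holomorphic in $z$ and antiholomorphic in $w$, we must have
\begin{equation*}
 \KK_\lambda^{\bf m}(z,w) = c_{\bf m}\,\Phi_{\bf m}(z,w) \qquad \text{for some constant } c_{\bf m}\in\CC.
\end{equation*}

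Next I would determine $c_{\bf m}$ by evaluating the reproducing property at the spherical polynomial $\Phi_{\bf m}\in\calP_{\bf m}(\calX_\lambda)$ and the point $w=e$. Using Lemma \ref{lem:PropertiesPhimDoubleVariable}~(1), $\Phi_{\bf m}(z,e)=\Phi_{\bf m}(z)$, so
\begin{equation*}
 \Phi_{\bf m}(e) \;=\; \langle\Phi_{\bf m},\KK_\lambda^{\bf m}(\cdot,e)\rangle_{\calF_\lambda} \;=\; c_{\bf m}\,\langle\Phi_{\bf m},\Phi_{\bf m}\rangle_{\calF_\lambda} \;=\; c_{\bf m}\,\|\Phi_{\bf m}\|_{\calF_\lambda}^2.
\end{equation*}
The left-hand side equals $1$ because $K^L$ fixes $e$ and $\Delta_{\bf m}(e)=1$, while Proposition \ref{prop:FockNormCalculations} gives $\|\Phi_{\bf m}\|_{\calF_\lambda}^2=4^{|{\bf m}|}(\tfrac{n}{r})_{\bf m}(\lambda)_{\bf m}/d_{\bf m}$. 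Hence
\begin{equation*}
 c_{\bf m} \;=\; \frac{d_{\bf m}}{4^{|{\bf m}|}(\tfrac{n}{r})_{\bf m}(\lambda)_{\bf m}}.
\end{equation*}

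Finally I would rewrite this in the form stated: since $\Phi_{\bf m}(z,w)$ is of bidegree $(|{\bf m}|,|{\bf m}|)$, the real scaling $(z,w)\mapsto(z/2,w/2)$ gives $\Phi_{\bf m}(z/2,w/2)=4^{-|{\bf m}|}\Phi_{\bf m}(z,w)$, so
\begin{equation*}
 \KK_\lambda^{\bf m}(z,w) \;=\; \frac{d_{\bf m}}{4^{|{\bf m}|}(\tfrac{n}{r})_{\bf m}(\lambda)_{\bf m}}\,\Phi_{\bf m}(z,w) \;=\; \frac{d_{\bf m}}{(\tfrac{n}{r})_{\bf m}(\lambda)_{\bf m}}\,\Phi_{\bf m}\!\left(\tfrac{z}{2},\tfrac{w}{2}\right).
\end{equation*}
The main conceptual point is the Schur-type identification in the first step; once that is in place, the evaluation at $w=e$ is a one-line computation and the factor of $4^{|{\bf m}|}$ merely reflects the factor of $2$ shift between the Fock measure (built from $\omega_\lambda(z)=\calK_\lambda((z/2)^2)$) and the natural scaling on $\calX_\lambda$. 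I do not expect any serious obstacle beyond carefully invoking the irreducibility of $\calP_{\bf m}(\calX_\lambda)$ and the bi-polynomial nature of the kernel to make the Schur argument precise.
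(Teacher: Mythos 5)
Your approach is sound and reproduces the formula correctly, but it genuinely diverges from the paper's proof in how it handles the discrete Wallach points, and that is precisely where there is a small gap you need to address. The paper first establishes \eqref{eq:RepKernelm} only for $\lambda>(r-1)\frac{d}{2}$, where $e\in\calX_\lambda=\calX_r$; it then identifies the representation $p(z)=[p,\KK_\lambda^{\bf m}(-,z)]_\lambda$ in terms of the Bessel--Fischer pairing and observes both sides are meromorphic in $\lambda$ with no poles at $\lambda=k\frac{d}{2}$ when $m_{k+1}=0$, so the formula continues analytically to the discrete Wallach set. Your proof instead aims to treat all $\lambda\in\calW$ uniformly via a Schur argument plus evaluation at $w=e$, which is the cleaner route if it works --- but when $\lambda=k\frac{d}{2}$ with $k<r$, the point $e$ is \emph{not} in $\calX_\lambda=\calX_k$. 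The display
\begin{equation*}
 \Phi_{\bf m}(e)=\langle\Phi_{\bf m},\KK_\lambda^{\bf m}(\cdot,e)\rangle_{\calF_\lambda}
\end{equation*}
is then not an instance of the reproducing property on $\calF_\lambda$, which reads $F(w)=\langle F,\KK_\lambda^{\bf m}(\cdot,w)\rangle_{\calF_\lambda}$ only for $w\in\calX_\lambda$. You need a one-line patch: writing $\KK_\lambda^{\bf m}(z,w)=\sum_\alpha\phi_\alpha(z)\overline{\phi_\alpha(w)}$ for an orthonormal basis $\{\phi_\alpha\}\subseteq\calP_{\bf m}(\calX_\lambda)\cong\calP_{\bf m}(V_\CC)$, the identity $\sum_\alpha\phi_\alpha(w)\langle F,\phi_\alpha\rangle_{\calF_\lambda}=F(w)$ holds for $w\in\calX_\lambda$, and both sides are elements of $\calP_{\bf m}(V_\CC)$ in the variable $w$; since the restriction map $\calP_{\bf m}(V_\CC)\to\calP_{\bf m}(\calX_\lambda)$ is injective (Corollary \ref{cor:HuaKostantSchmid}), the identity persists for all $w\in V_\CC$, and in particular at $w=e$. (Equivalently, you can bypass any point evaluation: once $\KK_\lambda^{\bf m}=c_{\bf m}\Phi_{\bf m}$, compare with the $L^2(\Sigma)$ reproducing kernel $d_{\bf m}\Phi_{\bf m}$ and read off $c_{\bf m}$ from the proportionality of inner products in Proposition \ref{prop:FockNormCalculations}.) A second, cosmetic, point: in your equality chain you should write $\langle\Phi_{\bf m},c_{\bf m}\Phi_{\bf m}\rangle_{\calF_\lambda}=\overline{c_{\bf m}}\,\|\Phi_{\bf m}\|^2_{\calF_\lambda}$ because the Fock inner product is antilinear in the second slot; this is harmless since $c_{\bf m}$ turns out real and positive, but as written there is a conjugate dropped. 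With these two small adjustments the proof is complete and in fact tidier than the paper's two-step argument, at the cost of making the underlying identification between $\calP_{\bf m}(\calX_\lambda)$ and $\calP_{\bf m}(V_\CC)$ do more work.
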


\begin{proof}
First let $\lambda>(r-1)\frac{d}{2}$. By Lemma \ref{lem:RepKernelEquivariance} the function $K_\lambda^{\bf m}(-,e)\in\calP_{\bf m}(\calX_\lambda)$ is $K^L$-invariant and hence there is a constant $c_\lambda^{\bf m}$ such that $K_\lambda^{\bf m}(z,e)=c_\lambda^{\bf m}\Phi_{\bf m}(z)$. Since
\begin{align*}
 1 &= \Phi_{\bf m}(e) = \langle\Phi_{\bf m},K_\lambda^{\bf m}(-,e)\rangle_{\calF_\lambda} = c_\lambda^{\bf m}\|\Phi_{\bf m}\|_{\calF_\lambda}^2,
\end{align*}
formula \eqref{eq:RepKernelm} now follows with Proposition \ref{prop:FockNormCalculations}. Now, for $\lambda>(r-1)\frac{d}{2}$ this implies the following identity:
\begin{align}
 p(z) &= [p,\KK^{\bf m}_\lambda(-,z)]_\lambda = \frac{d_{\bf m}}{(\frac{n}{r})_{\bf m}(\lambda)_{\bf m}}\cdot\left.p(\calB_\lambda)_w\Phi_{\bf m}(w,\overline{z})\right|_{w=0}\label{eq:RepKernelPropertyWithFischer}
\end{align}
for $z\in V_\CC$ and $p\in\calP_{\bf m}(V_\CC)$. The right hand side is clearly meromorphic in $\lambda$ with possible poles at the points where $(\lambda)_{\bf m}=0$. For $\lambda=k\frac{d}{2}$, $k\in\{0,\ldots,r-1\}$, and ${\bf m}\geq0$ with $m_{k+1}=\ldots=m_r=0$ we have $(\lambda)_{\bf m}\neq0$ and hence the identity \eqref{eq:RepKernelPropertyWithFischer} holds for such $\lambda$ and ${\bf m}$ by analytic continuation. Since the reproducing kernel of $\calP_{\bf m}(\calX_\lambda)$ is uniquely determined by \eqref{eq:RepKernelPropertyWithFischer} the claim now follows also for $\lambda=k\frac{d}{2}$.
\end{proof}

\begin{theorem}\label{thm:FockRepKernel}
The reproducing kernel $\KK_\lambda(z,w)$ of the Hilbert space $\calF_\lambda$ is given by
\begin{align*}
 \KK_\lambda(z,w) &= \calI_\lambda\left(\frac{z}{2},\frac{w}{2}\right), & z,w\in\calX_\lambda.
\end{align*}
\end{theorem}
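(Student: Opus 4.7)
The proof is essentially an assembly of pieces already prepared in the paper. The plan is to reduce the computation of $\KK_\lambda(z,w)$ to the sum of the reproducing kernels $\KK_\lambda^{\bf m}(z,w)$ on the finite-dimensional $\widetilde{K}$-types $\calP_{\bf m}(\calX_\lambda)$, which were computed in Proposition~\ref{prop:RepKernelm}, and then to recognise the resulting series as $\calI_\lambda(z/2,w/2)$.

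First I would set up the orthogonal decomposition. By Corollary~\ref{cor:HuaKostantSchmid}, the dense subspace $\calP(\calX_\lambda) \subseteq \calF_\lambda$ splits as $\bigoplus_{{\bf m}\geq 0,\, m_{k+1}=0} \calP_{\bf m}(\calX_\lambda)$, where $k$ is such that $\calO_\lambda = \calO_k$, and by Proposition~\ref{prop:FockNormCalculations} these subspaces are pairwise orthogonal with respect to $\langle -,-\rangle_{\calF_\lambda}$. Denoting by $P_{\bf m}$ the orthogonal projection of $\calF_\lambda$ onto $\calP_{\bf m}(\calX_\lambda)$, any $F\in\calF_\lambda$ satisfies $F = \sum_{\bf m} P_{\bf m}F$ in $\calF_\lambda$, and for fixed $w\in\calX_\lambda$ the point evaluation $F\mapsto F(w)$ is continuous by Proposition~\ref{prop:FockContPointEvaluations}.

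Next I would put $K(z,w) := \sum_{\bf m} \KK_\lambda^{\bf m}(z,w)$, the sum ranging over ${\bf m}\geq 0$ with $m_{k+1}=0$. By Proposition~\ref{prop:RepKernelm} together with the series definition of $\calI_\lambda$ in Section~\ref{sec:IBessel} (using Lemma~\ref{lem:PhimVanishing} to drop the terms with $m_{k+1}\neq 0$), this formal sum is precisely $\calI_\lambda(z/2,w/2)$. The key estimate needed is that $K(z,z) < \infty$ for every $z\in\calX_\lambda$, which follows from the absolute convergence of the $I$-Bessel series established in Lemma~\ref{lem:JBesselConvergence}. Consequently, for each $z\in\calX_\lambda$ the function $K(\cdot,z)=\sum_{\bf m}\KK_\lambda^{\bf m}(\cdot,z)$ has squared $\calF_\lambda$-norm $\sum_{\bf m}\KK_\lambda^{\bf m}(z,z) = K(z,z) < \infty$, hence lies in $\calF_\lambda$.

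Finally, I would verify the reproducing property. For $F\in\calF_\lambda$, orthogonality of the $\calP_{\bf m}(\calX_\lambda)$ together with the reproducing property of $\KK_\lambda^{\bf m}$ on $\calP_{\bf m}(\calX_\lambda)$ gives
\begin{equation*}
\langle F, K(\cdot,z)\rangle_{\calF_\lambda} = \sum_{\bf m} \langle P_{\bf m} F, \KK_\lambda^{\bf m}(\cdot,z)\rangle_{\calF_\lambda} = \sum_{\bf m} (P_{\bf m}F)(z) = F(z),
\end{equation*}
where the last equality uses continuity of point evaluation at $z$. By uniqueness of the reproducing kernel, $\KK_\lambda(z,w) = K(z,w) = \calI_\lambda(z/2,w/2)$. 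The only mildly subtle point is the convergence/interchange issue in the middle display, but once Lemma~\ref{lem:JBesselConvergence} delivers $K(\cdot,z)\in\calF_\lambda$, the identity $\sum_{\bf m}(P_{\bf m}F)(z)=F(z)$ follows from continuity of evaluation applied to the convergent series $F=\sum_{\bf m} P_{\bf m}F$ in $\calF_\lambda$. Thus there is no genuine obstacle; the main work was done in Proposition~\ref{prop:RepKernelm}, and this theorem merely packages the pointwise sum as a closed-form Bessel function.
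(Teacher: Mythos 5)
Your proof is correct and follows essentially the same strategy as the paper: decompose $\calF_\lambda$ into the orthogonal $\widetilde K$-types $\calP_{\bf m}(\calX_\lambda)$, use Proposition~\ref{prop:RepKernelm} to identify the componentwise kernel $\KK_\lambda^{\bf m}$, and sum to obtain $\calI_\lambda(z/2,w/2)$. The only difference is that the paper outsources the convergence and reproducing-property verification to a general fact about reproducing kernels of orthogonal Hilbert sums (Neeb, Proposition~I.1.8), whereas you re-derive that fact in place, using the non-negativity of $\KK_\lambda^{\bf m}(z,z)$, the convergence supplied by Lemma~\ref{lem:JBesselConvergence}, and the continuity of point evaluations from Proposition~\ref{prop:FockContPointEvaluations} to push the sum through. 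This is a legitimate and self-contained substitute for the citation; no gap.
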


\begin{proof}
Let $0\leq k\leq r$ be such that $\calX_\lambda=\calX_k$. By the previous result $\KK^{\bf m}_\lambda(z,w)$ is the reproducing kernel for the subspace $\calP_{\bf m}(\calX_\lambda)$. Further we know by Proposition \ref{prop:FockNormCalculations} that the spaces $\calP_{\bf m}(\calX_\lambda)$ are pairwise orthogonal. Therefore, by \cite[Proposition I.1.8]{Nee00}, the sum
\begin{align*}
 \sum_{{\bf m}\geq0,m_{k+1}=0}{\KK^m_\lambda(z,w)} = \sum_{{\bf m}\geq0,m_{k+1}=0}{\frac{d_{\bf m}}{(\frac{n}{r})_{\bf m}(\lambda)_{\bf m}}\Phi_{\bf m}\left(\frac{z}{2},\frac{w}{2}\right)} = \calI_\lambda\left(\frac{z}{2},\frac{w}{2}\right)
\end{align*}
converges pointwise to the reproducing kernel $\KK_\lambda(z,w)$ of the direct Hilbert sum of all subspaces $\calP_{\bf m}(\calX_\lambda)$ with ${\bf m}\geq0$, $m_{k+1}=0$. But this direct Hilbert sum is by definition $\calF_\lambda$ and the proof is complete.
\end{proof}

The following consequence is a standard result for reproducing kernel spaces and can e.g. be found in \cite[page 9]{Nee00}.

\begin{corollary}
For every $F\in\calF_\lambda$ and every $z\in\calX_\lambda$ we have
\begin{align*}
 |F(z)| \leq \calI_\lambda\left(\frac{z}{2},\frac{z}{2}\right)^{\frac{1}{2}}\|F\|_{\calF_\lambda}.
\end{align*}
\end{corollary}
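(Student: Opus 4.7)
The plan is to give the standard reproducing kernel argument, which reduces to Cauchy--Schwarz combined with the explicit formula for $\KK_\lambda(z,w)$ established in Theorem \ref{thm:FockRepKernel}. By Proposition \ref{prop:FockContPointEvaluations} the Fock space $\calF_\lambda$ is a Hilbert space of holomorphic functions with continuous point evaluations, and Theorem \ref{thm:FockRepKernel} identifies its reproducing kernel as $\KK_\lambda(z,w) = \calI_\lambda(z/2, w/2)$. So every hypothesis needed for the standard argument is already in place.

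First I would write, for any fixed $z\in\calX_\lambda$ and any $F\in\calF_\lambda$, the reproducing formula
\[
 F(z) = \langle F, \KK_\lambda(\cdot,z)\rangle_{\calF_\lambda}.
\]
Then I would apply the Cauchy--Schwarz inequality on $\calF_\lambda$ to obtain
\[
 |F(z)| \leq \|F\|_{\calF_\lambda}\cdot\|\KK_\lambda(\cdot,z)\|_{\calF_\lambda}.
\]
To evaluate the second factor, I would use the reproducing property a second time, applied to the function $\KK_\lambda(\cdot,z)\in\calF_\lambda$ itself:
\[
 \|\KK_\lambda(\cdot,z)\|_{\calF_\lambda}^2 = \langle \KK_\lambda(\cdot,z),\KK_\lambda(\cdot,z)\rangle_{\calF_\lambda} = \KK_\lambda(z,z).
\]
By Theorem \ref{thm:FockRepKernel} the right-hand side equals $\calI_\lambda(z/2, z/2)$. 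Substituting into the Cauchy--Schwarz estimate yields the desired inequality.

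There is essentially no obstacle here; the only point worth double-checking is that $\KK_\lambda(\cdot,z)$ genuinely lies in $\calF_\lambda$ (so that one may apply Cauchy--Schwarz and reproduce on it), but this is automatic from the definition of a reproducing kernel Hilbert space and has been verified via the explicit Hilbert sum decomposition in the proof of Theorem \ref{thm:FockRepKernel}. One should also note that $\calI_\lambda(z/2, z/2) \geq 0$ is real and non-negative (in fact it equals $\|\KK_\lambda(\cdot,z)\|_{\calF_\lambda}^2$), so taking its square root is legitimate.
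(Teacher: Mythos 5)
Your proof is correct and is precisely the standard reproducing kernel argument that the paper invokes: the paper does not write out a proof but simply cites the result as a standard fact about reproducing kernel Hilbert spaces (with a reference), and your argument — reproduce, Cauchy--Schwarz, then reproduce again on $\KK_\lambda(\cdot,z)$ to compute its norm — is exactly that standard argument, with the correct identification $\KK_\lambda(z,z) = \calI_\lambda(z/2,z/2)$ from Theorem \ref{thm:FockRepKernel}.
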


\subsection{Rings of differential operators and associated varieties}\label{sec:RingsOfDiffOps}

We recall the definition of the associated variety of an admissible representation for the example $(\rho_\lambda,\calF_\lambda)$. Let $(\calU_k(\frakg))_{k\in\NN_0}$ denote the usual filtration of the universal enveloping algebra $\calU(\frakg)$ and form the corresponding graded algebra $\gr\calU(\frakg)$ which is by the Poincare--Birkhoff--Witt theorem naturally isomorphic to the symmetric algebra $S(\frakg_\CC)$. The underlying $(\frakg,\widetilde{K})$-module $X^\lambda=\calP(\calX_\lambda)$ of the representation $\rho_\lambda$ carries an action of $\calU(\frakg)$. For $m\in\NN_0$ further let $X^\lambda_m$ be the subspace of polynomials in $\calP(\calX_\lambda)$ of degree $\leq m$. Then $X^\lambda_0=\CC\1$ is $\widetilde{K}$-invariant and generates $X^\lambda$ as a $\calU(\frakg)$-module. We further have
\begin{align*}
 \td\rho_\lambda(\calU_k(\frakg))X^\lambda_m &= X^\lambda_{k+m}, & k,m\in\NN_0,
\end{align*}
i.e. the filtrations $(\calU_k(\frakg))_k$ and $(X^\lambda_m)_m$ are compatible. Thus the corresponding graded space
\begin{align*}
 \gr X^\lambda &= \bigoplus_{m=0}^\infty{X^\lambda_m/X^\lambda_{m-1}}
\end{align*}
is a module over $\gr\calU(\frakg)$. Consider the annihilator ideal
\begin{align*}
 J_\lambda &:= \Ann_{\gr\calU(\frakg)}(\gr X^\lambda)\subseteq\gr\calU(\frakg)\cong S(\frakg_\CC)\cong\CC[\frakg_\CC^*].
\end{align*}
Then the associated variety $\calV(\rho_\lambda)$ of $\rho_\lambda$ is by definition the affine subvariety of $\frakg_\CC^*$ consisting of the common zeros of $J_\lambda$. Since $\frakk_\CC$ lives in degree $1$ in $\gr\calU(\frakg)$, but leaves each $X_m^\lambda$ invariant, every element in $\calV(\rho_\lambda)$ vanishes on $\frakk_\CC$ and we can view $\calV(\rho_\lambda)$ as a subset of $\frakp_\CC^*$. Via the Killing form we identify $\frakp_\CC^*$ with $\frakp_\CC$ and view $\calV(\rho_\lambda)$ as a subset of $\frakp_\CC$. Then $\calV(\rho_\lambda)$ is a $K_\CC$-stable closed subvariety of $\frakp_\CC$ consisting of nilpotent elements and hence the union of finitely many nilpotent $K_\CC$-orbits (see \cite[Corollary 5.23]{Vog91}).

Recall from Section \ref{sec:NilpotentOrbits} the $K_\CC$-orbits $\calO_k^{K_\CC}\subseteq\frakp^+$ which are isomorphic to the $L_\CC$-orbits $\calX_k$ via the Cayley type transform $C\in\Int(\frakg_\CC)$. The following result is due to A. Joseph \cite[Theorem 7.14]{Jos92}:

\begin{proposition}\label{prop:AssVar}
Let $\lambda\in\calW$ and $0\leq k\leq r$ such that $\calX_\lambda=\calX_k$. Then $\calV(\rho_\lambda)=\overline{\calO_k^{K_\CC}}$.
\end{proposition}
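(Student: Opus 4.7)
The plan is to compute the associated variety directly from the natural filtration of $X^\lambda = \calP(\calX_\lambda)$ by polynomial degree, using the explicit formulas for $\td\rho_\lambda = \td\pi_\lambda^\CC \circ C$. Let $X_m^\lambda$ denote the subspace of polynomials of degree at most $m$. Because $C$ maps $\frakp^- \to \frakn_\CC$, $\frakk_\CC \to \frakl_\CC$, and $\frakp^+ \to \overline{\frakn}_\CC$, the formulas for $\td\pi_\lambda^\CC$ show that under $\td\rho_\lambda$ the subspace $\frakp^-$ acts by multiplication by linear polynomials $i(u|z)$ and raises filtration degree by one, $\frakk_\CC$ acts by first-order operators $\partial_{T^\# z}+\const$ and preserves degree, and $\frakp^+$ acts via the Bessel operator $\calB_\lambda$ and lowers degree by one. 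The filtration $(X_m^\lambda)$ is therefore compatible with the PBW filtration on $\calU(\frakg)$, and on $\gr X^\lambda$ both $\frakk_\CC$ and $\frakp^+$ induce the zero map in degree one. Consequently $\frakk_\CC \oplus \frakp^+ \subseteq J_\lambda$ viewed as degree-one elements of $S(\frakg_\CC)$.

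Next I would identify $\gr X^\lambda$ as a graded module over $S(\frakp^-)$. The symbol of $X = (u,-2iL(u),u) \in \frakp^-$ is multiplication by the linear function $z \mapsto i(u|z)$ restricted to $\calX_\lambda$. Since $\overline{\calX_k}$ is a $\CC^\times$-stable affine algebraic cone in $V_\CC$, the polynomial-degree grading on $\calP(\calX_\lambda) = \CC[\overline{\calX_k}]$ coincides with the grading of $\gr X^\lambda$ as graded vector space. Under the Cayley identification $\frakp^- \cong V_\CC$ via $(u,-2iL(u),u) \leftrightarrow u$, the $S(\frakp^-)$-module structure on $\gr X^\lambda$ becomes the natural $\CC[V_\CC]$-module structure on $\CC[\overline{\calX_k}]$ given by polynomial restriction. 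Its annihilator in $S(\frakp^-) \cong \CC[V_\CC]$ is therefore the vanishing ideal $I(\overline{\calX_k})$.

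Combining the two observations, $J_\lambda$ contains the ideal of $S(\frakg_\CC)$ generated by $\frakk_\CC \oplus \frakp^+$ together with $I(\overline{\calX_k})$. Under the Killing-form identification $\frakg_\CC^* \cong \frakg_\CC$, its zero locus is the set of $X \in \frakp^+ = (\frakk_\CC \oplus \frakp^+)^\perp$ whose image in $V_\CC$ lies in $\overline{\calX_k}$; here the map $\frakp^+ \to V_\CC$ is the composition of Killing duality $\frakp^+ \cong (\frakp^-)^*$ with the Cayley--trace identification of $\frakp^-$ with $V_\CC$. This map and the direct Cayley identification $\frakp^+ \to V_\CC$ are two linear isomorphisms, and a short computation (say evaluating both on $E_1^d$) shows they differ by a non-zero scalar. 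Because $\overline{\calX_k}$ is $\CC^\times$-stable, the two preimages agree, and by the last proposition of Section~\ref{sec:NilpotentOrbits} this common preimage is exactly $\overline{\calO_k^{K_\CC}}$. Hence $\calV(\rho_\lambda) \subseteq \overline{\calO_k^{K_\CC}}$.

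For the reverse inclusion I would apply a Gelfand--Kirillov dimension count. Since $\gr X^\lambda = \CC[\overline{\calX_k}]$ as graded vector space, its Hilbert function has polynomial growth of degree $\dim_\CC \overline{\calX_k} - 1$, so $\operatorname{GKdim}(X^\lambda) = \dim_\CC \overline{\calX_k} = \dim_\CC \overline{\calO_k^{K_\CC}}$, the last equality because $C$ restricts to a biholomorphism between $\calX_k$ and $\calO_k^{K_\CC}$. Combining the standard identity $\operatorname{GKdim}(X^\lambda) = \dim \calV(\rho_\lambda)$ with the irreducibility of $\overline{\calO_k^{K_\CC}}$, the inclusion $\calV(\rho_\lambda) \subseteq \overline{\calO_k^{K_\CC}}$ must be an equality. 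The principal obstacle is the careful reconciliation of the two natural identifications of $V_\CC$ with $\frakp^+$ (via $C$ and via the Killing form); the cone property of $\overline{\calX_k}$ is what turns this into a minor bookkeeping issue rather than a structural one.
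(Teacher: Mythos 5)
Your argument is correct and is a genuinely different route from the paper's: the paper does not prove this proposition at all but simply cites Joseph, \cite[Theorem 7.14]{Jos92}, whereas you give a self-contained computation from the explicit Fock-model formulas. Your observations are the right ones: the degree filtration on $\calP(\calX_\lambda)$ is a good filtration, the symbols of $\frakk_\CC$ and $\frakp^+$ vanish on $\gr X^\lambda$, and as an $S(\frakp^-)$-module $\gr X^\lambda$ is the coordinate ring $\CC[\overline{\calX_k}]$ with annihilator $I(\overline{\calX_k})$; this pins $\calV(\rho_\lambda)$ inside $\overline{\calO_k^{K_\CC}}$, and the Gelfand--Kirillov dimension count together with irreducibility of the orbit closure upgrades this to equality. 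This is arguably the computation that Section~\ref{sec:RingsOfDiffOps} sets up but does not carry out, so it adds value by making the paper more self-contained. The one place where you are a bit cavalier is the claim that the Killing--trace identification $\frakp^+\to V_\CC$ and the Cayley identification $\frakp^+\to\overline{\frakn}_\CC\cong V_\CC$ differ by a scalar: ``evaluating both on $E_1^d$'' shows at best that the ratio is nonzero at that point, not that the maps are proportional. Proportionality does hold, and the cleanest argument is the $C$-invariance of the Killing form: $B(E,F)=B(C(E),C(F))$, and since $C(E)=(0,0,4u)$, $C(F)=(v,0,0)$ for $E=(u,2iL(u),u)\in\frakp^+$, $F=(v,-2iL(v),v)\in\frakp^-$, the pairing reduces to the $\frakn_\CC\times\overline{\frakn}_\CC$ Killing pairing, which is a global constant times the trace form; alternatively one may cite Schur's lemma using the irreducibility of $\frakp^+$ as a $K_\CC$-module. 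With that cleaned up, your proof is complete.
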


\begin{corollary}
For $\lambda\in\calW$ let $k\in\{0,\ldots,r\}$ such that $\calX_\lambda=\calX_k$. Then the Gelfand--Kirillov dimension of $\rho_\lambda$ is $k+k(2r-k-1)\frac{d}{2}$.
\end{corollary}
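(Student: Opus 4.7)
The plan is to combine Proposition \ref{prop:AssVar} with the standard fact that the Gelfand--Kirillov dimension of an admissible $(\frakg,\widetilde{K})$-module equals the complex dimension of its associated variety. More precisely, the Gelfand--Kirillov dimension of $\rho_\lambda$ is the Krull dimension of the graded module $\gr X^\lambda$, which by the Hilbert--Serre theorem coincides with the dimension of the support of $\gr X^\lambda$ in $\frakg_\CC^*$. By definition this support is exactly the associated variety $\calV(\rho_\lambda)$.

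First, by Proposition \ref{prop:AssVar} we have $\calV(\rho_\lambda) = \overline{\calO_k^{K_\CC}}$, so it remains to compute $\dim_\CC \overline{\calO_k^{K_\CC}}$. Since $\calO_k^{K_\CC}$ is open in its closure, this equals $\dim_\CC \calO_k^{K_\CC}$. The Cayley type transform $C \in \Int(\frakg_\CC)$ is a biholomorphism from $\calO_k^{K_\CC} \subseteq \frakp^+$ onto $\calX_k \subseteq \overline{\frakn}_\CC$ (as shown in Section \ref{sec:NilpotentOrbits}), so $\dim_\CC \calO_k^{K_\CC} = \dim_\CC \calX_k$.

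Finally, I would invoke the dimension formula \eqref{eq:OrbitDimension} from Section \ref{sec:Complexifications}, which gives
\begin{align*}
 \dim_\CC \calX_k = k + k(2r-k-1)\tfrac{d}{2},
\end{align*}
yielding the claim. There is no real obstacle here: Proposition \ref{prop:AssVar} does all of the representation-theoretic work, and the remaining content is a translation of the associated variety (a $K_\CC$-orbit closure in $\frakp_\CC$) to the $L_\CC$-orbit picture in $V_\CC$ via the Cayley type transform, where the dimension has already been recorded. The only point that deserves a brief remark in the proof is the identification of the Gelfand--Kirillov dimension with $\dim_\CC \calV(\rho_\lambda)$, for which one simply cites a standard reference (e.g.\ Vogan).
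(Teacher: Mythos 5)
Your proof is correct and follows essentially the same route as the paper: cite Proposition \ref{prop:AssVar}, identify the Gelfand--Kirillov dimension with the dimension of the associated variety via the standard Vogan references, and read off the dimension from \eqref{eq:OrbitDimension}. You merely spell out the intermediate step $\dim_\CC \overline{\calO_k^{K_\CC}} = \dim_\CC \calX_k$ via the Cayley transform, which the paper leaves implicit.
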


\begin{proof}
The Gelfand--Kirillov dimension of an irreducible unitary representation equals the dimension of its associated variety in $\frakp_\CC^*$ (combine \cite[Corollary 4.7]{Vog78} and \cite[Theorem 8.4]{Vog91}). Therefore the result follows from \eqref{eq:OrbitDimension}.
\end{proof}

For an algebraic variety $\XX$ over $\CC$ denote by $\CC[\XX]$ the ring of regular functions. Further let $\DD(\XX)$ be the ring of algebraic differential operators on $\XX$. This subring of $\End_\CC(\CC[\XX])$ can be defined inductively as follows: Let $\DD_0(\XX):=\CC[\XX]$ be the ring of multiplication operators and for $m\in\NN$ put
\begin{align*}
 \DD_m(\XX) &:= \{D\in\End_\CC(\CC[\XX]):[D,f]\in\DD_{m-1}(\XX)\,\forall\,f\in\CC[\XX]\}.
\end{align*}
Then $\DD(\XX)=\bigcup_{m\in\NN_0}{\DD_m(\XX)}$.

Since the varieties $\overline{\calX_k}$ are affine it follows that $\CC[\overline{\calX_k}]=\calP(\calX_k)$. The representation $\td\rho_\lambda$ acts on $\calP(\calX_\lambda)$ by differential operators and hence it induces a map
\begin{equation*}
 \td\rho_\lambda:\calU(\frakg)\to\DD(\overline{\calX_\lambda}).
\end{equation*}
The following result is a qualitative version of \cite[Theorem 4.5]{Jos92}:

\begin{theorem}\label{thm:DiffOpSurjectivity}
For $\lambda=k\frac{d}{2}\in\calW_{\disc}$ the map $\td\rho_\lambda:\calU(\frakg)\to\DD(\overline{\calX_k})$ is surjective and induces an isomorphism $\calU(\frakg)/\calJ_k\cong\DD(\overline{\calX_k})$, where $\calJ_k=\Ann_{\calU(\frakg)}(X^\lambda)$.
\end{theorem}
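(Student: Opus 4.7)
My plan is to prove surjectivity by comparing suitable filtrations and analyzing classical symbols. Recall from Section~\ref{sec:ActionOnFockSpace} that $\td\rho_\lambda(\calU(\frakg))$ acts on $\calP(\calX_k)=\CC[\overline{\calX_k}]$ and is generated by three families of operators: multiplication by linear coordinates $(u|z)$, $u\in V_\CC$, from $\td\rho_\lambda(\frakp^-)=\td\pi_\lambda^\CC(\frakn_\CC)$ (order $0$); first-order operators implementing the natural $\frakl_\CC$-action on $\calX_k$ up to a scalar, from $\td\rho_\lambda(\frakk_\CC)$ (order $1$); and the Bessel operators $(v|\calB_\lambda)$, $v\in V_\CC$, from $\td\rho_\lambda(\frakp^+)=\td\pi_\lambda^\CC(\overline{\frakn}_\CC)$ (order $2$). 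Assigning $\frakp^-$, $\frakk_\CC$, $\frakp^+$ the respective degrees $0,1,2$ in a new filtration on $\calU(\frakg)$ turns $\td\rho_\lambda$ into a filtered map with respect to the standard order filtration on $\DD(\overline{\calX_k})$.

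Surjectivity then reduces, via the usual filtered-to-graded argument, to showing surjectivity of $\gr\td\rho_\lambda$ onto $\gr\DD(\overline{\calX_k})$, which we view inside the ring of polynomial functions on $T^*\calX_k$ after restriction to the smooth locus $\calX_k\subseteq\overline{\calX_k}$. The images of the three generator families at the symbol level are explicit: the linear base functions $(u|z)$; the components of the moment map $\mu_L\colon T^*\calX_k\to\frakl_\CC^*$ associated to the $L_\CC$-action; and the fiber-quadratic functions $(v|P(\xi)z)$ coming from the principal symbol of $(v|\calB_\lambda)$.

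The geometric heart of the argument is that these symbols already generate $\CC[T^*\calX_k]$. Since $\calX_k\cong L_\CC/\Stab_{L_\CC}(e_k)$ is a single $L_\CC$-orbit, the $\frakl_\CC$-vector fields span every tangent space $T_z\calX_k$; dually, the moment map identifies each cotangent fiber $T_z^*\calX_k$ with a subspace of $\frakl_\CC^*$, and consequently $\mu_L^*\CC[\frakl_\CC^*]$ together with $\CC[\calX_k]$ surjects onto $\CC[T^*\calX_k]$. The Bessel symbols $(v|P(\xi)z)$ then automatically lie in this span, but they are indispensable on the filtration level so that the full filtered image covers all orders compatibly with the choice of degrees $0,1,2$ on $\frakp^-,\frakk_\CC,\frakp^+$. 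This yields surjectivity of $\gr\td\rho_\lambda$ onto $\gr\DD(\calX_k)$ on the smooth part.

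The main obstacle is extending from the smooth locus $\calX_k$ to the singular closure $\overline{\calX_k}$: for $k<r$ the closure is singular along $\overline{\calX_{k-1}}$, and $\gr\DD(\overline{\calX_k})$ could in principle acquire additional symbols concentrated on the boundary strata. This requires knowing that $\overline{\calX_k}$ is normal with rational singularities, a fact which is classical for the matrix orbit closures (determinantal varieties) and holds in the exceptional $\Herm(3,\OO)$ case via an $\frakl_\CC$-equivariant resolution. Alternatively, matching the ``qualitative version'' phrasing, one imports Joseph's quantitative statement \cite[Theorem~4.5]{Jos92} directly: the annihilator $\calJ_k=\Ann_{\calU(\frakg)}X^\lambda$ is a completely prime primitive ideal with associated variety $\overline{\calO_k^{G_\CC}}$ of the correct dimension, and the induced homomorphism $\calU(\frakg)/\calJ_k\to\DD(\overline{\calX_k})$ is an isomorphism, from which the desired surjectivity is immediate.
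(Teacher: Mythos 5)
Your second route is essentially the paper's, but it skips the one substantive step the paper actually carries out. The paper cites Joseph's Theorem~4.5 to conclude that $\td\rho_\lambda$ is surjective onto the subalgebra of $\End_\CC(\CC[\overline{\calX_k}])$ consisting of endomorphisms that are \emph{locally finite under the adjoint action of} $\frakp^-$; it does not conclude directly that the image is $\DD(\overline{\calX_k})$. The missing translation — the content of the ``qualitative version'' the theorem asserts — is that $\td\rho_\lambda(\frakp^-)$ acts by multiplication with coordinate functions, so local finiteness under $\ad(\frakp^-)$ is precisely the condition that some iterated commutator $[\cdots[[D,f_1],f_2]\cdots,f_N]$ with $f_i\in\CC[\overline{\calX_k}]$ vanishes, which is the Grothendieck characterization of differential operators on the affine variety $\overline{\calX_k}$. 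Your proposal asserts that Joseph's result already yields an isomorphism with $\DD(\overline{\calX_k})$ ``from which the desired surjectivity is immediate,'' which makes the argument circular: identifying locally finite endomorphisms with $\DD(\overline{\calX_k})$ \emph{is} the qualitative content being established.

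Your first (filtration/symbol) route has a more serious gap, and you name it yourself: the moment-map argument produces surjectivity of $\gr\td\rho_\lambda$ onto $\CC[T^*\calX_k]$ over the smooth locus, but what is needed is surjectivity onto $\gr\DD(\overline{\calX_k})$ for the singular closure. For a singular affine variety, $\gr\DD(\overline{\calX_k})$ need not embed into functions on the cotangent bundle of the smooth locus, and in general $\DD$ of a singular variety can behave pathologically (it can fail to be Noetherian, fail to be generated by low-order operators, etc.). You appeal to normality and rational singularities, which is the right kind of hypothesis, but you do not actually derive the desired surjectivity from it; one would need a substitute for the symbol calculus on the nilpotent cone (or to reduce to the Levasseur--Smith--Stafford/Levasseur--Stafford arguments the paper mentions in the following remark), and none of that is supplied. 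As written, the filtration route proves a statement about $\DD(\calX_k)$ (smooth locus), not about $\DD(\overline{\calX_k})$.

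A minor aside: the associated variety of the \emph{annihilator} $\calJ_k$ is the $G_\CC$-saturation $\overline{G_\CC\cdot\calO_k^{K_\CC}}$ inside $\frakg_\CC^*$, not the $K_\CC$-orbit closure appearing in Proposition~\ref{prop:AssVar}, which is the associated variety of the \emph{module}; you conflate these, though it does not affect your line of argument.
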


\begin{proof}
By \cite[Theorem 4.5]{Jos92} the map $\td\rho_\lambda$ is surjective onto the space of $\CC$-endomorphisms of $\calP(\calX_\lambda)=\CC[\overline{\calX_k}]$ which are locally finite under the diagonal action of $\frakp^-$. Now $\frakp^-$ acts by multiplication with coordinate functions and hence the condition for $D\in\End_\CC(\CC[\overline{\calX_k}])$ to be locally finite under the action of $\frakp^-$ is equivalent to the existence of $N\in\NN$ such that the iterated commutator $[[\ldots[D,f_1(x)],\ldots,f_{N-1}(x)],f_N(x)]=0$ for all $f_1,\ldots,f_N\in\CC[\overline{\calX_k}]$. This again is equivalent to $D\in\DD(\overline{\calX_k})$ and the proof is complete.
\end{proof}

\begin{remark}
A quantitative version of Theorem \ref{thm:DiffOpSurjectivity} was obtained by Levasseur--Smith--Stafford \cite{LSS88} for the minimal orbit $\calX_1$, by Levasseur--Stafford \cite{LS89} for classical $\frakg$ and finally by Joseph \cite{Jos92} for the general case. However, their version is less explicit and does not provide a geometric construction of the unitary structure.
\end{remark}

\begin{corollary}\label{cor:GeneratorsAlgDiffOp}
For $k=1,\ldots,r-1$ the ring $\DD(\overline{\calX_k})$ of algebraic differential operators on the affine variety $\overline{\calX_k}$ is generated by the multiplications with regular functions in $\CC[\overline{\calX_k}]$ and the Bessel operators $(u|\calB_\lambda)$, $u\in V_\CC$, for $\lambda=k\frac{d}{2}$.
\end{corollary}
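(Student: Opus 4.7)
The strategy is to combine Theorem \ref{thm:DiffOpSurjectivity} with the explicit description of $\td\rho_\lambda$ on a generating subset of $\frakg$. Write $R$ for the subring of $\DD(\overline{\calX_k})$ generated by the multiplications $f\in\CC[\overline{\calX_k}]$ and by the Bessel operators $(u|\calB_\lambda)$, $u\in V_\CC$. By Theorem \ref{thm:DiffOpSurjectivity} applied to $\lambda=k\frac{d}{2}$ with $1\le k\le r-1$, it suffices to verify $\td\rho_\lambda(\calU(\frakg))\subseteq R$, and since $\calU(\frakg)$ is generated as an algebra by $\frakg$, it is enough to prove $\td\rho_\lambda(\frakg_\CC)\subseteq R$.

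To this end I would use the Harish-Chandra decomposition $\frakg_\CC=\frakp^-\oplus\frakk_\CC\oplus\frakp^+$ together with the Cayley identifications $C(\frakp^-)=\frakn_\CC$, $C(\frakk_\CC)=\frakl_\CC$, $C(\frakp^+)=\overline{\frakn}_\CC$. By definition $\td\rho_\lambda(X)=\td\pi_\lambda^\CC(C(X))$, and the explicit formulas for the complexified Schr\"odinger model show that $\td\rho_\lambda(\frakp^-)$ acts by multiplications with linear coordinate functions $i(u|z)$, $u\in V_\CC$, while $\td\rho_\lambda(\frakp^+)$ acts via the Bessel operators $i(v|\calB_\lambda)$, $v\in V_\CC$. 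Both families lie in $R$ by construction, noting that the linear coordinate functions on $V_\CC$ restrict to ring generators of $\CC[\overline{\calX_k}]$, so that arbitrary multiplication operators are contained in the subring they generate.

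The remaining inclusion $\td\rho_\lambda(\frakk_\CC)=\td\pi_\lambda^\CC(\frakl_\CC)\subseteq R$ is obtained by exploiting that $R$ is closed under commutators. The KKT bracket formula gives $[(0,0,v),(u,0,0)]=(0,-2\,u\Box v,0)$, hence
\begin{equation*}
 [\td\pi_\lambda^\CC(0,0,v),\td\pi_\lambda^\CC(u,0,0)] = -2\,\td\pi_\lambda^\CC(0, u\Box v, 0) \in R.
\end{equation*}
It therefore suffices to check the Jordan-algebraic fact that the box operators $\{u\Box v:u,v\in V_\CC\}$ span all of $\str(V_\CC)=\frakl_\CC$. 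By polarization of $u\Box u=L(u^2)$, this span contains $L(V_\CC)=\frakp^\frakl_\CC$; from $u\Box v-v\Box u=2[L(u),L(v)]$ together with the standard identity $\frakk^\frakl=[L(V),L(V)]$ for simple Euclidean Jordan algebras, the span also contains $\frakk^\frakl_\CC$, so that $\{u\Box v\}$ spans $\frakp^\frakl_\CC\oplus\frakk^\frakl_\CC=\frakl_\CC$.

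The main non-routine input here is Theorem \ref{thm:DiffOpSurjectivity} itself, which rests on Joseph's characterization of the image of $\td\rho_\lambda$ as the $\frakp^-$-locally finite endomorphisms of $\CC[\overline{\calX_k}]$; granting that, the corollary reduces to Cayley-transform bookkeeping plus the spanning statement for box operators. No analytic or regularity issue arises because all operators in sight are polynomial-coefficient differential operators on $V_\CC$ that are tangential to the affine variety $\overline{\calX_k}$ precisely at $\lambda=k\frac{d}{2}$.
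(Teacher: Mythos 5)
Your proof is correct and takes essentially the same route as the paper: invoke Theorem~\ref{thm:DiffOpSurjectivity}, identify $\td\rho_\lambda(\frakp^-)$ with linear multiplications and $\td\rho_\lambda(\frakp^+)$ with Bessel operators via the Cayley transform, and conclude by noting that $\frakp^+$ and $\frakp^-$ generate $\frakg_\CC$ as a Lie algebra. The only difference is that the paper simply cites its earlier remark (end of Section~\ref{sec:ConformalGroup}) that $\frakn$ and $\overline{\frakn}$ generate $\frakg$, whereas you re-derive it concretely by showing $[\frakp^+,\frakp^-]\supseteq\frakk_\CC$ through the spanning of $\frakl_\CC$ by box operators $u\Box v$ — a correct and slightly more self-contained verification of the same fact, at the cost of a few extra lines.
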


\begin{proof}
By Theorem \ref{thm:DiffOpSurjectivity} the ring $\DD(\overline{\calX_k})$ is generated by the constants and $\td\rho_\lambda(\frakg)$. Note that in the decomposition $\frakg_\CC=\frakp^-\oplus\frakk_\CC\oplus\frakp^+$ the subalgebra $\frakk_\CC$ is generated by $\frakp^+$ and $\frakp^-$ and hence $\DD(\overline{\calX_k})$ is generated by the constants, $\td\rho_\lambda(\frakp^+)=\{(u|\calB_\lambda):u\in V_\CC\}$ and $\td\rho_\lambda(\frakp^-)=\{(v|z):v\in V_\CC\}$.
\end{proof}

\begin{remark}
Neither Theorem \ref{thm:DiffOpSurjectivity} nor Corollary \ref{cor:GeneratorsAlgDiffOp} can hold for $\lambda\in\calW_\cont$ resp. $k=r$ since in this case $\overline{\calX_\lambda}=\overline{\calX_r}=V_\CC$ and $\DD(V_\CC)=\CC[x,\frac{\partial}{\partial x}]$ is a Weyl algebra (see \cite[Lemma IV.1.5]{LS89}).
\end{remark}

\newpage
\section{The Segal--Bargmann transform}\label{sec:BargmannTransform}

For every $\lambda\in\calW$ we explicitly construct an intertwining operator, the \textit{Segal--Bargmann transform}, between the Schr\"odinger model $(\pi_\lambda,L^2(\calO_\lambda,\td\mu_\lambda))$ and the Fock model $(\rho_\lambda,\calF_\lambda)$ in terms of its integral kernel.

\subsection{Construction of the Segal--Bargmann transform}

For each $\lambda\in\calW$ the \textit{Segal--Bargmann transform} $\BB_\lambda$ is defined for $\psi\in L^2(\calO_\lambda,\td\mu_\lambda)$ by
\begin{align}
 \BB_\lambda\psi(z) &:= e^{-\frac{1}{2}\tr(z)}\int_{\calO_\lambda}{\calI_\lambda(z,x)e^{-\tr(x)}\psi(x)\td\mu_\lambda(x)}, & z\in V_\CC.\label{eq:DefBargmann}
\end{align}
Recall the space $\calO(V_\CC)$ of holomorphic functions on $V_\CC$.

\begin{proposition}\label{prop:BargmannMappingProperty}
For $\psi\in L^2(\calO_\lambda,\td\mu_\lambda)$ the integral in \eqref{eq:DefBargmann} converges uniformly on bounded subsets and defines a function $\BB_\lambda\psi\in\calO(V_\CC)$. The Segal--Bargmann transform $\BB_\lambda$ is a continuous linear operator
\begin{align*}
 L^2(\calO_\lambda,\td\mu_\lambda)\to\calO(V_\CC).
\end{align*}
\end{proposition}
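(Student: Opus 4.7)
The strategy is to dominate the integrand by a function independent of $z$ (on compact sets) and square-integrable against $|\psi|\td\mu_\lambda$, so that Cauchy--Schwarz gives a bound linear in $\|\psi\|_{L^2}$, after which holomorphy follows by standard complex-analytic arguments.

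First I would invoke the growth estimate from Lemma~\ref{lem:JBesselConvergence}, which applies verbatim to $\calI_\lambda(z,w)=\calJ_\lambda(z,-w)$, to get
\[
 |\calI_\lambda(z,x)|\leq C(1+|z|\cdot|x|)^{\frac{r(2n-1)}{4}}e^{2r\sqrt{|z|\cdot|x|}}
\]
for $z\in V_\CC$ and $x\in\overline{\calX_\lambda}$. Next I would use that $\overline{\calO_\lambda}\subseteq\overline{\Omega}$, so that every $x\in\calO_\lambda$ has polar decomposition $x=k\sum_{i=1}^{j}t_ic_i$ with $t_i\geq 0$ and hence $\tr(x)=\sum t_i\geq|x|/\sqrt{r}$. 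Fixing a bounded set $B\subseteq V_\CC$ with $|z|\leq R$ for $z\in B$, the elementary inequality $2r\sqrt{Rs}-\tfrac{1}{2\sqrt r}s\leq M_R$ (for all $s\geq 0$ and a constant $M_R$ depending only on $R$) then yields an estimate of the form
\[
 |e^{-\frac{1}{2}\tr(z)}\calI_\lambda(z,x)|\cdot e^{-\tr(x)} \leq C_B\,(1+|x|)^N\,e^{-\tr(x)/2}, \qquad z\in B,\ x\in\calO_\lambda,
\]
for some $N\in\NN$ and constant $C_B$ depending only on $B$.

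Then Cauchy--Schwarz in $L^2(\calO_\lambda,\td\mu_\lambda)$ gives, for $z\in B$,
\[
 \int_{\calO_\lambda}|\calI_\lambda(z,x)|\,e^{-\tr(x)}\,|\psi(x)|\,\td\mu_\lambda(x) \leq C_B'\,\|\psi\|_{L^2}\,\Big(\int_{\calO_\lambda}(1+|x|)^{2N}e^{-\tr(x)}\td\mu_\lambda(x)\Big)^{1/2}.
\]
The remaining integral is finite: since $|x|^2=\tr(x\cdot x)$ is a polynomial, $(1+|x|)^{2N}$ is dominated by a fixed polynomial $p\in\calP(V_\CC)$, and Lemma~\ref{lem:LaplaceTransformPolynomials} applied with $y=\tfrac12 e$ gives finite values for $\int_{\calO_\lambda}e^{-\tr(x)/2}q(x)\,\td\mu_\lambda(x)$ for every $q\in\calP_{\bf m}(V_\CC)$, hence for $p$. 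This simultaneously proves absolute and uniform convergence of the defining integral on the bounded set $B$ and produces the continuity estimate
\[
 \sup_{z\in B}|\BB_\lambda\psi(z)| \leq C''_B\,\|\psi\|_{L^2},
\]
which is exactly the continuity of $\BB_\lambda:L^2(\calO_\lambda,\td\mu_\lambda)\to\calO(V_\CC)$ when $\calO(V_\CC)$ is equipped with its usual Fr\'echet topology of uniform convergence on compact sets.

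Finally, for holomorphy of $\BB_\lambda\psi$, I would note that $z\mapsto\calI_\lambda(z,x)$ is entire for each fixed $x\in\calO_\lambda$ (Lemma~\ref{lem:JBesselConvergence} together with the absolute convergence of the defining series) and that the uniform domination established above allows one to interchange the integral with $\bar\partial$, or equivalently to apply Morera's theorem on lines/triangles in $V_\CC$. This shows $\BB_\lambda\psi\in\calO(V_\CC)$. The only genuinely nontrivial ingredient is the interplay between the sub-Gaussian growth of $\calI_\lambda$ and the linear decay of $e^{-\tr(x)}$; this is the step where one really uses that $\tr$ dominates $|\cdot|$ on $\overline{\Omega}$, and where one must also be careful that the measure $\td\mu_\lambda$ does not itself grow too fast on the noncompact orbit, which is precisely ensured by Lemma~\ref{lem:LaplaceTransformPolynomials}.
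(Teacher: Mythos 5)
Your argument follows essentially the same route as the paper's: invoke the growth bound for $\calI_\lambda$ from Lemma~\ref{lem:JBesselConvergence}, exploit the inequality $\tr(x)\geq |x|$ on $\overline\Omega$ (your $\tr(x)\geq |x|/\sqrt r$ is a slightly weaker but equally sufficient version) to absorb the sub-Gaussian growth of the Bessel kernel into the exponential factor, conclude that the kernel is $L^2$ in $x$ with norm bounded uniformly for $|z|\leq R$, and finish with Cauchy--Schwarz and a Morera-type argument for holomorphy. The only cosmetic differences are that you split $e^{-\tr(x)}$ as $e^{-\tr(x)/2}\cdot e^{-\tr(x)/2}$ and cite Lemma~\ref{lem:LaplaceTransformPolynomials} (with $y=e$ rather than $y=\tfrac12 e$, after the Cauchy--Schwarz the remaining weight is $e^{-\tr(x)}$) to justify the final moment bound, whereas the paper asserts the $L^2$-finiteness directly.
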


\begin{proof}
Since the kernel function $e^{-\frac{1}{2}\tr(z)}\calI_\lambda(z,x)e^{-\tr(x)}$ is analytic in $z$, it suffices to show that its $L^2$-norm in $x$ has a uniform bound for $|z|\leq R$, $R>0$. By Lemma \ref{lem:JBesselConvergence} there exists $C>0$ such that
\begin{align*}
 |\calI_\lambda(z,x)| &\leq C(1+|z|\cdot|x|)^{\frac{r(2n-1)}{4}}e^{2r\sqrt{|z|\cdot|x|}}.
\end{align*}
Then for $x\in\calO_\lambda$, $z\in V_\CC$ with $|z|\leq R$, we find
\begin{align*}
 |e^{-\frac{1}{2}\tr(z)}\calI_\lambda(z,x)e^{-\tr(x)}| &\leq C'(1+R\cdot|x|)^{\frac{r(2n-1)}{4}}e^{2r\sqrt{R\cdot|x|}}e^{-\tr(x)}\\
 &\leq C'(1+R\cdot|x|)^{\frac{r(2n-1)}{4}}e^{2r\sqrt{R}\sqrt{|x|}-|x|}
\end{align*}
with $C'=C\max_{|z|\leq R}{|e^{-\frac{1}{2}\tr(z)}|}$. This is certainly $L^2$ as a function of $x\in\calO_\lambda$ with norm independent of $z$ and the claim follows.
\end{proof}

Next, we show that $\BB_\lambda$ intertwines the action $\td\pi_\lambda$ on $L^2(\calO_\lambda,\td\mu_\lambda)$ with the action $\td\rho_\lambda$ on $\calF_\lambda$. Recall the Cayley type transform $C\in\Int(\frakg_\CC)$ introduced in Section \ref{sec:ConformalGroup}.

\begin{proposition}\label{prop:BargmannIntertwiningProperty}
The following intertwining identity holds on $L^2(\calO_\lambda,\td\mu_\lambda)^\infty$:
\begin{align}
 \BB_\lambda\circ\td\pi_\lambda(X) &= \td\pi_\lambda^\CC(C(X))\circ\BB_\lambda, & X\in\frakg.\label{eq:BargmannIntertwiningProperty}
\end{align}
\end{proposition}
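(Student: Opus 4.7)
The strategy is to reduce \eqref{eq:BargmannIntertwiningProperty} to a pointwise kernel identity and verify it on a small generating set of $\frakg$. Since $\frakn$ and $\overline\frakn$ together generate $\frakg$ as a Lie algebra (see Section \ref{sec:ConformalGroup}), it suffices to treat $X\in\frakn$ and $X\in\overline\frakn$. Writing $\BB_\lambda$ as an integral operator with kernel
\[
  K_\lambda(z,x):=e^{-\tfrac12\tr(z)}\,\calI_\lambda(z,x)\,e^{-\tr(x)},
\]
the intertwining identity for a given $X$ becomes, after integration by parts, the pointwise identity
\begin{equation}
  \td\pi_\lambda^\CC(C(X))_z\,K_\lambda(z,x) = \td\pi_\lambda(X)_x^t\,K_\lambda(z,x), \label{eq:PlanKernelId}
\end{equation}
where the transpose is taken with respect to $\td\mu_\lambda$. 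The integration by parts is legitimate on smooth vectors thanks to the sub-exponential growth bound on $\calI_\lambda$ provided by Lemma \ref{lem:JBesselConvergence}.

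For $X=(a,0,0)\in\frakn$, one has $\td\pi_\lambda(X)_x = i(a|x)$, a multiplication operator equal to its own transpose. By \eqref{eq:CayleyTypeTransform1}, $C(X)=(a/4,iL(a),a)$, so that
\[
  \td\pi_\lambda^\CC(C(X))_z = \tfrac{i}{4}(a|z) + i\,\partial_{a\cdot z} + \tfrac{i\lambda}{2}\tr(a) + i(a|\calB_\lambda)_z.
\]
A direct computation, using the Bessel product rule \eqref{eq:BesselProdRule}, the preparatory identity $\calB_\lambda e^{-\tr(z)/2}=e^{-\tr(z)/2}(z/4-\lambda e/2)$ (which rests on $P(e)=\id$, $P(e,b)=L(b)$ and $\sum_\alpha\tr(e_\alpha)e_\alpha=e$), and the Bessel equation $(\calB_\lambda)_z\calI_\lambda(z,x)=x\,\calI_\lambda(z,x)$ from Proposition \ref{prop:DiffEqIBessel}, shows that the $(a|z)$-, $\tr(a)$- and $\partial_{a\cdot z}$-contributions all cancel and only $i(a|x)K_\lambda$ survives on the left hand side.

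For $X=(0,0,a)\in\overline\frakn$, the infinitesimal unitarity of $\td\pi_\lambda$ combined with the real coefficients of $\calB_\lambda$ yields $\td\pi_\lambda(X)^t=\td\pi_\lambda(X)=i(a|\calB_\lambda)_x$. Using $C(X)=(a/4,-iL(a),a)$ an analogous product-rule expansion on each side of \eqref{eq:PlanKernelId} — together with the companion Bessel equation $(\calB_\lambda)_x\calI_\lambda(z,x)=z\,\calI_\lambda(z,x)$, which follows from Proposition \ref{prop:DiffEqIBessel} by conjugating and using the symmetry $\overline{\calI_\lambda(z,w)}=\calI_\lambda(w,z)$ — reduces the required identity to
\[
  \partial_{a\cdot z}\calI_\lambda(z,x) = \partial_{a\cdot x}\calI_\lambda(z,x).
\]
This last identity I would prove termwise on the spherical expansion $\calI_\lambda=\sum c_{\bf m}\Phi_{\bf m}(z,x)$ by differentiating the equivariance relation $\Phi_{\bf m}(e^{tL(a)}z,x)=\Phi_{\bf m}(z,e^{tL(a)}x)$ at $t=0$, using $L(a)^\#=L(a)$.

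The main obstacle is the second case: both sides of \eqref{eq:PlanKernelId} require applying a second order vector-valued differential operator to a product of three factors, generating a substantial collection of cross terms of the form $P(\partial f,\partial g)z$ coming from \eqref{eq:BesselProdRule}. Their cancellation is not transparent at first glance and hinges on the sign flip of the first-order summand in $\td\pi_\lambda^\CC(C(X))$ between $X\in\frakn$ and $X\in\overline\frakn$, which is exactly what is needed to balance the appearance of $\calB_\lambda$ on the opposite side of the kernel.
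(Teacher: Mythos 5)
Your proof is correct, but it takes a genuinely different route from the paper's, and it is worth spelling out the difference because it illustrates how much the right choice of generating set controls the length of the computation.

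You generate $\frakg$ by $\frakn\oplus\overline{\frakn}$, which makes $\td\pi_\lambda(X)_x$ as simple as possible (multiplication, or the Bessel operator in $x$), at the cost of pushing all the difficulty onto $\td\pi_\lambda^\CC(C(X))_z$. Since $C$ maps $\frakn$ and $\overline{\frakn}$ into $\frakp^-$ and $\frakp^+$ respectively, $C(X)=(a/4,\pm iL(a),a)$ has nonzero components of all three degrees, so $\td\pi_\lambda^\CC(C(X))_z$ is a genuine second-order operator and the Bessel product rule must be applied in $z$ to $e^{-\tr(z)/2}\calI_\lambda(z,x)$ in each of your two cases (and once more in $x$ in the $\overline{\frakn}$ case). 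This is why you see a ``substantial collection of cross terms'' and why you need the extra equivariance lemma $\partial_{az}\calI_\lambda(z,x)=\partial_{ax}\calI_\lambda(z,x)$ to close the second case. I checked both cases with your closed-form $\calB_\lambda e^{-\tr(z)/2}=e^{-\tr(z)/2}(\tfrac{z}{4}-\tfrac{\lambda}{2}e)$ and the cancellations do come out, and your derivation of the companion equation $(\calB_\lambda)_x\calI_\lambda(z,x)=z\,\calI_\lambda(z,x)$ and of the equivariance lemma (from $L(a)^\#=L(a)$ and $g^*=g^\#$ for real $g$) are both right.

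The paper does the opposite: it picks the generating set $\{(a,\pm 2iL(a),a):a\in V\}$, which is the $C$-preimage of the real form of $\frakn_\CC\oplus\overline{\frakn}_\CC$. With that choice $C(X)$ is either $(a,0,0)$ or $(0,0,4a)$, so $\td\pi_\lambda^\CC(C(X))_z$ is pure multiplication or the pure Bessel operator in $z$, and $\td\pi_\lambda(X)_x$ is then the three-term sum. The entire computation collapses to a single application of the Bessel product rule in $x$ to $e^{-\tr(x)}\psi(x)$ (the two factors being an exponential and a test function), with no cross-variable equivariance lemma needed. This is noticeably shorter, and only one of the two signs needs to be worked out explicitly. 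So both approaches buy the same theorem; the paper's choice of generators moves the second-order operator onto whichever side of the kernel has the simpler product structure, while yours keeps the $x$-side operator elementary and pays for it with three invocations of the product rule plus an auxiliary lemma on $\calI_\lambda$.

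One small point you should still tighten up in either approach: the reduction to the pointwise kernel identity (or, in the paper's phrasing, moving the Bessel operator off $\calI_\lambda$ and onto $e^{-\tr(x)}\psi(x)$ ``by symmetry'') is an integration by parts, and the growth bound on $\calI_\lambda$ from Lemma \ref{lem:JBesselConvergence} alone does not kill boundary contributions; you also need decay of $\psi$ and its derivatives. This holds for $\psi\in W^\lambda$ (polynomials times $e^{-\tr(x)}$) directly, and for general smooth vectors by the standard seminorm estimates, but it is worth saying.
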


\begin{proof}
As remarked in Section \ref{sec:ConformalGroup} the subspace $\{(a,L(b),a):a,b\in V\}$ generates $\frakg$ as a Lie algebra. Hence it suffices to prove \eqref{eq:BargmannIntertwiningProperty} for the elements $(a,\pm2iL(a),a)$, $a\in V$. We show \eqref{eq:BargmannIntertwiningProperty} for $X=(a,-2iL(a),a)$, $a\in V$, the proof for $(a,+2iL(a),a)$ works similarly. For $X=(a,-2iL(a),a)$ we have $C(X)=(a,0,0)$ and hence
\begin{align*}
 & (\td\pi_\lambda^\CC(C(X))\circ\BB_\lambda)\psi(z)\\
 ={}& i(a|z)e^{-\frac{1}{2}\tr(z)}\int_{\calO_\lambda}{\calI_\lambda(z,x)e^{-\tr(x)}\psi(x)\td\mu_\lambda(x)}.
\intertext{By Proposition \ref{prop:DiffEqIBessel} we have $z\calI_\lambda(z,x)=(\calB_\lambda)_x\calI_\lambda(z,x)$. Further the Bessel operator $\calB_\lambda$ is symmetric on $L^2(\calO_\lambda,\td\mu_\lambda)$ and we obtain}
 ={}& ie^{-\frac{1}{2}\tr(z)}\int_{\calO_\lambda}{(a|\calB_\lambda)_x\calI_\lambda(z,x)e^{-\tr(x)}\psi(x)\td\mu_\lambda(x)}\\
 ={}& ie^{-\frac{1}{2}\tr(z)}\int_{\calO_\lambda}{\calI_\lambda(z,x)(a|\calB_\lambda)\left[e^{-\tr(x)}\psi(x)\right]\td\mu_\lambda(x)}.
\end{align*}
By the product rule \eqref{eq:BesselProdRule} for the Bessel operator we obtain
\begin{align*}
 & \calB_\lambda\left[e^{-\tr(x)}\psi(x)\right]\\
 ={}& \calB_\lambda e^{-\tr(x)}\cdot\psi(x)+2P\left(\frac{\partial}{\partial x}e^{-\tr(x)},\frac{\partial\psi}{\partial x}\right)x+e^{-\tr(x)}\cdot\calB_\lambda\psi(x)\\
 ={}& (x-\lambda e)e^{-\tr(x)}\psi(x)-2e^{-\tr(x)}x\cdot\frac{\partial\psi}{\partial x}+e^{-\tr(x)}\calB_\lambda\psi(x)
\end{align*}
since
\begin{align*}
 \calB_\lambda e^{-\tr(x)} &= (P(-e)x-\lambda e)e^{-\tr(x)}=(x-\lambda e)e^{-\tr(x)} & \mbox{and}\\
 \frac{\partial}{\partial x}e^{-\tr(x)} &= -e\cdot e^{-\tr(x)}.
\end{align*}
Hence we have
\begin{multline*}
 (a|\calB_\lambda)\left[e^{-\tr(x)}\psi(x)\right]\\
 = e^{-\tr(x)}\left[(a|x)\psi(x)-\lambda\tr(a)\psi(x)-2\partial_{ax}\psi(x)+\calB_\lambda\psi(x)\right].
\end{multline*}
Inserting this into our calculation above we find
\begin{align*}
 & (\td\pi_\lambda^\CC(C(X))\circ\BB_\lambda)\psi(z)\\
 ={}& \BB_\lambda\circ\left(i(a|x)-2i\left[\partial_{L(a)x}+\frac{r\lambda}{2n}\Tr(L(a))\right]+i(a|\calB_\lambda)\right)\psi(z)\\
 ={}& (\BB_\lambda\circ\td\pi_\lambda(X))\psi(z).\qedhere
\end{align*}
\end{proof}

To conclude that $\BB_\lambda$ is an isomorphism $L^2(\calO_\lambda,\td\mu_\lambda)\to\calF_\lambda$, we show that it maps the underlying $(\frakg,\frakk)$-module $W^\lambda\subseteq L^2(\calO_\lambda,\td\mu_\lambda)$ to the $(\frakg,\frakk)$-module $\calP(\calX_\lambda)$. In order to do so we show that the function
\begin{align*}
 \Psi_0 &:= \BB_\lambda\psi_0\in\calO(V_\CC)
\end{align*}
is $L_\CC$-invariant. In fact, the function $\psi_0$ is $\widetilde{K}$-equivariant by the character $\xi_\lambda$ (see Section \ref{sec:SchrödingerModel}). By Proposition \ref{prop:BargmannIntertwiningProperty} the same has to be true for $\Psi_0$. But by Proposition \ref{prop:FockModelKAction} this implies that $\Psi_0$ is invariant under $\eta(\widetilde{K})^\#=U$. Now $U$ is a real form of $L_\CC$ and the action of $L_\CC$ on $\Psi_0$ is holomorphic, whence $\Psi_0$ has to be $L_\CC$-invariant. Therefore it has to be constant on every $L_\CC$-orbit. Since $\Psi_0$ is holomorphic on $V_\CC$ and $V_\CC$ decomposes into finitely many $L_\CC$-orbits, it follows that $\Psi_0$ is constant on $V_\CC$. It remains to show that $\Psi_0$ is non-zero.

\begin{proposition}
$\Psi_0(0) = 1$ and hence $\Psi_0=\BB_\lambda\psi_0=\1$.
\end{proposition}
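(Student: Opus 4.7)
The plan is to simply evaluate the defining integral for $\BB_\lambda \psi_0$ at the point $z = 0$ and use the normalization of the measure $\td\mu_\lambda$ already established earlier in the paper.

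First I would observe that by the very definition of the $I$-Bessel function as the series
\[
 \calI_\lambda(z,w) = \sum_{{\bf m}\geq0}{\frac{d_{\bf m}}{(\tfrac{n}{r})_{\bf m}(\lambda)_{\bf m}}\Phi_{\bf m}(z,w)},
\]
only the ${\bf m}={\bf 0}$ term contributes when $z=0$, and this term equals $\Phi_{\bf 0}(0,w) = 1$. Hence $\calI_\lambda(0,x) = 1$ for all $x\in\calO_\lambda$. Plugging $z = 0$ into the definition \eqref{eq:DefBargmann} of $\BB_\lambda$ and using $e^{-\frac{1}{2}\tr(0)} = 1$ therefore yields
\[
 \Psi_0(0) = \BB_\lambda\psi_0(0) = \int_{\calO_\lambda}{e^{-\tr(x)}\psi_0(x)\td\mu_\lambda(x)} = \int_{\calO_\lambda}{e^{-2\tr(x)}\td\mu_\lambda(x)} = \|\psi_0\|_{L^2(\calO_\lambda,\td\mu_\lambda)}^2.
\]

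The last quantity equals $1$: this is precisely the normalization of the $L$-equivariant measure $\td\mu_\lambda$ that was arranged in Section \ref{sec:StructureGroup} (where it was checked for $\lambda\in\calW_\cont$ by direct computation with the Gamma function of $\Omega$ and then extended to all $\lambda\in\calW$ by analytic continuation in $\lambda$). Combined with the preceding paragraph's argument that $\Psi_0$ is a constant function on $V_\CC$, this gives $\Psi_0 \equiv 1$, i.e.\ $\BB_\lambda\psi_0 = \1$. No step here is expected to pose any real obstacle; the only subtlety is recalling that the normalizing constant $c_\lambda = 2^{3r\lambda}\Gamma_\Omega(\tfrac{n}{r})$ in the Fock inner product \eqref{eq:DefFockInnerProduct} has already been chosen to be compatible with the normalization of $\td\mu_\lambda$, which is what makes the constant on the right-hand side work out to exactly $1$.
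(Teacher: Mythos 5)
Your argument is correct and is essentially identical to the paper's proof: observe that $\calI_\lambda(0,x)=1$, so $\Psi_0(0)=\int_{\calO_\lambda}e^{-2\tr(x)}\,\td\mu_\lambda(x)=\|\psi_0\|^2_{L^2(\calO_\lambda,\td\mu_\lambda)}=1$ by the measure normalization from Section \ref{sec:StructureGroup}, and then combine with the already-established fact that $\Psi_0$ is constant. One small correction to your closing remark: the constant $c_\lambda=2^{3r\lambda}\Gamma_\Omega(\tfrac{n}{r})$ in \eqref{eq:DefFockInnerProduct} plays no role here --- the defining integral \eqref{eq:DefBargmann} for $\BB_\lambda$ involves only $\td\mu_\lambda$, not the Fock inner product, so the value $\Psi_0(0)=1$ is governed solely by the Riesz-distribution normalization of $\td\mu_\lambda$; the choice of $c_\lambda$ instead ensures $\|\1\|_{\calF_\lambda}=1$, which enters later when proving unitarity of $\BB_\lambda$.
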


\begin{proof}
Since $\calI_\lambda(0,x)=1$ we have
\begin{align*}
 \Psi_0(0) &= \int_{\calO_\lambda}{e^{-2\,\tr(x)}\td\mu_\lambda(x)} = \|\psi_0\|^2_{L^2(\calO_\lambda,\td\mu_\lambda)} = 1
\end{align*}
as shown in Section \ref{sec:StructureGroup}.
\end{proof}

\begin{theorem}\label{thm:BargmannIsomorphism}
$\BB_\lambda$ is a unitary isomorphism $L^2(\calO_\lambda,\td\mu_\lambda)\to\calF_\lambda$ intertwining the actions $\pi_\lambda$ and $\rho_\lambda$.
\end{theorem}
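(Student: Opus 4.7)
The strategy is to reduce the theorem to two uniqueness statements for irreducible $(\frakg,\widetilde{K})$-modules, using the intertwining identity of Proposition \ref{prop:BargmannIntertwiningProperty} and the normalization $\BB_\lambda\psi_0 = \1$ as the starting point.

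First I would show that $\BB_\lambda$ restricts to a $(\frakg,\widetilde{K})$-module isomorphism $W^\lambda \stackrel{\sim}{\to} \calP(\calX_\lambda)$. Iterating Proposition \ref{prop:BargmannIntertwiningProperty} yields
\[
 \BB_\lambda\bigl(\td\pi_\lambda(X_1)\cdots\td\pi_\lambda(X_k)\psi_0\bigr) = \td\rho_\lambda(X_1)\cdots\td\rho_\lambda(X_k)\1
\]
for all $X_1,\ldots,X_k\in\frakg$, so $\BB_\lambda(W^\lambda) = \td\rho_\lambda(\calU(\frakg))\1 = \calP(\calX_\lambda)$, the last equality following from the irreducibility of the Fock module (Proposition \ref{prop:FockIrreducible}). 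Since $W^\lambda$ is itself irreducible and $\BB_\lambda\psi_0 = \1\neq0$, Schur's lemma makes $\BB_\lambda|_{W^\lambda}$ into an isomorphism of $\frakg$-modules, hence of $(\frakg,\widetilde{K})$-modules because $\widetilde{K}$ is connected.

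Next I would upgrade this module isomorphism to an isometry by invoking the uniqueness (up to a positive scalar) of invariant Hermitian forms on an irreducible $(\frakg,\widetilde{K})$-module. Both $\td\pi_\lambda$ and $\td\rho_\lambda$ are infinitesimally unitary (the latter by Proposition \ref{prop:FockInfUnitary}), so the pull-back of $\langle\cdot,\cdot\rangle_{\calF_\lambda}$ through $\BB_\lambda$ is a positive constant multiple of $\langle\cdot,\cdot\rangle_{L^2(\calO_\lambda,\td\mu_\lambda)}$ on $W^\lambda$. Evaluating at $\psi_0$ gives $\|\psi_0\|_{L^2}^2 = 1 = \|\1\|_{\calF_\lambda}^2$ (the first from the normalization of $\td\mu_\lambda$ in Section \ref{sec:StructureGroup}, the second from Proposition \ref{prop:FockNormCalculations} at ${\bf m}={\bf 0}$), so the constant is $1$ and $\BB_\lambda|_{W^\lambda}$ is an isometry. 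Since $W^\lambda$ is dense in $L^2(\calO_\lambda,\td\mu_\lambda)$ and $\calP(\calX_\lambda)$ is dense in $\calF_\lambda$ by construction, this isometry extends uniquely to a unitary isomorphism $L^2(\calO_\lambda,\td\mu_\lambda) \to \calF_\lambda$, and integrating the Lie algebra intertwining yields the $\widetilde{G}$-intertwining property on the group level.

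The one technical point I would need to settle carefully is that this abstract unitary extension coincides with the explicit integral operator $\BB_\lambda$ of \eqref{eq:DefBargmann}. This follows from two continuity properties: Proposition \ref{prop:BargmannMappingProperty} gives continuity of $\BB_\lambda:L^2(\calO_\lambda,\td\mu_\lambda)\to\calO(V_\CC)$, and Proposition \ref{prop:FockContPointEvaluations} gives continuity of point evaluations on $\calF_\lambda$. Since both operators agree pointwise on the dense subset $W^\lambda$ and are continuous into the space of holomorphic functions on $\calX_\lambda$ (obtained by restriction from $V_\CC$), they must agree on all of $L^2(\calO_\lambda,\td\mu_\lambda)$. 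This is the only place where the choice of the Fock space as a space of \emph{holomorphic} functions (rather than an abstract Hilbert space completion) enters non-trivially.
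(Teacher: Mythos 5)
Your proof follows the paper's argument essentially verbatim: establish the $(\frakg,\widetilde{K})$-module isomorphism $W^\lambda\to\calP(\calX_\lambda)$ via the intertwining identity and irreducibility, invoke uniqueness of invariant Hermitian forms on an irreducible infinitesimally unitary module to conclude isometry up to a scalar, normalize at $\psi_0$, and extend by density. You are in fact slightly more careful than the paper in spelling out (via Propositions \ref{prop:BargmannMappingProperty} and \ref{prop:FockContPointEvaluations}) why the abstract density extension coincides with the explicit integral operator, a step the paper leaves implicit in the phrase ``then the statement follows since \ldots\ are dense.''
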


\begin{proof}
Since $W^\lambda=L^2(\calO_\lambda,\td\mu_\lambda)_\frakk$ is generated by $\psi_0$, $\calP(\calX_\lambda)=(\calF_\lambda)_\frakk$ is generated by $\Psi_0=\1$, $\BB_\lambda\psi_0=\Psi_0$ and $\BB_\lambda$ intertwines the actions $\td\pi_\lambda$ and $\td\rho_\lambda$, it has to map the irreducible $(\frakg,\frakk)$-module $W^\lambda$ into the irreducible $(\frakg,\frakk)$-module $\calP(\calX_\lambda)$ and is therefore, thanks to Schur's Lemma, an isomorphism between the underlying $(\frakg,\frakk)$-modules. It only remains to show that $\BB_\lambda$ is isometric between $W^\lambda$ and $\calP(\calX_\lambda)$, then the statement follows since $W^\lambda\subseteq L^2(\calO_\lambda,\td\mu_\lambda)$ and $\calP(\calX_\lambda)\subseteq\calF_\lambda$ are dense.\\
Both $W^\lambda$ and $\calP(\calX_\lambda)$ are irreducible infinitesimally unitary $(\frakg,\frakk)$-modules and hence $\BB_\lambda$ is a scalar multiple of a unitary operator. Since further
\begin{align*}
 \|\BB_\lambda\psi_0\|_{\calF_\lambda} &= \|\1\|_{\calF_\lambda} = 1 = \|\psi_0\|_{L^2(\calO_\lambda,\td\mu_\lambda)}
\end{align*}
the operator $\BB_\lambda$ itself has to be unitary.
\end{proof}

\begin{remark}
The Segal--Bargmann transform can also be obtained via a \textit{restriction principle} (see \cite{HZ09,OO96} for other instances of this principle). The formula $\calR_\lambda F(x)=e^{-\frac{1}{2}\tr(x)}F(x)$ defines an operator $\calP(\calX_\lambda)\to L^2(\calO_\lambda,\td\mu_\lambda)$ and hence we obtain a densely defined unbounded operator $\calR_\lambda:\calF_\lambda\to L^2(\calO_\lambda,\td\mu_\lambda)$. We consider its adjoint $\calR_\lambda^*:L^2(\calO_\lambda,\td\mu_\lambda)\to\calF_\lambda$ as a densely defined unbounded operator. One can show that the Segal--Bargmann transform appears as the unitary part in the polar decomposition of the operator $\calR_\lambda^*$:
\begin{align*}
 \calR_\lambda^* &= \BB_\lambda\circ\sqrt{\calR_\lambda\calR_\lambda^*}.
\end{align*}
For the case of the minimal discrete Wallach point this is done in \cite[Proposition 5.5]{HKMO12}.
\end{remark}

\begin{corollary}\label{cor:InverseBargmann}
The inverse Segal--Bargmann transform is given by
\begin{align*}
 \BB_\lambda^{-1}F(x) &= \frac{e^{-\tr(x)}}{c_\lambda}\int_{\calX_\lambda}{\calI_\lambda(x,z)e^{-\frac{1}{2}\tr(\overline{z})}F(z)\omega_\lambda(z)\td\nu_\lambda(z)}, & F\in\calF_\lambda.
\end{align*}
\end{corollary}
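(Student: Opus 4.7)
The plan is to recognize that, since $\BB_\lambda:L^2(\calO_\lambda,\td\mu_\lambda)\to\calF_\lambda$ is a unitary isomorphism by Theorem~\ref{thm:BargmannIsomorphism}, its inverse equals its Hilbert space adjoint $\BB_\lambda^*$. The task therefore reduces to computing $\BB_\lambda^*$ directly from the integral formula \eqref{eq:DefBargmann} for $\BB_\lambda$.

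First I would fix $\psi\in W^\lambda$ and $F\in\calP(\calX_\lambda)$ (or more generally take $\psi$ and $F$ of compact support on their respective domains, which is enough since these subspaces are dense). Then I would write
\begin{align*}
\langle\BB_\lambda\psi,F\rangle_{\calF_\lambda}
 &= \frac{1}{c_\lambda}\int_{\calX_\lambda}\Bigl(e^{-\frac{1}{2}\tr(z)}\int_{\calO_\lambda}\calI_\lambda(z,x)e^{-\tr(x)}\psi(x)\td\mu_\lambda(x)\Bigr)\overline{F(z)}\omega_\lambda(z)\td\nu_\lambda(z),
\end{align*}
and interchange the order of integration by Fubini. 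Absolute integrability of the double integral follows from the pointwise bound on $\calI_\lambda$ supplied by Lemma~\ref{lem:JBesselConvergence}, the exponential decay factor $e^{-\tr(x)}$ on $\calO_\lambda$, and the finite-polynomial-type estimate on $\calX_\lambda$ controlled by the measure $\omega_\lambda\td\nu_\lambda$ via Lemma~\ref{lem:IntPhimKBessel}.

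After the swap I would identify the expression as $\langle\psi,\BB_\lambda^*F\rangle_{L^2(\calO_\lambda,\td\mu_\lambda)}$, so that
\begin{align*}
\overline{\BB_\lambda^*F(x)}
 &= \frac{e^{-\tr(x)}}{c_\lambda}\int_{\calX_\lambda}e^{-\frac{1}{2}\tr(z)}\calI_\lambda(z,x)\overline{F(z)}\omega_\lambda(z)\td\nu_\lambda(z).
\end{align*}
Taking the complex conjugate of both sides and using three elementary facts finishes the computation: (i)~$\tr$ on $V_\CC$ is the complex-linear extension of the real Jordan trace, hence $\overline{\tr(z)}=\tr(\overline{z})$; (ii)~by Proposition~\ref{prop:JBesselProperties}~(1), applied to $\calI_\lambda(z,w)=\calJ_\lambda(z,-w)$, one has $\overline{\calI_\lambda(z,x)}=\calI_\lambda(x,z)$; (iii)~the density $\omega_\lambda$ and the measure $\td\nu_\lambda$ are real and positive, so the conjugate bar passes through them. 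Since $x\in\calO_\lambda\subseteq V$, $\tr(x)\in\RR$ and $e^{-\tr(x)}$ is unchanged. The resulting formula is precisely the claimed expression for $\BB_\lambda^{-1}F(x)=\BB_\lambda^*F(x)$.

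The only real subtlety lies in the Fubini step and in extending the resulting identity from the dense subspaces $W^\lambda$, $\calP(\calX_\lambda)$ to all of $L^2(\calO_\lambda,\td\mu_\lambda)$, $\calF_\lambda$. For the former, the estimate $|\calI_\lambda(z,x)|\leq C(1+|z|\cdot|x|)^{r(2n-1)/4}e^{2r\sqrt{|z|\cdot|x|}}$ from Lemma~\ref{lem:JBesselConvergence} combined with the Gaussian-type factors $e^{-\tr(x)}$ and the $K$-Bessel density $\omega_\lambda$ (whose polynomial moments are finite by Lemma~\ref{lem:IntPhimKBessel}) gives absolute integrability. For the extension, one observes that the integral operator defined by the right-hand side of the stated formula is bounded from $\calF_\lambda$ into $L^2(\calO_\lambda,\td\mu_\lambda)$ by the same argument that established continuity of $\BB_\lambda$ in Proposition~\ref{prop:BargmannMappingProperty} (applied in the ``reverse'' direction using the reproducing-kernel bound $|F(z)|\leq\calI_\lambda(z/2,z/2)^{1/2}\|F\|_{\calF_\lambda}$), so the identity on a dense subspace propagates to all of $\calF_\lambda$ by continuity.
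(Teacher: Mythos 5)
Your proof is correct and follows essentially the same route as the paper: both exploit that $\BB_\lambda$ is unitary so its inverse is its adjoint, expand $\langle F,\BB_\lambda\psi\rangle_{\calF_\lambda}$ (you write the conjugate pairing $\langle\BB_\lambda\psi,F\rangle_{\calF_\lambda}$, which is equivalent), interchange the integrals, and simplify using $\overline{\tr(z)}=\tr(\overline z)$ and $\overline{\calI_\lambda(z,x)}=\calI_\lambda(x,z)$. The extra care you devote to the Fubini step and to propagating the identity from the dense subspaces $W^\lambda$ and $\calP(\calX_\lambda)$ is sound but not dwelt on in the paper's one-line justification.
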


\begin{proof}
Since the Segal--Bargmann transform is a unitary operator we have
\begin{align*}
 & \langle\BB_\lambda^{-1}F,\psi\rangle_{L^2(\calO_\lambda,\td\mu_\lambda)} = \langle F,\BB_\lambda\psi\rangle_{\calF_\lambda} = \frac{1}{c_\lambda}\int_{\calX_\lambda}{F(z)\overline{\BB_\lambda\psi(z)}\omega_\lambda(z)\td\nu_\lambda(z)}\\
 ={}& \frac{1}{c_\lambda}\int_{\calX_\lambda}{\int_{\calO_\lambda}{F(z)e^{-\frac{1}{2}\overline{\tr(z)}}\overline{\calI_\lambda(z,x)}e^{-\tr(x)}\overline{\psi(x)}\td\mu_\lambda(x)}\omega_\lambda(z)\td\nu_\lambda(z)}\\
 ={}& \frac{1}{c_\lambda}\int_{\calO_\lambda}{\left(e^{-\tr(x)}\int_{\calX_\lambda}{e^{-\frac{1}{2}\tr(\overline{z})}\calI_\lambda(x,z)F(z)\omega_\lambda(z)\td\nu_\lambda(z)}\right)\overline{\psi(x)}\td\mu_\lambda(x)}
\end{align*}
which implies the claim.
\end{proof}

We now use the Segal--Bargmann transform to obtain an intrinsic description of the Fock space.

\begin{theorem}\label{thm:FockAsRestriction}
$$ \calF_\lambda = \left\{F|_{\calX_\lambda}:F\in\calO(V_\CC),\int_{\calX_\lambda}{|F(z)|^2\omega_\lambda(z)\td\nu_\lambda(z)}<\infty\right\}. $$
\end{theorem}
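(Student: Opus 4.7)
The proof splits into the two inclusions. The easier one $\calF_\lambda\subseteq\{F|_{\calX_\lambda}:\ldots\}$ is a direct consequence of the Segal--Bargmann transform: by Theorem~\ref{thm:BargmannIsomorphism} every $f\in\calF_\lambda$ equals $(\BB_\lambda\psi)|_{\calX_\lambda}$ for some $\psi\in L^2(\calO_\lambda,\td\mu_\lambda)$, and by Proposition~\ref{prop:BargmannMappingProperty} the function $\BB_\lambda\psi$ is holomorphic on all of $V_\CC$; the required integrability of $|\BB_\lambda\psi|^2\omega_\lambda$ follows from unitarity of $\BB_\lambda$.

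For the reverse inclusion, fix $F\in\calO(V_\CC)$ with $\int_{\calX_\lambda}|F|^2\omega_\lambda\td\nu_\lambda<\infty$ and expand $F=\sum_{{\bf m}\geq0}F_{\bf m}$, $F_{\bf m}\in\calP_{\bf m}(V_\CC)$, in a Taylor series at the origin (uniformly convergent on compacta of $V_\CC$). For $\calX_\lambda=\calX_k$, Corollary~\ref{cor:HuaKostantSchmid} implies that only terms with $m_{k+1}=0$ survive restriction to $\calX_\lambda$. The plan is to establish the Parseval-type identity
\begin{align*}
 \int_{\calX_\lambda}|F(z)|^2\omega_\lambda(z)\td\nu_\lambda(z) &= c_\lambda\sum_{{\bf m}\geq0,\,m_{k+1}=0}\|F_{\bf m}|_{\calX_\lambda}\|^2_{\calF_\lambda}.
\end{align*}
The hypothesis then forces finiteness of the right-hand side, so $\sum F_{\bf m}|_{\calX_\lambda}$ converges in $\calF_\lambda$; continuity of point evaluations (Proposition~\ref{prop:FockContPointEvaluations}) combined with pointwise convergence of the Taylor series identifies the limit with $F|_{\calX_\lambda}$.

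To prove the identity I would first use $U$-invariance of $\omega_\lambda\td\nu_\lambda$ to rewrite $|F(z)|^2$ as $\int_U|F(uz)|^2\td u$. For each fixed $z$ the Taylor series converges uniformly on the compact orbit $Uz$ and hence in $L^2(U)$; Schur orthogonality (Lemma~\ref{lem:SchurOrthogonalityLemma}) then yields $\int_U|F(uz)|^2\td u=\sum_{\bf m}\Phi_{\bf m}(z,z)\|F_{\bf m}\|^2_\Sigma$. Monotone convergence (all integrands are non-negative) permits interchanging sum and integral over $\calX_\lambda$. Using the integral formula~\eqref{eq:IntFormulaOnXlambda}, the identity $\Phi_{\bf m}(ux^{\frac{1}{2}},ux^{\frac{1}{2}})=\Phi_{\bf m}(x)$ for $u\in U$ and the $L$-equivariance of $\td\mu_\lambda$ under rescaling, the residual integral reduces to $4^{r\lambda+|{\bf m}|}\int_{\calO_\lambda}\Phi_{\bf m}(y)\calK_\lambda(y)\td\mu_\lambda(y)$, which Lemma~\ref{lem:IntPhimKBessel} evaluates to $c_\lambda\cdot 4^{|{\bf m}|}(\tfrac{n}{r})_{\bf m}(\lambda)_{\bf m}$. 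Comparison with the norm formula in Proposition~\ref{prop:FockNormCalculations} then yields the claimed identity.

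The main technical point is justifying the interchange of summation and $L^2(U)$-integration at fixed $z$; this is handled cleanly by passing through uniform convergence of the Taylor series on the compact set $Uz$, which in particular implies $L^2(U)$-convergence and allows Schur orthogonality to be applied term by term. The subsequent passage to the integral over $\calX_\lambda$ then presents no difficulty since all the integrands are non-negative and monotone convergence applies.
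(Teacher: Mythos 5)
Your argument is correct and follows essentially the same route as the paper: use the Segal--Bargmann isomorphism for $\subseteq$, and for $\supseteq$ expand $F$ into $\calP_{\bf m}$-isotypic pieces, use Schur orthogonality over $U$ to show $\int_{\calX_\lambda}|F|^2\omega_\lambda\td\nu_\lambda=\sum_{\bf m}\|p_{\bf m}\|^2_{L^2(\calX_\lambda,\omega_\lambda\td\nu_\lambda)}$, and then identify the $L^2$-limit with $F|_{\calX_\lambda}$ via continuity of point evaluations. The only implementation difference is in how the sum/integral interchange is justified: the paper truncates to the compact balls $\calX_\lambda^R$, $\calO_\lambda^R$ and lets $R\to\infty$, whereas you first integrate out $U$ at each fixed $z$ (using uniform convergence on the compact orbit $Uz$) and then invoke monotone convergence for the sum over ${\bf m}$; both are valid, and your unpacking of the constants through $\Phi_{\bf m}(z,z)$ and Lemma~\ref{lem:IntPhimKBessel} correctly recovers the factor $c_\lambda\cdot4^{|{\bf m}|}(\tfrac{n}{r})_{\bf m}(\lambda)_{\bf m}$ matching Proposition~\ref{prop:FockNormCalculations}.
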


\begin{proof}
The Segal--Bargmann transform is an isomorphism $\BB_\lambda:L^2(\calO_\lambda,\td\mu_\lambda)\to\calF_\lambda$ and hence by Proposition \ref{prop:BargmannMappingProperty} every function in $\calF_\lambda$ extends to a holomorphic function on $V_\CC$. This shows the inclusion $\subseteq$.\\
For the other inclusion let $F\in\calO(V_\CC)$ such that
\begin{equation*}
 \int_{\calX_\lambda}{|F(z)|^2\omega_\lambda(z)\td\nu_\lambda(z)}<\infty.
\end{equation*}
We expand $F$ into a power series which we can arrange as
\begin{equation*}
 F=\sum_{{\bf m}\geq0}{p_{\bf m}} \quad \mbox{with} \quad p_{\bf m}\in\calP_{\bf m}(V_\CC)
\end{equation*}
by Theorem \ref{thm:HuaKostantSchmid}. Since $p_{\bf m}|_{\calX_\lambda}=0$ for $m_{k+1}\neq0$ by Corollary \ref{cor:HuaKostantSchmid} we may assume $F=\sum_{{\bf m}\geq0,\,m_{k+1}=0}{p_{\bf m}}$ for the study of $F|_{\calX_\lambda}$. This series converges uniformly on compact subsets. We show that this series also converges in $L^2(\calX_\lambda,\omega_\lambda\td\nu_\lambda)$. For $R>0$ let
\begin{equation*}
 \calX_\lambda^R := \{z\in\calX_\lambda:|z|\leq R\}, \qquad \calO_\lambda^R := \{x\in\calO_\lambda:|x|\leq R\}.
\end{equation*}
Note that $\calX_\lambda^R$ and $\calO_\lambda^R$ are compact and hence integration over these sets commutes with taking the limit $F=\sum_{{\bf m}\geq0,\,m_{k+1}=0}{p_{\bf m}}$. With \eqref{eq:IntFormulaOnXlambda} we find
\begin{align*}
 \infty &> \int_{\calX_\lambda}{|F(z)|^2\omega_\lambda(z)\td\nu_\lambda(z)}\\
 &= \lim_{R\to\infty}{\int_{\calX_\lambda^R}{|F(z)|^2\omega_\lambda(z)\td\nu_\lambda(z)}}\\
 &= \lim_{R\to\infty}{\sum_{\substack{{\bf m},{\bf n}\geq0,\\m_{k+1}=n_{k+1}=0}}{\int_{\calO_\lambda^R}{\int_U{p_{\bf m}(ux^{\frac{1}{2}})\overline{p_{\bf n}(ux^{\frac{1}{2}})}\td u}\omega_\lambda(x^{\frac{1}{2}})\td\mu_\lambda(x)}}}.\\
\intertext{By Theorem \ref{thm:CartanHelgason} and Lemma \ref{lem:SchurOrthogonalityLemma} the integrals over $U$ for ${\bf m}\neq{\bf n}$ vanish and we obtain}
 &= \lim_{R\to\infty}{\sum_{{\bf m}\geq0,\,m_{k+1}=0}{\int_{\calX_\lambda^R}{|p_{\bf m}(z)|^2\omega_\lambda(z)\td\nu_\lambda(z)}}}\\
 &= \sum_{{\bf m}\geq0,\,m_{k+1}=0}{\|p_{\bf m}\|_{L^2(\calX_\lambda,\omega_\lambda\td\nu_\lambda)}^2}
\end{align*}
which shows convergence in $L^2(\calX_\lambda,\omega_\lambda\td\nu_\lambda)$. Since $\calF_\lambda$ is the closure of the space of polynomials with respect to the norm of $L^2(\calX_\lambda,\omega_\lambda\td\nu_\lambda)$ it is clear that $F|_{\calX_\lambda}\in\calF_\lambda$ which shows the other inclusion.
\end{proof}

This intrinsic description leads to the following conjecture which was proved in \cite[Theorem 2.26]{HKMO12} for the minimal orbit, i.e. $\lambda\in\calW$ with $\calX_\lambda=\calX_1$:

\begin{conjecture}
\begin{align*}
 \calF_\lambda &= \left\{F\in\calO(\calX_\lambda):\int_{\calX_\lambda}{|F(z)|^2\omega_\lambda(z)\td\nu_\lambda(z)}<\infty\right\}.
\end{align*}
\end{conjecture}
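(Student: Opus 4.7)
The inclusion $\calF_\lambda \subseteq \widetilde{\calF}_\lambda$, where $\widetilde{\calF}_\lambda$ denotes the right-hand side of the conjecture, is immediate from Theorem \ref{thm:FockAsRestriction}, since the restriction of any entire function on $V_\CC$ to the complex submanifold $\calX_\lambda$ is holomorphic in the intrinsic sense. For the converse I would first verify that $\widetilde{\calF}_\lambda$ is a closed subspace of $L^2(\calX_\lambda,\omega_\lambda\td\nu_\lambda)$ with continuous point evaluations (by the local argument already used in Proposition \ref{prop:FockContPointEvaluations}), so that the problem reduces to showing the triviality of the orthogonal complement of $\calF_\lambda$ in $\widetilde{\calF}_\lambda$.

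The central tool is the unitary action of the compact real form $U\subseteq L_\CC$ on $L^2(\calX_\lambda,\omega_\lambda\td\nu_\lambda)$ coming from the geometric action on $\calX_\lambda$: the measure $\omega_\lambda\td\nu_\lambda$ is $U$-invariant (by the construction of $\omega_\lambda$ and \eqref{eq:IntFormulaOnXlambda}), and $U$ acts by biholomorphisms, so preserves $\widetilde{\calF}_\lambda$. Hence $\widetilde{\calF}_\lambda$ decomposes as a Hilbert direct sum of $U$-isotypic components. By Proposition \ref{prop:FockNormCalculations} the analogous decomposition of $\calF_\lambda$ is
\begin{equation*}
 \calF_\lambda \;=\; \bigoplushat_{{\bf m}\geq0,\,m_{k+1}=0} \calP_{\bf m}(\calX_\lambda),
\end{equation*}
each summand irreducible of type $\calP_{\bf m}$ and appearing with multiplicity one. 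It therefore suffices to prove, for every such admissible ${\bf m}$, that the $\calP_{\bf m}$-isotypic component of $\widetilde{\calF}_\lambda$ coincides with $\calP_{\bf m}(\calX_\lambda)$ and that no other $U$-types occur in $\widetilde{\calF}_\lambda$. By Frobenius reciprocity both assertions reduce to the equality
\begin{equation*}
 \widetilde{\calF}_\lambda^{K^L} \;=\; \bigoplushat_{{\bf m}\geq0,\,m_{k+1}=0} \CC\cdot\Phi_{\bf m}.
\end{equation*}

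To establish this, for a $K^L$-invariant $F\in\widetilde{\calF}_\lambda$ I would use the polar decomposition $z=u\cdot a$ on $\calX_\lambda$ to see that $F$ is determined by its restriction to the Cartan subspace $A=\{\sum_{i=1}^k a_ic_i:a_i>0\}$, and by \eqref{eq:IntFormulaOnXlambda} the norm $\|F\|_{\calF_\lambda}^2$ becomes an explicit weighted integral in $(a_1,\ldots,a_k)$ with weight coming from $\calK_\lambda$ and the Jacobian $J_\lambda$. Holomorphicity of $F$ together with $K^L$-invariance forces the restriction $F|_A$ to extend to a symmetric holomorphic function of $(a_1,\ldots,a_k)$ on a suitable complex thickening.

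The hard part will be to prove that any such symmetric holomorphic function, square integrable with respect to this weight, admits an $L^2$-convergent expansion in the spherical polynomials $\Phi_{\bf m}|_A$ (which are multivariate Jack/zonal polynomials in $a_1,\ldots,a_k$); this is the essential density statement underlying the conjecture. When $k<r$, I plan to first extend $F$ across the singular stratum: by \eqref{eq:OrbitDimension} the boundary $\overline{\calX_{k-1}}$ has complex codimension $1+(r-k)d\geq 2$ in the normal affine variety $\overline{\calX_\lambda}$, so a Hartogs-type extension theorem identifies $\calO(\calX_\lambda)$ with $\calO(\overline{\calX_\lambda})$; the $U$-type classification then follows by combining the multiplicity-one statement for $\calP(\calX_\lambda)$ from Corollary \ref{cor:HuaKostantSchmid} with reproducing-kernel/Parseval arguments based on Theorem \ref{thm:FockRepKernel}. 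The remaining case $k=r$ is more delicate because $\partial\calX_r$ has only complex codimension one; here I expect the main obstacle to lie in exploiting the explicit asymptotics of $\omega_\lambda=\calK_\lambda((\cdot/2)^2)$ near $\partial\calX_r$ to force any $L^2$-holomorphic function on $\calX_r$ to extend to an entire function on $V_\CC$, after which Theorem \ref{thm:FockAsRestriction} closes the argument.
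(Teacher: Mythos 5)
The statement you have attempted to prove is labelled a \emph{Conjecture} in the paper: the author explicitly leaves it open, citing only that the case where $\calX_\lambda=\calX_1$ is the minimal orbit was settled in the earlier work of Hilgert--Kobayashi--M\"ollers--\O rsted. There is therefore no proof of it in the paper to compare you against; your argument must stand entirely on its own, and as it stands it is a plan of attack rather than a proof.

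The easy inclusion and the closedness--with--continuous--point--evaluation step are both correct, and the observation that $\omega_\lambda\td\nu_\lambda$ is $U$-invariant so that $U$ acts unitarily and preserves the holomorphic subspace is a sound starting point. The main gap is already in your "Frobenius reciprocity" reduction: knowing the $K^L$-invariants of $\widetilde{\calF}_\lambda$ only controls the multiplicities of $K^L$-\emph{spherical} $U$-types. Nothing you have written rules out non-spherical $U$-types appearing in $\widetilde{\calF}_\lambda$, nor even rules out spherical types outside the family $\calP_{\bf m}$, ${\bf m}\geq0$ (for $k=r$ the orbit $\calX_r$ carries holomorphic functions such as negative powers of principal minors whose $U$-types are spherical but not of the form $\calP_{\bf m}$ with ${\bf m}\geq 0$). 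You need a separate argument that every $U$-constituent of $\widetilde{\calF}_\lambda$ is isomorphic to some $\calP_{\bf m}$ with $m_{k+1}=0$; in $\calP(\calX_\lambda)$ this is Corollary \ref{cor:HuaKostantSchmid}, but $\widetilde{\calF}_\lambda$ is a priori larger, which is the whole point of the conjecture.

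The two cases you then distinguish also have unclosed gaps. For $k<r$ the codimension computation $1+(r-k)d\geq 2$ is right, and assuming normality of $\overline{\calX_k}$ a Riemann/Hartogs extension does give $\calO(\calX_k)\cong\calO(\overline{\calX_k})$; but the extended $F$ is still only a holomorphic function on an affine variety, not a regular (polynomial) one, and the step from there to membership in the $L^2$-closure of $\calP(\calX_\lambda)$ --- which you bundle into "reproducing-kernel/Parseval arguments" --- is precisely the density statement being sought. For $k=r$ (which covers the entire continuous Wallach set) the boundary has codimension one, Hartogs fails, and you offer only the expectation that the decay of $\omega_\lambda=\calK_\lambda((\cdot/2)^2)$ near $\partial\calX_r$ will force extension to $V_\CC$; but there is no mechanism in your outline explaining why square-integrability against a weight that decays at the boundary should prevent, rather than permit, an essential singularity there, and this is exactly the phenomenon that makes even the flat one-variable Fock space theorem ($\calO(\CC^\times)\cap L^2(e^{-|z|^2}dz)=\calF(\CC)$) require a genuine argument. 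These obstacles are what leave the statement a conjecture.
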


Recall the Laguerre functions $\ell_{\bf m}^\lambda$ introduced in Section \ref{sec:Polynomials}.

\begin{proposition}
Let $\lambda\in\calW$ and $k\in\{0,\ldots,r\}$ such that $\calO_\lambda=\calO_k$. Then for every ${\bf m}\geq0$ with $m_{k+1}=0$ we have
\begin{align*}
 \BB_\lambda\ell_{\bf m}^\lambda &= \frac{(-1)^{|{\bf m}|}}{2^{|{\bf m}|}}\Phi_{\bf m}.
\end{align*}
\end{proposition}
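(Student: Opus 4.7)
The plan is to use Schur's lemma to reduce the identity to determining a scalar, and then pin down that scalar via a generating-function argument. First, since $\BB_\lambda$ is by Theorem~\ref{thm:BargmannIsomorphism} a unitary $(\frakg,\widetilde{K})$-intertwiner, and since by Section~\ref{sec:KTypeDecompositions} the Laguerre function $\ell_{\bf m}^\lambda$ spans the line of $\frakk^\frakl$-spherical vectors in the $\widetilde{K}$-type $W^\lambda_{\bf m}\subset L^2(\calO_\lambda,\td\mu_\lambda)$, while by Proposition~\ref{prop:FockKAction} the polynomial $\Phi_{\bf m}$ spans the corresponding $\frakk^\frakl$-spherical line in $\calP_{\bf m}(\calX_\lambda)\subset\calF_\lambda$, Schur's lemma gives $\BB_\lambda\ell_{\bf m}^\lambda = c_{\bf m}\Phi_{\bf m}$ for some scalar $c_{\bf m}\in\CC$. (Comparing norms via \eqref{eq:NormLaguerreFunctions} and Proposition~\ref{prop:FockNormCalculations} already yields $|c_{\bf m}|=2^{-|{\bf m}|}$, but does not give the sign.)

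Next, I would establish the generating function
\begin{align*}
\sum_{\bf n}(2t)^{|{\bf n}|}\frac{d_{\bf n}}{(\tfrac{n}{r})_{\bf n}}\BB_\lambda[\Phi_{\bf n}\psi_0](z) = (1-t)^{-r\lambda}e^{t\tr(z)/(2(1-t))} \qquad (|t|<1).
\end{align*}
To see this, note that $\BB_\lambda[e^{2t\tr(\cdot)}\psi_0](z) = e^{-\tr(z)/2}\int_{\calO_\lambda}\calI_\lambda(z,x)e^{-(2(1-t)e|x)}\td\mu_\lambda(x)$; applying Lemma~\ref{lem:IBesselIntFormula} (combined with $\calI_\lambda(z,x)=\overline{\calI_\lambda(x,z)}$ for real $x$) and simplifying yields the claimed right-hand side. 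Expanding the left-hand side via $e^{2t\tr(x)}=\sum_{\bf n}(2t)^{|{\bf n}|}(d_{\bf n}/(\tfrac{n}{r})_{\bf n})\Phi_{\bf n}(x)$, a consequence of~\eqref{eq:ExponentialTraceExpansion} at $w=e$, produces the displayed identity.

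Finally, match the coefficient of $t^{|{\bf m}|}\Phi_{\bf m}(z)$ on both sides. Inverting the upper-triangular system~\eqref{eq:DefLagPoly} gives $\Phi_{\bf n}(x)e^{-\tr(x)}=(-1)^{|{\bf n}|}2^{-|{\bf n}|}\ell_{\bf n}^\lambda(x)+\sum_{|{\bf k}|<|{\bf n}|}(\cdots)\ell_{\bf k}^\lambda(x)$; applying $\BB_\lambda$ and using Schur from the first paragraph yields $\BB_\lambda[\Phi_{\bf n}\psi_0](z)=(-1)^{|{\bf n}|}2^{-|{\bf n}|}c_{\bf n}\Phi_{\bf n}(z)+(\text{combination of }\Phi_{\bf k}\text{ with }|{\bf k}|<|{\bf n}|)$. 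Hence the lowest $t$-power contributing to $\Phi_{\bf m}(z)$ on the left-hand side is $t^{|{\bf m}|}$, with coefficient $2^{|{\bf m}|}(d_{\bf m}/(\tfrac{n}{r})_{\bf m})\cdot(-1)^{|{\bf m}|}2^{-|{\bf m}|}c_{\bf m}$. On the right-hand side, using $\tr(z)^j=j!\sum_{|{\bf n}|=j}(d_{\bf n}/(\tfrac{n}{r})_{\bf n})\Phi_{\bf n}(z)$ and expanding $(1-t)^{-r\lambda-|{\bf m}|}$, the $t^{|{\bf m}|}\Phi_{\bf m}(z)$-coefficient equals $(d_{\bf m}/(\tfrac{n}{r})_{\bf m})/2^{|{\bf m}|}$. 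Solving yields $c_{\bf m}=(-1)^{|{\bf m}|}/2^{|{\bf m}|}$.

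The main obstacle is establishing the generating-function identity in closed form; once the observation is made that $e^{2t\tr(x)}\psi_0(x)$ combines with the $e^{-\tr(x)}$ in the Segal--Bargmann kernel into a pure exponential $e^{-(2(1-t)e|x)}$, Lemma~\ref{lem:IBesselIntFormula} reduces $\BB_\lambda[e^{2t\tr}\psi_0]$ to an elementary expression, and everything else is a routine coefficient match.
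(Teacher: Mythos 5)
Your proof is correct, but it takes a genuinely different route from the paper's to pin down the scalar. Both arguments begin identically: since $\BB_\lambda$ is a unitary $(\frakg,\widetilde{K})$-intertwiner and $\ell^\lambda_{\bf m}$, $\Phi_{\bf m}$ span the respective $\frakk^\frakl$-spherical lines in the matching $\widetilde{K}$-types, Schur forces $\BB_\lambda\ell^\lambda_{\bf m} = c_{\bf m}\Phi_{\bf m}$, and the norm count via \eqref{eq:NormLaguerreFunctions} and Proposition~\ref{prop:FockNormCalculations} fixes $|c_{\bf m}|=2^{-|{\bf m}|}$. From there the paper simply evaluates both sides of $\BB_\lambda^{-1}\Phi_{\bf m}=c_{\bf m}^{-1}\ell^\lambda_{\bf m}$ at $x=0$: the left side is computed from Corollary~\ref{cor:InverseBargmann} together with the expansion \eqref{eq:ExponentialTraceExpansion} (giving $(-1)^{|{\bf m}|}2^{|{\bf m}|}(\lambda)_{\bf m}$), and the right side is $c_{\bf m}^{-1}L^\lambda_{\bf m}(0)=c_{\bf m}^{-1}(\lambda)_{\bf m}$, which immediately yields $c_{\bf m}$. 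Your alternative instead computes $\BB_\lambda\bigl[e^{2t\tr}\psi_0\bigr]$ in closed form via Lemma~\ref{lem:IBesselIntFormula}, expands $e^{2t\tr}$ through \eqref{eq:ExponentialTraceExpansion}, and extracts the sign by matching the leading coefficient of $t^{|{\bf m}|}\Phi_{\bf m}(z)$, using the upper-triangularity of \eqref{eq:DefLagPoly}. I checked the algebra and the coefficient extraction; your computation is right (minor caveat: the closed form requires $\Re(t)<\tfrac12$ so that $e^{(2t-1)\tr}\in L^2$, but this is irrelevant for matching Taylor coefficients at $t=0$). The trade-off is that the paper's evaluation at the origin is shorter, while your argument establishes along the way a genuine Mehler-type generating identity $\sum_{\bf n}(2t)^{|{\bf n}|}\tfrac{d_{\bf n}}{(n/r)_{\bf n}}\BB_\lambda[\Phi_{\bf n}\psi_0](z)=(1-t)^{-r\lambda}e^{t\tr(z)/(2(1-t))}$, equivalent to the Laguerre generating function after applying $\BB_\lambda^{-1}$, which the paper does not record.
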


\begin{proof}
$\ell_{\bf m}^\lambda$ is the unique (up to scalar multiples) $\frakk^\frakl$-invariant vector in the $\frakk$-type $W^\lambda_{\bf m}\subseteq L^2(\calO_\lambda,\td\mu_\lambda)$ whereas $\Phi_{\bf m}$ is the unique (up to scalar multiples) $\frakk^\frakl$-invariant vector in the $\frakk$-type $\calP_{\bf m}(\calX_\lambda)\subseteq\calF_\lambda$. Hence
\begin{equation*}
 \BB_\lambda\ell_{\bf m}^\lambda=\const\cdot\Phi_{\bf m}.
\end{equation*}
To find the constant we evaluate $\BB_\lambda^{-1}\Phi_{\bf m}(x)$ and $\ell_{\bf m}^\lambda(x)$ at $x=0$. First observe that
\begin{align*}
 \ell_{\bf m}^\lambda(0) &= L_{\bf m}^\lambda(0) = (\lambda)_{\bf m}.
\end{align*}
Next we calculate $\BB_\lambda^{-1}\Phi_{\bf m}(0)$ using Corollary \ref{cor:InverseBargmann}:
\begin{align*}
 \BB_\lambda^{-1}\Phi_{\bf m}(0) &= \frac{1}{c_\lambda}\int_{\calX_\lambda}{e^{-\frac{1}{2}\tr(\overline{z})}\Phi_{\bf m}(z)\omega_\lambda(z)\td\nu_\lambda(z)}.
\end{align*}
By \eqref{eq:ExponentialTraceExpansion} the function $e^{\tr(z)}$ has the following expansion:
\begin{align*}
 e^{\tr(z)} &= \sum_{{\bf m}\geq0}{\frac{d_{\bf m}}{(\frac{n}{r})_{\bf m}}\Phi_{\bf m}(z)}.
\end{align*}
Inserting this yields
\begin{align*}
 \BB_\lambda^{-1}\Phi_{\bf m}(0) &= \sum_{{\bf n}\geq0}{(-1)^{|{\bf n}|}\frac{d_{\bf n}}{2^{|{\bf n}|}(\frac{n}{r})_{\bf n}}\cdot\frac{1}{c_\lambda}\int_{\calX_\lambda}{\Phi_{\bf m}(z)\overline{\Phi_{\bf n}(z)}\omega_\lambda(z)\td\nu_\lambda(z)}}\\
 &= \sum_{{\bf n}\geq0}{(-1)^{|{\bf n}|}\frac{d_{\bf n}}{2^{|{\bf n}|}(\frac{n}{r})_{\bf n}}\langle\Phi_{\bf m},\Phi_{\bf n}\rangle_{\calF_\lambda}}\\
 &= (-1)^{|{\bf m}|}\frac{d_{\bf m}}{2^{|{\bf m}|}(\frac{n}{r})_{\bf m}}\cdot\frac{4^{|{\bf m}|}(\tfrac{n}{r})_{\bf m}(\lambda)_{\bf m}}{d_{\bf m}}\\
 &= (-1)^{|{\bf m}|}2^{|{\bf m}|}(\lambda)_{\bf m},
\end{align*}
where we have used Proposition \ref{prop:FockNormCalculations}.
\end{proof}

\subsection{Intertwiner to the bounded symmetric domain model}\label{sec:IntertwinerBdSymDomainModel}

The Fock space $\calF_\lambda$ and the bounded symmetric domain model $\calH^2_\lambda(\calD)$ have (as vector spaces) the same underlying $(\frakg,\widetilde{K})$-module, namely
\begin{align*}
 \bigoplus_{{\bf m}\geq0,\,m_{k+1}=0}{\calP_{\bf m}(V_\CC)},
\end{align*}
where $k\in\{0,\ldots,r\}$ is such that $\calX_\lambda=\calX_k$. However, inner product and group action differ. In Remark \ref{rem:ComparisonNormsFockBdSymDomain} we already compared the norms in the two models. To gain a better understanding of the relation between the two models we investigate the intertwiner between them.

For this we compose the inverse Segal--Bargmann transform $\BB_\lambda^{-1}$ with the Laplace transform $\calL_\lambda$ and the pullback of the Cayley transform $\gamma_\lambda$ to obtain an intertwining operator between the Fock space picture and the realization on functions on the bounded symmetric domain. (For the definition of $\calL_\lambda$ and $\gamma_\lambda$ see Sections \ref{sec:TubeDomainModel} and \ref{sec:BoundedSymmetricDomainModel}.) Let
\begin{align*}
 \calA_\lambda &:= \gamma_\lambda\circ\calL_\lambda\circ\BB_\lambda^{-1}:\calF_\lambda\to\calH_\lambda^2(\calD).
\end{align*}
The operator $\calA_\lambda$ intertwines the actions $\rho_\lambda$ and $\pi_\lambda^\calD$.

\begin{theorem}\label{thm:IntertwinerFockBdSymDomain}
For $F\in\calF_\lambda$ we have
\begin{align*}
 \calA_\lambda F(z) &= \frac{1}{c_\lambda}\int_{\calX_\lambda}{e^{-\frac{1}{2}(z|\overline{w})}F(w)\omega_\lambda(w)\td\nu_\lambda(w)}, & z\in\calD.
\end{align*}
\end{theorem}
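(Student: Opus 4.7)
My plan is to unwind the definition $\calA_\lambda = \gamma_\lambda \circ \calL_\lambda \circ \BB_\lambda^{-1}$ by substituting the explicit formulas for each factor and collapsing the resulting iterated integral. By density of $\calP(\calX_\lambda)$ in $\calF_\lambda$ together with continuity of both sides of the claimed identity (for the right hand side one uses Cauchy--Schwarz plus the exponential decay of $\omega_\lambda$ encoded in Lemma \ref{lem:IntPhimKBessel}), it suffices to prove the formula for polynomial $F$, where every integral in sight converges absolutely and Fubini is painless.

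First, I would substitute the formula from Corollary \ref{cor:InverseBargmann} into the definition of $\calL_\lambda \BB_\lambda^{-1} F$, interchange the order of integration, and recognize the inner integral over $\calO_\lambda$ as
\begin{equation*}
 \int_{\calO_\lambda} e^{-(e - iu \mid x)} \calI_\lambda(x, w) \td\mu_\lambda(x), \qquad u \in T_\Omega,
\end{equation*}
whose argument $y = e - iu$ has real part $e + \Im u \in \Omega$. Both sides of the identity in Lemma \ref{lem:IBesselIntFormula} are holomorphic in $y$ on $\Omega + iV$ (using the subexponential growth bound of Lemma \ref{lem:JBesselConvergence} for convergence), so by analytic continuation from $\Omega$ I obtain
\begin{equation*}
 (\calL_\lambda \BB_\lambda^{-1} F)(u) = \frac{2^{r\lambda}}{c_\lambda} \Delta(e - iu)^{-\lambda} \int_{\calX_\lambda} F(w)\, e^{-\frac{1}{2}\tr(\overline{w})} e^{((e-iu)^{-1} \mid \overline{w})} \omega_\lambda(w) \td\nu_\lambda(w).
\end{equation*}

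Second, I would apply $\gamma_\lambda$, setting $u = c(z)$ with $z \in \calD$ and multiplying by $\Delta(e-z)^{-\lambda}$. The crucial algebraic simplification is the Cayley transform identity
\begin{equation*}
 e - i c(z) = e + (e+z)(e-z)^{-1} = \bigl[(e-z) + (e+z)\bigr](e-z)^{-1} = 2(e-z)^{-1},
\end{equation*}
valid in the associative Jordan subalgebra generated by $z$, which gives $(e - i c(z))^{-1} = \tfrac{1}{2}(e-z)$. Combined with $\Delta(x^{-1}) = \Delta(x)^{-1}$ this yields $\Delta(e-z)^{-\lambda} \Delta(e - i c(z))^{-\lambda} = 2^{-r\lambda}$ and $((e-ic(z))^{-1} \mid \overline{w}) = \tfrac{1}{2}\tr(\overline{w}) - \tfrac{1}{2}(z \mid \overline{w})$, so the factors $2^{\pm r\lambda}$ and $e^{\pm \frac{1}{2}\tr(\overline{w})}$ cancel, leaving exactly the kernel $\frac{1}{c_\lambda} e^{-\frac{1}{2}(z \mid \overline{w})}$.

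The main obstacle is more bookkeeping than conceptual: keeping track of the various powers of $2$ and the exponential factors involving $\tr(\overline{w})$ that appear at different stages, and correctly invoking the associativity of the Jordan subalgebra generated by $z$ in order to simplify $e - i c(z)$. Once these two points are handled the proof is essentially mechanical.
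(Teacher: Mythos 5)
Your proposal matches the paper's own proof: both unwind $\calA_\lambda = \gamma_\lambda\circ\calL_\lambda\circ\BB_\lambda^{-1}$, swap the order of integration, evaluate the inner $\calO_\lambda$-integral via Lemma \ref{lem:IBesselIntFormula} (the paper applies it silently at the complex argument $y=e-ic(z)$ with $\Re y\in\Omega$, which you correctly flag as an analytic continuation step), and then simplify via $e-ic(z)=2(e-z)^{-1}$. The only additions you make — reduction to polynomial $F$ by density and the explicit justification of the holomorphic extension — are sound and slightly more careful than the published argument, but the route is identical.
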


\begin{proof}
Using Lemma \ref{lem:IBesselIntFormula} we obtain
\begin{align*}
 & \calA_\lambda F(z) = \gamma_\lambda\circ\calL_\lambda\circ\BB_\lambda^{-1}F(z) = \Delta(e-z)^{-\lambda}\calL_\lambda\circ\BB_\lambda^{-1}F(c(z))\\
 ={}& \Delta(e-z)^{-\lambda}\int_{\calO_\lambda}{e^{i(c(z)|u)}\BB_\lambda^{-1}F(u)\td\mu_\lambda(u)}\\
 ={}& \frac{\Delta(e-z)^{-\lambda}}{c_\lambda}\int_{\calO_\lambda}{\int_{\calX_\lambda}{e^{i(c(z)|u)}e^{-\tr(u)}\calI_\lambda(u,w)e^{-\frac{1}{2}\tr(\overline{w})}F(w)}}\\
 & \hspace{7.5cm}{{\omega_\lambda(w)\td\nu_\lambda(w)}\td\mu_\lambda(u)}\\
 ={}& \frac{\Delta(e-z)^{-\lambda}}{c_\lambda}\int_{\calX_\lambda}{\left(\int_{\calO_\lambda}{e^{-(u|e-ic(z))}\calI_\lambda(u,w)\td\mu_\lambda(u)}\right)e^{-\frac{1}{2}\tr(\overline{w})}F(w)}\\
 & \hspace{8.7cm}{\omega_\lambda(w)\td\nu_\lambda(w)}\\
 ={}& \frac{2^{r\lambda}\Delta(e-z)^{-\lambda}}{c_\lambda}\int_{\calX_\lambda}{\Delta(e-ic(z))^{-\lambda}e^{((e-ic(z))^{-1}|\overline{w})}e^{-\frac{1}{2}\tr(\overline{w})}F(w)}\\
 & \hspace{8.7cm}{\omega_\lambda(w)\td\nu_\lambda(w)}.
\end{align*}
Now
\begin{align*}
 e-ic(z) &= e+(e+z)(e-z)^{-1} = 2(e-z)^{-1}
\end{align*}
and therefore
\begin{align*}
 \Delta(e-ic(z))^{-\lambda} &= 2^{-r\lambda}\Delta(e-z)^\lambda.
\end{align*}
This also implies that
\begin{align*}
 ((e-ic(z))^{-1}|\overline{w}) &= \frac{1}{2}\tr(\overline{w})-\frac{1}{2}(z|\overline{w})
\end{align*}
so that altogether we obtain
\begin{align*}
 \calA_\lambda F(z) &= \frac{1}{c_\lambda}\int_{\calX_\lambda}{e^{-\frac{1}{2}(z|\overline{w})}F(w)\omega_\lambda(w)\td\nu_\lambda(w)}.\qedhere
\end{align*}
\end{proof}

\begin{corollary}
The operator $\calA_\lambda$ acts on $\calP_{\bf m}(V_\CC)$ by the scalar $\frac{(-1)^{|{\bf m}|}(\lambda)_{\bf m}}{2^{|{\bf m}|}}$.
\end{corollary}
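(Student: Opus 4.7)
The plan is to reduce to a scalar computation via Schur's lemma and then to pin down the scalar by evaluating $\calA_\lambda$ on the distinguished spherical vector $\Phi_{\bf m}$. Since $\calA_\lambda = \gamma_\lambda\circ\calL_\lambda\circ\BB_\lambda^{-1}$ is a composition of $\widetilde{G}$-equivariant maps, it commutes in particular with the $\widetilde{K}$-action. The $\widetilde{K}$-type decompositions of $\calF_\lambda$ and $\calH^2_\lambda(\calD)$ both contain $\calP_{\bf m}(V_\CC)$ as an irreducible constituent (Proposition \ref{prop:FockKAction} and Section \ref{sec:KTypeDecompositions}), so by Schur's lemma $\calA_\lambda$ acts on each such subspace by a single scalar $c_{\bf m}$. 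To determine $c_{\bf m}$ it suffices to test the operator on any nonzero vector; the $\frakk^\frakl$-spherical polynomial $\Phi_{\bf m}$ is the natural choice.

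I would then substitute $F = \Phi_{\bf m}$ into the integral formula of Theorem \ref{thm:IntertwinerFockBdSymDomain} and evaluate at $z = te$ for a small $t > 0$, where $\Phi_{\bf m}(te) = t^{|{\bf m}|}$. The kernel specializes to $e^{-\frac{t}{2}\tr(\overline{w})}$; expanding it via \eqref{eq:ExponentialTraceExpansion} and invoking the identities $\Phi_{\bf n}(e,w) = \overline{\Phi_{\bf n}(w,e)} = \overline{\Phi_{\bf n}(w)}$ from parts (1) and (3) of Lemma \ref{lem:PropertiesPhimDoubleVariable} yields
\[
 e^{-\frac{t}{2}\tr(\overline{w})} \;=\; \sum_{{\bf n}\geq 0}\Bigl(-\frac{t}{2}\Bigr)^{|{\bf n}|}\frac{d_{\bf n}}{(\frac{n}{r})_{\bf n}}\,\overline{\Phi_{\bf n}(w)}.
\]
Integrating term by term against $\Phi_{\bf m}(w)\omega_\lambda(w)\td\nu_\lambda(w)$, the orthogonality of the subspaces $\calP_{\bf n}$ in the Fock inner product (Proposition \ref{prop:FockNormCalculations}) collapses the sum to the single term ${\bf n} = {\bf m}$ and produces a multiple of $\|\Phi_{\bf m}\|^2_{\calF_\lambda}$. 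Substituting the explicit norm from Proposition \ref{prop:FockNormCalculations} and comparing with $c_{\bf m}\Phi_{\bf m}(te) = c_{\bf m}\,t^{|{\bf m}|}$ then reads off $c_{\bf m}$.

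No single step is a serious obstacle once the test vector is chosen; the only genuine care required is in the bookkeeping of powers of $\frac{1}{2}$ and of complex conjugations, since the trace form on $V_\CC$ is $\CC$-bilinear rather than sesquilinear, so $e^{-\frac{1}{2}(z|\overline{w})}$ is antiholomorphic in $w$, and one must match the holomorphic factor $\Phi_{\bf m}(w)$ against the conjugates $\overline{\Phi_{\bf n}(w)}$ appearing in the expansion. A slightly more conceptual variant would be to evaluate $\calA_\lambda\Phi_{\bf m}$ by composition: use the preceding proposition to identify $\BB_\lambda^{-1}\Phi_{\bf m}$ with a scalar multiple of the Laguerre function $\ell_{\bf m}^\lambda$, then apply $\gamma_\lambda\circ\calL_\lambda$, which by $\widetilde{K}$-equivariance must send $\ell_{\bf m}^\lambda$ to a scalar multiple of $\Phi_{\bf m}$; the resulting proportionality factor is controlled by the classical Laplace transform of $\ell_{\bf m}^\lambda$ on $T_\Omega$ (cf.\ Section \ref{sec:KTypeDecompositions}) and provides an independent check of the final scalar.
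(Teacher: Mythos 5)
Your strategy---reduce to a scalar via Schur's lemma, test on the spherical vector $\Phi_{\bf m}$, expand the exponential kernel, and use orthogonality---is essentially the paper's own proof; the paper simply avoids specializing $z$ by writing the expansion in terms of the reproducing kernels $\KK^{\bf n}_\lambda$ and invoking the reproducing property, whereas you would use the explicit norm of $\Phi_{\bf m}$ at the point $z=te$. Either packaging works, and your alternative route through $\BB_\lambda\ell_{\bf m}^\lambda$ is sound as well.

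The genuine gap is precisely where you say ``care is required'' and then decline to take it: the bookkeeping of powers of $2$ \emph{is} the entire content of the statement, and carried out it does not reproduce the printed constant. Following your plan literally, $\langle\Phi_{\bf m},\Phi_{\bf n}\rangle_{\calF_\lambda}=\delta_{\bf mn}\|\Phi_{\bf m}\|^2_{\calF_\lambda}$ together with Proposition~\ref{prop:FockNormCalculations} gives
\begin{align*}
 \calA_\lambda\Phi_{\bf m}(te) &= \Bigl(-\frac{t}{2}\Bigr)^{|{\bf m}|}\frac{d_{\bf m}}{(\frac{n}{r})_{\bf m}}\cdot\frac{4^{|{\bf m}|}(\frac{n}{r})_{\bf m}(\lambda)_{\bf m}}{d_{\bf m}} = (-2t)^{|{\bf m}|}(\lambda)_{\bf m},
\end{align*}
so the scalar is $(-2)^{|{\bf m}|}(\lambda)_{\bf m}$, differing from the printed $\frac{(-1)^{|{\bf m}|}(\lambda)_{\bf m}}{2^{|{\bf m}|}}$ by a factor $4^{|{\bf m}|}$. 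This is not an error in your method but in the stated constant and in the paper's proof of it: the displayed identity $e^{(z|\overline{w})}=\sum_{\bf n}(\lambda)_{\bf n}\KK_\lambda^{\bf n}(z,w)$ should read $e^{\frac14(z|\overline{w})}=\sum_{\bf n}(\lambda)_{\bf n}\KK_\lambda^{\bf n}(z,w)$ (substitute $z\mapsto z/2$, $w\mapsto w/2$ in \eqref{eq:ExponentialTraceExpansion} and compare with Proposition~\ref{prop:RepKernelm}), and the value $(-2)^{|{\bf m}|}(\lambda)_{\bf m}$ is the only one compatible with Remark~\ref{rem:ComparisonNormsFockBdSymDomain} (which pins down $|c_{\bf m}|=2^{|{\bf m}|}(\lambda)_{\bf m}$) and with $\BB_\lambda^{-1}\Phi_{\bf m}=(-2)^{|{\bf m}|}\ell_{\bf m}^\lambda$ from the preceding proposition. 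The moral: do the bookkeeping---it is the proof, and here it overrules the claim.
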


\begin{proof}
Using the expansion \eqref{eq:ExponentialTraceExpansion} and Proposition \ref{prop:RepKernelm} we obtain
\begin{align*}
 e^{(z|\overline{w})} &= \sum_{{\bf n}\geq0}{(\lambda)_{\bf n}\KK_\lambda^{\bf n}(z,w)}.
\end{align*}
Hence for $F\in\calP_{\bf m}(V_\CC)$ we find
\begin{align*}
 \calA_\lambda F(z) &= \sum_{{\bf n}\geq0}{(\lambda)_{\bf n}\langle F,\KK_\lambda^{\bf n}(-,-{\textstyle\frac{1}{2}}z)\rangle_{\calF_\lambda}}\\
 &= (\lambda)_{\bf m}F(-{\textstyle{\frac{1}{2}}z}) = \frac{(-1)^{|{\bf m}|}(\lambda)_{\bf m}}{2^{|{\bf m}|}}F(z).\qedhere
\end{align*}
\end{proof}

\newpage
\section{The unitary inversion operator}\label{sec:UnitaryInversionOperator}

Using the Segal--Bargmann transform we now calculate the integral kernel of the \textit{unitary inversion operator} $\calU_\lambda$. The unitary inversion operator is essentially given by the action $\pi_\lambda(\widetilde{j})$ of the inversion element (see \cite[Section 3.3]{HKM12})
\begin{align*}
 \widetilde{j} &= \exp_{\widetilde{G}}\left(\frac{\pi}{2}(e,0,-e)\right)\in\widetilde{G}.
\end{align*}
More precisely, we put
\begin{align*}
 \calU_\lambda &:= e^{-i\pi\frac{r\lambda}{2}}\pi_\lambda(\widetilde{j}).
\end{align*}
The operator $\calU_\lambda$ is unitary on $L^2(\calO_\lambda,\td\mu_\lambda)$ of order $2$, i.e. $\calU_\lambda^2=\id$ (see \cite[Proposition 3.17~(1)]{HKM12}). We also study Whittaker vectors in the Schr\"odinger model. They can be derived from the explicit expression of the integral kernel of $\calU_\lambda$.

\subsection{The integral kernel of $\calU_\lambda$}

Recall the underlying $(\frakg,\frakk)$-module $W^\lambda=\calP(\calO_\lambda)e^{-\tr(x)}$ of the representation $(\pi_\lambda,L^2(\calO_\lambda,\td\mu_\lambda))$.

\begin{proposition}\label{prop:ConvergenceUnitInvOp}
For every $\psi\in W^\lambda$ the integral
\begin{align*}
 \calT_\lambda\psi(z) &:= 2^{-r\lambda}\int_{\calO_\lambda}{\calJ_\lambda(z,x)\psi(x)\td\mu_\lambda(x)}, & z\in V_\CC,
\end{align*}
converges uniformly on compact subsets of $V_\CC$. This defines a linear operator $\calT_\lambda:W^\lambda\to C(\calO_\lambda)$.
\end{proposition}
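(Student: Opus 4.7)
The proof is a direct convergence argument resting on the sharp growth estimate for the $J$-Bessel function in Lemma~\ref{lem:JBesselConvergence}. Since $W^\lambda=\calP(\calO_\lambda)e^{-\tr(x)}$, any $\psi\in W^\lambda$ has the form $\psi(x)=p(x)e^{-\tr(x)}$ with $p\in\calP(\calO_\lambda)$, and $|p(x)|\leq C_p(1+|x|)^{\deg p}$. For a compact set $K\subseteq V_\CC$ set $R:=\max_{z\in K}|z|$. Lemma~\ref{lem:JBesselConvergence} yields
\begin{equation*}
 |\calJ_\lambda(z,x)|\leq C(1+R|x|)^{r(2n-1)/4}e^{2r\sqrt{R|x|}}
\end{equation*}
uniformly in $z\in K$ and $x\in\calO_\lambda$.

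The key elementary observation is that $\tr(x)\geq|x|$ on $\overline{\Omega}$. Indeed, writing $x=k\sum_{i=1}^{r}a_ic_i$ in polar form with $k\in K^L$ and $a_i\geq0$, one has $\tr(x)=\sum a_i$ and $|x|^2=\sum a_i^2\leq(\sum a_i)^2$. Combining this with the $J$-Bessel estimate and the polynomial bound on $p$, the integrand is uniformly dominated on $K$ by
\begin{equation*}
 C'\,(1+|x|)^N\,e^{2r\sqrt{R|x|}-|x|}
\end{equation*}
for some integer $N$ and a constant $C'=C'(K,\psi)$.

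To confirm that this majorant is $\td\mu_\lambda$-integrable on $\calO_\lambda$, I would invoke the polar integral formula \eqref{eq:IntFormulaOlambda}: in coordinates $x=ke^{\mathbf t}$ with $k\in K^L$ and $t_1>\cdots>t_k$, the weight $J_\lambda({\mathbf t})$ of \eqref{eq:Jlambda} grows at worst like $e^{c\max_i t_i}$, while $|x|\sim e^{t_1}$ for $t_1$ large, so the factor $e^{-|x|}=e^{-e^{t_1}}$ annihilates all polynomial and exponential growth in ${\mathbf t}$. Hence the integral is absolutely convergent, with a bound depending only on $K$.

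Uniform integrability of the dominating function, together with the continuity (indeed holomorphy) of $z\mapsto\calJ_\lambda(z,x)$ at each fixed $x$, then gives uniform convergence of the defining integral on every compact $K\subseteq V_\CC$, so $\calT_\lambda\psi$ is continuous on $V_\CC$ and in particular restricts to an element of $C(\calO_\lambda)$. Linearity is immediate from the definition. The only mildly subtle point is the integrability check, and it hinges entirely on the inequality $\tr(x)\geq|x|$ ensuring that the Gaussian-type decay $e^{-\tr(x)}$ beats the $e^{2r\sqrt{R|x|}}$ growth produced by $\calJ_\lambda(z,\cdot)$.
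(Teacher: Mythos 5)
Your argument follows the paper's proof essentially verbatim: the same split $\psi=pe^{-\tr(x)}$, the same polynomial bound on $p$, the same invocation of Lemma~\ref{lem:JBesselConvergence}, and the same observation that the resulting majorant $(1+R|x|)^M e^{2r\sqrt{R}\sqrt{|x|}-|x|}$ is $\td\mu_\lambda$-integrable. You usefully make explicit the inequality $\tr(x)\geq|x|$ on $\overline{\Omega}$, which the paper uses silently; your closing remark about the polar coordinates is a bit informal (the $\sinh$ factors in $J_\lambda(\mathbf{t})$ grow like $e^{c(t_1-t_k)}$, not merely $e^{c\max_i t_i}$, though the superexponential decay of $e^{-|x|}$ still wins), but the conclusion is correct and matches the paper.
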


\begin{proof}
Let $\psi(x)=p(x)e^{-\tr(x)}\in\calP(\calO_\lambda)e^{-\tr(x)}=W^\lambda$. Then $|p(x)|\leq C_1(1+|x|)^N$ for some constants $C_1,N>0$. Further, by Lemma \ref{lem:JBesselConvergence} we have
\begin{align*}
 |\calJ_\lambda(z,x)| &\leq C_2(1+|z|\cdot|x|)^{\frac{r(2n-1)}{4}}e^{2r\sqrt{|z|\cdot|x|}} & \forall\,x\in\calO_\lambda,z\in V_\CC
\end{align*}
for a constant $C_2>0$. Hence, for $|z|\leq R$, $R\geq1$, we obtain
\begin{align*}
 & \int_{\calO_\lambda}{|\calJ_\lambda(z,x)\psi(x)|\td\mu_\lambda(x)}\\
 \leq{}& C_1C_2\int_{\calO_\lambda}{(1+R|x|)^{N+\frac{r(2n-1)}{4}}e^{2r\sqrt{R}\sqrt{|x|}-|x|}\td\mu_\lambda(x)} < \infty
\end{align*}
and the proof is complete.
\end{proof}

\begin{proposition}
The operator $\calT_\lambda$ is a unitary isomorphism $\calT_\lambda:W^\lambda\to W^\lambda$ with $\calT_\lambda\psi_0=\psi_0$ which intertwines the $\frakg$-action $\td\pi_\lambda$ with the $\frakg$-action $\td\pi_\lambda\circ\Ad(j)$.
\end{proposition}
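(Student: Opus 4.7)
The plan is to verify the three assertions in natural order: first $\calT_\lambda\psi_0 = \psi_0$, then the intertwining with $\Ad(j)$ checked on generators of $\frakg$, and finally stabilization of $W^\lambda$ together with unitarity via Schur's lemma.

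To compute $\calT_\lambda\psi_0$ I would specialize Lemma~\ref{lem:LaplaceTrafoJBessel} at $z = ie\in T_\Omega$, which converts $e^{i(z|x)}$ into $e^{-\tr(x)}$. Using $\Delta(e) = 1$ and $(ie)^{-1} = -ie$ the formula collapses to
\[ \int_{\calO_\lambda} e^{-\tr(x)}\calJ_\lambda(x,w)\,\td\mu_\lambda(x) \;=\; 2^{r\lambda}\,e^{-\tr(\overline w)}, \qquad w\in\overline{\calX_\lambda}. \]
Combining this with the symmetry $\calJ_\lambda(y,x) = \overline{\calJ_\lambda(x,y)}$ of Proposition~\ref{prop:JBesselProperties} yields $\calT_\lambda\psi_0(y) = e^{-\tr(y)}$, whose restriction to $\calO_\lambda$ is $\psi_0$.

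For the intertwining property, since $\frakg = \frakn+\frakl+\overline\frakn$ and $\Ad(j)=\vartheta$, it suffices to check three families of generators. For $X=(u,0,0)\in\frakn$ we have $\vartheta X = (0,0,-u)$, and the required identity $\calT_\lambda\circ i(u|x) = -i(u|\calB_\lambda)\circ\calT_\lambda$ follows directly from the differential equation $(\calB_\lambda)_z\calJ_\lambda(z,x) = -\overline x\,\calJ_\lambda(z,x)$ (Proposition~\ref{prop:DiffEqJBessel}) together with $\overline x = x$ on $\calO_\lambda$. The case $X=(0,0,v)\in\overline\frakn$ is the transpose: formal symmetry of $(v|\calB_\lambda)$ under the integral moves the Bessel operator onto $\calJ_\lambda(z,x)$ in the $x$ variable, where the conjugate equation $(\calB_\lambda)_x\calJ_\lambda(z,x) = -z\,\calJ_\lambda(z,x)$ produces the needed $-i(v|z)$ factor. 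For $X=(0,T,0)\in\frakl$ the $L$-equivariance of Proposition~\ref{prop:JBesselProperties} differentiates to $\partial_{Tz}\calJ_\lambda(z,x) = (\partial_{T^\# x})_x\calJ_\lambda(z,x)$; integration by parts against $\td\mu_\lambda$, with the Lie-derivative correction $\tfrac{r\lambda}{n}\Tr(T^\#)$ arising from $\td\mu_\lambda(gx) = \chi(g)^\lambda\td\mu_\lambda(x)$, then reduces the desired identity to the algebraic statements $(\vartheta T)^\# = -T$ and $\Tr((\vartheta T)^\#) = -\Tr(T^\#)$, both immediate upon decomposing $T = D + L(a)$ with $D\in\frakk^\frakl$ skew and $L(a)\in\frakp^\frakl$ symmetric.

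With intertwining established, $W^\lambda$-stability follows by induction: $W^\lambda$ is generated by $\psi_0$ as a $(\frakg,\frakk)$-module, and $\calT_\lambda\td\pi_\lambda(X_1)\cdots\td\pi_\lambda(X_k)\psi_0 = \td\pi_\lambda(\vartheta X_1)\cdots\td\pi_\lambda(\vartheta X_k)\psi_0\in W^\lambda$. Both $(\td\pi_\lambda,W^\lambda)$ and $(\td\pi_\lambda\circ\Ad(j),W^\lambda)$ are irreducible and infinitesimally unitary with respect to the $L^2$-inner product (since $\vartheta$ preserves the real form $\frakg$), and they are intertwined by the unitary operator $\pi_\lambda(\widetilde j)\in\pi_\lambda(\widetilde K)$ (note $\widetilde j = \exp(-\pi Z_0)\in\widetilde K$). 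Hence $\pi_\lambda(\widetilde j)^{-1}\calT_\lambda$ is a $(\frakg,\widetilde K)$-endomorphism of an irreducible module, hence a scalar by Schur, so $\calT_\lambda = c\,\pi_\lambda(\widetilde j)$; the normalization $\calT_\lambda\psi_0 = \psi_0$ together with $\|\psi_0\|=1$ then forces $|c|=1$ and yields unitarity. The main obstacle I expect is the $\frakl$-step, which couples three ingredients (differentiated equivariance, integration by parts against an equivariant density with non-trivial Radon--Nikodym derivative, and the computation of $\vartheta$ on $\frakl$); the integration by parts has to be justified on the possibly singular boundary orbits $\calO_k$, and this is where the rapid decay of elements of $W^\lambda = \calP(\calO_\lambda)e^{-\tr(x)}$ combined with the polynomial-times-exponential growth bound of Lemma~\ref{lem:JBesselConvergence} has to be used to rule out boundary contributions.
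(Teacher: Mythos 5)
Your proof is correct, but it diverges from the paper's at each of its three stages, and one divergence costs you significant unnecessary work. For the intertwining property you propose to verify $\calT_\lambda\circ\td\pi_\lambda(X)=\td\pi_\lambda(\Ad(j)X)\circ\calT_\lambda$ on all of $\frakn+\frakl+\overline{\frakn}$, and you flag the $\frakl$-step (differentiated $L_\CC$-equivariance of $\calJ_\lambda$, integration by parts against the equivariant measure $\td\mu_\lambda$, computation of $\vartheta$ on $\frakl$) as the main obstacle; the paper avoids this step entirely by observing that $\frakn$ and $\overline{\frakn}$ already generate $\frakg$ as a Lie algebra (a fact it records in Section~\ref{sec:ConformalGroup}), so the two Bessel-equation checks suffice and the $\frakl$-relation is automatic. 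Your computation of $\calT_\lambda\psi_0$ via Lemma~\ref{lem:LaplaceTrafoJBessel} at $z=ie$ is valid, though the paper instead rescales $y\mapsto 2y$ using the $L$-equivariance of $\td\mu_\lambda$ and $\calJ_\lambda(x,2y)=\calI_\lambda(-2x,y)$ to reduce to the previously established identity $\BB_\lambda\psi_0=\1$. Finally, for unitarity the paper argues directly from uniqueness of the invariant Hermitian form on the irreducible infinitesimally unitary $(\frakg,\frakk)$-module $W^\lambda$, whereas you compose with $\pi_\lambda(\widetilde{j})^{-1}$ and apply Schur's lemma, effectively proving $\calT_\lambda=c\,\pi_\lambda(\widetilde{j})$ en route; this is sound (granted that $\widetilde{j}$ projects to $j$ and hence $\Ad(\widetilde{j})=\Ad(j)=\vartheta$), and has the incidental virtue of absorbing most of the proof of the subsequent Theorem~\ref{thm:IntegralKernelInversionOperator}, but it is more machinery than the proposition itself requires.
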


\begin{proof}
By Proposition \ref{prop:DiffEqJBessel} the operator $\calT_\lambda$ intertwines the Bessel operator $\calB_\lambda$ with the coordinate multiplication $-x$ and vice versa. Therefore we obtain
\begin{align}
 \calU_\lambda\circ\td\pi_\lambda(X) &= \td\pi_\lambda(\Ad(j)X)\circ\calU_\lambda\label{eq:IntertwiningFormulaInversionOperator}
\end{align}
for $X\in\frakn\oplus\overline{\frakn}$. Since $\frakn$ and $\overline{\frakn}$ together generate $\frakg$ as a Lie algebra the intertwining formula \eqref{eq:IntertwiningFormulaInversionOperator} holds for all $X\in\frakg$. Further, for $x\in\calO_\lambda\subseteq\calX_\lambda$ we find
\begin{align*}
 \calT_\lambda\psi_0(x) &= 2^{-r\lambda}\int_{\calO_\lambda}{\calJ_\lambda(x,y)e^{-\tr(y)}\td\mu_\lambda(y)}\\
 &= \int_{\calO_\lambda}{\calI_\lambda(-2x,y)e^{-2\tr(y)}\td\mu(y)}\\
 &= e^{-\tr(x)}\BB_\lambda\psi_0(-2x) = e^{-\tr(x)} = \psi_0(x).
\end{align*}
Since $W^\lambda=\td\pi_\lambda(\calU(\frakg))\psi_0$, it follows that $\calT_\lambda W^\lambda\subseteq\td\pi_\lambda(\calU(\frakg))\calT_\lambda\psi_0=W^\lambda$. Now, since invariant Hermitian forms on the irreducible infinitesimally unitary $(\frakg,\frakk)$-module $W^\lambda$ are unique up to a scalar, we find that $\calT_\lambda$ is a unitary isomorphism.
\end{proof}

\begin{theorem}\label{thm:IntegralKernelInversionOperator}
$\calU_\lambda=\calT_\lambda$.
\end{theorem}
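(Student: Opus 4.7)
The plan is to invoke Schur's lemma: I will show that both $\calU_\lambda$ and $\calT_\lambda$ preserve the underlying $(\frakg,\widetilde{K})$-module $W^\lambda$, that both intertwine $\td\pi_\lambda$ with the twisted action $\td\pi_\lambda\circ\Ad(j)$, and that both fix the minimal $\widetilde{K}$-type vector $\psi_0$. Since $W^\lambda$ is irreducible and the twisted action is also irreducible, Schur forces equality on $W^\lambda$, and then density plus boundedness will extend the identity to $L^2$ and in particular to $C_c^\infty(\calO_\lambda)$.

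The intertwining and the equality $\calT_\lambda\psi_0=\psi_0$ for the integral operator $\calT_\lambda$ have already been established in the proposition preceding the theorem. For the corresponding properties of $\calU_\lambda$, first note that conjugation by $\widetilde{j}$ in $\widetilde{G}$ covers conjugation by $j$ in $G$, hence $\Ad(\widetilde{j})=\Ad(j)$ on $\frakg$; differentiating $\pi_\lambda(\widetilde{j})\pi_\lambda(\exp(tX))\pi_\lambda(\widetilde{j})^{-1}=\pi_\lambda(\exp(t\Ad(\widetilde{j})X))$ at $t=0$ gives
\[
 \calU_\lambda\circ\td\pi_\lambda(X) = \td\pi_\lambda(\Ad(j)X)\circ\calU_\lambda,\qquad X\in\frakg.
\]
For $\calU_\lambda\psi_0=\psi_0$, observe that $\widetilde{j}=\exp_{\widetilde{K}}\!\bigl(\tfrac{\pi}{2}(e,0,-e)\bigr)\in\widetilde{K}$, that $\psi_0$ transforms by the character $\xi_\lambda$ under $\widetilde{K}$, and that the differential of $\xi_\lambda$ satisfies
\[
 \td\xi_\lambda\!\bigl(\tfrac{\pi}{2}(e,0,-e)\bigr)=i\lambda\tr\!\bigl(\tfrac{\pi}{2}e\bigr)=\tfrac{i\pi r\lambda}{2},
\]
so that $\pi_\lambda(\widetilde{j})\psi_0=e^{i\pi r\lambda/2}\psi_0$ and therefore $\calU_\lambda\psi_0=e^{-i\pi r\lambda/2}\pi_\lambda(\widetilde{j})\psi_0=\psi_0$.

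Now $\calU_\lambda$ is bounded on $L^2(\calO_\lambda,\td\mu_\lambda)$ and hence preserves the space of $\widetilde{K}$-finite vectors, which is exactly $W^\lambda$. Both $\calU_\lambda$ and $\calT_\lambda$ are thus $(\frakg,\widetilde{K})$-intertwiners between $W^\lambda$ endowed with $\td\pi_\lambda$ and $W^\lambda$ endowed with $\td\pi_\lambda\circ\Ad(j)$; by irreducibility (Schur) they differ by a scalar, and evaluating on $\psi_0$ gives the scalar $1$. Hence $\calU_\lambda\psi=\calT_\lambda\psi$ for all $\psi\in W^\lambda$.

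To conclude for $\psi\in C_c^\infty(\calO_\lambda)$, I will argue by duality. Since $W^\lambda$ is dense in $L^2(\calO_\lambda,\td\mu_\lambda)$ and $\calU_\lambda$ is unitary, the identity extends to an $L^2$-identity between $\calU_\lambda$ and the continuous extension of $\calT_\lambda$; it remains to check that for $\psi\in C_c^\infty(\calO_\lambda)$ this extension agrees with the pointwise integral $x\mapsto 2^{-r\lambda}\int_{\calO_\lambda}\calJ_\lambda(x,y)\psi(y)\,\td\mu_\lambda(y)$. For any $\varphi\in W^\lambda$, self-adjointness of $\calU_\lambda$ (it has order $2$ up to the scalar already absorbed) together with the polynomial-times-exponential decay of $\varphi$ and the growth bound of Lemma \ref{lem:JBesselConvergence} justify Fubini applied to the double integral
\[
 \int_{\calO_\lambda}\int_{\calO_\lambda}\calJ_\lambda(x,y)\psi(y)\overline{\varphi(x)}\,\td\mu_\lambda(y)\,\td\mu_\lambda(x),
\]
which shows $\langle\calU_\lambda\psi,\varphi\rangle$ coincides with $\langle \calT_\lambda\psi,\varphi\rangle$ where $\calT_\lambda\psi$ is the pointwise integral; density of $W^\lambda$ in $L^2$ then yields the claimed pointwise formula. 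The main technical point is this last extension step: everything else is a straightforward Schur's-lemma matching, and the genuine work lies in checking that the $J$-Bessel kernel is tame enough (via the estimate of Lemma \ref{lem:JBesselConvergence}) to permit interchanging the order of integration against compactly supported data and against elements of $W^\lambda$.
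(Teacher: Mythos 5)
Your argument is essentially the paper's argument: Schur's Lemma applied to the intertwining property, then normalization at $\psi_0$; the extra steps you supply (the explicit computation $\calU_\lambda\psi_0 = \psi_0$ via the character $\xi_\lambda$, and the Fubini step for $C_c^\infty$ data) are correct and simply make explicit what the paper leaves to the reader. One small slip: boundedness of $\calU_\lambda$ on $L^2$ does not by itself force it to preserve $\widetilde{K}$-finite vectors; the correct reason is the one you already noted, namely $\widetilde{j}\in\widetilde{K}$ (indeed $\widetilde{j}\in Z(\widetilde{K})$ since $(e,0,-e)\in Z(\frakk)$), so $\pi_\lambda(\widetilde{j})$ acts directly on the $(\frakg,\widetilde{K})$-module $W^\lambda$ and commutes with all $\widetilde{K}$-types.
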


\begin{proof}
By the previous proposition $\calU_\lambda\circ\calT_\lambda^{-1}$ extends to a unitary isomorphism $L^2(\calO_\lambda,\td\mu_\lambda)\to L^2(\calO_\lambda,\td\mu_\lambda)$ which commutes with the $\frakg$-action $\td\pi_\lambda$. Therefore, by Schur's Lemma, $\calU_\lambda$ is a scalar multiple of $\calT_\lambda$. Since $\calU_\lambda\psi_0=\psi_0=\calT_\lambda\psi_0$ this gives the claim.
\end{proof}

\begin{remark}
The group $G$ is generated by the conformal inversion $j$ and the maximal parabolic subgroup $P:=L^\circ\ltimes N$, where
\begin{align*}
 L^\circ &= G\cap\Str(V).
\end{align*}
Write $\widetilde{P}$ for the preimage of $P$ under the covering map $\widetilde{G}\to G$. The restriction of $\pi_\lambda$ to $\widetilde{P}$ factors to $P$ and is in the Schr\"odinger model quite simple. In fact
\begin{align*}
 \pi_\lambda(n_a)\psi(x) &= e^{i(x|a)}\psi(x), & n_a\in N,\\
 \pi_\lambda(g)\psi(x) &= \chi(g^\#)^{\frac{\lambda}{2}}\psi(g^\# x), & g\in L^\circ,
\end{align*}
and by Mackey theory this representation is even irreducible on $L^2(\calO_\lambda,\td\mu_\lambda)$. Therefore, together with the action of $\widetilde{j}$ in the Schr\"odinger model (see Theorem \ref{thm:IntegralKernelInversionOperator}) this describes the complete group action $\pi_\lambda$.\\
For the case of the rank $1$ orbit $\calO_1$ Theorem \ref{thm:IntegralKernelInversionOperator} was shown in \cite[Theorem 4.3]{HKMO12}. Earlier contributions to special cases are \cite{KM07a} for the case $\frakg=\so(2,k)$ and $\lambda$ the minimal discrete Wallach point and \cite{Kos00} for the case $\frakg=\sl(2,\RR)$ and arbitrary $\lambda\in\calW$.
\end{remark}

\begin{remark}
Since the functions $x\mapsto\calJ_\lambda(x,y)$, $y\in\calO_\lambda$, are eigenfunctions of the Bessel operator, the inversion formula for $\calU_\lambda$ gives an expansion of any function $\psi\in L^2(\calO_\lambda,\td\mu_\lambda)$ into eigenfunctions of the Bessel operator:
\begin{align*}
 \psi(x) &= 2^{-r\lambda}\int_{\calO_\lambda}{\calJ_\lambda(x,y)\,\calU_\lambda\psi(y)\td\mu_\lambda(y)}, & \psi\in L^2(\calO_\lambda,\td\mu_\lambda).
\end{align*}
\end{remark}

Compared to the action of $\widetilde{j}$ in the Schr\"odinger model its action in the Fock model is extremely simple. Define $(-1)^*$ on $\calF_\lambda$ by $(-1)^*F(z)=F(-z)$.

\begin{proposition}\label{prop:InversionOnFockSpace}
$\BB_\lambda\circ\calU_\lambda = (-1)^*\circ\BB_\lambda.$
\end{proposition}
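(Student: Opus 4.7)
The plan is to exploit the intertwining property of the Segal--Bargmann transform together with a direct computation of the action of $\widetilde{j}$ in the Fock model. By Theorem~\ref{thm:BargmannIsomorphism}, $\BB_\lambda$ intertwines $\pi_\lambda$ and $\rho_\lambda$ at the group level, so it suffices to identify $\rho_\lambda(\widetilde{j})$ explicitly on each $\widetilde{K}$-type $\calP_{\bf m}(V_\CC)$ and show that
\[
 e^{-i\pi\frac{r\lambda}{2}}\rho_\lambda(\widetilde{j}) = (-1)^*.
\]

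First I would compute the image of the generator $(e,0,-e)\in\frakk$ under the Cayley type transform. By \eqref{eq:CayleyTypeTransform1} and \eqref{eq:CayleyTypeTransform3} one obtains $C(e,0,-e)=(0,2iL(e),0)=(0,2i\,\id,0)\in\frakl_\CC$, which is consistent with $\td\eta(e,0,-e)=2iL(e)$. Consequently,
\[
 \td\rho_\lambda(e,0,-e) = \td\pi_\lambda^\CC(0,2i\,\id,0) = 2i\,\partial_z + ir\lambda,
\]
using the formula for the $\frakl$-action in the complexified Schr\"odinger model and the fact that $\Tr(\id)=n$ and $\frac{r\lambda}{2n}\cdot 2i\cdot n=ir\lambda$. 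On a homogeneous polynomial $F\in\calP_{\bf m}(V_\CC)$, which has degree $|{\bf m}|$, the Euler operator $\partial_z$ acts by $|{\bf m}|$, so $\td\rho_\lambda(e,0,-e)$ acts by the scalar $i(2|{\bf m}|+r\lambda)$.

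Since $\widetilde{j}=\exp_{\widetilde{G}}(\tfrac{\pi}{2}(e,0,-e))$ and $\calP_{\bf m}(V_\CC)$ is a single $\widetilde{K}$-type, the $\widetilde{K}$-action integrates the Lie algebra action and we obtain
\[
 \rho_\lambda(\widetilde{j})|_{\calP_{\bf m}(V_\CC)} = \exp\!\left(\tfrac{\pi}{2}\cdot i(2|{\bf m}|+r\lambda)\right) = (-1)^{|{\bf m}|}e^{i\pi\frac{r\lambda}{2}}.
\]
Therefore $e^{-i\pi\frac{r\lambda}{2}}\rho_\lambda(\widetilde{j})$ acts on $\calP_{\bf m}(V_\CC)$ by $(-1)^{|{\bf m}|}$. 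Noting that $(-1)^*F(z)=F(-z)$ also acts on the homogeneous space $\calP_{\bf m}(V_\CC)$ by $(-1)^{|{\bf m}|}$, the two operators agree on every $\widetilde{K}$-type and hence on the dense subspace $\calP(\calX_\lambda)\subseteq\calF_\lambda$. By continuity they agree on all of $\calF_\lambda$. Combining with $\BB_\lambda\circ\pi_\lambda(\widetilde{j})=\rho_\lambda(\widetilde{j})\circ\BB_\lambda$ yields $\BB_\lambda\circ\calU_\lambda=(-1)^*\circ\BB_\lambda$.

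There is no serious obstacle here; the only subtle point is justifying that the infinitesimal computation of $\rho_\lambda(\widetilde{j})$ correctly integrates to the group action on the universal cover. This is handled by Theorem~\ref{thm:UnitaryRepOnFock}, together with the observation that $(e,0,-e)$ spans the center $Z(\frakk)$ and hence acts as a scalar on each irreducible $\widetilde{K}$-type $\calP_{\bf m}(V_\CC)$, so the exponential is literally the scalar exponential.
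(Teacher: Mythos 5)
Your proof is correct and takes essentially the same route as the paper: compute $C(e,0,-e)=(0,2i\,\id,0)$, hence $\td\rho_\lambda(e,0,-e)=2i\partial_z+ir\lambda$, and integrate this infinitesimal action of the center of $\frakk$ to identify $\rho_\lambda(\widetilde{j})$ as $e^{i\pi\frac{r\lambda}{2}}(-1)^*$. The paper integrates the vector field directly to $F\mapsto e^{ir\lambda t}F(e^{2it}z)$, while you verify the same scalar degree-by-degree on the $\widetilde{K}$-types $\calP_{\bf m}(V_\CC)$; these are equivalent presentations of the same computation.
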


\begin{proof}
We have $\td\rho_\lambda(t(e,0,-e))=\td\pi_\lambda^\CC(2it(0,\1,0))=2it(\partial_z+\frac{r\lambda}{2})$. Therefore, we obtain
\begin{align*}
 \rho_\lambda(e^{t(e,0,-e)})F(z) &= e^{ir\lambda t}F(e^{2it}z).
\end{align*}
For $t=\frac{\pi}{2}$ we obtain the action of $\widetilde{j}$ which is given by $e^{i\pi\frac{r\lambda}{2}}(-1)^*$.
\end{proof}

This now allows us to compute the action of the unitary inversion operator on the Laguerre functions $\ell_{\bf m}^\lambda$ introduced in Section \ref{sec:Polynomials}.

\begin{proposition}
$$ \calU_\lambda\ell_{\bf m}^\lambda = (-1)^{|{\bf m}|}\ell_{\bf m}^\lambda. $$
\end{proposition}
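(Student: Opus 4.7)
The plan is to transport the question to the Fock model via the Segal--Bargmann transform, where the action of $\calU_\lambda$ becomes trivial to compute.

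First I would recall the two key ingredients already established in the paper: the image of the Laguerre function under the Segal--Bargmann transform,
\begin{align*}
 \BB_\lambda \ell_{\bf m}^\lambda = \frac{(-1)^{|{\bf m}|}}{2^{|{\bf m}|}} \Phi_{\bf m},
\end{align*}
and Proposition \ref{prop:InversionOnFockSpace}, which states that $\BB_\lambda \circ \calU_\lambda = (-1)^* \circ \BB_\lambda$, where $(-1)^*F(z) = F(-z)$.

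Next I would use the fact that $\Phi_{\bf m} \in \calP_{\bf m}(V_\CC)$ is a homogeneous polynomial of degree $|{\bf m}|$, so $\Phi_{\bf m}(-z) = (-1)^{|{\bf m}|} \Phi_{\bf m}(z)$. Combining these observations gives
\begin{align*}
 \BB_\lambda \left(\calU_\lambda \ell_{\bf m}^\lambda\right)
 = (-1)^* \left(\frac{(-1)^{|{\bf m}|}}{2^{|{\bf m}|}} \Phi_{\bf m}\right)
 = \frac{1}{2^{|{\bf m}|}} \Phi_{\bf m}
 = \BB_\lambda\left((-1)^{|{\bf m}|} \ell_{\bf m}^\lambda\right).
\end{align*}
Since $\BB_\lambda$ is a unitary isomorphism by Theorem \ref{thm:BargmannIsomorphism}, and in particular injective, the claimed identity follows.

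There is no genuine obstacle: once Proposition \ref{prop:InversionOnFockSpace} and the Segal--Bargmann image of $\ell_{\bf m}^\lambda$ are in hand, the statement is a one-line homogeneity argument. The conceptual content of the result is really that the eigenspaces of $\calU_\lambda$ in the Schr\"odinger model correspond, via $\BB_\lambda$, to the decomposition of $\calF_\lambda$ into even and odd polynomials.
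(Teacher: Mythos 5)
Your proof is correct and is essentially the same as the paper's: both transport the problem to the Fock side via $\BB_\lambda$, apply $\BB_\lambda\circ\calU_\lambda=(-1)^*\circ\BB_\lambda$, and then use that $\Phi_{\bf m}$ is homogeneous of degree $|{\bf m}|$. The only cosmetic difference is that you carry along the explicit constant $\tfrac{(-1)^{|{\bf m}|}}{2^{|{\bf m}|}}$, whereas the paper just invokes $\BB_\lambda\ell_{\bf m}^\lambda=\const\cdot\Phi_{\bf m}$; both versions work.
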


\begin{proof}
Since $\BB_\lambda\ell_{\bf m}^\lambda=\const\cdot\Phi_{\bf m}$ and $\Phi_{\bf m}(-z)=(-1)^{|{\bf m}|}\Phi_{\bf m}(z)$ the claim follows with Proposition \ref{prop:InversionOnFockSpace}.
\end{proof}

\subsection{Whittaker vectors}\label{sec:WhittakerVectors}

The integral kernel of the unitary inversion operator provides us with explicit Whittaker vectors for the representations $\pi_\lambda$ on $L^2(\calO_\lambda,\td\mu_\lambda)$ and hence with explicit embeddings into Whittaker models. For this let $L^2(\calO_\lambda,\td\mu_\lambda)^\infty$ denote the space of smooth vectors of the representation $\pi_\lambda$ endowed with the usual Frechet topology. Since $\pi_\lambda$ leaves $L^2(\calO_\lambda,\td\mu_\lambda)^\infty$ invariant the representation extends to the space $L^2(\calO_\lambda,\td\mu_\lambda)^{-\infty}=(L^2(\calO_\lambda,\td\mu_\lambda)^\infty)'$ of distribution vectors by
\begin{align*}
 \langle\pi_\lambda(g)u,\varphi\rangle &:= \langle u,\overline{\pi_\lambda(g^{-1})}\varphi\rangle, & g\in\widetilde{G},
\end{align*}
for $u\in L^2(\calO_\lambda,\td\mu_\lambda)^{-\infty}$ and $\varphi\in L^2(\calO_\lambda,\td\mu_\lambda)^\infty$. Here we use the complex conjugate $\overline{U}$ of an operator $U$ on $L^2(\calO_\lambda,\td\mu_\lambda)^\infty$ which is defined by $\overline{U}\varphi:=\overline{U\overline{\varphi}}$ for $\varphi\in L^2(\calO_\lambda,\td\mu_\lambda)^\infty$. Further recall that $W^\lambda=\calP(\calO_\lambda)e^{-\tr(x)}\subseteq L^2(\calO_\lambda,\td\mu_\lambda)$ is the underlying $(\frakg,\frakk)$-module of $\pi_\lambda$. It does not carry a representation of the group $\widetilde{G}$, but the representation $\td\pi_\lambda$ of the Lie algebra $\frakg$ restricts to $W^\lambda$. By duality $\td\pi_\lambda$ extends to the algebraic dual $(W^\lambda)^*$:
\begin{align*}
 \langle\td\pi_\lambda(X)u,\varphi\rangle &:= -\langle u,\overline{\td\pi_\lambda(X)}\varphi\rangle, & X\in\frakg,
\end{align*}
for $u\in(W^\lambda)^*$ and $\varphi\in W^\lambda$. We have the following inclusions:
\begin{align*}
 W^\lambda \subseteq L^2(\calO_\lambda,\td\mu_\lambda)^\infty \subseteq L^2(\calO_\lambda,\td\mu_\lambda) \subseteq L^2(\calO_\lambda,\td\mu_\lambda)^{-\infty} \subseteq (W^\lambda)^*.
\end{align*}
Note that the unitary inversion operator $\calU_\lambda$ leaves each of these spaces invariant.

\begin{definition}
A distribution $u\in (W^\lambda)^*$ is an \textit{algebraic $\frakn$-Whittaker vector of weight $\eta\in\frakn_\CC^*$} (resp. an \textit{algebraic $\overline{\frakn}$-Whittaker vector of weight $\eta\in\overline{\frakn}_\CC^*$}) if
\begin{align*}
 \td\pi_\lambda(X)u &= \eta(X)u, & \mbox{for all $X\in\frakn$ (resp. $X\in\overline{\frakn}$).}
\end{align*}
If moreover $u\in L^2(\calO_\lambda,\td\mu_\lambda)^{-\infty}$ we call $u$ a \textit{smooth Whittaker vector}.
\end{definition}

For $z\in V_\CC$ let $\eta_z\in\frakn_\CC^*$ and $\overline{\eta}_z\in\overline{\frakn}_\CC^*$ be defined by
\begin{align*}
 \eta_z(u,0,0) &:= i(u|z), & u\in\frakn_\CC,\\
 \overline{\eta}_z(0,0,v) &:= -i(v|z), & v\in\overline{\frakn}_\CC.
\end{align*}

Now fix $\lambda\in\calW$. In \cite[Theorem 1]{Mat87} it is shown that there can only be non-trivial algebraic $\frakn$-Whittaker vectors (resp. $\overline{\frakn}$-Whittaker vectors) of weight $\eta\in\frakn_\CC^*$ (resp. $\eta\in\overline{\frakn}_\CC^*$) for $\td\pi_\lambda$ if $\eta$ is contained in the associated variety of $\td\pi_\lambda$ in $\frakg_\CC^*$. The intersection of the associated variety of $\td\pi_\lambda$ with $\frakn_\CC^*$ (resp. $\overline{\frakn}_\CC^*$) is equal to $\overline{\calX_\lambda}$ (after identifying $\frakn_\CC^*$ and $\overline{\frakn}_\CC^*$ with $V_\CC$). Hence the existence of a non-trivial algebraic $\frakn$-Whittaker vector (resp. $\overline{\frakn}$-Whittaker vector) of weight $\eta_z$ (resp. $\overline{\eta}_z$) implies $z\in\overline{\calX_\lambda}$. We prove the converse of this statement:

\begin{proposition}\label{prop:AlgebraicWhittakerVectors}
Let $z\in\overline{\calX_\lambda}$.
\begin{enumerate}
\item The Dirac delta distribution $\delta_{\lambda,z}$ at $z$ is contained in $(W^\lambda)^*$ and defines an algebraic $\frakn$-Whittaker vector of weight $\eta_z$.
\item The distribution
\begin{align*}
 \phi_{\lambda,z}(x) &:= \calJ_\lambda(z,x)\td\mu_\lambda(x), & x\in\calO_\lambda,
\end{align*}
is contained in $(W^\lambda)^*$ and defines an algebraic $\overline{\frakn}$-Whittaker vector of weight $\overline{\eta}_z$.
\item The algebraic Whittaker vectors $\delta_{\lambda,z}$ and $\phi_{\lambda,z}$ are related by
\begin{align*}
 \phi_{\lambda,z} &= 2^{r\lambda}\calU_\lambda\delta_{\lambda,z}.
\end{align*}
\end{enumerate}
\end{proposition}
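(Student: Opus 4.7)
The plan is to treat the three items in order, with each relying on the infrastructure already in place. For part (1), I would first observe that $\delta_{\lambda,z}$ is well-defined on $W^\lambda$: by Corollary \ref{cor:HuaKostantSchmid}, every $p\in\calP(\calO_\lambda)$ extends uniquely to a holomorphic polynomial on $\calX_\lambda$ (and continuously to $\overline{\calX_\lambda}$), so if $\varphi = p(x)e^{-\tr(x)} \in W^\lambda$ then $\widetilde\varphi(z) := p(z)e^{-\tr(z)}$ makes sense for $z\in\overline{\calX_\lambda}$. The Whittaker identity is then a one-line unwinding of definitions: since $\td\pi_\lambda(u,0,0)$ is multiplication by $i(u\mid x)$ with real coefficients, $\overline{\td\pi_\lambda(u,0,0)}\varphi = -i(u\mid x)\varphi$, and the sign from the dual-action convention cancels, giving $\td\pi_\lambda(u,0,0)\delta_{\lambda,z} = i(u\mid z)\delta_{\lambda,z} = \eta_z(u,0,0)\delta_{\lambda,z}$.

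For part (2), convergence of $\phi_{\lambda,z}(\varphi)$ for $\varphi\in W^\lambda$ follows by essentially copying the proof of Proposition \ref{prop:ConvergenceUnitInvOp}: the polynomial bound on $p$, together with the estimate from Lemma \ref{lem:JBesselConvergence} and the Gaussian factor $e^{-\tr(x)}$, produces an integrable majorant. For the Whittaker identity I would expand the dual pairing, use the symmetry of $\calB_\lambda$ on $L^2(\calO_\lambda, \td\mu_\lambda)$ to move $(v\mid\calB_\lambda)$ from $\varphi$ onto $\calJ_\lambda(z,\cdot)$, and then invoke the eigenfunction equation of Proposition \ref{prop:DiffEqJBessel}. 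Since that equation acts on the first variable, the essential trick is to combine the conjugation symmetry $\calJ_\lambda(z,x)=\overline{\calJ_\lambda(x,z)}$ (Proposition \ref{prop:JBesselProperties}(1)) with the realness of the coefficients of $\calB_\lambda$ to obtain the $V_\CC$-valued identity $(\calB_\lambda)_x\calJ_\lambda(z,x) = -z\,\calJ_\lambda(z,x)$. Pairing against $v\in V$ then produces the eigenvalue $-i(v\mid z) = \overline{\eta}_z(0,0,v)$.

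For part (3), the cleanest route is to compute $\calU_\lambda\delta_{\lambda,z}$ directly rather than appealing to uniqueness of Whittaker vectors. Since $\widetilde j \in \widetilde K$, the operator $\calU_\lambda$ preserves $W^\lambda$, so for $\varphi \in W^\lambda$ the function $\calU_\lambda\varphi$ has a well-defined extension to $\overline{\calX_\lambda}$. Two observations then unlock the computation: first, the reality of $\calJ_\lambda(x,y)$ on $\calO_\lambda\times\calO_\lambda$ (visible from its power series together with Lemma \ref{lem:PropertiesPhimDoubleVariable}(3) specialised to real arguments, giving $\Phi_{\bf m}(x,y)=\Phi_{\bf m}(y,x)\in\RR$) shows $\overline{\calU_\lambda}=\calU_\lambda$; second, the relation $\calU_\lambda^2=\mathrm{id}$ yields $\pi_\lambda(\widetilde j)^{-1} = e^{-i\pi r\lambda}\pi_\lambda(\widetilde j)$. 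Inserting both into $\langle\calU_\lambda\delta_{\lambda,z},\varphi\rangle = e^{-i\pi r\lambda/2}\langle\delta_{\lambda,z},\overline{\pi_\lambda(\widetilde j^{-1})}\varphi\rangle$ collapses all phase factors to give $\langle\calU_\lambda\delta_{\lambda,z},\varphi\rangle = \langle\delta_{\lambda,z},\calU_\lambda\varphi\rangle$. It remains to identify the holomorphic extension of $\calU_\lambda\varphi\in W^\lambda$ to $\overline{\calX_\lambda}$ with the entire function $z\mapsto 2^{-r\lambda}\int_{\calO_\lambda}\calJ_\lambda(z,y)\varphi(y)\td\mu_\lambda(y)$; both are holomorphic on $\calX_\lambda$ (the latter by a Proposition \ref{prop:ConvergenceUnitInvOp}-type argument) and coincide on the totally real, full-dimensional submanifold $\calO_\lambda$. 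Evaluating at $z$ delivers $\phi_{\lambda,z} = 2^{r\lambda}\calU_\lambda\delta_{\lambda,z}$. The main obstacle I anticipate is precisely this sign- and phase-tracking step in (3): getting the two $e^{\pm i\pi r\lambda/2}$ factors to cancel requires the reality of $\calJ_\lambda$ on $\calO_\lambda\times\calO_\lambda$ and a careful bookkeeping between the conventions for $\overline{U}$ and for the unitary inversion.
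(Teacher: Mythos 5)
Your proposal is correct and follows essentially the same route as the paper: the well-definedness of $\delta_{\lambda,z}$ via $\calP(\calO_\lambda)\cong\CC[\overline{\calX_\lambda}]$, the convergence argument from Proposition~\ref{prop:ConvergenceUnitInvOp}, the symmetry/eigenfunction step via Proposition~\ref{prop:DiffEqJBessel}, and the identification $\langle\delta_{\lambda,z},\calU_\lambda\varphi\rangle=\langle\calU_\lambda\delta_{\lambda,z},\varphi\rangle$ via $\overline{\calU_\lambda}=\calU_\lambda$. You have usefully filled in two steps the paper leaves implicit: the transfer of the differential equation to the second variable of $\calJ_\lambda$ using conjugation symmetry and realness of $\calB_\lambda^V$, and the phase bookkeeping (the two factors $e^{\mp i\pi r\lambda/2}$ from $\calU_\lambda=e^{-i\pi r\lambda/2}\pi_\lambda(\widetilde j)$ and from $\overline{\pi_\lambda(\widetilde j^{-1})}$) that collapses to give the symmetry of $\calU_\lambda$ for the dual pairing.
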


\begin{proof}
\begin{enumerate}
\item Since $W^\lambda=\calP(\calO_\lambda)e^{-\tr(x)}$ and $\calP(\calO_\lambda)\cong\CC[\overline{\calX_\lambda}]$ the point evaluation of every $\varphi\in W^\lambda$ at $z\in\overline{\calX_\lambda}$ is well-defined. Now $\td\pi_\lambda(u,0,0)=i(u|x)$ and the Whittaker property follows.
\item By Proposition \ref{prop:ConvergenceUnitInvOp} the function $\phi_{\lambda,z}$ belongs to $(W^\lambda)^*$. Since $\td\pi_\lambda(0,0,v)=i(v|\calB_\lambda)$ the Whittaker property follows from Proposition \ref{prop:DiffEqJBessel}.
\item By Theorem \ref{thm:IntegralKernelInversionOperator} we have for $\varphi\in W^\lambda$
\begin{align*}
 \langle\phi_{\lambda,z},\varphi\rangle &= \int_{\calO_\lambda}{\calJ_\lambda(z,x)\varphi(x)\td\mu_\lambda(x)} = 2^{r\lambda}\calU_\lambda\varphi(z)\\
 &= 2^{r\lambda}\langle\delta_{\lambda,z},\calU_\lambda\varphi\rangle = 2^{r\lambda}\langle\calU_\lambda\delta_{\lambda,z},\varphi\rangle
\end{align*}
since $\overline{\calU_\lambda}=\calU_\lambda$.\qedhere
\end{enumerate}
\end{proof}

An interesting question is to determine the set of smooth Whittaker vectors for $\pi_\lambda$. We find that the algebraic $\frakn$-Whittaker vector $\delta_{\lambda,z}$ (resp. the algebraic $\overline{\frakn}$-Whittaker vector $\phi_{\lambda,z}$) is smooth if $z\in\overline{\calO_\lambda}$. For this we first prove estimates for $\delta_{\lambda,z}$ and $\phi_{\lambda,z}$ using the estimate for $\calJ_\lambda(z,w)$ by Nakahama~\cite{Nak12} stated in Proposition~\ref{prop:JBesselSharpEstimate}.

\begin{lemma}
For each $\lambda\in\calW$ and $x\in\overline{\calO_\lambda}$ there exist elements $X,Y\in\calU(\frakg)$ such that for all $\varphi\in W^\lambda$ we have
\begin{align}
 |\langle\delta_{\lambda,x},\varphi\rangle| &\leq \|\td\pi_\lambda(X)\varphi\|_{L^2(\calO_\lambda,\td\mu_\lambda)},\label{eq:WhittakerEstimateDelta}\\
 |\langle\phi_{\lambda,x},\varphi\rangle| &\leq \|\td\pi_\lambda(Y)\varphi\|_{L^2(\calO_\lambda,\td\mu_\lambda)}.\label{eq:WhittakerEstimatePhi}
\end{align}
\end{lemma}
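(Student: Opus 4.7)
My plan has two parts: first I would reduce \eqref{eq:WhittakerEstimatePhi} to \eqref{eq:WhittakerEstimateDelta} via the unitary inversion operator, and then prove \eqref{eq:WhittakerEstimateDelta} by transporting to the Fock picture via the Segal--Bargmann transform, where point evaluation is governed by the reproducing kernel.

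For the reduction, Proposition~\ref{prop:AlgebraicWhittakerVectors}(3) gives $\phi_{\lambda,x} = 2^{r\lambda}\calU_\lambda\delta_{\lambda,x}$. Since $\calU_\lambda = e^{-i\pi r\lambda/2}\pi_\lambda(\widetilde{j})$ is unitary and satisfies $\overline{\calU_\lambda} = \calU_\lambda$ (its integral kernel $\calJ_\lambda$ from Theorem~\ref{thm:IntegralKernelInversionOperator} being real on $\calO_\lambda\times\calO_\lambda$), and $\calU_\lambda\psi_0 = \psi_0$ together with $W^\lambda = \td\pi_\lambda(\calU(\frakg))\psi_0$ shows that $\calU_\lambda$ preserves $W^\lambda$, the duality pairing yields $\langle\phi_{\lambda,x},\varphi\rangle = c\,\langle\delta_{\lambda,x},\calU_\lambda\varphi\rangle$ for an explicit scalar $c \in \CC$ of modulus $2^{r\lambda}$. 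If \eqref{eq:WhittakerEstimateDelta} holds with $X \in \calU(\frakg)$, applying it to $\calU_\lambda\varphi$ and using the intertwining $\td\pi_\lambda(X)\calU_\lambda = \calU_\lambda\,\td\pi_\lambda(\Ad(\widetilde{j}^{-1})X)$ together with unitarity of $\calU_\lambda$ on $L^2$ implies \eqref{eq:WhittakerEstimatePhi} with $Y := c\cdot\Ad(\widetilde{j}^{-1})X \in \calU(\frakg)$.

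For \eqref{eq:WhittakerEstimateDelta} itself, set $F := \BB_\lambda\varphi$; by Proposition~\ref{prop:BargmannIntertwiningProperty} and Theorem~\ref{thm:BargmannIsomorphism}, $\|\td\pi_\lambda(Z)\varphi\|_{L^2} = \|\td\rho_\lambda(Z)F\|_{\calF_\lambda}$ for every $Z \in \calU(\frakg)$. The inverse Segal--Bargmann formula from Corollary~\ref{cor:InverseBargmann} expresses $\varphi(x)$ as an integral of $F$ against the kernel $\calI_\lambda(x,z)e^{-\tr(\overline{z})/2}$; inserting a holomorphic polynomial weight $h$ on $\calX_\lambda$ and applying Cauchy--Schwarz bounds $|\varphi(x)|$ by
\[
\tfrac{e^{-\tr(x)}}{c_\lambda}\,A_h(x)\,\|hF\|_{L^2(\omega_\lambda\td\nu_\lambda)},\qquad A_h(x)^2 := \int_{\calX_\lambda}\frac{|\calI_\lambda(x,z)|^2\,e^{-\Re\tr(z)}}{|h(z)|^2}\omega_\lambda(z)\,\td\nu_\lambda(z).
\]
The key structural input from Section~\ref{sec:ActionOnFockSpace} is that multiplication by a holomorphic polynomial on $\calX_\lambda$ is realised on $\calF_\lambda$ by $\td\rho_\lambda(X_h)$ for some $X_h \in \calU(\frakp^-) \subseteq \calU(\frakg)$ (since $\td\rho_\lambda(u,-2iL(u),u) = i(u|z)$), so the second factor equals $c_\lambda^{1/2}\|\td\pi_\lambda(X_h)\varphi\|_{L^2}$, of the form required in \eqref{eq:WhittakerEstimateDelta}.

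The main obstacle is producing a polynomial $h$ for which $A_h(x)$ is finite. Lemma~\ref{lem:JBesselConvergence} gives $|\calI_\lambda(x,z)|^2 \lesssim C(x)(1+|z|)^N e^{4r\sqrt{|x||z|}}$; the factor $e^{-\Re\tr(z)}$ grows at most exponentially in $|z|$; and $\omega_\lambda(z)$ decays exponentially on $\calX_\lambda$ by the reduction in Proposition~\ref{prop:KBesselOnLowerRankOrbits} together with the classical asymptotics $\widetilde{K}_\nu(t) \sim c\,t^{-\nu-1/2}e^{-t}$. The residual subtlety is that a single holomorphic polynomial $h$ may vanish on the complex variety $\calX_\lambda$, preventing a uniform lower bound for $|h|$; this is handled by covering $\calX_\lambda$ by finitely many open sets on each of which some polynomial $h_i$ is bounded below, applying Cauchy--Schwarz on each, and combining the bounds into a single estimate of the desired form using the Fr\'echet-space fact that finitely many seminorms $\|\td\pi_\lambda(X_i)\cdot\|_{L^2}$ are dominated by $\|\td\pi_\lambda(D^k)\cdot\|_{L^2}$ for sufficiently high powers of an elliptic element $D \in \calU(\frakl) \subseteq \calU(\frakg)$. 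The extension from interior points $x \in \calO_\lambda$ to boundary points $x \in \overline{\calO_\lambda}$ is automatic from the continuity of both sides in $x$.
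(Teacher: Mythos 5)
The global structure of your argument matches the paper's --- prove one of the two estimates directly and then obtain the other by conjugating with the unitary inversion operator via Proposition~\ref{prop:AlgebraicWhittakerVectors}~(3), the intertwining relation $\td\pi_\lambda(X)\calU_\lambda=\calU_\lambda\,\td\pi_\lambda(\Ad(\widetilde{j}^{-1})X)$, and unitarity of $\calU_\lambda$ --- except that you run the reduction in the opposite direction (from $\delta$ to $\phi$ rather than from $\phi$ to $\delta$). That step is fine. The difference is in the base case, and there your proposal has a real gap.

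The paper never leaves the Schr\"odinger model. It proves the estimate for $\phi_{\lambda,x}$ directly: Nakahama's sharp bound (Proposition~\ref{prop:JBesselSharpEstimate}), specialized to $x\in\overline{\calO_\lambda}$, gives $|\calJ_\lambda(x,y)|\leq C_1(1+|y|^2)^k$ uniformly in $y\in\overline{\calO_\lambda}$ --- a clean \emph{polynomial} bound because both arguments are in the real orbit, where $|\Im z|_1=0$. After that, H\"older against $(1+|y|^2)^{k-N}\in L^2(\calO_\lambda,\td\mu_\lambda)$ and the observation that $\td\pi_\lambda(\calU(\frakg))$ contains all polynomial multipliers finishes the proof in two lines. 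Your proposal instead transports the problem into the Fock model via $\BB_\lambda^{-1}$, and then needs to control
\[ A_h(x)^2=\int_{\calX_\lambda}\frac{|\calI_\lambda(x,z)|^2\,e^{-\Re\tr(z)}}{|h(z)|^2}\,\omega_\lambda(z)\,\td\nu_\lambda(z) \]
over the \emph{complex} orbit. There the polynomial bound is simply not available: for $z\in\calX_\lambda$, $\calI_\lambda(x,z)$ genuinely grows like $e^{c\sqrt{|z|}}$ in some directions, and Lemma~\ref{lem:JBesselConvergence} --- which is all you invoke --- only gives this sub-exponential upper bound. You then need the product $e^{-\Re\tr(z)}\omega_\lambda(z)$ to supply enough decay, but these two factors are borderline balanced: along the worst ray ($z=ua$ with $\Re\tr(ua)\sim -|a|_1$) the growth $e^{|a|_1}$ exactly cancels the decay $\calK_\lambda((a/2)^2)\sim\text{poly}\cdot e^{-|a|_1}$, leaving only polynomial behavior to fight the sub-exponential $\calI_\lambda$ term via a polynomial $|h|^{-2}$; and along the best ray ($\Re\tr(ua)\sim|a|_1$) the $\calI_\lambda$ growth is maximal. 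Making this converge requires a direction-dependent interpolation you have not carried out --- you explicitly call $A_h<\infty$ ``the main obstacle'' but the resolution you sketch (polynomial $h$, cover of $\calX_\lambda$, elliptic domination) addresses only the secondary issue of zeros of $h$, not the exponential competition. Absent a genuinely sharp estimate for $\calI_\lambda(x,z)$ on $\overline{\calO_\lambda}\times\calX_\lambda$ in the spirit of Nakahama, this step does not go through. The paper's choice to prove the $\phi$-estimate first is not merely a stylistic inversion: the integral kernel $\calJ_\lambda$ is polynomially bounded on $\overline{\calO_\lambda}\times\overline{\calO_\lambda}$, and this is precisely what makes the direct argument work, whereas the Fock-space route runs into the unavoidable exponential growth of $\calI_\lambda$ on the complex orbit.
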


\begin{proof}
We first prove \eqref{eq:WhittakerEstimatePhi}. From Proposition~\ref{prop:JBesselSharpEstimate} it follows that there exists constants $C_1>0$ and $k\in\NN_0$ such that
\begin{align*}
 |\calJ_\lambda(x,y)| &\leq C_1(1+|y|^2)^k & \forall\,y\in\overline{\calO_\lambda}.
\end{align*}
Now there exists an $N\in\NN_0$ with
\begin{align*}
 C_2 := \|(1+|\cdot|^2)^{k-N}\|_{L^2(\calO_\lambda,\td\mu_\lambda)} < \infty.
\end{align*}
Hence we obtain for $\varphi\in W^\lambda$, using H\"{o}lder's inequality:
\begin{align*}
 |\langle\phi_{\lambda,x},\varphi\rangle| &\leq C_1\int_{\calO_\lambda}{(1+|y|^2)^k|\varphi(y)|\td\mu_\lambda(y)}\\
 &= C_1\int_{\calO_\lambda}{(1+|y|^2)^{k-N}\cdot|(1+|y|^2)^N\varphi(y)|\td\mu_\lambda(y)}\\
 &\leq C_1C_2\|(1+|\cdot|^2)^N\varphi\|_{L^2(\calO_\lambda,\td\mu_\lambda)}.
\end{align*}
Since $\td\pi_\lambda(\calU(\frakg))$ contains all polynomials there exists $X\in\calU(\frakg)$ such that
\begin{equation*}
 \td\pi_\lambda(X)\varphi(y) = C_1C_2(1+|y|^2)^N\varphi(y)
\end{equation*}
which shows \eqref{eq:WhittakerEstimatePhi}. Now by Proposition~\ref{prop:AlgebraicWhittakerVectors}~(3) we have
\begin{equation*}
 \delta_{\lambda,x} = 2^{-r\lambda}\calU_\lambda\phi_{\lambda,x}
\end{equation*}
and we obtain
\begin{align*}
 |\langle\delta_{\lambda,x},\varphi\rangle| &= 2^{-r\lambda}|\langle\calU_\lambda\phi_{\lambda,x},\varphi\rangle| = 2^{-r\lambda}|\langle\phi_{\lambda,x},\calU_\lambda\varphi\rangle|\\
 &\leq 2^{-r\lambda}\|\td\pi_\lambda(X)\calU_\lambda\varphi\|_{L^2(\calO_\lambda,\td\mu_\lambda)}\\
 &= 2^{-r\lambda}\|\td\pi_\lambda(X)\pi_\lambda(\widetilde{j})\varphi\|_{L^2(\calO_\lambda,\td\mu_\lambda)}\\
 &= 2^{-r\lambda}\|\pi(\widetilde{j})\td\pi_\lambda(\Ad(\widetilde{j}^{-1})X)\varphi\|_{L^2(\calO_\lambda,\td\mu_\lambda)}\\
 &= 2^{-r\lambda}\|\td\pi_\lambda(\Ad(\widetilde{j}^{-1})X)\varphi\|_{L^2(\calO_\lambda,\td\mu_\lambda)}.
\end{align*}
Hence \eqref{eq:WhittakerEstimateDelta} follows with $Y=2^{-r\lambda}\Ad(\widetilde{j}^{-1})X$.
\end{proof}

\begin{theorem}\label{thm:DeltaPhiDistributionVectors}
Let $\lambda\in\calW$. Then for every $x\in\overline{\calO_\lambda}$ the algebraic Whittaker vectors $\delta_{\lambda,x}$ and $\phi_{\lambda,x}$ are smooth, i.e. $\delta_{\lambda,x},\phi_{\lambda,x}\in L^2(\calO_\lambda,\td\mu_\lambda)^{-\infty}$.
\end{theorem}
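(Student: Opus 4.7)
The plan is a direct density/continuity argument using the seminorm estimates provided by the preceding lemma. Recall that the Fr\'echet topology on $L^2(\calO_\lambda,\td\mu_\lambda)^\infty$ is generated by the seminorms
\[
 \varphi \longmapsto \|\pi_\lambda(X)\varphi\|_{L^2(\calO_\lambda,\td\mu_\lambda)}, \qquad X\in\calU(\frakg),
\]
and that on the subspace $W^\lambda$ of $\widetilde{K}$-finite vectors the operator $\pi_\lambda(X)$ agrees with the Lie-algebra action $\td\pi_\lambda(X)$. A continuous linear functional on $L^2(\calO_\lambda,\td\mu_\lambda)^\infty$ is, by definition, exactly an element of $L^2(\calO_\lambda,\td\mu_\lambda)^{-\infty}$.

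First, I would fix $x\in\overline{\calO_\lambda}$ and invoke the previous lemma to obtain $X,Y\in\calU(\frakg)$ such that
\[
 |\langle\delta_{\lambda,x},\varphi\rangle| \le \|\td\pi_\lambda(X)\varphi\|_{L^2}, \qquad |\langle\phi_{\lambda,x},\varphi\rangle|\le\|\td\pi_\lambda(Y)\varphi\|_{L^2}
\]
for all $\varphi\in W^\lambda$. Since the right-hand sides are exactly two of the defining seminorms of the Fr\'echet topology on $L^2(\calO_\lambda,\td\mu_\lambda)^\infty$ restricted to $W^\lambda$, the functionals $\delta_{\lambda,x}$ and $\phi_{\lambda,x}$ are continuous on $W^\lambda$ with respect to the subspace topology induced from $L^2(\calO_\lambda,\td\mu_\lambda)^\infty$.

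Next, I would use the fact that the space $W^\lambda$ of $\widetilde{K}$-finite vectors of the irreducible unitary representation $(\pi_\lambda,L^2(\calO_\lambda,\td\mu_\lambda))$ is dense in $L^2(\calO_\lambda,\td\mu_\lambda)^\infty$ (this is a standard consequence of Harish--Chandra's theory of admissible representations: any smooth vector is approximated, in every seminorm, by its $\widetilde{K}$-projections onto finite sums of $\widetilde{K}$-types via convolution with suitable functions on $\widetilde{K}$). Combining continuity on $W^\lambda$ with density, both $\delta_{\lambda,x}$ and $\phi_{\lambda,x}$ extend uniquely to continuous linear functionals on $L^2(\calO_\lambda,\td\mu_\lambda)^\infty$, i.e.\ to elements of $L^2(\calO_\lambda,\td\mu_\lambda)^{-\infty}$. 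The extensions automatically coincide with the original algebraic Whittaker vectors on $W^\lambda\subseteq(W^\lambda)^*$, and the Whittaker transformation law is preserved under the extension because both sides of the identity $\td\pi_\lambda(Z)u=\eta(Z)u$ depend continuously on $u$ in the strong dual topology.

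The only potentially delicate point is the density of $W^\lambda$ in the Fr\'echet space $L^2(\calO_\lambda,\td\mu_\lambda)^\infty$; everything else is formal. For this I would either quote the general Harish--Chandra fact or give a short direct argument: convolving a smooth vector $\varphi$ with an approximate identity of $\widetilde{K}$-finite functions on $\widetilde{K}$ produces $\widetilde{K}$-finite vectors converging to $\varphi$ in every seminorm $\|\pi_\lambda(X)\,\cdot\|_{L^2}$, which suffices since $\widetilde{K}$-finite smooth vectors of an admissible $(\frakg,\widetilde{K})$-module automatically lie in $W^\lambda$.
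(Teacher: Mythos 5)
Your proof is correct and follows essentially the same route as the paper's: both deduce continuity of $\delta_{\lambda,x}$ and $\phi_{\lambda,x}$ on $W^\lambda$ from the lemma's seminorm estimates \eqref{eq:WhittakerEstimateDelta}--\eqref{eq:WhittakerEstimatePhi} and then conclude by density of $W^\lambda$ in $L^2(\calO_\lambda,\td\mu_\lambda)^\infty$. The paper states this in two lines; you have merely supplied the standard justification for the density step and the uniqueness of the continuous extension.
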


\begin{proof}
Recall that for $X\in\calU(\frakg)$ the map
\begin{equation*}
 L^2(\calO_\lambda,\td\mu_\lambda)\to[0,\infty),\,\varphi\mapsto\|\td\pi_\lambda(X)\varphi\|_{L^2(\calO_\lambda,\td\mu_\lambda)}
\end{equation*}
is a continuous seminorm on $L^2(\calO_\lambda,\td\mu_\lambda)^\infty$. Since $W^\lambda\subseteq L^2(\calO_\lambda,\td\mu_\lambda)^\infty$ is dense the claim now follows from \eqref{eq:WhittakerEstimateDelta} and \eqref{eq:WhittakerEstimatePhi}
\end{proof}

\begin{remark}
For $x\in\calO_\lambda$ it is easy to see that $\delta_{\lambda,x},\phi_{\lambda,x}\in L^2(\calO_\lambda,\td\mu_\lambda)^{-\infty}$. In fact, by the Sobolev embedding theorem it follows that there is a continuous linear embedding $L^2(\calO_\lambda,\td\mu_\lambda)^\infty\hookrightarrow C^\infty(\calO_\lambda)$ and hence it is immediate that $\delta_{\lambda,x}\in L^2(\calO_\lambda,\td\mu_\lambda)^{-\infty}$. Further $\phi_{\lambda,x}=2^{r\lambda}\calU_\lambda\delta_{\lambda,x}\in L^2(\calO_\lambda,\td\mu_\lambda)^{-\infty}$ as $\calU_\lambda$ is an automorphism of $L^2(\calO_\lambda,\td\mu_\lambda)^{-\infty}$. However, it is a priori not clear that every function $\varphi\in L^2(\calO_\lambda,\td\mu_\lambda)^\infty$ extends to $\overline{\calO_\lambda}$ and that $\delta_{\lambda,x}$ is defined on $L^2(\calO_\lambda,\td\mu_\lambda)^\infty$ for $x\in\partial\calO_\lambda$. In \cite[Theorems 5.15, 5.18 \&\ 5.19]{Kos00} the statements of Theorem~\ref{thm:DeltaPhiDistributionVectors} are shown for the special case $V=\RR$, i.e. $\frakg\cong\sl(2,\RR)$.
\end{remark}

\begin{remark}
Note that since $\calJ_\lambda(0,y)=1$ for all $y\in V$ we have $\phi_{\lambda,0}=\1\td\mu_\lambda$, the constant function on $\calO_\lambda$ with value $1$, and hence
\begin{align*}
 \calU_\lambda\delta_{\lambda,0} &= 2^{r\lambda}\1\td\mu_\lambda.
\end{align*}
This formula resembles the fact that the Euclidean Fourier transform maps the Dirac delta distribution at the origin to a constant function.
\end{remark}

\newpage
\section{Application to branching problems}\label{sec:ApplicationBranching}

For $\frakg=\so(2,n)$ we consider the representation $\td\pi^{\so(2,n)}_\lambda$ belonging to the minimal non-zero discrete Wallach point $\lambda=\frac{d}{2}=\frac{n-2}{2}$. We consider the restriction of $\td\pi^{\so(2,n)}_\lambda$ to the symmetric subalgebra $\frakh=\so(2,m)\oplus\so(n-m)$ of $\frakg$. Note that the pair $(\frakg,\frakh)$ is a symmetric pair of holomorphic type and hence the restriction of the lowest weight representation $\td\pi^{\so(2,n)}_\lambda$ to $\frakh$ is discretely decomposable with multiplicity one and the representations of $\frakh$ appearing in the decomposition will again be lowest weight representations (see \cite{Kob08}).

Consider the Jordan algebra $V=\RR^{1,n-1}$ which is the vector space $\RR\times\RR^{n-1}$ endowed with the multiplication
\begin{align*}
 (x_1,x')\cdot(y_1,y') &:= (x_1y_1+x'\cdot y',x_1y'+y_1x')
\end{align*}
for $x_1,y_1\in\RR$, $x',y'\in\RR^{n-1}$, where $x'\cdot y'$ denotes the standard inner product on $\RR^{n-1}$. We have $\der(V)=\so(n-1)$, acting on the last $n-1$ variables, and $\str(V)=\so(1,n-1)\oplus\RR$. The conformal Lie algebra of $V$ is given by $\frakg=\so(2,n)$. The subalgebra $\so(2,m)$, $1\leq m\leq n$, can be viewed as the conformal Lie algebra of the Jordan subalgebra $U=\{x\in V:x_{m+1}=\ldots=x_n=0\}\cong\RR^{1,m-1}$. Then $\so(2,m)$ consists of the elements $(u,L(a)+D,v)$ with $u,a,v\in U$ and $D\in\so(m-1)$ acting on the coordinates $x_2,\ldots,x_m$. The centralizer of $\so(2,m)$ in $\frakg$ is $\so(n-m)$ acting linearly on the coordinates $x_{m+1},\ldots,x_n$. Hence $\so(n-m)\subseteq\frakk^\frakl=\so(n-1)$. Finally let $\frakh=\so(2,m)\oplus\so(n-m)\subseteq\frakg$ be the corresponding subalgebra of $\frakg$.

To study the restriction $\td\pi^{\so(2,n)}_\lambda|_{\frakh}$ we use the Fock model $(\td\rho^{\so(2,n)}_\lambda,\calF_\lambda)$ of the representation $\td\pi^{\so(2,n)}_\lambda$. In this case
\begin{align*}
 \calX^{\so(2,n)}_1 &= \{z\in\CC^n:z_1^2-z_2^2-\cdots-z_n^2=0\}\setminus\{0\},\\
 \calX^{\so(2,n)}_2 &= \{z\in\CC^n:z_1^2-z_2^2-\cdots-z_n^2\neq0\},
\end{align*}
realized in the ambient space $V_\CC=\CC^n$, and hence
\begin{align*}
 \calP(\calX_1^{\so(2,n)}) &= \CC[Z_1,\ldots,Z_n]/(Z_1^2-Z_2^2-\cdots-Z_n^2),\\
 \calP(\calX_2^{\so(2,n)}) &= \CC[Z_1,\ldots,Z_n].
\end{align*}
For arbitrary $\lambda\in\calW$ the representation $\td\rho^{\so(2,n)}_\lambda$ of $\frakg_\CC$ acts by
\begin{align*}
 \td\rho_\lambda(a,-2iL(a),a) &= 2i\sum_{j=1}^n{a_jz_j}, & \td\rho_\lambda(a,0,-a) &= 2i\left(\partial_{az}+\frac{\lambda}{2}\tr(a)\right),\\
 \td\rho_\lambda(a,2iL(a),a) &= -2i\sum_{j=1}^n{a_jB_\lambda^{n,j}}, & \td\rho_\lambda(0,D,0) &= -\partial_{Dz},
\end{align*}
where
\begin{align*}
 B_\lambda^{n,j}&= \varepsilon_jz_j\Box^n-2(E^n+\lambda)\frac{\partial}{\partial z_j}, & \Box^n &= \sum_{j=1}^n{\varepsilon_j\frac{\partial^2}{\partial z_j^2}},\\
 E^n &= \sum_{j=1}^n{z_j\frac{\partial}{\partial z_j}}, & \varepsilon_j &= \begin{cases}+1 & \mbox{for $j=1$,}\\-1 & \mbox{for $2\leq j\leq n$.}\end{cases}
\end{align*}

We first consider the action of the compact part $\so(n-m)$ which acts naturally on the coordinates $z_{m+1},\ldots,z_n$. Note that
\begin{align*}
 \CC[Z_1,\ldots,Z_n] = \CC[Z_1,\ldots,Z_m]\otimes\CC[Z_{m+1},\ldots,Z_n].
\end{align*}
Every polynomial in $Z_{m+1},\ldots,Z_n$ can be written as a sum of spherical harmonics multiplied with powers of $(Z_{m+1}^2+\cdots+Z_n^2)$. In $\calP(\calX_1^{\so(2,n)})$ we have $Z_{m+1}^2+\cdots+Z_n^2=Z_1^2-Z_2^2-\cdots-Z_m^2$ and hence
\begin{align*}
 \calP(\calX_1^{\so(2,n)}) &\cong \CC[Z_1,\ldots,Z_n]/(Z_1^2-Z_2^2-\cdots-Z_n^2)\\
 &\cong \bigoplus_{k=0}^\infty{\CC[Z_1,\ldots,Z_m]\otimes\calH^k(\CC^{n-m})},
\end{align*}
which gives the decomposition into irreducible $\so(n-m)$-representations.

Next we have to examine the action of $\so(2,m)$ on each of the summands. For this we use the notation $V_\CC=\CC^n=\CC^m\oplus\CC^{n-m}=U_\CC\oplus U^\perp_\CC$.

\begin{lemma}
For $k\in\NN_0$ and $X\in\so(2,m)$ we have
\begin{align*}
 \td\rho_\lambda^{\so(2,n)}(X)|_{\CC[Z_1,\ldots,Z_m]\otimes\calH^k(\CC^{n-m})} &= \td\rho_{\lambda+k}^{\so(2,m)}(X)\otimes\id.
\end{align*}
\end{lemma}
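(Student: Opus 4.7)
The plan is to verify the identity on a generating set of $\so(2,m)\subseteq\so(2,n)$. Concretely, $\so(2,m)$ is spanned by four families of elements: (i) $(a,-2iL(a),a)$ for $a\in U$, (ii) $(0,D,0)$ for $D\in\so(m-1)$ trivially extended to $\so(n-1)=\frakk^\frakl$, (iii) $(a,0,-a)$ for $a\in U$, and (iv) $(a,2iL(a),a)$ for $a\in U$. For each of these I apply the formulas displayed just before the lemma to a factored vector $f=p(z_1,\ldots,z_m)\,h(z_{m+1},\ldots,z_n)$ with $h\in\calH^k(\CC^{n-m})$ and match the result against $\td\rho_{\lambda+k}^{\so(2,m)}(X)(p)\cdot h$.

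Families (i) and (ii) are essentially free. Since $a_{m+1}=\cdots=a_n=0$, the multiplication operator $2i\sum_{j=1}^n a_j z_j$ reduces to $2i\sum_{j=1}^m a_j z_j$, depends only on $z_1,\ldots,z_m$, and is independent of $\lambda$; and for $D\in\so(m-1)$ the first-order operator $-\partial_{Dz}$ involves only coordinates $2,\ldots,m$, annihilates $h$, and again is $\lambda$-independent. Hence both agree with the corresponding operators in $\td\rho_{\lambda+k}^{\so(2,m)}$ applied to $p$.

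Family (iii) is where the shift first emerges. Writing $a\cdot_V z$ for the Jordan product in $V_\CC$, a direct inspection of the multiplication rule on $V=\RR^{1,n-1}$ shows that for $j\leq m$ the component $(a\cdot_V z)_j$ coincides with $(a\cdot_U z')_j$ with $z'=(z_1,\ldots,z_m)$ and involves only $z_1,\ldots,z_m$, while for $j\geq m+1$ it equals $a_1 z_j$. Applied to $ph$, the transverse sum contributes $a_1\,p\cdot E^{n-m}h = k a_1\,ph$ by homogeneity, so
\[
\partial_{a\cdot_V z}(ph) \;=\; (\partial_{a\cdot_U z'}p)\,h \;+\; k a_1\,ph.
\]
Combined with the coincidence $\tr_V(a)=2a_1=\tr_U(a)$, the action of $(a,0,-a)$ on $ph$ becomes $[2i\partial_{a\cdot_U z'}p+i(\lambda+k)\tr_U(a)\,p]\,h$, which is precisely $\td\rho_{\lambda+k}^{\so(2,m)}(a,0,-a)(p)\cdot h$.

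The substantive case is (iv), driven by two observations. First, because $h$ is harmonic and depends only on $z_{m+1},\ldots,z_n$, one has $\Box^n(ph)=(\Box^m p)h$; second, since $h$ is homogeneous of degree $k$, $E^n(ph)=(E^m p)h+k\,ph$, so $(E^n+\lambda)$ acts on $ph$ exactly as $(E^m+\lambda+k)$ acts on $p$. For $j\in\{1,\ldots,m\}$ this yields
\[
B_\lambda^{n,j}(ph) \;=\; \varepsilon_j z_j(\Box^m p)\,h - 2(E^m+\lambda+k)(\partial_{z_j}p)\,h \;=\; B_{\lambda+k}^{m,j}(p)\cdot h,
\]
and summing against $-2i a_j$ gives the claim for family (iv). Since the four families generate $\so(2,m)$, this finishes the proof. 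The argument is computational rather than conceptual; the only real content is recognizing that the uniform shift $\lambda\mapsto\lambda+k$ comes from the Euler operator picking up the degree $k$ of $h$ in the transverse $U^\perp$-directions, while the harmonicity of $h$ kills the would-be Laplacian contribution there.
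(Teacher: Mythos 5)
Your proof is correct and follows essentially the same route as the paper: checking the four families $(a,-2iL(a),a)$, $(0,D,0)$, $(a,0,-a)$, $(a,2iL(a),a)$ on factored vectors $p\cdot h$, with the shift $\lambda\mapsto\lambda+k$ coming from the Euler operator and the transverse Laplacian contribution vanishing by harmonicity of $h$. The only (cosmetic) difference is in the third family, where the paper splits into the subcases $a_1=0$ and $a=e$, while you treat all $a\in U$ at once by decomposing $L(a)z$ componentwise into the $U$- and $U^\perp$-directions.
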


\begin{proof}
On $\CC[Z_1,\ldots,Z_m]\otimes\calH^k(\CC^{n-m})$ we check four exhaustive cases separately:
\begin{enumerate}
\item $X=(0,D,0)$, $D\in\so(m-1)$. The action is given by
\begin{align*}
 \td\rho_\lambda^{\so(2,n)}(X) &= -\partial_{Dz} = \td\rho_{\lambda+k}^{\so(2,m)}(X)\otimes\id
\end{align*}
since $D$ only acts on the coordinates $z_2,\ldots,z_m$.
\item $X=(a,-2iL(a),a)$, $a\in U$. Since $a\in U\subseteq\CC^m$ it is immediate that
\begin{align*}
 \td\rho_\lambda^{\so(2,n)}(X) &= 2i\sum_{j=1}^m{a_jz_j} = \td\rho_{\lambda+k}^{\so(2,m)}(X)\otimes\id.
\end{align*}
\item $X=(a,0,-a)$, $a\in U$. Assume first that $a_1=0$. Then $L(a)$ annihilates $\CC^{n-m}$ and hence
\begin{align*}
 \td\rho_\lambda^{\so(2,n)}(X) &= 2i\partial_{az} = \td\rho_{\lambda+k}^{\so(2,m)}(X)\otimes\id.
\end{align*}
For $a=e$ we consider $X=(e,0,-e)$ which acts by
\begin{align*}
 \td\rho_\lambda^{\so(2,n)}(X) &= 2i(E^n+\lambda) = 2i(E^m+\lambda+k) = \td\rho_{\lambda+k}^{\so(2,m)}(X)\otimes\id
\end{align*}
since $E^{n-m}=\sum_{j=m+1}^n{z_j\frac{\partial}{\partial z_j}}$ acts on $\calH^k(\CC^{n-m})$ by the scalar $k$.
\item $X=(a,2iL(a),a)$, $a\in U$. It suffices to check the claimed formula for $a=e_j$, $j=1,\ldots,m$. We then have
\begin{align*}
 \td\rho_\lambda^{\so(2,n)}(X) &= -2i\left(\varepsilon_jz_j\Box^n-2(E^n+\lambda)\frac{\partial}{\partial z_j}\right)\\
 &= -2i\left(\varepsilon_jz_j\Box^m-2(E^m+\lambda+k)\frac{\partial}{\partial z_j}\right)\\
 &= \td\rho_{\lambda+k}^{\so(2,m)}(X)\otimes\id
\end{align*}
since
\begin{multline*}
 \Box^n[f(z_1,\ldots,z_m)g(z_{m+1},\ldots,z_n)]\\
 = \Box^mf(z_1,\ldots,z_m)\cdot g(z_{m+1},\ldots,z_n)\\
 + f(z_1,\ldots,z_m)\cdot\Delta^{n-m}g(z_{m+1},\ldots,z_n),
\end{multline*}
where $\Delta^{n-m}$ denotes the Laplacian on $\CC^{n-m}$ and $E^{n-m}$ the Euler operator on $\CC^{n-m}$ which acts on $\calH^k(\CC^{n-m})$ by the scalar $k$.\qedhere
\end{enumerate}
\end{proof}

This shows the following theorem:

\begin{theorem}\label{thm:BranchingSO(n,2)}
For $\lambda=\frac{n-2}{2}$ the minimal non-zero discrete Wallach point for $\so(2,n)$ we have
\begin{align*}
 \td\pi_\lambda^{\so(2,n)} &= \bigoplus_{k=0}^\infty{\td\pi_{\lambda+k}^{\so(2,m)}\boxtimes\calH^k(\RR^{n-m})}.
\end{align*}
\end{theorem}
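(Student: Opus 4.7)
The plan is to perform the branching at the level of $\widetilde{K}$-finite vectors inside the Fock model. By Theorems \ref{thm:IntroFockSpace} and \ref{thm:UnitaryRepOnFock}, the representation $\td\pi_\lambda^{\so(2,n)}$ is unitarily equivalent to $\rho_\lambda$ on $\calF_\lambda$, whose $\widetilde{K}$-finite part is $\calP(\calX_1^{\so(2,n)})\cong\CC[Z_1,\ldots,Z_n]/(Z_1^2-Z_2^2-\cdots-Z_n^2)$ (the orbit $\calX_1$ appears because $\lambda=\frac{d}{2}=\frac{n-2}{2}$ corresponds to $k=1$). Since $(\frakg,\frakh)$ is a symmetric pair of holomorphic type, Kobayashi's theorem \cite{Kob08} guarantees that the restriction to $\frakh$ is discretely decomposable with multiplicity free spectrum, so it is enough to identify the $(\frakh,\widetilde{K}\cap\widetilde{H})$-module structure on $\calP(\calX_1^{\so(2,n)})$ and then integrate.

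First I would decompose this space under the compact factor $\so(n-m)\subseteq K^L=\so(n-1)$, which acts linearly on $(Z_{m+1},\ldots,Z_n)$ and trivially on $(Z_1,\ldots,Z_m)$. Every polynomial in $Z_{m+1},\ldots,Z_n$ is a finite sum of products of spherical harmonics with powers of $|Z''|^2:=Z_{m+1}^2+\cdots+Z_n^2$, and the defining quadratic relation on $\calX_1^{\so(2,n)}$ lets me replace each such power by $Z_1^2-Z_2^2-\cdots-Z_m^2$. Rearranging, I obtain the $\so(n-m)$-isotypic decomposition
\[
\calP(\calX_1^{\so(2,n)})\;\cong\;\bigoplus_{k=0}^\infty\CC[Z_1,\ldots,Z_m]\otimes\calH^k(\CC^{n-m}),
\]
each summand being $\so(n-m)$-irreducible.

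Next I would invoke the lemma proved just above: a direct check on a generating set $\{(a,\pm 2iL(a),a),(a,0,-a),(0,D,0)\}$ of $\so(2,m)$ shows that the action of $\td\rho_\lambda^{\so(2,n)}$ restricted to each summand coincides with $\td\rho_{\lambda+k}^{\so(2,m)}\otimes\id$. The crucial observation is that the Euler operator splits as $E^n=E^m+E^{n-m}$, with $E^{n-m}$ acting by the scalar $k$ on $\calH^k(\CC^{n-m})$; this scalar is absorbed into the parameter, shifting $\lambda$ to $\lambda+k$ in the Bessel-operator formula for the $\overline{\frakn}$-action and in the central part of the $\frakl$-action, while the $\frakn$- and $\frakk^\frakl$-actions survive unchanged.

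It remains to recognise $\CC[Z_1,\ldots,Z_m]$ with shifted parameter $\lambda+k$ as the underlying $(\frakh,\widetilde{K}\cap\widetilde{H})$-module of $\td\pi_{\lambda+k}^{\so(2,m)}$. For $n>m$ one has $\lambda+k=\tfrac{n-2}{2}+k>\tfrac{m-2}{2}=(r-1)\tfrac{d}{2}$ for $\so(2,m)$, so $\lambda+k$ lies in the continuous Wallach set for $\so(2,m)$; hence $\calX_{\lambda+k}^{\so(2,m)}=\calX_2^{\so(2,m)}$ is Zariski-dense in $V_\CC^{\so(2,m)}$ and $\calP(\calX_{\lambda+k}^{\so(2,m)})=\CC[Z_1,\ldots,Z_m]$, matching the Fock realization of $\td\pi_{\lambda+k}^{\so(2,m)}$; the case $n=m$ is vacuous since $\calH^k(\CC^0)=0$ for $k\ge 1$. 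Summing over $k$ gives the desired $(\frakh,\widetilde{K}\cap\widetilde{H})$-module identity, and Kobayashi's discrete decomposability automatically upgrades it to the unitary Hilbert direct-sum decomposition asserted in the theorem. The only real obstacle is the algebraic bookkeeping in the spherical-harmonic decomposition modulo the quadratic relation; once the parameter shift $\lambda\leadsto\lambda+k$ is spotted, the generator-by-generator intertwining verification is routine.
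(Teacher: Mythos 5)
Your proposal follows essentially the same route as the paper: work in the Fock model, decompose $\calP(\calX_1^{\so(2,n)})\cong\CC[Z_1,\ldots,Z_n]/(Z_1^2-Z_2^2-\cdots-Z_n^2)$ under $\so(n-m)$ via spherical harmonics (using the defining quadratic relation to eliminate $Z_{m+1}^2+\cdots+Z_n^2$), and then verify, generator by generator, that on each summand $\CC[Z_1,\ldots,Z_m]\otimes\calH^k$ the $\so(2,m)$-action is $\td\rho_{\lambda+k}^{\so(2,m)}\otimes\id$, with the degree-$k$ Euler eigenvalue from $\calH^k$ supplying the parameter shift. The only things you add beyond what the paper writes out are making explicit the appeal to Kobayashi's theorem to upgrade the algebraic decomposition to a Hilbert-space one and the remark that $\lambda+k$ lies in the continuous Wallach set for $\so(2,m)$ so that $\calP(\calX_{\lambda+k}^{\so(2,m)})=\CC[Z_1,\ldots,Z_m]$; both of these are already stated by the paper just outside the proof (at the opening of Section 7 and in the remark following the theorem, respectively), so your write-up is a correct and slightly more self-contained rendering of the same argument.
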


For $m<n$ we have $\lambda+k=\frac{n+2k-2}{2}>\frac{m-2}{2}$ for $k\in\NN_0$ and hence the representations $\td\pi_{\lambda+k}^{\so(2,m)}$ belong to Wallach points in the continuous part of the Wallach set. For $2k>2m-n$ the representation $\td\pi_{\lambda+k}^{\so(2,m)}$ are holomorphic discrete series. In particular, if $2m\geq n$ then there occur representations in the branching law which are not holomorphic discrete series.

\newpage

\bibliographystyle{amsplain}
\bibliography{bibdb}

\vspace{30pt}

\textsc{Jan M\"ollers\\Institut for Matematiske Fag, Aarhus Universitet, Ny Munkegade 118, 8000 Aarhus C, Danmark.}\\
\textit{E-mail address:} \texttt{moellers@imf.au.dk}

\end{document}